\numberwithin{equation}{section}
\newtheorem{theorem}{Theorem}[section]
\newtheorem{lemma}[theorem]{Lemma}
\newtheorem{proposition}[theorem]{Proposition}
\newtheorem{prop}[theorem]{Proposition}
\newtheorem{assumption}[theorem]{Assumption}
\newtheorem{remark}[theorem]{Remark}
\def\eps{\varepsilon }
\def\D{\partial }
\newcommand{\RR}{\mathbb{R}}
\newcommand{\cO}{\mathcal{O}}
\newcommand{\CC}{\mathbb{C}}
\newcommand{\cA}{\mathcal{A}}
\newcommand{\e}{{\varepsilon}}
\newcommand{\Range}{{\rm range}  \,}
\def\beq{\begin{equation}}
\def\eeq{\end{equation}}
\def\bb1{{1\!\!1}}
\def\R{\mbox{Re }}
\def\I{\mbox{Im }}
\def\dz{\partial_z}
\def\dx{\partial_x}
\def\cA{\mathcal{A}}
\def\eps{\varepsilon}
\def\OS{\mathrm{OS}}
\def\Ray{\mathrm{Ray}}
\def\Airy{\mathrm{Airy}}
\def\Iter{\mathrm{Iter}}
\def\bl{\mathrm{bl}}
\begin{document}

\title{Green function for linearized Navier-Stokes around a boundary layer profile: near critical layers} 

\author{Emmanuel Grenier\footnotemark[1]
  \and Toan T. Nguyen\footnotemark[2]
}

\maketitle

\renewcommand{\thefootnote}{\fnsymbol{footnote}}

\footnotetext[1]{Equipe Projet Inria NUMED,
 INRIA Rh\^one Alpes, Unit\'e de Math\'ematiques Pures et Appliqu\'ees., 
 UMR 5669, CNRS et \'Ecole Normale Sup\'erieure de Lyon,
               46, all\'ee d'Italie, 69364 Lyon Cedex 07, France. Email: Emmanuel.Grenier@ens-lyon.fr}

\footnotetext[2]{Department of Mathematics, Penn State University, State College, PA 16803. Email: nguyen@math.psu.edu. }



\begin{abstract}
This is a continuation and completion of the program (initiated in \cite{GrN1,GrN2}) to derive pointwise estimates on the Green function and sharp bounds on the semigroup of linearized Navier-Stokes around a generic stationary boundary layer profile. This is done via a spectral analysis approach and a careful study of the Orr-Sommerfeld equations, or equivalently the Navier-Stokes resolvent operator $(\lambda - L)^{-1}$. The earlier work (\cite{GrN1,GrN2}) treats the Orr-Sommerfeld equations away from critical layers: this is the case when the phase velocity is away from the range of the background profile or when $\lambda$ is away from the Euler continuous spectrum. In this paper, we study the critical case: the Orr-Sommerfeld equations near critical layers, providing pointwise estimates on the Green function as well as carefully studying the Dunford's contour integral near the critical layers.

As an application, we obtain pointwise estimates on the Green function and sharp bounds on the semigroup of the linearized Navier-Stokes problem near monotonic boundary layers that are spectrally stable to the Euler equations, complementing \cite{GrN1,GrN2} where unstable profiles are considered.


\end{abstract}

\newpage
\tableofcontents


\section{Introduction}

In continuation of \cite{GrN1,GrN2}, we are interested in the study of linearized Navier Stokes around a given fixed profile $U_s = (U(z),0)^{tr}$ in the inviscid limit; namely, we consider the following set of equations
\beq \label{Nlin1}
\partial_t v + U_s \cdot \nabla v + v\cdot  \nabla U_s + \nabla p - \sqrt \nu \Delta v = 0,
\eeq
\beq \label{Nlin2}
\nabla\cdot  v = 0,
\eeq
with $0<\nu \ll1$, posed on the half plane $x \in \RR$, $z \ge 0$,
with the no-slip boundary conditions
\beq \label{Nlin3}
v = 0 \quad \hbox{on} \quad z = 0 .
\eeq

Throughout this paper, $U(z)$ is strictly monotonic, real analytic, $U(0) =0$, 
$$U_+ = \lim_{z\to \infty} U(z) < \infty,$$
and the convergence is exponentially fast. We  are interested in the case when $U(z)$ is spectrally stable for the linearized Euler equations, that is, equations \eqref{Nlin1}-\eqref{Nlin2} when $\nu =0$. For instance, all the profiles that do not have an inflection point are stable, thanks to the classical Rayleigh's stability condition. In this case, strikingly, the viscosity has a destabilizing effect. That is, {\em all shear profiles are unstable for large Reynolds numbers.} There are lower and upper marginal stability branches $\alpha_\mathrm{low}(\nu)\sim \nu^{1/8}$ and  $ \alpha_\mathrm{up}(\nu)\sim \nu^{1/12}$, for generic stable shear flows, so that whenever the horizontal wave number $\alpha$ of perturbation belongs to $[\alpha_\mathrm{low}(\nu),\alpha_\mathrm{up}(\nu)]$, the linearized Navier-Stokes equations about the shear profile have a growing eigenfunction and an unstable eigenvalue $\lambda_\nu$ with 
\begin{equation}\label{growthrate0} \Re \lambda_\nu \sim \nu^{1/4}\end{equation}  
Heisenberg \cite{Hei,HeiICM}, then Tollmien and C. C. Lin \cite{Lin0,LinBook} were among the first physicists to use asymptotic expansions to study the (spectral) instability; see also Drazin and Reid \cite{Reid,Schlichting} for a complete account of the physical literature on the subject, and \cite{GGN2,GGN3} for a complete mathematical proof of the instability. 

The aim of this paper
is to bound the solution $v$ of (\ref{Nlin1})-(\ref{Nlin3}), uniformly as $\nu$ goes to $0$. Roughly speaking, we will prove
that there
exist positive constants $C_0,\theta_0$ such that the semigroup $e^{Lt}$ associated to the linearized problem \eqref{Nlin1}-\eqref{Nlin3} satisfies 
\beq \label{bornev1}
\| e^{Lt} v_0 \|_{X} \le C_0 \nu^{-1/4} e^{\theta_0 \nu^{1/4} t} \| v_0 \|_X
\eeq
uniformly for any positive $t$ and for $\nu \ll1$, for some norm $\|\cdot \|_X$ to be precise, below. We are interested in a sharp semigroup estimate in term of exponential growth in time. Standard energy estimates only yield a semigroup bound which grows in time of order $e^{\|\nabla U_s\|_{L^\infty} t}$, a bound that is far from being sharp. The interest in deriving such a sharp bound on the linearized Navier-Stokes problem is pointed out in \cite{Gr1,GGN1,GGN2,GrN1,GrN2}. See also \cite{DGV2}, where the authors derive semigroup bounds in Gevrey spaces.  

\subsection{Spectral approach}

We shall derive semigroup bounds via a spectral approach. First, we study the equation for vorticity $\omega = \nabla \times v = \partial_z v_1 - \partial_x v_2$, which reads 
\begin{equation}\label{EE-vort} (\partial_t + U\partial_x) \omega + v U'' - \sqrt\nu \Delta \omega = 0\end{equation}
together with $v = \nabla^\perp \phi$ and $\Delta \phi = \omega$. The no-slip boundary condition \eqref{Nlin3} becomes $\phi = \partial_z \phi =0$ on $\{z=0\}$. 
We then denote the linearized vorticity operator 
$$ L\omega : = \sqrt\nu \Delta \omega - U\partial_x \omega - v U'' .$$
Since $U''$ decays exponentially as $z$ tends to infinity, the operator $L$ is a compact perturbation of the Laplace operator $\sqrt\nu \Delta$, and so we can write the semigroup $e^{Lt}$ in term of the resolvent solutions; namely, there holds the Dunford's integral representation:
\beq \label{int1}
e^{L t} \omega = {1 \over 2 i \pi} \int_\Gamma e^{\lambda t}  (\lambda - L)^{-1} \omega  \, d \lambda
\eeq
where $\Gamma$ is a contour on the right of the spectrum of $L$. 

Since the boundary layer profile $U(z)$ depends only on $z$, we can take the Fourier transform in the $x$ variable, $\alpha$ being 
the dual Fourier variable, 
leading to \beq \label{int2}
e^{L t} \omega = \int_\RR e^{i\alpha x} e^{L_\alpha t} \omega_\alpha \; d\alpha 
\eeq
with $$e^{L_\alpha t} \omega_\alpha : = {1 \over 2 i \pi}
\int_{\Gamma_\alpha} e^{\lambda t } 
 (\lambda - L_\alpha)^{-1} \omega_\alpha \, d\lambda
$$  
having the contour $\Gamma_\alpha$ lie on the right of the spectrum of $L_\alpha$. In this formula, $\omega_\alpha$ is the Fourier transform of $\omega$ in tangential variables, $L_\alpha$ is the Fourier transform of $L$: 
\begin{equation}\label{def-Lalpha}L_\alpha\omega : = \sqrt\nu\Delta_\alpha \omega - Ui\alpha\omega + i\alpha U'' \phi, \qquad \Delta_\alpha \phi = \omega,\end{equation}
with the notation $$\Delta_\alpha = \partial_z^2  - \alpha^2.$$ The linear operator $L_\alpha$ is studied together with the no-slip boundary conditions: $\phi =\phi' = 0$. 

Since the operator $L_\alpha$ is a compact perturbation of the Laplace operator $\sqrt\nu \Delta_\alpha$ (with respect to the usual $L^2$ space), the unstable spectrum of $L_\alpha$ thus consists of precisely finitely many point spectrum $\lambda$, with $\Re \lambda >0$, satisfying 
$$ (\lambda - L_\alpha) \omega = 0,$$
 with the zero boundary conditions on the corresponding stream function $\phi$. By multiplying the above equation by $\phi^*$, the complex conjugate of $\phi$, and taking integration by parts, it follows that for each $\alpha \in \RR$, the point spectrum $\lambda$ of $L_\alpha$ must lie in the region 
\begin{equation}\label{def-secLa}S_{\alpha,\nu}: = \Big\{ \lambda \in \mathbb{C}~:~\Re \lambda +\alpha^2 \sqrt \nu \le C_0 |\alpha|, \quad |\Im \lambda |  \le C_0 |\alpha| \Big \}, \end{equation}
for some universal constant $C_0$ (depending only on $U$).



\subsection{Orr-Sommerfeld equations}
In order to construct the resolvent solutions $ (\lambda - L_\alpha)^{-1} \omega_\alpha$,
we set \begin{equation}\label{resolvent-EE}
\theta_\alpha = (\lambda - L_\alpha)^{-1} \omega_\alpha .
\end{equation}
Let $\phi_\alpha$ be the corresponding stream function, defined through the elliptic equation, 
$$
 \Delta_\alpha \phi_\alpha= \theta_\alpha.
$$
Then the stream function solves the following $4^{th}$-order ODEs, the classical Orr-Sommerfeld equations, 
\beq \label{OS}
\OS(\phi_\alpha) := - \eps \Delta_\alpha^2 \phi_\alpha + (U - c) \Delta_\alpha \phi_\alpha - U'' \phi_\alpha = {\omega_\alpha \over i \alpha},
\eeq
together with the boundary conditions
\beq \label{OS2}
{\phi_\alpha}_{\vert_{z=0}} = \partial_z{ \phi_\alpha}_{\vert_{z=0}} = 0, \qquad \lim_{z\to \infty}\phi_\alpha(z) =0.
\eeq
In the above, for convenience, we have denoted 
\begin{equation}\label{lambda-alpha}
\eps = {\sqrt\nu \over i \alpha} 
, \qquad c =  - \frac{\lambda}{i\alpha} \end{equation}
both of which are complex numbers. Occasionally, we write $\OS_{\alpha,c}(\cdot)$ in place of $\OS(\cdot)$ to stress the dependence on $\alpha$ and $c$.

Solving the resolvent equation \eqref{resolvent-EE} is thus equivalent to solve the Orr-Sommerfeld problem \eqref{OS}-\eqref{OS2}. We shall solve the latter problem via constructing its Green function. Precisely, for each fixed $\alpha \in \RR_+$ and $c\in \CC$, we let $G_{\alpha,c}(x,z)$ be the corresponding Green kernel of the OS problem. 
By definition, for each $x\in \RR$ and $c\in \CC$, $G_{\alpha,c}(x,z)$ solves the Orr-Sommerfeld equations in the sense
$$ \OS(G_{\alpha,c} (x,\cdot)) = \delta_x (\cdot)$$
on $z\ge 0$, together with the boundary conditions:
$$G_{\alpha,c}(x,0) = \partial_z G_{\alpha,c} (x,0) =0, \qquad \lim_{z\to \infty} G_{\alpha,c}(x,z) =0.$$
That is, for $z\not = x$, the Green function $G_{\alpha,c} (x,z)$ solves the homogenous Orr-Sommerfeld equations, together with the following jump conditions across $z=x$:
$$ [\partial_z^kG_{\alpha,c}(x,z)]_{\vert_{z=x}} = 0, \qquad [\epsilon\partial_z^3 G_{\alpha,c}(x,z)]_{\vert_{z=x}} = -1$$ 
for $k=0,1,2$. Here, the jump $[f(z)]_{\vert_{z=x}}$ across $z=x$ is defined to be the value of the right limit subtracted by that of the left limit as $z\to x$.  

Finally, we let $G_\alpha(z,t;x)$ be the corresponding temporal Green function, defined by 
\begin{equation}\label{def-Ga} G_\alpha(z,t;x) : = \frac{1}{2\pi i} \int_{\Gamma_\alpha} e^{\lambda t} G_{\alpha,c}(x,z) \; \frac{d\lambda}{i\alpha}
\end{equation}
in which $c = i \alpha^{-1} \lambda$. Then, the Navier-Stokes semigroup $e^{L_\alpha t}$ for vorticity is constructed by 
\begin{equation}\label{def-eLt}
e^{L_\alpha t} \omega_\alpha (z) :=\int_0^\infty \Delta_\alpha G_\alpha(z,t;x)  \omega_\alpha (x) \; dx.
\end{equation}
We stress that by construction, the corresponding velocity of $e^{L_\alpha t} \omega_\alpha$ satisfies the no-slip boundary condition at $z=0$.  Our goal is to derive pointwise bounds on the temporal Green function and bounds on the semigroup $e^{L_\alpha t}$.

\subsection{Spectrum of the Orr-Sommerfeld problem}\label{sec-OSspectrum}

To construct the Green function of the Orr-Sommerfeld problem, we first need to analyze its spectrum, or by definition, the complement of the range of $c$ over which the Orr-Sommerfeld problem \eqref{OS}-\eqref{OS2} is solvable for each initial data $\omega_\alpha$. For this, we study the corresponding homogenous Orr-Sommerfeld problem 
\beq \label{OS-h}
\OS_{\alpha,c}(\phi) =0
\eeq
together with the boundary conditions $\phi = \phi'=0$ on $z=0$. 

In what follows, we focus on the case $\alpha>0$; the other case being similar. The unstable spectrum then corresponds to the case when $\Im c>0$. By view of \eqref{def-secLa} and the relation $\lambda = -i\alpha c$, the spectrum $c$ must satisfy $|\Re c|\le C_0$ and $\Im c + \alpha \sqrt \nu \le C_0$, for the same constant $C_0$ as in \eqref{def-secLa}.  

Next, since $U(z)$ converges to a finite constant $U_+$ as $z\to \infty$, solutions to the homogenous Orr-Sommerfeld equation \eqref{OS-h} converge to solutions of the limiting, constant-coefficient equations
\begin{equation}\label{OS-plus} \OS_+(\phi) = - \eps \Delta_\alpha^2 \phi_\alpha + (U_+ - c) \Delta_\alpha \phi_\alpha =0.\end{equation}
Clearly, \eqref{OS-plus} has four independent solutions $e^{\pm\mu_s z}$ and $e^{\pm \mu_f^+ z}$, with 
\begin{equation}\label{def-sfrate}\mu_s = \alpha, \qquad \mu_f =  \epsilon^{-1/2}\sqrt{U-c+\alpha^2 \epsilon},\qquad \mu_f^+  = \lim_{z\to \infty}\mu_f,\end{equation}
in which the square root takes the positive real part. Observe that as long as $|U(z)-c| \gg \epsilon$, or equivalently $|\lambda + i\alpha U(z)|\gg \sqrt \nu$, we have $\mu_s \ll \mu_f$. That is, solutions to the Orr-Sommerfeld equation consist of ``slow behavior'' $e^{\pm\mu_s z}$ and ``fast behavior'' $e^{\pm \mu_f z}$, asymptotically near the infinity. By view of \eqref{OS-h} and \eqref{OS-plus}, the two slow modes are perturbations from the Rayleigh solutions $(U-c)\Delta_\alpha\phi - U''\phi =0$ and the two fast modes are linked to so-called Airy-type solutions $(-\eps \Delta_\alpha + U-c)\Delta_\alpha \phi =0$. 

Our goal is to construct all four independent solutions to the Orr-Sommerfeld equation \eqref{OS-h}, with the aforementioned slow and fast behavior at infinity. We divide into two cases:  

\begin{itemize}

\item Away from critical layers: $|\mathrm{Range}(U)-c|\gtrsim 1$
  
\item Near critical layers: $|\mathrm{Range}(U)-c|\ll1$. 

\end{itemize}  
  By a critical layer, we are referred to the complex point $z = z_c$ at which $c = U(z_c)$. The former case when $c$ is away from the critical layers has been treated in our recent paper \cite{GrN1}, whereas the 
latter case is the main subject of this present paper.

To understand the Orr-Sommerfeld spectrum, we let $\phi_{\alpha,c}^s, \phi_{\alpha,c}^f$ be two independent, slow and fast decaying solutions of the Orr-Sommerfeld equations $\OS_{\alpha,c}(\phi) =0$, with their normalized amplitude $\|\phi_{\alpha,c}^s\|_{L^\infty} = \| \phi_{\alpha,c}^f \|_{L^\infty} = 1$. Set 
\begin{equation}\label{def-Evans} 
D(\alpha,c) : = \mu_f^{-1} \mathrm{det} \begin{pmatrix}  \phi_{\alpha,c}^s & \phi_{\alpha,c}^f \\ \partial_z  \phi_{\alpha,c}^s & \partial_z  \phi_{\alpha,c}^f \end{pmatrix}_{\vert_{z=0}} 
,\end{equation}
which is often referred to as the Evans function. Clearly, there are non trivial solutions $(\alpha,c,\phi)$ to the Orr-Sommerfeld problem if and only if $D(\alpha,c) =0$. In addition, the Orr-Sommerfeld solutions and hence the Evans function $D(\alpha,c)$ is analytic in $c$ away from the critical layers. As a consequence, there are at most finitely many zeros $c$ on $\Im c>0$, and eigenfunction corresponding to each unstable eigenvalue $c$ (if exists) is of the form 
\begin{equation}\label{eigen-mode000} \phi =  \phi_{\alpha,c}^s - a(\alpha,c) \phi_{\alpha,c}^f , \qquad a(\alpha,c) = \frac{\phi_{\alpha,c}^s(0)}{\phi_{\alpha,c}^f(0)}.\end{equation}

In estimating the Green function through the contour integral \eqref{def-Ga}, we can move the contour across the discrete spectrum by adding corresponding projections on the eigenfunction. However, we cannot move the contour of integration $\Gamma_\alpha$ across the Euler continuous spectrum $-i\alpha \mathrm{Range}(U)$ (or equivalently, $c$ across the range of $U$), since the Orr-Sommerfeld solutions are singular near the critical layers. In addition, there are unstable eigenvalues that exist near the critical layers and that vanish in the inviscid limit (see Section \ref{sec-spectral}, below). One of the contributions of this paper is to carefully study the contour integral near the critical layers and thus to provide sharp bounds on the Navier-Stokes semigroup. 

\subsection{Critical layers}\label{sec-crlayers}

As mentioned, to derive sharp semigroup bounds, we need to study the resolvent equations with the temporal frequency $\lambda$ to be arbitrarily near the continuous spectrum of the Euler operator, or equivalently, to study the Orr-Sommerfeld equations for $c$ arbitrarily close to the range of $U(z)$. For this reason, we will construct solutions to Orr-Sommerfeld equations in the regime when 
$$
\Im c\ll1
$$
and $\Re c$ lies within the range of $U(z)$. As a consequence, there exists a complex number $z_c$ (which is unique, since $U(z)$ is strictly monotonic) such that 
$$
U(z_c) = c.
$$
Clearly, $\Im z_c \ll1$. The presence of critical layers greatly complicates the analysis of constructing Orr-Sommerfeld solutions and deriving uniform bounds for the corresponding Green function. Let us briefly explain this issue. 

Roughly speaking, there are two independent solutions to the Orr-Som\-mer\-feld equations that are approximated by the Rayleigh solutions, solving 
$$\Ray_\alpha (\phi) := (U-c)\Delta_\alpha \phi - U'' \phi =0.$$ 
However, one cannot view the Orr-Som\-mer\-feld operator as a perturbation of the Rayleigh operator, or 
equivalently, $\epsilon \Delta_\alpha^2 \Ray_\alpha^{-1}$ is not a good iteration operator, for $\eps \ll1$. Precisely, when $z\approx z_c$, Rayleigh solutions experience a singularity of the form $(z-z_c)\log (z-z_c)$ and therefore $\epsilon \Delta_\alpha^2 \Ray_\alpha^{-1}$
 has a singularity of order $(z-z_c)^{-3}$.  
 
 To deal with the singularity, we need to examine the leading operator 
 in the Orr-Sommerfeld equations near the singular point $z = z_c$. Indeed, we introduce the blow-up variable: 
 $Z = (z-z_c)/\delta$ and search for the Ansatz solution $\phi = \phi_\mathrm{cr}(Z)$, 
leading to 
$$
\partial_Z^4 \phi_\mathrm{cr} \approx Z\partial_Z^2 \phi_\mathrm{cr}
$$
with the critical layer size $\delta \approx \epsilon^{1/3}$. That is, within the critical layer, $\partial_z^2 \phi_\mathrm{cr}$ solves the classical Airy equation. 

Following \cite{GGN2,GGN3}, we construct Orr-Sommerfeld solutions via an iterative approach. Precisely, we introduce the following iterative operator $$ 
\Iter : = \underbrace{\Airy^{-1}}_{\mbox{critical layer}} \circ \quad \underbrace{\epsilon \Delta^2_\alpha}_{\mbox{error}} 
\quad \circ \quad \underbrace{\Ray_\alpha^{-1}}_{\mbox{inviscid}}
$$
in which $\Airy(\cdot)$ denotes the following modified Airy operator
\begin{equation} \label{opAiry}
\Airy(\phi) := \epsilon \Delta_\alpha ^2 \phi - (U-c) \Delta_\alpha \phi .
\end{equation}
The Airy operator governs the behavior of Orr-Sommerfeld solutions near critical layers. Like the classical Airy operator, $\Airy(\cdot)$ has smoothing effect, which will be studied in great details. 
In the sequel, we shall introduce suitable function spaces on which the Iter operator is contractive. The inverses $\Airy^{-1}$ and $\Ray_\alpha^{-1}$ will be constructed appropriately via their corresponding Green functions. 

\subsection{Spectral instability}\label{sec-spectral}

In this section, we recall the following spectral instability theorem, proved in \cite{GGN3}, providing unstable eigenvalues for generic shear flows.  

\begin{theorem}[Spectral instability; \cite{GGN3}]\label{theo-spectralinstablity}
Let $U(z)$ be an arbitrary shear profile with $U(0)=0$ and $U'(0) > 0$ and satisfy 
$$
\sup_{z \ge 0} | \partial^k_z (U(z) - U_+) e^{\eta_0 z} | < + \infty, \qquad k=0,\cdots ,4,
$$ for some constants $U_+$ and $\eta_0 > 0$. Set $\alpha_\mathrm{low}\sim \nu^{1/8}$ and  $ \alpha_\mathrm{up}\sim \nu^{1/12}$ be the lower and upper stability branches. 

Then, there is a critical Reynolds number $R_c = \frac{1}{\sqrt{\nu_c}}$ so that for all positive $\nu\le \nu_c$ and all $\alpha \in (\alpha_\mathrm{low}, \alpha_\mathrm{up})$, there exists
a nontrivial solution $(c_\nu, \phi_\nu)$, with $\mathrm{Im} ~c_\nu >0$, to the Orr-Sommerfeld problem \eqref{OS-h} such that $v_\nu: = e^{i\alpha(x-c_\nu t) } \nabla^\perp_\alpha \phi_\nu (z)$ solves the linearized Navier-Stokes problem 
\eqref{Nlin1}-\eqref{Nlin3}. In the case of instability, there holds the following estimate for the growth rate of the unstable solutions:
$$ \Re \lambda_\nu = \alpha \Im c_\nu \quad \approx\quad  \nu^{1/4}$$
in the inviscid limit as $\nu \to 0$. 
\end{theorem}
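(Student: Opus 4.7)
The plan is to reduce existence of an unstable eigenvalue $(c_\nu,\phi_\nu)$ to the existence of a zero of the Evans function $D(\alpha,c)$ defined in \eqref{def-Evans} with $\Im c>0$. Because the expected unstable $c_\nu$ has $\Im c_\nu \ll 1$ and $\Re c_\nu$ lying in the range of $U$, this is precisely the near-critical-layer regime of Section~\ref{sec-crlayers}. I would therefore use the $\Iter$ scheme to construct the slow decaying mode $\phi^s_{\alpha,c}$ (a perturbation of a Rayleigh solution, carrying the $(z-z_c)\log(z-z_c)$ singularity at $z=z_c$) and the fast decaying mode $\phi^f_{\alpha,c}$ (governed near $z_c$ by the Airy equation in the blow-up variable $Z=(z-z_c)/\eps^{1/3}$). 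The Evans function is then read off from the boundary traces of these two modes at $z=0$, and the zero-locus condition becomes the matching relation
\[
\frac{\phi^s_{\alpha,c}(0)}{\partial_z\phi^s_{\alpha,c}(0)}  =  \frac{\phi^f_{\alpha,c}(0)}{\partial_z\phi^f_{\alpha,c}(0)}.
\]

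Next, I would expand both sides asymptotically as $\alpha\to 0$ and $\eps\to 0$. For stable profiles the left-hand side is an analytic function of $(\alpha,c)$ away from critical layers, and, using $z_c\approx c/U'(0)$, it admits a power-series expansion in $\alpha$ and $c$ with real coefficients to leading order. The right-hand side is, at leading order, the classical Tietjens/Airy-integral ratio evaluated at $Z(0)=-z_c/\eps^{1/3}$; this is a universal meromorphic function whose image traces the well-known Tietjens curve in $\CC$. The matching relation thus reduces to a scalar dispersion equation of schematic form $F_{\mathrm{Ray}}(\alpha,c) = g(-z_c/\eps^{1/3})+O(\text{small})$, where all the $\nu$-dependence on the right is concentrated in the Airy variable $Z(0)$.

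The existence of $c_\nu$ with $\Im c_\nu >0$ would then be obtained from a winding/Rouché argument applied to this dispersion relation, using two observations: (i) the Tietjens curve $g$ crosses the real axis transversely at two distinguished arguments of $Z(0)$, which translate via $\eps=\sqrt\nu/(i\alpha)$ and $z_c\approx c/U'(0)$ into the two marginal branches $\alpha_{\mathrm{low}}\sim\nu^{1/8}$ (obtained by balancing $\alpha^2$ against the inviscid term at $z_c$) and $\alpha_{\mathrm{up}}\sim\nu^{1/12}$ (from balancing $\alpha$ against the Rayleigh boundary contribution), and (ii) between these branches $\Im g>0$, so the implicit function theorem produces a unique root $c_\nu$ with $\Im c_\nu>0$. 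The growth rate is then extracted directly from the dispersion relation: inverting yields $\Im c_\nu\sim\nu^{1/4}/\alpha$ in the interior of the instability window, hence $\Re\lambda_\nu=\alpha\,\Im c_\nu\sim\nu^{1/4}$, as claimed.

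The main obstacle is the quantitative control of the remainder terms generated by the $\Iter$ scheme so that they do not drown the leading Tietjens contribution in the dispersion relation. Both the $(z-z_c)\log(z-z_c)$ singularity of the Rayleigh base and the $\eps^{-1/3}$ stretching inside the Airy layer amplify errors, and the composition $\Airy^{-1}\circ\eps\Delta_\alpha^2\circ\Ray_\alpha^{-1}$ must be shown to be a contraction in a weighted norm tailored to the critical layer, with norm small compared to $|g'(Z(0))|$ evaluated at the marginal branch. This is exactly the content of the sharp pointwise resolvent bounds on $\Airy^{-1}$ and $\Ray_\alpha^{-1}$ developed earlier in the paper, which feed into the boundary-value asymptotics used above and close the argument.
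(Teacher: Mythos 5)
This theorem is not proved in the present paper: it is recalled verbatim from \cite{GGN3} and used as a black-box input. So there is no ``paper's own proof'' here to compare against; the relevant comparison is with the argument in \cite{GGN2,GGN3}.

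That said, your proposal does follow essentially the same strategy as the cited proof: construct the slow (Rayleigh-type) and fast (Airy-type) decaying Orr--Sommerfeld modes via the $\Iter$ contraction, read off the Evans function from their boundary traces, reduce the zero condition to a scalar dispersion relation in which the viscosity enters only through the Tietjens-type ratio $g(Z(0))$ with $Z(0)=-z_c/\delta$, and then locate roots with $\Im c>0$ by tracking where the image of $g$ leaves the real axis. The contraction estimates for $\Airy^{-1}\circ\epsilon\Delta_\alpha^2\circ\Ray_\alpha^{-1}$, which you correctly identify as the technical heart of the matter, are exactly the kind developed in Sections~3--5 of this paper (and in \cite{GGN2,GGN3}). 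So the architecture is right.

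Two places where your sketch is looser than the actual argument. First, the derivation of the marginal branches $\alpha_\mathrm{low}\sim\nu^{1/8}$ and $\alpha_\mathrm{up}\sim\nu^{1/12}$ is not a pair of independent one-line balances; both come from a single matched asymptotic expansion of the dispersion relation in which the boundary-sublayer thickness $\delta_{bl}\sim(\sqrt\nu/(\alpha(U_0-c)))^{1/2}$, the critical-layer thickness $\delta_{cr}\sim(\sqrt\nu/\alpha)^{1/3}$, and the Rayleigh boundary trace $\phi_{2,0}(0)\sim 1/U_c' - (2U_c''/{U_c'}^2)z_c\log z_c+\cdots$ (cf.\ Lemma~\ref{lem-exactphija}) all enter simultaneously, and the two branches emerge as the two regimes where the Tietjens curve crosses the real axis while the Rayleigh side remains $O(1)$. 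Your heuristics point in the right direction but would not by themselves pin down the exponents. Second, the claim ``$\Im c_\nu\sim\nu^{1/4}/\alpha$ in the interior of the window'' cannot hold uniformly, since $\Im c_\nu$ must vanish as $\alpha\to\alpha_\mathrm{low}$ and $\alpha\to\alpha_\mathrm{up}$; the actual statement is that the maximum of $\alpha\,\Im c_\nu$ over the window is of order $\nu^{1/4}$, attained near the lower branch. This is a useful distinction because the present paper's spectral assumption (Assumption~\ref{assump-Evans} and the definition of $\gamma_0$ in \eqref{def-ga0max0}) is formulated precisely in terms of that supremum.

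Finally, one point you flag but leave open is worth making explicit: the Evans function is analytic in $c$ only for $\Im c>0$ (see Section~\ref{sec-OSspectrum} and Lemmas~\ref{lem-analytic01}, \ref{lem-analytic03}); the branch cut from the $(z-z_c)\log(z-z_c)$ singularity sits on $\Im c=0$. Any Rouch\'e/winding argument must therefore be run entirely in the open upper half $c$-plane, typically by showing the Tietjens image crosses a curve coming from the Rayleigh side and invoking the argument principle on a small contour that stays away from $\Im c=0$.
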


\begin{remark}\label{rem-18}
By construction, the stream function $\phi_\nu$ is constructed through asymptotic expansions and of the form 
 $$\phi_\nu(z) =  \phi_{in,0}(z) +\delta_\bl \phi_{bl,0} ( \delta_\bl^{-1} z) + \delta_{cr} \phi_{cr,0} (\delta_{cr}^{-1} \eta(z)) $$ 
for some boundary layer function $\phi_{bl,0}$ and some critical layer function $\phi_{cr,0}$, with the Langer's variable $\eta(z) \approx z-z_c$ near the critical layer. The critical layer thickness is of order 
$$
\delta_{cr}  = (\epsilon/U_c')^{1/3} \approx \nu^{1/6} \alpha^{-1/3} , \qquad \nu^{1/8} \lesssim \alpha \lesssim \nu^{1/12},
$$
and the boundary sublayer thickness is of order 
$$
\delta_{bl} = \Bigl( \frac{\sqrt \nu}{\alpha_\nu (U_0-c_\nu) }\Bigr)^{1/2}  \approx  \nu^{1/8}.
$$
In particular, for the lower instability branch $\alpha \sim \nu^{1/8}$, both critical layer and boundary sublayer have the same thickness and are of order $\nu^{1/8}$.  
\end{remark}

\section{Main results}

\subsection{Green function for Orr-Sommerfeld}

We shall construct the Green function $G_{\alpha,c}(x,z)$ for the Orr-Sommerfeld problem
\begin{equation}
\label{OS-problem}
\begin{aligned}
\OS(\phi) = - \eps \Delta_\alpha^2 \phi + (U - c) \Delta_\alpha \phi - U'' \phi &=0,
\\
\phi_{\vert_{z=0}} = \partial_z \phi_{\vert_{z=0}} =0, \qquad \lim_{z\to \infty}\phi(z)&=0.
\end{aligned}\end{equation}
As discussed in Section \ref{sec-OSspectrum}, the homogenous equations $\OS(\phi)=0$ have four independent solutions, two of which are slow modes and linked with the Rayleigh operator $\Ray_\alpha = (U - c) \Delta_\alpha - U'' $, and the other two are fast modes and linked with the Airy operator $\Airy = \eps \Delta_\alpha^2  - (U - c) \Delta_\alpha $. The standard conjugation method for ODEs does not apply directly to construct these slow and fast modes, due to the dependence on various parameters in the problem. 
 
In \cite{GrN1}, we initiated an analytical program to construct the Green function for the Orr-Sommerfeld problem. There, we consider the case when $\alpha |c - \Range(U)| \gtrsim 1$ or equivalently the temporal frequency $\lambda$ remains away from the essential spectrum of the corresponding linearized Euler operator. We recall the following theorem, proved in \cite{GrN1}.

\begin{theorem}[\cite{GrN1}]\label{theo-GreenOS} For any fixed positive $\epsilon_0$, we set 
$$A_{\epsilon_0} := \Big\{ (\alpha,c)\in \RR_+ \times \CC~:~|c - \Range (U)|\ge \frac{\epsilon_0}{1+\alpha}\Big\} $$
and 
\begin{equation}\label{def-mMf}
m_f = \inf_{z} \Re \mu_f (z) , \qquad M_f = \sup_z \Re \mu_f(z) , \end{equation}
with $\mu_f$ defined as in \eqref{def-sfrate}. 
Let $G_{\alpha,c}(x,z)$ be  the Green function of the Orr-Sommerfeld problem \eqref{OS-problem} and let $D(\alpha,c)$ be the corresponding Evans function \eqref{def-Evans}. Then, there are universal positive constants $\theta_0, C_0$ so that
\begin{equation}\label{est-GrOS}
\begin{aligned}
 |G_{\alpha,c}(x,z)|  &\le C_0 [ D(\alpha,c)]^{-1} \Big(  \frac{1}{\mu_s} e^{-\theta_0 \mu_s (|z| + |x|)}  +  \frac{1}{ m_f }  e^{-\theta_0 m_f (|z|+|x|)}\Big)
\\&\quad +  C_0 \Big( \frac{1}{\mu_s}e^{-\theta_0\mu_s |x-z|} + \frac{1}{ m_f} e^{- \theta_0 m_f|x-z|} \Big) 
\end{aligned}
\end{equation}
uniformly in $(\alpha,c)$, within $A_{\epsilon_0}$, and uniformly for all $x,z\ge 0$. Similar bounds hold for the derivatives. 
\end{theorem}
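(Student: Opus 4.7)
The plan is to build $G_{\alpha,c}(x,z)$ from four independent solutions of $\OS(\phi)=0$ on $[0,\infty)$ and then assemble the Green function by the standard ODE recipe: two boundary conditions at $z=0$, the decay condition at $+\infty$, and the four jump conditions at $z=x$ (namely $[\partial_z^k G]=0$ for $k=0,1,2$ and $[\eps\partial_z^3 G]=-1$). The four independent modes split into two slow modes $\phi^{s,\pm}_{\alpha,c}$, behaving like $e^{\pm\mu_s z}$ at infinity and arising as perturbations of the two Rayleigh solutions of $\Ray_\alpha(\phi)=0$, and two fast modes $\phi^{f,\pm}_{\alpha,c}$, with $e^{\pm\int_0^z\mu_f\,dz'}$ asymptotics and arising as perturbations of the two solutions of $\Airy(\phi)=0$. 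The normalization factor $\mu_f^{-1}$ in the definition of $D(\alpha,c)$ reflects this WKB scaling of the fast modes.

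To produce the slow modes, I would first solve $\Ray_\alpha(\phi)=0$: on $A_{\epsilon_0}$ the coefficient $U-c$ stays away from zero, so the Rayleigh Green function $G_\Ray$ exists with pointwise bound $|G_\Ray(x,z)|\lesssim \mu_s^{-1}e^{-\theta_0\mu_s|x-z|}$. Writing $\OS=\Ray_\alpha-\eps\Delta_\alpha^2$, I would then set up the fixed-point problem
\[
\phi^{s,\pm} = \phi^{s,\pm}_{\Ray} + \eps\,\Ray_\alpha^{-1}\bigl(\Delta_\alpha^2\phi^{s,\pm}\bigr)
\]
in an exponentially weighted space chosen to preserve the prescribed asymptotic behavior. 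For the fast modes, I would use the decomposition $\OS=-\Airy-U''$ together with the Airy Green function $G_\Airy$, which on $A_{\epsilon_0}$ satisfies $|G_\Airy(x,z)|\lesssim m_f^{-1}e^{-\theta_0 m_f|x-z|}$. The parallel iteration $\phi^{f,\pm}=\phi^{f,\pm}_{\Airy}-G_\Airy(U''\phi^{f,\pm})$ converges thanks to the exponential decay of $U''$ and the gap $m_f\gg\mu_s$ inherent in the regime $|U-c|\gtrsim (1+\alpha)^{-1}$.

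With the four modes in hand, the Green function is written as
\[
G_{\alpha,c}(x,z)=\begin{cases}A_1(x)\phi^{s,-}(z)+A_2(x)\phi^{f,-}(z),& z>x,\\ B_1(x)\psi_1(z)+B_2(x)\psi_2(z),& 0\le z<x,\end{cases}
\]
where $\psi_1,\psi_2$ are two combinations of $\phi^{s,\pm},\phi^{f,\pm}$ satisfying $\psi_j(0)=\partial_z\psi_j(0)=0$. The four jump conditions at $z=x$ produce a $4\times 4$ linear system for $(A_1,A_2,B_1,B_2)$; its determinant factorizes, via Abel's identity applied to the Wronskian of the four homogeneous modes, into a universal constant times $D(\alpha,c)$. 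Cramer's rule combined with the exponential bounds on the individual modes then yields the split form \eqref{est-GrOS}: the $D(\alpha,c)^{-1}$ contribution reflects the boundary interaction at $z=0$ and decays in $|x|+|z|$, while the remaining piece depends only on $|x-z|$ and is inherited from the free-space Green functions of $\Ray_\alpha$ and $\Airy$. The main obstacle is tracking constants uniformly as $c$ approaches $\Range(U)$ inside $A_{\epsilon_0}$: both the Rayleigh and Airy Green functions degrade as the distance to the critical point shrinks, and one must verify that every loss incurred in the two iterative schemes and in the $4\times 4$ inversion is absorbed into the prefactor $D(\alpha,c)^{-1}$ rather than producing additional unfavorable powers of $(1+\alpha)/\epsilon_0$ in the free-space piece.
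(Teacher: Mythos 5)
Your overall architecture — build four independent OS modes (two slow, two fast), then assemble the Green function from jump conditions at $z=x$ and boundary/decay conditions — is sound and is close in spirit to the paper's (and \cite{GrN1}'s) strategy, which follows Zumbrun--Howard. Where you differ in the assembly step is that you solve a $4\times 4$ linear system via Cramer's rule with boundary-adapted combinations $\psi_1,\psi_2$ at $z=0$, whereas the paper uses the adjoint-solution representation \eqref{rep-GrOS}: one writes $G_{\alpha,c}$ as a free part $\sum_k\phi_{k,\mp}(z)\phi^*_{k,\pm}(x)$ plus a reflected part $\sum_{jk}d_{jk}\phi_{j,-}(z)\phi^*_{k,-}(x)$, where the $\phi^*$ are algebraic duals (defined by \eqref{OSadjoint} through the jump matrix $\mathcal{B}$) and the matrix $(d_{jk})$ carries the boundary conditions and produces the $D(\alpha,c)^{-1}$ factor. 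These two assemblies are essentially equivalent, and your claim that the $4\times 4$ determinant factors as a constant times $D(\alpha,c)$ is correct modulo Abel's identity, so that part of the plan is fine.

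The genuine gap is in your construction of the slow modes via the fixed point
\[
\phi^{s,\pm} = \phi^{s,\pm}_{\Ray} + \eps\,\Ray_\alpha^{-1}\bigl(\Delta_\alpha^2\phi^{s,\pm}\bigr).
\]
The map $\phi\mapsto\eps\,\Ray_\alpha^{-1}\Delta_\alpha^2\phi$ loses two derivatives: $\Delta_\alpha^2$ costs four and $\Ray_\alpha^{-1}$ gains only two, so this does not close in any fixed Sobolev or $C^k$ space, and the small factor $\eps$ does not help because the derivative loss persists at every step. This is exactly the obstruction the paper flags in Section~\ref{sec-crlayers}: ``$\eps\Delta_\alpha^2\,\Ray_\alpha^{-1}$ is not a good iteration operator''; the issue is already present away from critical layers, even though it is worse near them. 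The cure, used throughout Section~\ref{sec-construction-phi1} and Lemma~\ref{lem-keyIter} (and, by the paper's account, in \cite{GrN1} as well), is the cyclic rearrangement
\[
\Iter = U''\,\Airy^{-1}\circ\eps\Delta_\alpha^2\circ\Ray_\alpha^{-1},
\]
where the Airy inverse reabsorbs the four derivatives introduced by $\eps\Delta_\alpha^2$ while simultaneously producing a small factor (of order $\delta|\log\delta|$ or better), because $\Airy$ has the same principal part as $\OS$. Without this smoothing step, your iteration does not converge in a fixed function space and the uniform $\eps$-independent bounds you need for \eqref{est-GrOS} cannot be extracted. Your fast-mode iteration $\phi^{f,\pm}=\phi^{f,\pm}_{\Airy}-G_\Airy(U''\phi^{f,\pm})$ is fine since $U''$ is zeroth order and exponentially decaying, but the slow-mode step needs to be redone along the lines of \eqref{def-Iter}.
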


Next, we consider the case when $|c - \Range(U)| \ll 1$. The critical layers appear, as discussed in Section \ref{sec-crlayers}. It suffices for the derivation of semigroup bounds to consider the following range of $\alpha$:
\begin{equation}\label{range-aGrOS} \sqrt \nu \ll \alpha \ll \nu^{-1/4} .
\end{equation}
Indeed, we observe that when $\alpha^2\sqrt \nu \gtrsim 1$, the linearized Navier-Stokes problem has a nonvanishing spectral gap in the inviscid limit, and thus, the boundedness of the corresponding semigroup can be obtained, without having to go into the continuous spectrum of Euler. Whereas, in the case when $\alpha \lesssim \sqrt\nu$, the Navier-Stokes equations are simply a regular perturbation of the heat equation and the semigroup bounds are easy to establish.   

Our first main result in this paper is as follows. 

\begin{theorem}\label{theo-GreenOS-stable} Let $\alpha$ satisfy \eqref{range-aGrOS}, and let $c$ be sufficiently close to the Range of $U$ so that $z_c$ exists and is finite. Assume in addition that $(\alpha,c)$ satisfy 
\begin{equation}\label{small-c} \epsilon^{1/8} |\log \Im c | \ll1.\end{equation}
Let $G_{\alpha,c}(x,z)$ be  the Green function of the Orr-Sommerfeld problem \eqref{OS-problem}, $\mu_s, \mu_f$ be defined as in \eqref{def-sfrate}, and let $D(\alpha,c)$ be the corresponding Evans function \eqref{def-Evans}. 
Then, there are universal positive constants $\theta_0, C_0$ so that the followings hold.

(i) In the case when $\alpha \gtrsim 1$, we have 
\begin{equation}\label{est-GrOS-stable}
\begin{aligned}
 |G_{\alpha,c}(x,z)|  &\le 
  C_0\Big( \mu_s^{-1}e^{-\theta_0\mu_s |x-z|} + \delta \langle Z\rangle^{-5/4} \langle X \rangle^{3/4} e^{ -  \int_{x}^z \Re \mu_f(y)\; dy}\Big)
\\
& \quad+ C_0 [ D(\alpha,c)]^{-1} \Big(  \mu_s^{-1}e^{-\theta_0 \mu_s (|z| + |x|)}  
\\&\quad+ \delta
\langle Z\rangle^{-5/4} \langle X \rangle^{3/4} e^{ -  \int_{z_c}^z \Re \mu_f(y)\; dy}e^{ -  \int_{z_c}^x \Re \mu_f(y)\; dy} \Big)
\end{aligned}
\end{equation}
uniformly for all $x,z\ge 0$. 

(ii) In the case when $\alpha \ll1$, the slow behavior of the Green function satisfies the following: the two terms $\mu_s^{-1}e^{-\theta_0\mu_s |x\pm z|}$ in \eqref{est-GrOS-stable} are replaced by 
\begin{equation}\label{Gslow-small-a} \frac{C_0}{|U-c|} \Big( |U-c| + \cO(\alpha)\Big) e^{- \theta_0 \mu_s |x\pm z|} ,\end{equation}
uniformly for all $x,z\ge 0$. 

Similar bounds hold for the derivatives. In the above, $X,Z$ denote the classical Langer's variables
\begin{equation}\label{Lar-XZ}\begin{aligned}
X = \Big( \frac 32 \int_{z_c}^x \mu_f(y) \; dy\Big)^{2/3} , \qquad Z  =\Big( \frac 32 \int_{z_c}^z \mu_f(y) \; dy\Big)^{2/3} .
 \end{aligned}\end{equation}
 \end{theorem}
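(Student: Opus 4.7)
The plan is to construct $G_{\alpha,c}(x,z)$ from four independent homogeneous OS solutions, assemble an interior Green function satisfying the jump at $z=x$ and decay at infinity, and then enforce the boundary conditions at $z=0$ by a correction whose denominator is the Evans function $D(\alpha,c)$. First, I would construct the four modes. The two slow modes $\phi^{s,\pm}_{\alpha,c}$, with behavior $e^{\mp\alpha z}$ at infinity, are obtained as fixed points of the iterative operator $\Iter=\Airy^{-1}\circ\eps\Delta_\alpha^2\circ\Ray_\alpha^{-1}$ applied to the two Rayleigh solutions of $\Ray_\alpha(\phi)=0$, which carry the classical $(z-z_c)\log(z-z_c)$ singularity at the critical point. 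The $\eps\Delta_\alpha^2$ error of a Rayleigh solution has singularity of order $(z-z_c)^{-3}$, but $\Airy^{-1}$ smooths it on the critical-layer scale $\delta=(\eps/U'(z_c))^{1/3}$, so that $\Iter$ contracts in a weighted sup-norm under the smallness assumption $\eps^{1/8}|\log\Im c|\ll 1$. The two fast modes $\phi^{f,\pm}_{\alpha,c}$ are built from the modified Airy operator $\Airy(\phi)=0$ in Langer's variable $Z=\bigl(\tfrac32\int_{z_c}^z\mu_f\,dy\bigr)^{2/3}$, so that at leading order they behave like $\langle Z\rangle^{-1/4}e^{\mp\int_{z_c}^z\mu_f\,dy}$.

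Once the four modes are in hand, the interior Green function splits as $G^{int}=G^R+G^A$. The Rayleigh piece $G^R$, built from $\phi^{s,\pm}$, yields the slow contribution $\mu_s^{-1}e^{-\theta_0\mu_s|x-z|}$, essentially as in Theorem~\ref{theo-GreenOS}. The Airy piece $G^A$, built from $\phi^{f,\pm}$, is a standard Green function computation in Langer's variables; combined with the asymptotics of $\mathrm{Ai},\mathrm{Bi}$ and their derivatives it produces the envelope $\delta\langle Z\rangle^{-5/4}\langle X\rangle^{3/4}e^{-\int_x^z\Re\mu_f\,dy}$, where the exponent $-5/4$ is the observation-side Airy asymptotic and the exponent $+3/4$ on the source side comes from the two derivatives that appear in the jump normalization $[\eps\partial_z^3 G]=-1$ after division by the Wronskian. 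The boundary correction then subtracts $c_s(x)\phi^{s,+}_{\alpha,c}(z)+c_f(x)\phi^{f,+}_{\alpha,c}(z)$ from $G^{int}$, with $(c_s(x),c_f(x))$ chosen to cancel $G^{int}(x,0)$ and $\partial_z G^{int}(x,0)$; the determinant of this $2\times 2$ system, normalized by $\mu_f$, is precisely $D(\alpha,c)$, and Cramer's rule produces the factor $[D(\alpha,c)]^{-1}$ together with the boundary kernels $\mu_s^{-1}e^{-\theta_0\mu_s(|z|+|x|)}$ from the slow-slow component and $\delta\langle Z\rangle^{-5/4}\langle X\rangle^{3/4}e^{-\int_{z_c}^z\Re\mu_f\,dy}e^{-\int_{z_c}^x\Re\mu_f\,dy}$ from the fast-fast component. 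The slow-fast cross terms are absorbed into these two dominant contributions.

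In the small-$\alpha$ regime the slow exponent $\mu_s=\alpha$ degenerates and the bound $\mu_s^{-1}$ is no longer valid. Here I would expand the Rayleigh solutions in powers of $\alpha$ around the explicit $\alpha=0$ solutions of $(U-c)\phi''-U''\phi=0$: one is $\phi_0(z)=U(z)-c$ and vanishes at $z_c$, the other carries the logarithmic singularity, and a Volterra iteration in $\alpha$ yields the improved Rayleigh Green function bound $|U-c|^{-1}(|U-c|+\mathcal{O}(\alpha))e^{-\theta_0\mu_s|x\pm z|}$, which is exactly the replacement \eqref{Gslow-small-a}. The main technical obstacle throughout is the uniform contraction of $\Iter$ near $z_c$: each application of $\Ray_\alpha^{-1}$ produces a new $\log(z-z_c)$ factor, so iteration risks an arbitrary power of $|\log\Im c|$, and the $\eps^{1/3}$-scale smoothing of $\Airy^{-1}$ must strictly dominate this logarithmic accumulation. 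This is precisely where the smallness hypothesis $\eps^{1/8}|\log\Im c|\ll 1$ is used, and tracking the sharp Langer envelope $\langle Z\rangle^{-5/4}\langle X\rangle^{3/4}$ uniformly across the full window $\sqrt\nu\ll\alpha\ll\nu^{-1/4}$ requires careful asymptotics of the Airy Green kernel and its derivatives.
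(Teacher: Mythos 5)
Your proposal is correct and follows essentially the same route as the paper: slow modes are built by contracting the $\Iter = \Airy^{-1}\circ\eps\Delta_\alpha^2\circ\Ray_\alpha^{-1}$ map on the Rayleigh solutions, fast modes come from the Langer/Airy ansatz, the interior kernel is assembled from products of the fast/slow pairs, and the boundary correction is a $2\times2$ system at $z=0$ whose normalized determinant is $D(\alpha,c)$, exactly as in \eqref{rep-GrOS}. The only cosmetic difference is that the paper packages the jump normalization via the Zumbrun--Howard adjoint solutions $\phi^*_{j,\pm}$ (whence the $\langle X\rangle^{3/4}$ on the source side) rather than an explicit Cramer's-rule computation, which is an equivalent bookkeeping of the same Wronskian factors.
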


Let us briefly explain the Green function bounds. The slow behavior in \eqref{est-GrOS-stable} is  due to the behavior of Rayleigh solutions, which are of the form $e^{\pm \mu_s z}$ for large $z$. As for the fast behavior, Orr-Sommerfeld solutions are essentially the second primitives, denoted by $Ai(2,\cdot)$ and $Ci(2,\cdot)$, of the classical Airy functions $Ai(\cdot)$ and $Ci(\cdot)$, corresponding to decaying and growing solutions at infinite. Asymptotically, there hold 
$$
\begin{aligned}Ai(2,Z) &\le C_0 \langle Z \rangle^{-5/4}   e^{- \sqrt{2|Z|} Z/3} ,
\\Ci(2,Z) &\le C_0 \langle Z \rangle^{-5/4}   e^{ \sqrt{2|Z|} Z/3},
\end{aligned} 
$$
in which $Z = \delta^{-1}\eta(z)$ denotes the Langer's variable. In particular, $Z$ is of order $\epsilon^{-1/3}|z-z_c|$ near the critical layers $z = z_c$ and of order 
$\epsilon^{-1/3}\langle z\rangle^{2/3}$ for large $z$. 
By view of \eqref{Lar-XZ}, we have 
$$  e^{\pm \sqrt{2|Z|} Z/3} = e^{\pm\int_{z_c}^z\mu_f(y) \; dy } $$
which explains the fast behavior in the Green function estimates \eqref{est-GrOS-stable}. 

Like the case away from critical layers (\cite{GrN1}), our construction of the Green function for the Orr-Sommerfeld problem follows closely the analytical approach introduced by Zumbrun-Howard in the seminal paper \cite{ZH}. The main difficulty is to construct independent solutions to the homogenous Orr-Sommerfeld equations, having uniform bounds with the parameters $\alpha, \epsilon, c$. In particular, critical layers appear; see Section \ref{sec-crlayers}. Certainly, the standard conjugation method for ODEs (see, for instance, \cite{ZH,MZ1}) does not apply directly due to the dependence on the various parameters in the equation. Our construction of slow modes of the OS equations, especially near critical layers, is based on the recent iterative approach introduced in \cite{GGN2,GGN3}.

\subsection{Green function for vorticity}

The Green function $G_{\alpha,c}(x,z)$ of the Orr-Sommerfeld problem \eqref{OS-problem} describes the behavior of the stream function $\phi$. In this section, we derive bounds for $\Delta_\alpha G_{\alpha,c}(x,z)$, which is the Green function for vorticity. It follows from the Orr Sommerfeld equations that  $\Delta_\alpha G_{\alpha,c}(x,z)$ solves
$$ \Big( -\epsilon \Delta_\alpha + U - c \Big) \Delta_\alpha G_{\alpha,c} (x,z) = \delta_x(z) + U'' G_{\alpha,c} (x,z) .$$   
This shows that to leading order, the vorticity $ \Delta_\alpha G_{\alpha,c} (x,z)$ is governed by the $2^{th}$-order modified Airy operator 
$$\cA: = -\epsilon \Delta_\alpha + U - c.$$
To describe this, let us set $\mathcal{G}(x,z)$ to be the Green function of $-\epsilon \Delta_\alpha + U-c$, which consists of precisely the fast behavior. We then write the Green function for vorticity $
\Delta_\alpha G_{\alpha,c}(x,z) =  \mathcal{G}(x,z) + \mathcal{R}_G(x,z) $. 
It follows that the residual Green function $ \mathcal{R}_G(x,z)$ can be computed by 
\begin{equation}\label{def-RGGG} \mathcal{R}_G(x,z) := \int_0^\infty \mathcal{G}(y,z) U''(y) G_{\alpha,c} (x,y) \; dy .\end{equation}
Roughly speaking, the integration gains an extra factor of ${\mu_f}^{-1}$, which is precisely the size of the fast oscillation in the Green function 
$\mathcal{G}(x,z)$. That is, the residual $\mathcal{R}_G(x,z)$ is of order of $(\epsilon \mu_f^2)^{-1}$ times $G_{\alpha,c}(x,z)$.

Precisely, we obtain the following theorem. 

\begin{theorem}\label{theo-vort-stable} Under the same assumptions as in Theorem \ref{theo-GreenOS-stable}, the Green function for vorticity $\Delta_\alpha G_{\alpha,c}(x,z) $ can be written as 
\begin{equation}\label{def-vortG12}  \Delta_\alpha G_{\alpha,c}(x,z) = \mathcal{G}(x,z) + \mathcal{R}_G(x,z) \end{equation}
in which $\mathcal{G}(x,z)$ denotes the Green function of $-\epsilon \Delta_\alpha + U - c$. Let $\delta = \eps^{1/3} / (U_c')^{1/3} $ be the critical layer thickness. 
There holds 
\begin{equation}\label{estGa-11}
\begin{aligned}
|\D_z^\ell \D_x^k \mathcal{G}(x,z) | 
\le 
C_{x,z} \delta^{-2-k-\ell}\langle Z \rangle^{(2k-1)/4}\langle X \rangle^{(2\ell-1)/4}
e^{ -  \int_{x}^z \Re \mu_f(y)\; dy}
\end{aligned}
\end{equation} 
for all $k,\ell \ge 0$, in which $C_{x,z}\lesssim  \langle z\rangle^{k/3}\langle x\rangle^{(1-\ell)/3}$. In addition, the remainder  $ \mathcal{R}_G(x,z)$ satisfies 
$$
\begin{aligned}
 |\mathcal{R}_G(x,z)|  &\le 
  C_0\delta \langle Z \rangle^{-1/2}\Big( \mu_s^{-1}e^{-\theta_0\mu_s |x-z|} + \delta \langle Z\rangle^{-1/2} e^{ -  \int_{x}^z \Re \mu_f(y)\; dy}\Big)
\\
& \quad+ C_0 \delta \langle Z \rangle^{-1/2}[ D(\alpha,c)]^{-1} \Big(  \mu_s^{-1}e^{-\theta_0 \mu_s (|z| + |x|)}  
\\&\quad+ \delta
\langle Z\rangle^{-1/2} e^{ -  \int_{z_c}^z \Re \mu_f(y)\; dy}e^{ -  \int_{z_c}^x \Re \mu_f(y)\; dy} \Big),
\end{aligned}
$$
for $x,z\ge 0$ and for $\alpha \gtrsim 1$. In the case when $\alpha \ll1$, the slow behavior is again replaced by the bound \eqref{Gslow-small-a}, exactly as done in Theorem \ref{theo-GreenOS-stable}.   
\end{theorem}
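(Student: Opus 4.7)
The starting point is the identity obtained by applying $\Delta_\alpha$ to the Orr--Sommerfeld equation $\OS(G_{\alpha,c})(x,\cdot) = \delta_x$, which rearranges to
\[
\bigl(-\epsilon\Delta_\alpha + U - c\bigr)\Delta_\alpha G_{\alpha,c}(x,z) = \delta_x(z) + U''(z)\,G_{\alpha,c}(x,z).
\]
Letting $\mathcal{G}(x,z)$ denote the Green function of the second order Airy operator $\cA = -\epsilon\Delta_\alpha + U - c$ (with appropriate decay at infinity), Duhamel's principle then immediately yields the decomposition \eqref{def-vortG12} together with the representation \eqref{def-RGGG}. Thus the proof reduces to (a) producing pointwise bounds on the Airy Green function $\mathcal{G}$ and its derivatives, and (b) propagating the bounds of Theorem \ref{theo-GreenOS-stable} through the convolution \eqref{def-RGGG}.

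For (a), I would construct $\mathcal{G}$ from two fast modes $\phi_f^\pm$ of $\cA$ via the standard formula $\mathcal{G}(x,z) = W^{-1}\phi_f^-(\min)\phi_f^+(\max)$, where the decaying/growing modes are built from Airy functions in the Langer variable $Z$ following the scheme in Section \ref{sec-crlayers}: near the critical layer $\phi_f^\pm \approx Ai(Z), Ci(Z)$, while away from it the WKB representations $e^{\mp\int_{z_c}^z \mu_f(y)\,dy}$ are accurate. The Wronskian $W$ is constant in $z$ and of size $\delta^{-2}$, while derivatives of Airy functions increase the Langer weights by $\langle\cdot\rangle^{1/2}$ and contribute a factor $\delta^{-1}$ via the chain rule $\partial_z = \delta^{-1}\eta'(z)\partial_Z$. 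Combining these ingredients and using the classical asymptotics $Ai(Z),Ci(Z) \sim \langle Z\rangle^{-1/4}e^{\mp\sqrt{2|Z|}Z/3}$ together with the identity $\pm\sqrt{2|Z|}Z/3 = \pm\int_{z_c}^z \mu_f$ produces \eqref{estGa-11}.

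For (b), the bound on $\mathcal{R}_G$ follows by inserting the estimate \eqref{est-GrOS-stable} for $G_{\alpha,c}(x,y)$ and the $k=\ell=0$ case of \eqref{estGa-11} for $\mathcal{G}(y,z)$ into \eqref{def-RGGG} and carrying out the $y$ integration. The key mechanism is that $\mathcal{G}(y,z)$ is concentrated at the scale $\mu_f^{-1} \sim \delta \langle Z\rangle^{-1/2}$ in $y$, so integrating against it effectively evaluates the slow factors in $G_{\alpha,c}(x,y)$ (the $e^{-\theta_0\mu_s|x\pm y|}$ and the $D(\alpha,c)^{-1}$ piece) near $y=z$ while leaving $U''(y) = O(1)$, thereby producing exactly the prefactor $\delta\langle Z\rangle^{-1/2}$ over each term of \eqref{est-GrOS-stable}. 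For the fast--fast interaction (product of two Airy factors), the integration of $\langle Y\rangle^{-5/4}\langle Y\rangle^{-1/4}$ against the composite exponential $e^{-\int_{x}^{y}\Re\mu_f}\,e^{-\int_{y}^{z}\Re\mu_f} = e^{-\int_x^z \Re\mu_f}$ (and similarly with $z_c$ in place of $x$) leaves the combined exponential intact while yielding an additional factor $\delta\langle Z\rangle^{-1/2}$ from the stationary-phase-type estimate in $Y$. The case $\alpha \ll 1$ is treated identically, replacing the slow factors according to \eqref{Gslow-small-a}.

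The main obstacle will be the fast--fast term in the convolution, where both $\mathcal{G}(y,z)$ and the fast piece of $G_{\alpha,c}(x,y)$ exhibit Airy-type singularities in the Langer variable $Y$ near $y = z_c$, and simultaneously $U''$ is not small on the critical layer. One must split the $y$ integration into the critical layer region $|Y| \lesssim 1$ and its complement, exploit the decay $\langle Y\rangle^{-3/2}$ of the integrand in the outer region, and use the uniform bound $U''(y) \lesssim 1$ together with the fact that the critical layer thickness is exactly $\delta$ in the inner region; this accounts for the uniform prefactor $\delta \langle Z\rangle^{-1/2}$ without loss of the exponential localization along the composed path. The remaining interactions (slow--fast and fast--slow) are simpler since one factor is already non-oscillatory on the scale of the other.
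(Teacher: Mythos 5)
The decomposition \eqref{def-vortG12} and the representation \eqref{def-RGGG}, together with the plan of inserting the pointwise bound on $\mathcal{G}$ (Proposition \ref{prop-exactGrmAiry}) and the Green function bound from Theorem \ref{theo-GreenOS-stable} into the convolution and invoking the Airy convolution estimates, are the same steps the paper takes; your step (a) is essentially the content of Lemma \ref{lem-GrmAiry} and Proposition \ref{prop-exactGrmAiry}. So the overall architecture is right.

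However, there is a gap in the way you justify the prefactor in step (b). You write that because $\mathcal{G}(y,z)$ is concentrated at the scale $\mu_f^{-1}\sim\delta\langle Z\rangle^{-1/2}$ in $y$, "integrating against it \dots produces exactly the prefactor $\delta\langle Z\rangle^{-1/2}$". This conflates the width of localization with the total mass of the kernel. The Green kernel $\mathcal{G}(y,z)$ has amplitude $\delta^{-2}\langle Z\rangle^{-1/2}$ (Proposition \ref{prop-exactGrmAiry} with $k=\ell=0$) and width $\sim\delta\langle Z\rangle^{-1/2}$, so integrating it against a slowly varying function $g$ yields
\[
\int_0^\infty \mathcal{G}(y,z)\,g(y)\,dy \;\approx\; g(z)\int_0^\infty \mathcal{G}(y,z)\,dy \;\sim\; \delta^{-2}\langle Z\rangle^{-1/2}\cdot\delta\langle Z\rangle^{-1/2}\,g(z) \;=\;\delta^{-1}\langle Z\rangle^{-1}g(z),
\]
not $\delta\langle Z\rangle^{-1/2}g(z)$. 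This is exactly what Lemma \ref{lem-ConvAiry-cl} records ($\|\int G_{\mathrm a}f\|_\eta\lesssim\delta^{-1}\|f\|_\eta$), and it is consistent with the identification $\int\mathcal{G}(y,z)\,dy \approx \cA^{-1}(1)(z)\approx (U(z)-c)^{-1}\sim \delta^{-1}\langle Z\rangle^{-1}$ near the critical layer, and with the paper's own heuristic remark after \eqref{def-RGGG} that $\mathcal{R}_G$ is of order $(\epsilon\mu_f^2)^{-1}G_{\alpha,c}$ (note $(\epsilon\mu_f^2)^{-1}=(U-c+\alpha^2\epsilon)^{-1}$, again $\delta^{-1}\langle Z\rangle^{-1}$). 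To actually land on the factor claimed in the statement you would have to track the convolution more carefully — in particular the extra $\delta$'s hidden in the fast piece of $G_{\alpha,c}(x,y)$ and in the polynomial weights — rather than read it off from the localization scale. As written, your "key mechanism" step does not establish the bound on $\mathcal{R}_G$; it asserts it.

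The obstacle you flag at the end (the fast--fast interaction and the need to split into $|Y|\lesssim 1$ versus $|Y|\gtrsim 1$) is the right thing to worry about, but the inner/outer splitting you sketch also needs to be supplemented by the amplitude bookkeeping above before the prefactor count closes.
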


\subsection{Semigroup bounds in boundary layer spaces}

Our next main result yields a sharp bound on the semigroup $e^{L_\alpha t}$, constructed by the Dunford's integral
\begin{equation}\label{Dunford1}e^{L_\alpha t} \omega_\alpha = {1 \over 2 i \pi}
\int_{\Gamma_\alpha} e^{\lambda t } 
 (\lambda - L_\alpha)^{-1} \omega_\alpha \, d\lambda,
\end{equation}
uniformly in the small viscosity and in the Fourier frequency $\alpha \in \RR_+$, for $\Gamma_\alpha$ lying on the right of the spectrum of $L_\alpha$. Recall that the unstable spectrum of $L_\alpha$ consists precisely of discrete spectrum, having eigenvalues and eigenmodes solve the Orr-Sommerfeld problem; see Section \ref{sec-OSspectrum}.

Away from the critical layers, eigenmodes are smooth in $z$, analytic in $\lambda$, and of the form $\nabla^\perp (e^{i\alpha (x-ct)} \phi(z))$, with $\phi$ being defined as in \eqref{eigen-mode000}. Thus, we can move the contour of integration in \eqref{Dunford1} pass the point spectrum, by adding projections on the eigenmodes. This can be continued, as long as the contour of integration remains away from the Euler continuous spectrum $-i\alpha \mathrm{Range(U)}$, or equivalently $c$ is away from the critical layers. 

Near the critical layers, solutions to the Orr-Sommerfeld equations are singular with singularity of the form $(z-z_c)\log (z-z_c)$, and we are no longer able to take the contour cross the critical layers. The main contribution of this paper is to allow the contour to stay sufficiently close to the critical layers (precisely, within a distance of order $\nu^{1/4}$). This is sufficient for us to obtain sharp bounds on the semigroup $e^{L_\alpha t}$.

In this paper, in coherence with the physical literature (\cite{Reid}), we assume that the unstable eigenvalues found in the spectral instability result, Theorem \ref{theo-spectralinstablity}, are maximal eigenvalues. Precisely, let us set $\lambda_{\alpha,\nu}$ to be the maximal unstable eigenvalue of $L_\alpha$ (or zero, if none exists), for each $\alpha >0$ and $\nu >0$, and introduce 
\begin{equation}\label{def-ga0max0}
\gamma_0 : = \lim_{\nu \to 0} \sup_{\alpha \in \RR}  \nu^{-1/4}\Re \lambda_{\alpha,\nu}  .
\end{equation}
The existence of unstable eigenvalues in Theorem \ref{theo-spectralinstablity} implies that $\gamma_0$ is positive. Our spectral assumption is that $\gamma_0$ is finite (that is, the eigenvalues in Theorem \ref{theo-spectralinstablity} are maximal).

Our spectral assumption can be stated in term of the Evans function as follows: 
\begin{assumption}\label{assump-Evans} Define $\gamma_0$ as in \eqref{def-ga0max0} and the Evans function $D(\alpha,c)$ as in \eqref{def-Evans}. For any $\gamma_1>\gamma_0$, there holds 
$$D(\alpha,c) \not =0$$
for all $\alpha$ and for $c = -\lambda/i\alpha$ with $\Re \lambda > \gamma_0 \nu^{1/4}$.   
\end{assumption}

Our semigroup bounds will be obtained with respect to the following boundary layer norms for vorticity 
\begin{equation}\label{assmp-wbl-stable} 
\| \omega_\alpha\|_{ \beta, \gamma, p} : = \sup_{z\ge 0} \Big [ \Bigl( 1 +  \sum_{q=1}^p\delta^{-q} \phi_{P-1+q} (\delta^{-1} z)  \Bigr)^{-1} e^{\beta z} |\omega_\alpha (z)| \Big]\end{equation}
with $p\ge 0$, $\beta>0$, and with the boundary layer thickness 
$$\delta = \gamma \nu^{1/8}$$
for some $\gamma>0$. Here, the order $\nu^{1/8}$ is dictated by the structure of the unstable eigenmodes; see Remark \ref{rem-18}. We stress that the boundary layer norm is introduced to capture the large behavior of vorticity near the boundary. In the case when $p=0$,  $\| \omega_\alpha\|_{ \beta, \gamma, p} $ reduces to the usual exponentially weighted $L^\infty$ norm $ \| \omega_\alpha\|_{L^\infty_\beta}$. We introduce the boundary layer space ${\cal B}^{\beta,\gamma,p}$ to consist of functions whose $\|\cdot \|_{ \beta, \gamma, p} $ norm is finite, and write $L^\infty_\beta = {\cal B}^{\beta,\gamma,0}$. Clearly, $L^\infty_\beta \subset {\cal B}^{\beta,\gamma,q} \subset {\cal B}^{\beta,\gamma,p}$ for $0\le q\le p$.

Our final main result is as follows. 

\begin{theorem} \label{theo-eLt-stable} Let $\omega_\alpha \in {\cal B}^{\beta,\gamma,1}$ for some positive $\beta$, and $\gamma_0$ be defined as in \eqref{def-ga0max0}. 
Then, for any $\gamma_1>\gamma_0$, there are positive constants $C_0,\theta_0$ so that 
$$\begin{aligned}
\| e^{L_\alpha t}\omega_\alpha\|_{ \beta, \gamma, 1} &\le C_\nu e^{\gamma_1 \nu^{1/4} t }  e^{- \frac14 \alpha^2 \sqrt\nu t}  \| \omega_\alpha\|_{ \beta, \gamma, 1}
\\
\| \partial_ze^{L_\alpha t}\omega_\alpha\|_{ \beta, \gamma, 1} &\le C_\nu\Big( \nu^{-1/8}+ (\sqrt \nu t)^{-1/2} \Big) e^{\gamma_1 \nu^{1/4} t }  e^{- \frac14 \alpha^2 \sqrt\nu t}  \| \omega_\alpha\|_{ \beta, \gamma, 1} ,
\end{aligned}$$
in which the constant $C_\nu$ is defined by 
\begin{equation}\label{def-Cnu} C_\nu : = C_0 \Big( 1 + \alpha^2 \nu^{-1/4} \chi_{\{\alpha \ll1\}} \Big)\end{equation}
for some universal constant $C_0$ and for $\chi_{\{\cdot\}}$ being the characteristic function. 
\end{theorem}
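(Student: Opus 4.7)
My plan is to insert the pointwise bounds of Theorem \ref{theo-vort-stable} into the Dunford representation \eqref{Dunford1} via the kernel formulas \eqref{def-Ga}--\eqref{def-eLt}, and to control the resulting $\lambda$-integral. The first step is to deform $\Gamma_\alpha$ onto the shifted vertical line $\Re \lambda = \gamma_1 \nu^{1/4}$, detouring around the Euler continuous spectrum $-i\alpha \,\mathrm{Range}(U)$ at distance $\gamma_1\nu^{1/4}$. By Assumption \ref{assump-Evans}, no discrete eigenvalues are crossed, so no residues appear, and $|D(\alpha,c)|$ is bounded below on the deformed contour. Along the detour, $\Im c \gtrsim \gamma_1\nu^{1/4}/\alpha$; combined with $|\epsilon|^{1/8}\sim (\sqrt\nu/\alpha)^{1/8}$ and the standing range $\sqrt\nu\ll\alpha\ll\nu^{-1/4}$, the condition \eqref{small-c} is verified throughout $\Gamma_\alpha$, so Theorem \ref{theo-vort-stable} is applicable pointwise along it.

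\textbf{Green function contributions.} Writing $\Delta_\alpha G_{\alpha,c}(x,z) = \mathcal{G}(x,z) + \mathcal{R}_G(x,z)$ and parameterizing $\lambda = \gamma_1\nu^{1/4} + i\tau$, one has $|e^{\lambda t}| = e^{\gamma_1 \nu^{1/4} t}$ on the contour while $e^{i\tau t}$ acts on the $\tau$-integral. The fast-mode kernel $\mathcal{G}$ is essentially the resolvent of $-\sqrt\nu\Delta_\alpha + i\alpha U$; the $\tau$-integral collapses to the standard parabolic semigroup associated with the symbol $-\alpha^2\sqrt\nu - \epsilon\partial_z^2 + U-c$, which yields the Gaussian factor $e^{-\alpha^2\sqrt\nu t/4}$ from the $-\alpha^2\sqrt\nu$ shift and the short-time singularity $(\sqrt\nu t)^{-1/2}$ after a $\partial_z$. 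The residual $\mathcal{R}_G$ from Theorem \ref{theo-vort-stable} combines the slow Rayleigh contribution $\mu_s^{-1}e^{-\theta_0\mu_s|x\pm z|}$ with the boundary-layer fast exponentials damped by $D(\alpha,c)^{-1}$. Pairing against $\omega_\alpha$ in $\|\cdot\|_{\beta,\gamma,1}$ and absorbing the exponential $z$-weights, the slow part is uniformly bounded, while the fast exponentials reproduce precisely the boundary sublayer profile $\delta^{-1}\phi_P(\delta^{-1}z)$ allowed by the target space --- the matching of $\delta=\gamma\nu^{1/8}$ with the natural sublayer thickness of Remark \ref{rem-18} is essential here.

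\textbf{Derivative and small-$\alpha$ corrections.} For $\partial_z e^{L_\alpha t}\omega_\alpha$, differentiating the Langer variable $Z$ supplies an extra $\delta^{-1}=\gamma^{-1}\nu^{-1/8}$, producing the first term in the prefactor, while the $(\sqrt\nu t)^{-1/2}$ comes from differentiating $\mathcal{G}$ and is the usual short-time parabolic singularity. For $\alpha\ll 1$, Theorem \ref{theo-GreenOS-stable}(ii) and Theorem \ref{theo-vort-stable} replace the slow bound by \eqref{Gslow-small-a}, in which the leading contribution is $|U-c|^{-1}$; since $|U-c|\gtrsim \nu^{1/4}/\alpha$ along $\Gamma_\alpha$ and the $\tau$-integration sweeps a strip of width $\cO(\alpha)$, the resulting amplification is precisely $\alpha^2\nu^{-1/4}\chi_{\alpha\ll 1}$, giving the $C_\nu$ in \eqref{def-Cnu}.

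\textbf{Main obstacle.} The most delicate point is to prove that the $\lambda$-integration along the contour at distance $\nu^{1/4}/\alpha$ from the critical set produces no additional negative powers of $\nu$ beyond those recorded in $C_\nu$. The bounds of Theorem \ref{theo-vort-stable} carry $\delta^{-2}$ prefactors and polynomial growth in the Langer variables, and $D(\alpha,c)^{-1}$ must be controlled uniformly --- not just pointwise --- which requires a quantitative lower bound refining Assumption \ref{assump-Evans}. Simultaneously, one must Taylor-expand $z_c(c)$, $\mu_f(z;c)$ and the exponentials $e^{-\int_{z_c}^{\cdot}\Re\mu_f(y)\,dy}$ in their $c$-dependence to expose Gaussian decay in $\tau$ on the natural scale $\alpha^2\sqrt\nu$. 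Once this hidden Gaussian structure is made explicit, all $\lambda$-integrals converge absolutely with the claimed factors $e^{\gamma_1\nu^{1/4}t}e^{-\alpha^2\sqrt\nu t/4}$, completing the proof.
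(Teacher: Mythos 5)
Your overall decomposition into $\mathcal{S}_\alpha + \mathcal{R}_\alpha$ coming from $\Delta_\alpha G_{\alpha,c}=\mathcal{G}+\mathcal{R}_G$, your identification of the $\alpha^2\nu^{-1/4}$ amplification from the slow-mode singularity $|U-c|^{-1}$ at small $\alpha$, and your reading of the $\nu^{-1/8}$ in the derivative bound from the Langer variable are all consistent with the paper. However, there is a genuine gap at the step you yourself flag as the ``main obstacle,'' and it is not a detail that can be deferred.

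You propose deforming $\Gamma_\alpha$ to a \emph{fixed} vertical line $\Re\lambda=\gamma_1\nu^{1/4}$ (with a detour around the continuous spectrum). On such a contour $|e^{\lambda t}|=e^{\gamma_1\nu^{1/4}t}$ is constant and the $\tau$-integral does not by itself exhibit the Gaussian spatial factor $e^{-|x-z|^2/(8\sqrt\nu t)}$; extracting it would require a separate stationary-phase or Plancherel argument, which you state as a goal (``Taylor-expand $z_c(c)$, $\mu_f$, \dots to expose Gaussian decay in $\tau$'') but do not carry out. The paper instead uses the Zumbrun--Howard device of a contour that depends on $(x,z,t)$: a central segment $\Gamma_{\alpha,1}$ at $\Re\lambda=\gamma-\alpha^2\sqrt\nu$ and two parabolic side arcs $\Gamma_{\alpha,2},\Gamma_{\alpha,3}$, with $\gamma=a^2\sqrt\nu+\tfrac12\alpha^2\sqrt\nu$ (resp.\ $+\gamma_1\nu^{1/4}$ for $\mathcal{R}_\alpha$) and $a=|x-z|/(2\sqrt\nu t)$. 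This choice is precisely a saddle-point optimization of $e^{\gamma t}\cdot e^{-\nu^{-1/4}\sqrt\gamma\,|x-z|}$, which produces $e^{-|x-z|^2/(8\sqrt\nu t)}e^{-\frac18\alpha|x-z|}$ for free (Proposition \ref{prop-tGreen-Sa} and Lemma \ref{lem-tempGRztx}). That Gaussian is what makes the kernel $I_{x,z}$ uniformly integrable in $x$ and what allows Lemma \ref{lem-Heatconv} to trade the spatial localization against the permitted time growth $e^{\gamma_1\nu^{1/4}t}$ in the boundary-layer norm $\|\cdot\|_{\beta,\gamma,1}$; without it the convolution against $\delta^{-1}\phi_P(\delta^{-1}x)$ does not close. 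Finally, the paper also relocates the contour separately for the slow-mode part ($\Gamma'_{\alpha,1}$, with $\Im\lambda$ restricted to $-\alpha U[y,z]$) versus the full-range $\Gamma_{\alpha,1}$ for the fast part, and it obtains the derivative bound on $\mathcal{S}_\alpha$ by Duhamel rather than by differentiating the temporal Green function directly; both are additional structural ingredients absent from your sketch.
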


The main result in Theorem \ref{theo-eLt-stable} is a continuation of \cite{GrN1} to provide sharp and uniform semigroup bounds in the inviscid limit. It also provides {\em a stable semigroup estimate} with respect to the boundary layer norm for (stable) boundary layers. As was pointed out in \cite{DG,DGV2,Gr1,GGN1,GGN2,GrN1,GrN2,HKN2}, such a stable sharp bound is crucial for the nonlinear instability of stable boundary layers, which we shall address in a forthcoming paper. Its proof relies on a very careful and 
detailed construction and analysis of the
Green function of linear Navier-Stokes equations, constructed in \cite{GrN1}, and the Fourier-Laplace approach (\cite{Z1, Z2,GR08,GrN1,GrN2}).

\begin{remark}
Comparing to the case when $\lambda$ is away from critical layers (\cite{GrN1,GrN2}), we note that there is a loss of $\nu^{-1/4}$ in the semigroup bound. This loss is precisely due to the presence of singularity (or critical layers) in the resolvent equation, when inverting the linearized Euler operator $\partial_t + i\alpha U - i\alpha U'' (\Delta_\alpha)^{-1}$, within the temporal frequency $\Re \lambda \sim \nu^{1/4}$ and the spatial frequency $\alpha \ll1$; see Remark \ref{rem-lossGnu}. 
\end{remark}

\begin{remark}
In view of the spectral instability, Theorem \ref{theo-spectralinstablity}, the unstable solutions occur precisely when the spatial frequency $\alpha$ satisfies $\nu^{1/8}\lesssim \alpha \lesssim \nu^{1/12}$. In particular, if we assume in addition that the maximal unstable eigenvalue corresponds to $\alpha \sim \nu^{1/8}$, then the semigroup $e^{L_\alpha t}$ with respect to the boundary layer norms is bounded by
$$C_0 \Big[ e^{\gamma_1 \nu^{1/4} t } + \nu^{\beta - \frac14} e^{\gamma_\beta \nu^{1/4} t} + \nu^{-1/4} e^{\gamma_2 \nu^{1/4} t} \Big] e^{- \frac14 \alpha^2 \sqrt\nu t} $$
for $\frac14 \le \beta \le \frac16$, for some $\gamma_2 \ll \gamma_1$, and for $\gamma_\beta \le \gamma_1$. In the instability analysis, we expect that at the instability time $t_\nu$, the maximal growth dominates the remaining terms in the semigroup bound. 
\end{remark}

In view of the vorticity decomposition \eqref{def-vortG12}, we write the semigroup $e^{L_\alpha t}$ as
\begin{equation}\label{de-eLta-stable}e^{L_\alpha t}  = \mathcal{S}_\alpha +  \mathcal{R}_{\alpha} \end{equation} 
with 
\begin{equation}\label{def-SRa}\begin{aligned}
\mathcal{S}_\alpha  \omega_\alpha (z): &= \frac{1}{2\pi i}  \int_0^\infty \int_{\Gamma_{\alpha}} e^{\lambda t} \mathcal{G}(x,z) \omega_\alpha (x) \; \frac{d \lambda dx }{i\alpha},
\\
\mathcal{R}_{\alpha} \omega_\alpha (z): &= \frac{1}{2\pi i} \int_0^\infty  \int_{\Gamma_{\alpha}} e^{\lambda t}  \mathcal{R}_G(x,z) \omega_\alpha (x) \; \frac{d \lambda dx}{i\alpha}.
\end{aligned}\end{equation}
Theorem \ref{theo-eLt-stable} is a combination of semigroup estimates for $\mathcal{S}_\alpha $ and $\mathcal{R}_\alpha$, which we shall derive in the following sections.


\section{Rayleigh solutions near critical layers}\label{sec-Rayleigh}


In this section, we are aimed to construct an exact inverse for the Rayleigh operator $\Ray_\alpha(\cdot)$ and thus solve the inhomogenous Rayleigh problem 
\begin{equation}\label{Ray-smalla}
\Ray_\alpha(\phi) = (U-c)\Delta_\alpha \phi - U'' \phi =f, \qquad z\ge 0,
\end{equation}
in the presence of critical layers. We consider the case when $c$ is sufficiently close to the range of $U(z)$ so that 
$$c = U(z_c)$$ for some complex number $z_c$. 

The case when $\alpha \ll1$ has been studied carefully in our recent study (\cite{GGN2,GGN3}), which we shall recall for sake of completeness. The construction of Rayleigh solutions for $\alpha =0$ is done via an explicit Green function. 
We then use this inverse to construct an approximate inverse to $\Ray_\alpha$ operator through the construction 
of an approximate Green function. Finally, the construction of the exact inverse of $\Ray_\alpha$ follows by  an iterative procedure.


\subsection{Function spaces}\label{sec-space}


The natural framework to solve (\ref{Ray-smalla}) is to work in the complex universal cover $\widetilde  \Gamma$ of $\Gamma_{\sigma,r}\setminus\{z_c\}$, with $\Gamma_{\sigma,r}$ being a complex neighborhood of $\RR_+$. 
The Rayleigh equation has the vanishing coefficient of the highest derivative term at $z=z_c$, since $U(z_c) = c$. Using the classical results on ordinary differential equations in the complex
place we get that (\ref{Ray-smalla}) has a "multivalued" solution $\widetilde  \phi$ defined on the universal cover of $\Gamma_{\sigma,r}$. 


In constructing solutions, we will use the function spaces $X^{p,\eta}$, for $p\ge 0$, to denote the spaces consisting of holomorphic 
functions $f = f(z)$ on $\widetilde  \Gamma$ such that the norm 
$$
\| f\|_{X^{p,\eta}} : = \sup_{|z-z_c|\le 1 } \sum_{k=0}^p|  (z-z_c)^k \partial_z^k f(z) | 
+ \sup_{|z-z_c|\ge 1} \sum_{k=0}^p|  e^{\eta \Re z} \partial_z^k f(z) |  
$$ 
is bounded. In the case $p=0$, we simply write $X^\eta, \|\cdot \|_\eta$ in places of
 $X^{0,\eta}, \|\cdot \|_{X^{0,\eta}}$, respectively. We also introduce the function spaces $Y^{p,\eta} \subset X^{p,\eta}$, $p\ge 0$, 
 such that for any $f\in Y^{p,\eta}$, the function $f$ additionally satisfies 
$$
|f(z)| \le C, \quad | \partial_z f(z) | \le C (1 + | \log (z - z_c) | ) , 
$$
$$| \partial_z^k f(z) | \le C (1 + | z - z_c |^{1 - k} ),
$$
for all $|z-z_c|\le 1$ and for $2\le k \le p$. The best constant $C$ in the previous bounds, plus $\| f\|_{X^{p,\eta}}$, defines the norm $\| f \|_{Y^{p,\eta}}$.


\subsection{Rayleigh equation: $\alpha = 0$}

In this section, we recall the study (\cite{GGN2,GGN3}) of the Rayleigh operator $\Ray_0$ when
$\alpha = 0$. More precisely, we solve  
\begin{equation}\label{Ray0} \Ray_0 (\phi) = (U-c) \partial_z^2 \phi - U'' \phi = f.\end{equation}
We have the following lemma whose proof is given in \cite[Lemma 3.2]{GGN3}.

\begin{lemma}[\cite{GGN2,GGN3}] \label{lem-defphi012} Assume that $\Im c \not =0$. There are two independent solutions $\phi_{1,0},\phi_{2,0}$ of $\Ray_0(\phi) =0$ with the Wronskian determinant 
$$ 
W(\phi_{1,0}, \phi_{2,0}) := \partial_z \phi_{2,0} \phi_{1,0} - \phi_{2,0} \partial_z \phi_{1,0} = 1.
$$
Furthermore, there are holomorphic functions $P_1(z), P_2(z), Q(z)$ on $\widetilde  \Gamma$ 
with $P_1(z_c) = P_2(z_c) = 1$ and $Q(z_c)\not=0$ so that the asymptotic descriptions 
\begin{equation}\label{asy-phi012} 
\phi_{1,0}(z) = (z-z_c) P_1(z) ,\qquad \phi_{2,0}(z) = P_2(z) + Q(z) (z-z_c) \log (z-z_c)
\end{equation}
hold for $z$ near $z_c$, and  \begin{equation}\label{decay-phi012} 
 \phi_{1,0}(z) - V_+ \in Y^{p,\eta_0} , \qquad \partial_z \phi_{2,0}(z)  - \frac{1}{V_+}   \in  Y^{p,\eta_1}
\end{equation}
for some positive constants $C,\eta_0$, and $V_+ = U_+ - c$, any $\eta_1<\eta_0$, and any $p\ge 0$.
 \end{lemma}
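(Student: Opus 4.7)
\textbf{Proof plan for Lemma \ref{lem-defphi012}.} The starting observation is that $(U-c)$ solves $\Ray_0(\phi)=0$ identically, since
$(U-c)U'' - U''(U-c)=0$. We therefore \emph{define} the first solution by the normalization
$\phi_{1,0}(z) := (U(z)-c)/U'(z_c)$, which is holomorphic on $\widetilde \Gamma$ since $U$ is real analytic, and which satisfies $\phi_{1,0}(z) = (z-z_c)P_1(z)$ with $P_1(z)=(U(z)-c)/((z-z_c)U'(z_c))$ analytic and $P_1(z_c)=1$ (by the definition of $z_c$ as the simple zero of $U-c$, using $U'(z_c)\neq 0$ which follows from the strict monotonicity and analyticity of $U$). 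At infinity, $U(z)-U_+$ and all of its derivatives decay exponentially in $\Re z$ (our standing assumption on $U$), so $\phi_{1,0}-V_+/U'(z_c)\in Y^{p,\eta_0}$ for any $p\ge 0$, which is the content of \eqref{decay-phi012} up to an overall multiplicative normalization convention.

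The second independent solution is obtained by the classical reduction of order: since the Rayleigh equation $\Ray_0(\phi)=0$ has no first-derivative term, the Wronskian of any two solutions is constant, and setting
\[
\phi_{2,0}(z) \; := \; \phi_{1,0}(z)\int_{z_*}^z \frac{dw}{\phi_{1,0}(w)^2}
\]
for a suitable basepoint $z_*$ automatically produces a second solution with $W(\phi_{1,0},\phi_{2,0})=1$. The main step is to extract the asymptotic form of $\phi_{2,0}$ near $z=z_c$. Write
\[
\frac{1}{\phi_{1,0}(w)^2} \; = \; \frac{1}{(w-z_c)^2}\,\frac{1}{P_1(w)^2}
\]
and Taylor-expand $1/P_1(w)^2 = 1 + a_1(w-z_c) + a_2(w-z_c)^2+\cdots$ around $z_c$ (this is analytic since $P_1(z_c)=1$). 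Integrating term by term,
\[
\int^z \frac{dw}{\phi_{1,0}(w)^2} \; = \; -\frac{1}{z-z_c} + a_1\log(z-z_c) + h(z),
\]
with $h$ holomorphic at $z_c$. Multiplying by $\phi_{1,0}(z)=(z-z_c)P_1(z)$ yields
\[
\phi_{2,0}(z) \; = \; -P_1(z) + a_1\,(z-z_c)P_1(z)\log(z-z_c) + (z-z_c)P_1(z)\,h(z),
\]
which after an inessential sign normalization is exactly the claimed form $\phi_{2,0}(z)=P_2(z)+Q(z)(z-z_c)\log(z-z_c)$ with $P_2,Q$ holomorphic at $z_c$, $P_2(z_c)=1$, and $Q(z_c)=a_1$ equal to the Taylor coefficient, which is nonzero generically (it vanishes only if $P_1'(z_c)=0$; in our setting this coefficient is computed explicitly from $U''(z_c)/U'(z_c)$ and can be verified to be nonzero for generic profiles, and is indeed the standard residue identified in the Rayleigh-equation literature).

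For the decay at infinity, we argue separately on $\phi_{2,0}$. Since $U(z)\to U_+$ exponentially and $U''$ decays likewise, $\phi_{1,0}(z)$ converges exponentially to its constant limit and $\partial_z\phi_{1,0}$ decays exponentially. Using $W(\phi_{1,0},\phi_{2,0})=1$, we have $\partial_z\phi_{2,0}\,\phi_{1,0} - \phi_{2,0}\,\partial_z\phi_{1,0} = 1$, so in the region $z\gg 1$, $\partial_z\phi_{2,0}$ differs from $1/V_+$ (up to the normalization constant) by a quantity controlled by the exponentially small remainders $\phi_{1,0}-V_+/U'(z_c)$ and $\partial_z\phi_{1,0}$. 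Differentiating the Rayleigh equation and iterating gives the higher-derivative bounds needed to place $\partial_z\phi_{2,0}-1/V_+$ in $Y^{p,\eta_1}$ for any $\eta_1<\eta_0$ (any small loss of rate absorbs the polynomial prefactors arising from Leibniz expansions).

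The main technical obstacle is not the formal manipulation, which is a standard reduction of order, but the careful bookkeeping of (i) the multivaluedness inherent in $(z-z_c)\log(z-z_c)$ on the universal cover $\widetilde \Gamma$, requiring us to fix branches consistently with the path of integration from $z_*$, and (ii) the simultaneous control of behavior near $z_c$ (where one must extract the precise logarithmic singularity and identify $Q(z_c)\ne 0$) and at infinity (where one must propagate the exponential decay of $U-U_+$ through the integral defining $\phi_{2,0}$ and through $p$ derivatives). The norms $X^{p,\eta}$ and $Y^{p,\eta}$ of Section \ref{sec-space} are tailored exactly to accommodate both regimes at once, and the estimates above verify that $\phi_{1,0}$ and $\phi_{2,0}$ sit in the appropriate spaces.
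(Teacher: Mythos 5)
The paper does not prove this lemma -- it is recalled verbatim from \cite{GGN2,GGN3} (Lemma~3.2 of the latter), so there is no in-text proof to compare against. Your reduction-of-order / Frobenius argument is the canonical proof for a second-order linear ODE with a regular singular point, and is in substance the argument of the cited reference: take $\phi_{1,0}\propto U-c$ as the regular solution, build $\phi_{2,0}=\phi_{1,0}\int\phi_{1,0}^{-2}$, extract the $(z-z_c)\log(z-z_c)$ singularity from the Laurent--Taylor expansion of $\phi_{1,0}^{-2}$, and use the constancy of the Wronskian to transfer exponential decay of $U-U_+$ to $\partial_z\phi_{2,0}$ at infinity. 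So the approach matches.

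Two points are worth tightening, both reflecting imprecisions in the lemma as recalled rather than errors in your reasoning, but which your proposal currently handles only by gesture. First, the normalizations are mutually incompatible: $\phi_{1,0}=(U-c)/U'(z_c)$ gives $P_1(z_c)=1$ but $\phi_{1,0}\to V_+/U'(z_c)$, while $\phi_{1,0}=U-c$ gives the limit $V_+$ but $P_1(z_c)=U'(z_c)$; and evaluating the constant Wronskian as $z\to z_c$ forces $P_2(z_c)=-P_1(z_c)$, so $W=1$ with $P_1(z_c)=P_2(z_c)=1$ cannot hold simultaneously. Your phrase ``after an inessential sign normalization'' actually changes the sign of $W$ along with that of $\phi_{2,0}$; you should state which convention you adopt and record the resulting constants explicitly rather than waving at a convention. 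Second, your computation gives $Q(z_c)=\mp\,U_c''/U_c'$ (sign depending on that same convention), so the assertion $Q(z_c)\neq 0$ is not automatic but is equivalent to $U''(z_c)\neq 0$; this is an implicit hypothesis on the profile (satisfied, e.g., for concave monotone boundary-layer profiles) that should be named rather than dismissed as ``generic.''
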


To go back to $\Gamma_{\sigma,r}$ we have to make
a choice of the logarithm. We choose to define it on $\mathbb{C} - \{ z_c + \mathbb{R}_-  \}$.
Then $\phi_{2,0}$ is holomorphic on 
$$
\Gamma' = \Gamma_{\sigma,r} \cap \Bigl(  \mathbb{C} - \{ z_c + \mathbb{R}_-  \} \Bigr) .
$$
Note that because of the singularity at $z_c$, $\phi_{2,0}$ cannot be made holomorphic on 
$\Gamma_{\sigma,r}$.

In particular if $\Im z_c = 0$, $\phi_{2,0}$ is holomorphic in $z$ excepted on the half line $z_c + \mathbb{R}_- $.
For $z \in \mathbb{R}$, $\phi_{2,0}$ is holomorphic as a function of $c$ excepted if $z - z_c$ is real and negative,
namely excepted if $z < z_c$. 
For a fixed $z$, $\phi_{2,0}$ is an holomorphic function of $c$ provided $z_c$ does not cross
$\mathbb{R}_+$, and provided $z - z_c$ does not cross $\mathbb{R}_-$.

Let $\phi_{1,0},\phi_{2,0}$ be constructed as in Lemma \ref{lem-defphi012}. Then 
for real arguments $x$ and $z$, the Green function $G_{R,0}(x,z)$ of the $\Ray_0$ operator can be defined by 
$$
G_{R,0}(x,z) = \left\{ \begin{array}{rrr} (U(x)-c)^{-1} \phi_{1,0}(z) \phi_{2,0}(x), 
\quad \mbox{if}\quad z>x,\\
(U(x)-c)^{-1} \phi_{1,0}(x) \phi_{2,0}(z), \quad \mbox{if}\quad z<x.\end{array}\right.
$$ 
We will denote by $G_{R,0}^\pm$ the expressions of $G_{R,0}$ for $x > z$ and $x < z$. 
Here we note that $c$ is complex with $\Im c \not=0$ and so the Green function $G_{R,0}(x,z)$ is a well-defined function in $(x,z)$, continuous across $x=z$, and its first derivative has a jump across $x=z$. Let us now introduce the inverse of $\Ray_0$ as 
\begin{equation}\label{def-RayS0}
\begin{aligned}
RaySolver_0(f) (z)  &: =  \int_0^{+\infty} G_{R,0}(x,z) f(x) dx.
\end{aligned}
\end{equation}

The following lemma asserts that the operator $RaySolver_0(\cdot)$ is in fact well-defined from $X^{0,\eta}$ to $Y^{2,0}$, which in particular shows that $RaySolver_0(\cdot)$ gains two derivatives, but losses the fast decay at infinity.

\begin{lemma}\label{lem-RayS0} 
Assume that $\Im c \not =0$. For any $f\in {X^{0,\eta}}$,  the function $RaySolver_0(f)$ is a solution to the Rayleigh problem \eqref{Ray0}, defined on $\widetilde  \Gamma$. In addition, $RaySolver_0(f) \in Y^{2,0}$, and there holds  
$$
\| RaySolver_0(f)\|_{Y^{2,0}} \le C (1+|\log \Im c|) \|f\|_{{X^{0,\eta}}},
$$ 
for some universal constant $C$. 
\end{lemma}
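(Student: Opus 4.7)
My plan has three parts: first verify the ODE identity, then control the two fundamental integrals $I_1, I_2$ quantitatively (this is where the $(1+|\log \Im c|)$ loss enters), and finally read off the $Y^{2,0}$ pointwise bounds.

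Write $\phi := RaySolver_0(f) = \phi_{1,0}(z) I_1(z) + \phi_{2,0}(z) I_2(z)$ with
$$I_1(z) := \int_0^z \frac{\phi_{2,0}(x) f(x)}{U(x)-c}\, dx, \qquad I_2(z) := \int_z^{+\infty} \frac{\phi_{1,0}(x) f(x)}{U(x)-c}\, dx.$$
Differentiating and using continuity of $G_{R,0}$ across $x=z$, the boundary contributions in $\partial_z \phi$ cancel, so $\partial_z\phi = \phi_{1,0}' I_1 + \phi_{2,0}' I_2$. Differentiating once more produces a jump term $(\phi_{2,0}'\phi_{1,0} - \phi_{1,0}'\phi_{2,0}) f/(U-c) = \pm f/(U-c)$ by the Wronskian identity of Lemma \ref{lem-defphi012}, and substituting into $\Ray_0$ together with $\Ray_0(\phi_{i,0})=0$ yields $\Ray_0(\phi) = f$.

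For the quantitative bound, the integration variable $x$ is real, so $U(x)-c$ has imaginary part bounded below by $|\Im c|$ while on a bounded interval $|U(x)-\Re c|\gtrsim |x-\Re z_c|$ by strict monotonicity; at infinity $U(x)-c$ is bounded away from zero by the exponential convergence $U(x)\to U_+$. Changing variables $y = U(x)-\Re c$ and splitting near/far from $\Re z_c$ produces the key estimate
$$\int_0^{+\infty} \frac{dx}{|U(x)-c|} \le C(1+|\log \Im c|).$$
Combining this with the vanishing $\phi_{1,0}(x) = O(|x-z_c|)$ near $z_c$ (which cancels the singularity of $(U-c)^{-1}$ inside $I_2$), the logarithmic singularity of $\phi_{2,0}$ at $z_c$, the linear growth of $\phi_{2,0}$ at infinity, and the exponential decay $|f(x)|\le \|f\|_{X^{0,\eta}} e^{-\eta \Re x}$ (which absorbs that linear growth in $I_1$ as well as guarantees convergence $I_1(\infty)<\infty$ and exponential decay of $I_2(z)$), one obtains
$$|I_1(z)|, \, |I_2(z)| \le C(1+|\log \Im c|)\, \|f\|_{X^{0,\eta}}$$
uniformly on the universal cover $\widetilde{\Gamma}$.

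Finally, I convert these into $Y^{2,0}$ bounds using Lemma \ref{lem-defphi012}. Near $z_c$ both $\phi_{1,0},\phi_{2,0}$ are $O(1)$, so $|\phi(z)|\le C(1+|\log\Im c|)\|f\|_{X^{0,\eta}}$. For the first derivative, $\phi_{1,0}'$ is bounded while $\phi_{2,0}'(z) = O(1+|\log(z-z_c)|)$, yielding $|\partial_z\phi(z)|\le C(1+|\log\Im c|)(1+|\log(z-z_c)|)\|f\|_{X^{0,\eta}}$, which is exactly the $Y^{2,0}$ tolerance on $\partial_z\phi$. For the second derivative, rather than differentiating the integral a third time, I would invoke the equation itself:
$$\partial_z^2 \phi(z) = \frac{f(z)+U''(z)\phi(z)}{U(z)-c},$$
and use $|U(z)-c|\gtrsim |z-z_c|$ together with the $L^\infty$ bound on $\phi$ to get $|\partial_z^2\phi(z)|\le C(1+|\log\Im c|)\|f\|_{X^{0,\eta}}|z-z_c|^{-1}$, precisely the required $Y^{2,0}$ behavior. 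On the region $|z-z_c|\ge 1$, uniform $L^\infty$ bounds on $\phi, \partial_z\phi, \partial_z^2\phi$ follow from the convergence $\phi_{1,0}\to V_+$, the convergence of $I_1(\infty)$, and the exponential decay of $I_2(z)$ that compensates the linear growth of $\phi_{2,0}$.

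The main obstacle is the log-loss step: one must show, cleanly and uniformly in the various parameters, that the singular weight $(U(x)-c)^{-1}$ integrates to $O(1+|\log\Im c|)$ against the mildly singular factors $\phi_{i,0} f$. Once this is in place, everything else is a careful but direct bookkeeping exercise using the explicit asymptotics provided by Lemma \ref{lem-defphi012}.
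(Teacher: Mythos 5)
Your proposal follows essentially the same route as the paper: both bound the Green kernel through the explicit asymptotics of $\phi_{1,0},\phi_{2,0}$ from Lemma \ref{lem-defphi012}, extract the $(1+|\log\Im c|)$ loss from the log-integrability of the singularity near $\Re z_c$ traded against the exponential decay of $f$, and get the $\partial_z^2$ bound for free from the equation itself. Your decomposition $\phi=\phi_{1,0}I_1+\phi_{2,0}I_2$ is just the convolution $\int G_{R,0}(x,z)f(x)\,dx$ written out, so this is not a genuinely different argument.

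Two imprecisions are worth flagging, neither of which is fatal but both of which should be repaired before the write-up is quotable. First, the displayed ``key estimate'' $\int_0^{+\infty}\frac{dx}{|U(x)-c|}\le C(1+|\log\Im c|)$ is literally false: $|U(x)-c|\to|U_+-c|$ so the integrand tends to a positive constant and the integral diverges linearly. What you actually use (and what the paper records) is $\int_0^\infty e^{-\eta x}\max\{(1+x),|x-z_c|^{-1}\}\,dx\le C(1+|\log\Im c|)$, where the exponential cutoff comes from $f\in X^{0,\eta}$; you should state it that way. Second, the standalone claim $|I_1(z)|\le C(1+|\log\Im c|)\|f\|_{X^{0,\eta}}$ uniformly in $z$ and $c$ is not right as stated. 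In the regime where $\Re z_c$ is large (equivalently $c$ close to $U_+$), the tail $x\gg1$ of $I_1$ contributes on the order of $|U_+-c|^{-1}$, which can dominate $1+|\log\Im c|$. The resolution is precisely the cancellation at infinity between $\phi_{1,0}(z)\to U_+-c$ (sitting outside your $I_1$) and the factor $(U(x)-c)^{-1}$ inside it; this is exactly what the paper's kernel bound $|G_{R,0}(x,z)|\le C\max\{(1+x),|x-z_c|^{-1}\}$ packages implicitly by keeping $\phi_{1,0}(z)$ and $(U(x)-c)^{-1}$ together. You already invoke this cancellation near $z_c$ for $I_2$; you should invoke the analogous one at $x=\infty$ for the $\phi_{1,0}I_1$ product rather than bounding $I_1$ in isolation. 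Once those two statements are corrected, the rest of your bookkeeping (the $Y^{2,0}$ readout and the identity $\partial_z^2\phi=(U-c)^{-1}(f+U''\phi)$) matches the paper exactly.
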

\begin{proof} As long as it is well-defined, the function $RaySolver_0(f)(z)$ solves the equation \eqref{Ray0} at once by a direct calculation, upon noting that 
$$ \Ray_0 (G_{R,0}(x,z) ) = \delta_x(z),$$
for each fixed $x$. 

Next, by scaling, we assume that $ \| f\|_{X^{0,\eta}} = 1$. By using \eqref{decay-phi012} in Lemma \ref{lem-defphi012}, it is clear that $\phi_{1,0}(z)$ and $\phi_{2,0}(z)/(1+z)$
  are uniformly bounded. Thus, considering the cases $z>x$ and $z<x$, we obtain 
\begin{equation}\label{est-Gr0}|G_{R,0}(x,z)| \le  C \max\{ (1+x), |x-z_c|^{-1} \}.\end{equation}
That is, $G_{R,0}(x,z)$ grows linearly in $x$ for large $x$ and has a singularity of order $|x-z_c|^{-1}$ when $x$ is near $z_c$, for arbitrary $z \ge 0$.  Since $|f(z)|\le e^{-\eta z}$, the integral \eqref{def-RayS0} is well-defined and satisfies 
$$|RaySolver_0(f) (z)| \le  C \int_0^\infty e^{-\eta x} \max\{ (1+x), |x-z_c|^{-1} \}  \; dx\le C (1+|\log \Im c|),$$
in which we used the fact that $\Im z_c \approx \Im c$. 
  
Finally, as for derivatives, we need to check the order of singularities for $z$ near $z_c$. We note that $|\partial_z \phi_{2,0}| \le C (1+|\log(z-z_c)|)$, and hence 
  $$|\partial_zG_{R,0}(x,z)| \le  C \max\{ (1+x), |x-z_c|^{-1} \} (1+|\log(z-z_c)|).$$
Thus, $\partial_z RaySolver_0(f)(z)$ behaves as $1+|\log(z-z_c)|$ near the critical layer. In addition, from the $\Ray_0$ equation, we have \begin{equation}\label{identity-R0f} \partial_z^2 (RaySolver_0(f)) = \frac{U''}{U-c} RaySolver_0(f) + \frac{f}{U-c}.\end{equation}
This proves that $RaySolver_0(f) \in Y^{2,0}$, by definition of $Y^{2,0}$. \end{proof}

\begin{lemma}\label{lem-derRayS0}Assume that $\Im c \not =0$. Let $p$ be in $\{0,1,2\}$. For any $f \in X^{p,\eta}$, we have 
$$
\begin{aligned}
 \| RaySolver_0(f) \|_{Y^{p+2,0}} \le C \|f\|_{X^{p,\eta}}(1+|\log (\Im c)| )
 \end{aligned}$$ 
\end{lemma}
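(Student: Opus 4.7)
The plan is to bootstrap Lemma \ref{lem-RayS0} (the $p=0$ case) by differentiating the ODE identity satisfied by $u := RaySolver_0(f)$, namely
\begin{equation}\label{plan-ODE}
\partial_z^2 u = \frac{U''}{U-c}\, u + \frac{f}{U-c},
\end{equation}
which was derived at the end of the proof of Lemma \ref{lem-RayS0}. The base case $p=0$ is precisely that lemma, which already provides $\|u\|_{Y^{2,0}} \le C(1+|\log\Im c|)\|f\|_{X^{0,\eta}}$, including the logarithmic factor. The point is that all higher derivative bounds can be deduced from \eqref{plan-ODE} without re-invoking the Green function, and hence without picking up further $|\log\Im c|$ factors.

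For $p=1,2$, the plan is to apply $\partial_z^{p}$ to \eqref{plan-ODE} and estimate term by term via Leibniz. Since $U$ is analytic and $U(z_c)=c$, we have $|\partial_z^j((U-c)^{-1})| \le C|z-z_c|^{-1-j}$ and $|\partial_z^j(U''/(U-c))| \le C|z-z_c|^{-1-j}$ for $z$ near $z_c$, and both quantities are smooth with exponential decay of $U''$ for $z$ away from $z_c$. Combining with the inductive bound on derivatives of $u$ coming from $u\in Y^{p+1,0}$ (so $|\partial_z^{j}u| \le C|z-z_c|^{1-j}$ for $2\le j \le p+1$, with the mild endpoint behaviors for $j=0,1$ dictated by the $Y$-space definition), and the assumption $|\partial_z^j f| \le C|z-z_c|^{-j}$ for $0\le j\le p$ coming from $f\in X^{p,\eta}$, each product in the Leibniz expansion of $\partial_z^p\bigl(\frac{U''}{U-c}u\bigr)$ and $\partial_z^p\bigl(\frac{f}{U-c}\bigr)$ is bounded by $C|z-z_c|^{-(p+1)}$, which is exactly the required singularity order for $u\in Y^{p+2,0}$. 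Up to logarithmic corrections at the $k=p+1$ step (which are absorbed into the $Y$-space norm), the argument is straightforward.

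Away from the critical layer, i.e. for $|z-z_c|\ge 1$, the ODE \eqref{plan-ODE} shows that derivatives of $u$ of any order are controlled by a finite sum of bounded quantities times $\|f\|_{X^{p,\eta}}$, since $U-c$ is uniformly bounded away from $0$ there and $U''$ decays exponentially. No exponential decay of $u$ itself is claimed (consistent with the $\eta=0$ in the target $Y^{p+2,0}$), so this part of the bound is routine; the only use of the exponential decay of $f$ is in the base case to ensure convergence of the defining integral.

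The main technical obstacle is the bookkeeping of singularity orders at $z=z_c$: one must verify that the worst term in the Leibniz expansion, namely the one where all derivatives fall on $(U-c)^{-1}$, still produces exactly the $|z-z_c|^{-(p+1)}$ allowed by the $Y^{p+2,0}$ norm and does not force an extra factor of $|z-z_c|^{-1}$ or $|\log(z-z_c)|^{p+2}$. This balance is clean for $p\le 2$ because $U''/(U-c)$ and $f/(U-c)$ have the same worst singularity as $(U-c)^{-1}$ times their numerators, and the numerators are, respectively, analytic (for $U''u$) and of order $|z-z_c|^{-p}$ (for $\partial_z^p f$). The factor $(1+|\log\Im c|)$ only enters through the base case and is preserved by the argument, yielding the stated bound.
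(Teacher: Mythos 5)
Your proposal is correct and takes essentially the same route as the paper: the paper's proof simply says that for $p=1,2$ the result "follows directly from the identity" $\partial_z^2\,RaySolver_0(f) = \frac{U''}{U-c}RaySolver_0(f) + \frac{f}{U-c}$, together with the base case $p=0$ (Lemma~\ref{lem-RayS0}). You have spelled out the Leibniz bookkeeping that the paper leaves implicit, and your singularity counting near $z_c$ and your observation that the $(1+|\log\Im c|)$ factor only enters through the base case are exactly what is needed to justify the paper's one-line claim.
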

\begin{proof} This is Lemma \ref{lem-RayS0} when $p=0$. When $p=1$ or $2$, the lemma follows directly from the identity \eqref{identity-R0f}.
\end{proof}


\subsection{Approximate Green function for Rayleigh: $\alpha \ll1$}


Let $\phi_{1,0}$ and $\phi_{2,0}$ be the two solutions of $\Ray_0(\phi) = 0$ that are constructed above in Lemma \ref{lem-defphi012}. We note that  solutions of $\Ray_0(\phi) = f$ tend to a constant
value as $z \to + \infty$ since $\phi_{1,0} \to U_+-c$. We now construct solutions to the Rayleigh equation \eqref{Ray-smalla} with $\alpha \not=0$.
To proceed, let us introduce
\begin{equation}\label{def-phia12}
\phi_{1,\alpha } = \phi_{1,0} e^{-\alpha z} ,\qquad \phi_{2,\alpha} = \phi_{2,0} e^{-\alpha z}.
\end{equation}
A direct calculation shows that the Wronskian determinant 
$$
W[\phi_{1,\alpha},\phi_{2,\alpha}] =  \partial_z \phi_{2,\alpha} \phi_{1,\alpha} - \phi_{2,\alpha} \partial_z \phi_{1,\alpha}  = e^{-2\alpha z}
$$ is  non zero. In addition, we can check that 
\begin{equation}\label{Ray-phia12}
\Ray_\alpha(\phi_{j,\alpha}) = - 2 \alpha (U-c) \partial_z \phi_{j,0} e^{-\alpha z} 
\end{equation}
We are then led to introduce an approximate Green function $G_{R,\alpha}(x,z)$ defined by 
$$
G_{R,\alpha}(x,z) = \left\{ \begin{array}{rrr} (U(x)-c)^{-1} e^{-\alpha (z-x)}  \phi_{1,0}(z) \phi_{2,0}(x), \quad \mbox{if}\quad z>x\\
(U(x)-c)^{-1} e^{-\alpha (z-x)}  \phi_{1,0}(x) \phi_{2,0}(z), \quad \mbox{if}\quad z< x.\end{array}\right.
$$
Again, like $G_{R,0}(x,z)$, the Green function $G_{R,\alpha}(x,z)$ is ``singular'' near $z = z_c$ with two sources 
of singularities: one arising from $1/ (U(x) - c)$ for $x$ near $ z_c$ and the other coming from the $(z - z_c) \log (z- z_c)$ singularity
of $\phi_{2,0}(z)$. By a view of \eqref{Ray-phia12}, it is clear that 
\begin{equation}\label{id-Gxz}
\Ray_\alpha (G_{R,\alpha}(x,z)) = \delta_{x} -2\alpha (U- c) E_{R,\alpha}(x,z),
\end{equation}
for each fixed $x$. Here the error term $E_{R,\alpha}(x,z)$ is defined by  
$$
E_{R,\alpha}(x,z) = 
 \left\{ \begin{array}{rrr}
 (U(x)-c)^{-1} e^{-\alpha (z-x)} \partial_z \phi_{1,0}(z) \phi_{2,0}(x), \quad \mbox{if}\quad z>x\\
 (U(x)-c)^{-1}e^{-\alpha (z-x)}\  \phi_{1,0}(x) \partial_z \phi_{2,0}(z), \quad \mbox{if}\quad z< x.\end{array}\right.
$$
We then introduce an approximate inverse of the operator $\Ray_\alpha$ defined by
\begin{equation}\label{def-RaySa}
RaySolver_\alpha(f)(z) 
:= \int_0^{+\infty} G_{R,\alpha}(x,z) f(x) dx
\end{equation}
and the error remainder 
\begin{equation}\label{def-ErrR}
Err_{R,\alpha}(f)(z) := 2\alpha (U(z) - c) \int_0^{+\infty} E_{R,\alpha}(x,z) f(x) dx
\end{equation}

\begin{lemma}\label{lem-RaySa} Assume that $\Im c \not =0$, and let $p$ be $0,1,$ or $2$. For any $f\in {X^{p,\eta}}$,  with $\alpha<\eta$, the function $RaySolver_\alpha(f)$ is well-defined in $Y^{p+2,\alpha}$, satisfying 
$$ \Ray_\alpha(RaySolver_\alpha(f)) = f + Err_{R,\alpha}(f).$$
Furthermore, there hold  
\begin{equation}\label{est-RaySa}
\| RaySolver_\alpha(f)\|_{Y^{p+2,\alpha}} \le C (1+|\log \Im c|) \|f\|_{{X^{p,\eta}}},
\end{equation}
and 
\begin{equation}\label{est-ErrRa} 
\|Err_{R,\alpha}(f)\|_{Y^{p,\eta}} \le C\alpha  (1+|\log (\Im c)|)  \|f\|_{X^{p,\eta}} ,
\end{equation}
for some universal constant $C$. 
\end{lemma}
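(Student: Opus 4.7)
The identity $\Ray_\alpha(RaySolver_\alpha(f)) = f + Err_{R,\alpha}(f)$ is immediate from \eqref{id-Gxz} by integrating against $f(x)\,dx$, so the substance of the lemma lies in the two norm estimates. My strategy is to exploit the elementary factorization $G_{R,\alpha}(x,z) = e^{-\alpha(z-x)}\,G_{R,0}(x,z)$ in order to reduce matters to Lemma~\ref{lem-derRayS0}.

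For the base estimate \eqref{est-RaySa} with $p=0$, I would split the integral at $x=z$. On $\{x \le z\}$ one has $e^{-\alpha(z-x)}\le 1$, so the pointwise bound \eqref{est-Gr0} on $G_{R,0}$, combined with the decay $|f(x)|\le e^{-\eta x}\|f\|_{X^{0,\eta}}$, yields a contribution of size $C(1+|\log \Im c|)\|f\|_{X^{0,\eta}}$, exactly as in the proof of Lemma~\ref{lem-RayS0}. On $\{x \ge z\}$ we rewrite $e^{-\alpha(z-x)}=e^{\alpha(x-z)}$: the integrand is then bounded by $e^{-(\eta-\alpha)x}\max\{1+x,|x-z_c|^{-1}\}$, which is integrable because $\alpha<\eta$, and the common factor $e^{-\alpha z}$ can be extracted, giving the required $e^{-\alpha z}$ decay at infinity.

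To upgrade to the full $Y^{p+2,\alpha}$ norm for $p\in\{0,1,2\}$, I would bootstrap on the equation itself:
\[ \partial_z^2\phi_\alpha \;=\; \alpha^2\phi_\alpha + \frac{U''}{U-c}\phi_\alpha + \frac{f+Err_{R,\alpha}(f)}{U-c}, \qquad \phi_\alpha := RaySolver_\alpha(f). \]
The factor $(U-c)^{-1}$ on the right produces the prescribed $|z-z_c|^{-1}$ singularity for $\partial_z^2 \phi_\alpha$ demanded by $Y^{p+2,\alpha}$, provided the error estimate \eqref{est-ErrRa} is in hand. Boundedness of $\phi_\alpha$ and the $(1+|\log(z-z_c)|)$ bound on $\partial_z \phi_\alpha$ near $z_c$ are inherited from the analogous properties of $\phi_{1,0}$ and $\phi_{2,0}$ in Lemma~\ref{lem-defphi012}, and higher derivatives $\partial_z^{k}\phi_\alpha$ for $k\ge 3$ come from differentiating the displayed ODE $(k-2)$ further times, each differentiation worsening the singularity by exactly one power.

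For the error bound \eqref{est-ErrRa}, I would apply the same split-at-$x=z$ argument to $E_{R,\alpha}$. The key observation is that the $z$-dependent factor is now $\partial_z\phi_{j,0}(z)$ rather than $\phi_{j,0}(z)$: $\partial_z\phi_{1,0}$ still decays exponentially at rate $\eta_0$ by \eqref{decay-phi012}, while $\partial_z\phi_{2,0}$ has only a logarithmic singularity at $z_c$. The prefactor $2\alpha(U(z)-c)$ in the definition of $Err_{R,\alpha}$ therefore contributes the announced $\alpha$, and the linear factor $(U(z)-c)\sim (z-z_c)$ exactly cancels the $\log(z-z_c)$ near the critical layer to leave a bounded quantity; for large $z$, the exponential decay of $\partial_z\phi_{1,0}$ (combined with $\alpha<\eta<\eta_0$) gives the $e^{-\eta z}$ tail, so that $Err_{R,\alpha}(f)$ lies in $Y^{p,\eta}$ rather than merely $Y^{p,\alpha}$. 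The principal technical obstacle throughout is tracking the $(1+|\log \Im c|)$ factor, which originates from integrating the non-integrable tail $|x-z_c|^{-1}$ against the exponential weight and requires splitting the integration domain into a neighborhood of $z_c$ of size $\sim|\Im c|$ and its complement.
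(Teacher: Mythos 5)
Your proposal is correct and follows essentially the same path as the paper: factor out $e^{-\alpha(z-x)}$ to reduce to the $\alpha=0$ Green function bound \eqref{est-Gr0}, use $\alpha<\eta$ to retain integrability and extract $e^{-\alpha z}$ decay, bootstrap on the Rayleigh ODE for the singular second and higher derivatives, and exploit the $(U(z)-c)$ prefactor in $Err_{R,\alpha}$ together with the exponential decay of $\partial_z\phi_{1,0}$ to land the error in $Y^{p,\eta}$. The only presentational slip is in the $\{x\le z\}$ part of \eqref{est-RaySa}, where simply using $e^{-\alpha(z-x)}\le 1$ gives a uniform bound but not the required $e^{-\alpha z}$ decay — one should instead write $e^{-\alpha(z-x)}=e^{-\alpha z}e^{\alpha x}$ on that region too, absorbing $e^{\alpha x}$ into $e^{-\eta x}$, which is what the paper implicitly does.
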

\begin{proof}  The proof follows similarly to that of Lemmas \ref{lem-RayS0} and \ref{lem-derRayS0}. Indeed, since $G_{R,\alpha}(x,z)  = e^{-\alpha (z-x)} G_{R,0}(x,z)$, the behavior near the critical layer $z=z_c$ is the same for both the Green function, and hence the proof of \eqref{est-RaySa} and \eqref{est-ErrRa} near the critical layer follows identically from that of Lemmas \ref{lem-RayS0} and \ref{lem-derRayS0}.  

Let us check the right behavior at infinity. Consider the case $p=0$ and assume $\|f \|_{X^{0,\eta}} =1$. Similarly to the estimate \eqref{est-Gr0}, Lemma \ref{lem-defphi012} and the definition of $G_{R,\alpha}$ yield
  $$|G_{R,\alpha}(x,z)| \le  C e^{-\alpha|x-z|} \max\{ (1+x), |x-z_c|^{-1} \}.$$
Hence, by definition, 
$$ |RaySolver_\alpha (f)(z) |\le C e^{-\alpha z} \int_0^\infty e^{\alpha x} e^{-\eta x}\max\{ (1+x), |x-z_c|^{-1} \}\; dx $$ 
which is  bounded by $C(1+|\log \Im c|) e^{-\alpha z}$, upon recalling that $\alpha<\eta$.  This proves the right exponential decay of $RaySolver_\alpha (f)(z)$ at infinity, for all $f \in X^{0,\eta}$.

Next, by definition, we have 
$$\begin{aligned}
Err_{R,\alpha}(f)(z) &= -2\alpha (U(z) - c)  \partial_z \phi_{2,0}(z)   \int_z^\infty  e^{-\alpha (z-x)} \phi_{1,0}(x)\frac{f(x)}{U(x)-c}\; dx  
\\ & \quad - 2\alpha (U(z) - c)  \partial_z \phi_{1,0}(z) \int_0^ze^{-\alpha (z-x)} \phi_{2,0}(x) {f(x) \over U(x) - c} dx .
\end{aligned}$$
Since $f(z), \partial_z \phi_{1,0}(z)$ decay exponentially at infinity, the exponential decay of $Err_{R,\alpha}(f)(z)$ follows directly from the above integral representation. It remains to check the order of singularity near the critical layer. Clearly, for bounded $z$, we have 
$$ |E_{R,\alpha}(x,z) | \le C (1+ |\log (z-z_c)| ) e^{\alpha x} \max \{ 1, |x-z|^{-1}\} . $$
The lemma then follows at once, using the extra factor of $U-c$ in the front of the integral \eqref{def-ErrR} to bound the $\log (z-z_c)$ factor. The estimates for derivatives follow similarly. \end{proof}

\subsection{The exact solver for Rayleigh: $\alpha \ll1$}


We are ready to construct the exact solver for the Rayleigh problem. Precisely, we obtain the following.

\begin{proposition}\label{prop-exactRayS}
 Let $p$ be in $\{0,1,2\}$ and $\eta>0$. Assume that $\Im c \not =0$ and $\alpha |\log \Im c|$ is sufficiently small. Then, there exists an operator $RaySolver_{\alpha,\infty} (\cdot) $ from $X^{p,\eta}$ to $Y^{p+2,\alpha}$ (defined by \eqref{def-exactRayS}) so that 
\begin{equation}\label{eqs-RaySolver}
\begin{aligned}
 \Ray_\alpha (RaySolver_{\alpha,\infty} (f)) &= f.
\end{aligned} \end{equation} 
In addition, there holds 
$$\| RaySolver_{\alpha,\infty}(f)\|_{Y^{p+2,\alpha}} \le C \|f\|_{X^{p,\eta}}(1+|\log (\Im c)|) ,$$
for all $f \in X^{p,\eta}$.
\end{proposition}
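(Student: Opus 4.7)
The plan is to invert $\Ray_\alpha$ by a Neumann series argument, using Lemma~\ref{lem-RaySa} as the approximate inverse and exploiting the smallness of the error operator $Err_{R,\alpha}$ under the hypothesis $\alpha |\log \Im c|\ll 1$. Concretely, since Lemma~\ref{lem-RaySa} gives
$$
\Ray_\alpha(RaySolver_\alpha(f)) = f + Err_{R,\alpha}(f),
$$
and since \eqref{est-ErrRa} shows that $Err_{R,\alpha}$ maps $X^{p,\eta}$ into $Y^{p,\eta}\hookrightarrow X^{p,\eta}$ with operator norm at most $C\alpha(1+|\log \Im c|)$, the smallness assumption guarantees that this norm is strictly less than $1/2$, say. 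Consequently $I+Err_{R,\alpha}$ is invertible on $X^{p,\eta}$ by the Neumann series
$$
(I+Err_{R,\alpha})^{-1} = \sum_{n\ge 0} (-Err_{R,\alpha})^{n},
$$
with $\|(I+Err_{R,\alpha})^{-1}\|_{X^{p,\eta}\to X^{p,\eta}} \le 2$.

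I would then simply define
\begin{equation}\label{def-exactRayS}
RaySolver_{\alpha,\infty}(f) \, := \, RaySolver_\alpha\bigl((I+Err_{R,\alpha})^{-1}f\bigr) \, = \, \sum_{n\ge 0} RaySolver_\alpha\bigl((-Err_{R,\alpha})^{n} f\bigr).
\end{equation}
By construction and linearity of $\Ray_\alpha$, one computes
$$
\Ray_\alpha\bigl(RaySolver_{\alpha,\infty}(f)\bigr) = (I+Err_{R,\alpha})\bigl((I+Err_{R,\alpha})^{-1}f\bigr) = f,
$$
which gives \eqref{eqs-RaySolver}. The quantitative bound follows by composing estimates: for any $f\in X^{p,\eta}$,
$$
\|RaySolver_{\alpha,\infty}(f)\|_{Y^{p+2,\alpha}} \le C(1+|\log \Im c|)\, \|(I+Err_{R,\alpha})^{-1}f\|_{X^{p,\eta}} \le 2C(1+|\log \Im c|)\,\|f\|_{X^{p,\eta}},
$$
using \eqref{est-RaySa} for the first inequality and the Neumann bound for the second.

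The only subtle point, and the one that requires a small check rather than routine bookkeeping, is that the Neumann iteration is self-consistent at the level of function spaces: each factor $Err_{R,\alpha}$ must send $X^{p,\eta}$ back into $X^{p,\eta}$ with the same exponential weight $\eta$, so that \eqref{est-ErrRa} may be applied at every step of the series. This is exactly the content of \eqref{est-ErrRa}, which outputs a function in $Y^{p,\eta}\subset X^{p,\eta}$ with the original weight $\eta$ (not the degraded weight $\alpha$ that appears in the image of $RaySolver_\alpha$); the exponential decay of $\partial_z\phi_{1,0}$ and of $f$ itself is what permits this, while the critical layer factor $(U-c)$ in front of the integral defining $Err_{R,\alpha}$ absorbs the $\log(z-z_c)$ singularity. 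Once this observation is in place, the proof reduces to geometric-series bookkeeping.
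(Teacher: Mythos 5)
Your proposal is correct and is essentially the same argument as the paper's, just packaged more abstractly: you invert $I+Err_{R,\alpha}$ on $X^{p,\eta}$ by a Neumann series and then apply $RaySolver_\alpha$ once, whereas the paper runs the iteration explicitly with $S_n=-RaySolver_\alpha(E_{n-1})$, $E_n=-Err_{R,\alpha}(E_{n-1})$ and sums $\sum S_n$. Your formulation has the minor advantage that $\Ray_\alpha$ is applied to a single term $RaySolver_\alpha((I+Err_{R,\alpha})^{-1}f)$ rather than term-by-term to an infinite sum, so the verification of \eqref{eqs-RaySolver} avoids the distributional-limit remark the paper needs; the key self-consistency observation you single out (that $Err_{R,\alpha}$ returns to the original weight $\eta$, not the degraded weight $\alpha$) is exactly the point \eqref{est-ErrRa} provides and the paper uses as well.
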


\begin{proof} The proof follows by iteration. Let us denote 
$$ S_0(z) : = RaySolver_\alpha(f)(z),\qquad E_0(z): = Err_{R,\alpha}(f)(z).$$
It then follows that $\Ray_\alpha (S_0) (z) = f(z) + E_0(z)$. Inductively, we define
$$ S_n(z): = - RaySolver_\alpha(E_{n-1})(z), \qquad E_n(z): = - Err_{R,\alpha}(E_{n-1})(z) ,$$
for $n\ge 1$. It is then clear that for all $n\ge 1$, 
\begin{equation}\label{eqs-appSn} \Ray_\alpha  \Big( \sum_{k=0}^n S_k(z)\Big) = f(z) + E_n(z).\end{equation}
This leads us to introduce the exact solver for Rayleigh defined by 
\begin{equation}\label{def-exactRayS} RaySolver_{\alpha,\infty} (f) := RaySolver_\alpha(f)(z) - \sum_{n\ge 0} (-1)^n RaySolver_{\alpha} (E_n)(z).\end{equation}
By a view of \eqref{est-ErrRa}, we have 
$$\| E_n \|_\eta = \| (Err_{R,\alpha})^n (f)\|_\eta \le C^n\alpha^n  (1+|\log (\Im c)|)^n  \|f\|_\eta,$$
which implies that $E_n\to 0$ in $X_\eta$ as $n \to \infty$ as long as $\alpha \log \Im c$ is sufficiently small. In addition, by a view of  \eqref{est-RaySa},
$$\|RaySolver_{\alpha} (E_n)\|_{Y^\alpha_2} \le C C^n\alpha^n  (1+|\log (\Im c)|)^n  \|f\|_\eta .$$
This shows that the series
$$\sum_{n\ge 0} (-1)^n RaySolver_{\alpha} (E_n)(z)$$
converges in $Y_2^\alpha$, assuming that $\alpha \log \Im c$ is small.

Next, by taking the limit of $n\to \infty$ in \eqref{eqs-appSn}, the equation \eqref{eqs-RaySolver} holds by definition at least in the distributional sense. The estimates when $z$ is near $z_c$ follow directly from the similar estimates on $RaySolver_\alpha(\cdot)$; see Lemma \ref{lem-RaySa}. The proof of Proposition \ref{prop-exactRayS} is thus complete. 
\end{proof}

\subsection{Exact Rayleigh solutions: $\alpha  \ll1$}\label{sec-exactRayleigh}
We shall construct two independent exact Rayleigh solutions by iteration, starting from the approximate Rayleigh solutions $\phi_{j,\alpha}$ defined as in \eqref{def-phia12}. 

\begin{lemma}\label{lem-exactphija} For $\alpha$ small enough so that $\alpha |\log \Im c| \ll 1$, 
there exist two independent functions $\phi_{Ray,\pm} \in e^{\pm \alpha z}L^\infty$ such that
$$
\Ray_\alpha ( \phi_{Ray,\pm} ) = 0, \qquad W[\phi_{Ray,+},\phi_{Ray,-}](z) = \alpha.
$$
Furthermore, we have the following expansions in $L^\infty$: 
$$
\begin{aligned}
\phi_{Ray,-} (z)&=  e^{-\alpha z} \Big (U-c + O(\alpha )\Big).
\\
\phi_{Ray,+} (z)&=  e^{\alpha z} \mathcal{O}(1),
\end{aligned}$$
as $z\to \infty$. At $z = 0$, there hold
$$ 
\begin{aligned}
\phi_{Ray,-}(0) &= U_0 - c + \alpha (U_+-U_0) ^2  \phi_{2,0}(0) + \mathcal{O}(\alpha(\alpha + |z_c|))
\\
\phi_{Ray,+}(0) &= \alpha  \phi_{2,0}(0) +  \mathcal{O}(\alpha^2)
\end{aligned}$$
with $\phi_{2,0}(0) =  {1 \over {U'_c}}  -  {2U''_c\over {U'_c}^2 } z_c \log z_c + \mathcal{O}(z_c)$. 
 \end{lemma}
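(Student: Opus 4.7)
The plan is to construct $\phi_{Ray,-}$ as a perturbation of the natural ansatz $\tilde\phi_-(z) = (U-c)e^{-\alpha z}$ using the exact Rayleigh solver of Proposition~\ref{prop-exactRayS}, and then to obtain $\phi_{Ray,+}$ from $\phi_{Ray,-}$ by reduction of order. Since $U-c$ is an exact zero of $\Ray_0$, the identity \eqref{Ray-phia12} (or a direct computation) gives
\[
\Ray_\alpha \tilde\phi_-(z) \;=\; -2\alpha(U-c)\,U'\, e^{-\alpha z},
\]
which decays exponentially and hence lies in $X^{p,\eta}$ for some $\eta>\alpha$, with norm of size $O(\alpha)$. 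Setting
\[
\phi_{Ray,-} \;:=\; \tilde\phi_{-} + RaySolver_{\alpha,\infty}\bigl(2\alpha(U-c)U'\, e^{-\alpha z}\bigr),
\]
Proposition~\ref{prop-exactRayS} gives $\Ray_\alpha\phi_{Ray,-}=0$ with the correction bounded in $Y^{p+2,\alpha}$ by $C\alpha(1+|\log \Im c|)$. Under the smallness assumption $\alpha|\log \Im c|\ll 1$ this yields $\phi_{Ray,-}(z) = e^{-\alpha z}(U-c+\mathcal{O}(\alpha))$ and, in particular, the claimed exponential decay.

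For the precise boundary value $\phi_{Ray,-}(0)$, I would retain only the leading iterate of the series \eqref{def-exactRayS} defining $RaySolver_{\alpha,\infty}$, evaluate the Green function \eqref{def-RaySa} at $z=0<x$, and use the antiderivative identity
\[
\int_0^\infty (U(x)-c)\,U'(x)\,dx \;=\; \tfrac12\bigl[(U_+-c)^2-(U_0-c)^2\bigr].
\]
After substituting $U_0=0$ and $c=U'_c z_c + O(z_c^2)$, this produces (up to the overall constant fixed by the normalization of $\phi_{1,0},\phi_{2,0}$ in Lemma~\ref{lem-defphi012}) the term $\alpha(U_+-U_0)^2\phi_{2,0}(0)$ plus an $O(\alpha|z_c|)$ remainder. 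The remaining iterates contribute $O(\alpha^2(1+|\log \Im c|)^2)$, absorbed into $O(\alpha(\alpha+|z_c|))$. Adding $\tilde\phi_-(0) = U_0 - c$ yields the claimed formula for $\phi_{Ray,-}(0)$.

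For the growing solution $\phi_{Ray,+}$, I would use reduction of order: set $\phi_{Ray,+}(z) = \phi_{Ray,-}(z)\,\chi(z)$ with
\[
\chi'(z) \;=\; -\alpha/\phi_{Ray,-}(z)^2,
\]
and choose the constant of integration so that $\phi_{Ray,+}(0) = \alpha\phi_{2,0}(0) + O(\alpha^2)$. By direct calculation $\Ray_\alpha\phi_{Ray,+} = 0$ and the Wronskian
\[
W[\phi_{Ray,+},\phi_{Ray,-}] \;=\; -\phi_{Ray,-}^2\,\chi' \;=\; \alpha
\]
is constant by construction. At infinity, $\phi_{Ray,-}(s)^{-2} \sim (U_+-c)^{-2}e^{2\alpha s}$ forces $\chi(z)\sim -e^{2\alpha z}/[2(U_+-c)^2]$, so $\phi_{Ray,+}(z)\sim -e^{\alpha z}/[2(U_+-c)]$, which is $e^{\alpha z}\mathcal{O}(1)$ as claimed.

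The main obstacle is the delicate treatment of $\chi$ near the critical layer. Since $\phi_{Ray,-}$ inherits a simple zero at $z=z_c$ (from $\phi_{1,0}(z_c)=0$), $\phi_{Ray,-}^{-2}$ has a pole of order two there, and $\chi$ must be defined on the universal cover $\widetilde\Gamma$ with the integration contour chosen consistently with the branch cut defining $\phi_{2,0}$. The simple pole that $\chi$ acquires at $z_c$ is then exactly cancelled by the simple zero of $\phi_{Ray,-}$, so $\phi_{Ray,+}$ remains regular at the critical layer. The subtle step is identifying the constant of integration for $\chi$ so that the resulting $\phi_{Ray,+}$ matches $\alpha\phi_{2,0}(0)$ at $z=0$ up to $O(\alpha^2)$, which amounts to relating the reduction-of-order antiderivative to the explicit form of $\phi_{2,0}$ given in Lemma~\ref{lem-defphi012}.
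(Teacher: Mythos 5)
Your proposal is correct and takes essentially the same route as the paper. The paper constructs $\phi_{Ray,-}$ by the explicit iteration $\psi_0=e^{-\alpha z}(U-c)$, $e_0=-2\alpha(U-c)U'e^{-\alpha z}$, $\psi_k=-RaySolver_\alpha(e_{k-1})$, $e_k=-Err_{R,\alpha}(e_{k-1})$, and sums; your formula $\phi_{Ray,-}=\tilde\phi_-+RaySolver_{\alpha,\infty}(2\alpha(U-c)U'e^{-\alpha z})$ is term-by-term the same series, just packaged through the exact solver of Proposition~\ref{prop-exactRayS}. Your computation of the leading iterate at $z=0$ via the Green function and the antiderivative $\int_0^\infty(U-c)U'\,dx=\tfrac12[(U_+-c)^2-(U_0-c)^2]$ is exactly the paper's evaluation of $\psi_1(0)$. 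For $\phi_{Ray,+}$, the paper likewise uses reduction of order, $\phi_{Ray,+}=\alpha\,\phi_{Ray,-}\int_{1/2}^z\phi_{Ray,-}^{-2}\,dy$, and verifies the Wronskian and the $e^{\alpha z}$ growth at infinity; with the convention $W[f,g]=g'f-gf'$ used in Lemma~\ref{lem-defphi012}, your choice $\chi'=-\alpha/\phi_{Ray,-}^2$ is the one that actually delivers $W[\phi_{Ray,+},\phi_{Ray,-}]=+\alpha$, so you have the sign more carefully tracked than the displayed formula in the paper (which is off by an overall sign but harmless). Your remarks on the cancellation of the double pole of $\phi_{Ray,-}^{-2}$ at $z_c$ against the simple zero of $\phi_{Ray,-}$, and on the choice of branch on the universal cover $\widetilde\Gamma$, are accurate and make explicit a point the paper passes over silently by fixing the base point at $z=1/2$. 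Neither you nor the paper fully writes out the verification that the reduction-of-order antiderivative reproduces $\alpha\phi_{2,0}(0)+\mathcal{O}(\alpha^2)$ at $z=0$, so your acknowledgement of this as the remaining ``subtle step'' is honest and consistent with the level of detail given in the original.
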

\begin{proof} 
Let us start with the decaying solution $\phi_{Ray,-}$, which is now constructed by induction. Let us introduce 
$$ \psi_{0} =  e^{-\alpha z} (U-c), \qquad e_{0} = - 2\alpha (U-c) U' e^{-\alpha z},$$
and inductively for $k \ge 1$, 
$$ \psi_{k} = - RaySolver_\alpha (e_{k-1}), \qquad e_{k} = - Err_{R,\alpha} (e_{k-1}).$$
We also introduce 
$$ \phi_{N} = \sum_{k=0}^N \psi_{k} .$$
By definition, it follows that 
$$ \Ray_\alpha (\phi_{N}) = e_{N}, \qquad \forall ~N\ge 1.$$
%
We observe that $\|e_{0}\|_{\eta+\alpha} \le C \alpha$ and $\|\psi_{0}\|_\alpha \le C$. Inductively for $k\ge 1$, by the estimate \eqref{est-ErrRa}, we have 
$$\| e_{k}\|_{\eta + \alpha}  \le C\alpha  (1+|\log (\Im c)|)  \|e_{k-1}\|_{\eta+\alpha} \le C \alpha(C\alpha  (1+|\log (\Im c)|))^{k-1} ,
$$
and by Lemma \ref{lem-RaySa}, 
$$\| \psi_{k} \|_\alpha \le C(1+|\log (\Im c)|)  \| e_{k-1}\|_{\eta + \alpha} \le (C\alpha  (1+|\log (\Im c)|))^{k} .$$
Thus, for sufficiently small $\alpha$, the series $\phi_{N}$ converges in $X_\alpha$ and the error term $e_{N}\to 0$ in $X_{\eta+\alpha}$. This proves the existence of the exact decaying Rayleigh solution $\phi_{Ray,-}$ in $X_\alpha$, or in $e^{-\alpha z}L^\infty$.

As for the growing solution, we simply define 
$$ \phi_{Ray,+}  =\alpha \phi_{Ray,-}(z) \int_{1/2} ^ z \frac{1}{\phi^2_{Ray,-} (y) }\; dy,$$
for which we have $W[\phi_{Ray,+},\phi_{Ray,-}](z) = \alpha$. By definition, $\phi_{Ray,+} $ solves the Rayleigh equation identically. Next, since $\phi_{Ray,-}(z)$ tends to $e^{-\alpha z} (U_+ - c + \mathcal{O}(\alpha))$, $\phi_{Ray,+} $ is of order $e^{\alpha z}$ as $z \to \infty$.

Finally, at $z=0$, we have 
$$
\begin{aligned}
\psi_1(0) &= - RaySolver_\alpha(e_0) (0) = - \phi_{2,\alpha}(0) \int_0^{+\infty} e^{2\alpha x}\phi_{1,\alpha}(x) {e_0(x) \over U(x) - c} dx 
\\
&=  2 \alpha \phi_{2,0}(0) \int_0^{+\infty}  U' (U-c)dz = \alpha (U_+-U_0) (U_+ + U_0 - 2c) \phi_{2,0}(0)
\\
&  = \alpha (U_+-U_0)^2  (U_+ + U_0 - 2c) \phi_{2,0}(0) +  2\alpha (U_+-U_0) (U_0 - c) \phi_{2,0}(0).
\end{aligned}
$$
From the definition, we have $\phi_{Ray,-}(0) = U_0 -c + \psi_1(0) + \mathcal{O}(\alpha^2)$. 
This proves the lemma, upon using that $U_0 - c = \mathcal{O}(z_c)$. 
\end{proof}

%
%

\subsection{Rayleigh equation: $\alpha \approx1$}

In this subsection, we treat the case when $\alpha \approx 1$ and when the critical layer is finite:
$$z_c \not = \infty.$$  
Precisely, we shall prove the following lemma.

\begin{lemma}\label{lem-mid-alpha} Let $r,R$ be arbitrary positive constants, and let $z_c$ be the critical layer. Assume that $z_c \not =\infty$. For all $\alpha \in [r,R]$, there are two exact solutions $\phi_{1,\alpha}, \phi_{2,\alpha}$ to the Rayleigh equations so that 
$W[\phi_{1,\alpha}, \phi_{2,\alpha}] = 1$, 
$$|\phi_{1,\alpha}(z)| \le C e^{-\alpha z}, \qquad |\phi_{2,\alpha}(z)| \le C e^{\alpha z},$$
for $z$ away from the critical layers, and $\phi_{j,\alpha}$ are both in $Y^{2,\alpha}$
(in particular, both behave as $1+(z-z_c)\log(z-z_c)$ for $z$ near the critical layers $z=z_c$). 
\end{lemma}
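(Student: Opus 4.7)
The iterative construction of Proposition 3.6 expands around $\Ray_0$ and requires $\alpha|\log\Im c|\ll 1$, which is no longer available when $\alpha\in[r,R]$. The plan is instead to build $\phi_{1,\alpha}$ by a shooting argument from infinity and obtain $\phi_{2,\alpha}$ by reduction of order.

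For the decaying solution, I rewrite the Rayleigh equation as $(\partial_z^2-\alpha^2)\phi=\bigl(U''/(U-c)\bigr)\phi$. Since $U''$ decays exponentially and $|U-c|$ is bounded below for $\Re z$ large (using that $z_c$ is finite and lies in a bounded region), the right-hand side is exponentially small there. On $\{\Re z\ge M\}\subset\widetilde\Gamma$ with $M$ sufficiently large, I would solve the Volterra integral equation
\[
\phi(z)=e^{-\alpha z}+\frac{1}{2\alpha}\int_z^{\infty}\bigl(e^{\alpha(y-z)}-e^{-\alpha(y-z)}\bigr)\frac{U''(y)}{U(y)-c}\phi(y)\,dy
\]
by a contraction argument in the weighted norm $\|e^{\alpha\Re z}\phi\|_{L^\infty}$, producing a unique holomorphic solution with $\phi_{1,\alpha}\sim e^{-\alpha z}$ as $\Re z\to\infty$. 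Extending backward through $\widetilde\Gamma$ is straightforward by standard ODE continuation away from $z_c$. Near $z_c$, I would invoke a Frobenius analysis mirroring Lemma 3.2: the indicial roots at $z_c$ are $0$ and $1$ (the added $-\alpha^2(U-c)$ term vanishes at $z_c$ and is subdominant at the singular point), so the local solution space is spanned by analogues $\phi_{1,\alpha}^{\mathrm{loc}}(z)=(z-z_c)P_1^\alpha(z)$ and $\phi_{2,\alpha}^{\mathrm{loc}}(z)=P_2^\alpha(z)+Q^\alpha(z)(z-z_c)\log(z-z_c)$ with $P_j^\alpha,Q^\alpha$ holomorphic near $z_c$. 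The extended $\phi_{1,\alpha}$ decomposes locally as $a\,\phi_{1,\alpha}^{\mathrm{loc}}+b\,\phi_{2,\alpha}^{\mathrm{loc}}$, yielding the $Y^{2,\alpha}$-type regularity near $z_c$.

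For the growing solution I would set
\[
\phi_{2,\alpha}(z):=\phi_{1,\alpha}(z)\int_{z_\ast}^z\frac{dy}{\phi_{1,\alpha}(y)^2},
\]
with the contour chosen in $\widetilde\Gamma$ to avoid zeros of $\phi_{1,\alpha}$ and $z_\ast\neq z_c$. Direct differentiation gives $W[\phi_{1,\alpha},\phi_{2,\alpha}]=1$, and the asymptotics $\phi_{1,\alpha}(y)^{-2}\sim e^{2\alpha y}$ produce $\phi_{2,\alpha}(z)=O(e^{\alpha z})$ at infinity. Expanding $\phi_{1,\alpha}^{-2}$ near $z_c$ using the local decomposition and integrating term-by-term produces the claimed $1+(z-z_c)\log(z-z_c)$ singularity structure in $\phi_{2,\alpha}$.

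The principal obstacle will be uniformity as $\Im c\to 0^+$: the cutoff $M$ and the ODE continuation constants must be bounded independently of $\Im c$, which holds because the Rayleigh coefficients on the compact region $\Re z\in[\Re z_c,M]$ stay bounded uniformly away from the sole singularity at $z_c$, while the Frobenius data $P_j^\alpha,Q^\alpha$ depend holomorphically on $c$. A secondary subtlety is that the reduction-of-order integral must be interpreted carefully when the coefficient $b$ vanishes (so that $\phi_{1,\alpha}(z_c)=0$); in that degenerate case one instead constructs $\phi_{2,\alpha}$ by a parallel shooting argument from infinity with boundary data $\sim e^{+\alpha z}$, solved via an analogous Volterra scheme, and then renormalizes to enforce $W[\phi_{1,\alpha},\phi_{2,\alpha}]=1$.
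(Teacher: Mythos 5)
Your proposal is correct, but it organizes the construction differently from the paper, and it is worth contrasting the two. The paper starts from the explicit approximate pair $\phi_{1,\alpha,0}=e^{-\alpha z}(U-c)$ and $\phi_{2,\alpha,0}=\phi_{1,\alpha,0}\int^z \phi_{1,\alpha,0}^{-2}$, notes that $\Ray_\alpha(\phi_{j,\alpha,0})=-2\alpha U'\phi_{j,\alpha,0}$ (a small exponentially decaying error), and then corrects \emph{both} approximate solutions by a contraction mapping patched over three regions: a contraction on $[M,\infty)$ in exponentially weighted $L^\infty$, a contraction on $I_c=[\Re z_c-r,\Re z_c+r]$ in a space weighted by $(1+|(z-z_c)\log(z-z_c)|)^{-1}$, and standard ODE continuation on the regular middle piece. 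You instead build only the decaying solution $\phi_{1,\alpha}$ directly (Volterra shooting from infinity with $e^{-\alpha z}$ as leading term and $U''/(U-c)$ as the perturbation), continue it backward, read off its local structure via Frobenius theory at the regular singular point $z_c$ (indicial roots $0,1$, so a $(z-z_c)\log(z-z_c)$ branch appears), and then produce $\phi_{2,\alpha}$ by an \emph{exact} reduction of order. The paper's route applies reduction of order only at the approximate level and then iterates both solutions; yours applies it at the exact level, which makes the Wronskian identity $W[\phi_{1,\alpha},\phi_{2,\alpha}]=1$ automatic and avoids one of the two contraction schemes, at the cost of having to analyze $\int^z\phi_{1,\alpha}^{-2}$. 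The paper's route keeps all constants explicitly under control through the iteration and avoids entirely the reduction-of-order integral, at the cost of a more elaborate three-region patching.

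Two small remarks on your argument. First, your worry about the reduction-of-order integrand near a zero $z_0\neq z_c$ of $\phi_{1,\alpha}$ is unfounded: at such a zero the Rayleigh equation forces $\phi_{1,\alpha}''(z_0)=\bigl(\alpha^2+U''(z_0)/(U(z_0)-c)\bigr)\phi_{1,\alpha}(z_0)=0$, so the residue of $\phi_{1,\alpha}^{-2}$ at $z_0$ vanishes identically and the antiderivative is single-valued there; no special contour choice is required. Likewise, when $\phi_{1,\alpha}(z_c)=0$ (your ``degenerate'' case) the reduction-of-order integral acquires a simple pole and a log term at $z_c$, and multiplication by $\phi_{1,\alpha}\sim(z-z_c)$ precisely produces the claimed $Y^{2,\alpha}$ behavior for $\phi_{2,\alpha}$; the parallel shooting you propose as a workaround is unnecessary, though of course also legitimate. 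Second, the Volterra contraction on $[M,\infty)$ does rely on $\alpha$ being bounded below (a factor $\alpha^{-1}$ appears in the kernel and in the contraction constant $Ce^{-\eta_0 M}/(\alpha c_0\eta_0)$), but that is exactly the regime $\alpha\in[r,R]$ assumed in the lemma, so the argument is consistent.
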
 

The two exact solutions $\phi_{1,\alpha}, \phi_{2,\alpha}$ to the Rayleigh equation defines the Green function:  
$$
G_{R,\alpha}(x,z) =  \frac{1}{U(x)-c}\left\{ \begin{array}{rrr} \phi_{1,\alpha}(z) \phi_{2,\alpha}(x), \quad \mbox{if}\quad z>x\\
\phi_{1,\alpha}(x) \phi_{2,\alpha}(z), \quad \mbox{if}\quad z< x.\end{array}\right.
$$
and the inverse of the Rayleigh operator 
$$Ray^{-1}_{\alpha} (f) = \int_0^\infty G_{R,\alpha}(x,z) f(x)\; dx .$$ 
This yields the following proposition. 

\begin{proposition}\label{prop-exactRayS-mid}
Let $r,R$ be arbitrary positive constants, and let $z_c$ be the critical layer. Assume that $z_c \not =\infty$ and $\Im c \not =0$. Then, for all $\alpha \in [r,R]$ and for positive $\eta <\alpha$, the inverse $Ray^{-1}_{\alpha}(\cdot)$ of the Rayleigh operator is well-defined from $X^{p,\eta}$ to $Y^{p+2,\eta}$. In addition, there holds 
$$\| Ray^{-1}_{\alpha} (f)\|_{Y^{p+2,\eta}} \le C \|f\|_{X^{p,\eta}}(1+|\log (\Im c)|) ,$$
for all $f \in X^{p,\eta}$, and for $p = 0,1,2$.
\end{proposition}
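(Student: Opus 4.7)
Given Lemma 3.6, which supplies two independent exact Rayleigh solutions $\phi_{1,\alpha},\phi_{2,\alpha}\in Y^{2,\alpha}$ with $\phi_{1,\alpha}\sim e^{-\alpha z}$, $\phi_{2,\alpha}\sim e^{\alpha z}$ away from the critical layer and $W[\phi_{1,\alpha},\phi_{2,\alpha}]=1$, my plan is to take the Green function $G_{R,\alpha}(x,z)$ in the statement as a \emph{definition} of $\mathrm{Ray}_\alpha^{-1}(f)$ and show it has the claimed mapping properties, mirroring the analysis already carried out for the small-$\alpha$ solver in Lemmas 3.3, 3.4, and 3.6. Writing
\[
\mathrm{Ray}_\alpha^{-1}(f)(z) = \phi_{1,\alpha}(z)\int_0^z \frac{\phi_{2,\alpha}(x)f(x)}{U(x)-c}\,dx + \phi_{2,\alpha}(z)\int_z^\infty \frac{\phi_{1,\alpha}(x)f(x)}{U(x)-c}\,dx,
\]
a direct differentiation together with $W=1$ shows that $\mathrm{Ray}_\alpha(\mathrm{Ray}_\alpha^{-1}(f))=f$ in the distributional sense (and then classically off the critical layer), so the equation is satisfied for free. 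The heart of the matter is therefore to extract the $Y^{p+2,\eta}$ bounds.

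For the behavior at infinity, fix $\eta\in(0,\alpha)$ and assume $\|f\|_{X^{p,\eta}}=1$. In the first integral, $\phi_{1,\alpha}(z)\sim e^{-\alpha z}$ while $|\phi_{2,\alpha}(x)f(x)|\lesssim e^{(\alpha-\eta)x}$ away from $z_c$, so the integral contributes at most $e^{(\alpha-\eta)z}/(\alpha-\eta)$ and the product is $O(e^{-\eta z})$. In the second integral, $\phi_{2,\alpha}(z)\sim e^{\alpha z}$ and $|\phi_{1,\alpha}(x)f(x)|\lesssim e^{-(\alpha+\eta)x}$, giving directly an $O(e^{-\eta z})$ contribution. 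Hence $\mathrm{Ray}_\alpha^{-1}(f)\in L^\infty_\eta$, which is the outer piece of the $Y^{p+2,\eta}$ norm.

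Near the critical layer the main difficulty is that $1/(U(x)-c)$ has a pole of order one at $x=z_c$, while $\phi_{2,\alpha}$ carries a $(z-z_c)\log(z-z_c)$ branch. Exactly as in the proof of Lemma 3.3, one uses $|U(x)-c|\gtrsim |x-z_c|$ and $\Im z_c\sim \Im c$ to bound the inner integral by $C(1+|\log\Im c|)$, and the $Y^{2,\alpha}$ behavior of $\phi_{1,\alpha}$ and $\phi_{2,\alpha}$ inherited from Lemma 3.6 shows that $\mathrm{Ray}_\alpha^{-1}(f)$ itself is bounded near $z_c$, its first derivative grows at most like $1+|\log(z-z_c)|$, and the second derivative at most like $|z-z_c|^{-1}$; this places the output in $Y^{2,\eta}$ with the constant $C(1+|\log\Im c|)\|f\|_{X^{0,\eta}}$. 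For the higher-regularity cases $p=1,2$, rather than differentiate the integral representation, I would use the Rayleigh equation itself in the form
\[
\partial_z^2\bigl(\mathrm{Ray}_\alpha^{-1}(f)\bigr) = \alpha^2\, \mathrm{Ray}_\alpha^{-1}(f) + \frac{U''\,\mathrm{Ray}_\alpha^{-1}(f) + f}{U-c},
\]
and differentiate this identity $p$ additional times; the $X^{p,\eta}$ control of $f$ combined with the already-established $Y^{2,\eta}$ control of $\mathrm{Ray}_\alpha^{-1}(f)$ yields the $Y^{p+2,\eta}$ bound, exactly as in the bootstrap argument of Lemma 3.4.

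The main obstacle, and the only genuinely delicate point, is keeping the exponential weights and the logarithmic critical-layer loss decoupled: at infinity one needs $\eta<\alpha$ so that $\phi_{1,\alpha}(z)\int_0^z e^{\alpha x}e^{-\eta x}\,dx$ still decays, while near $z_c$ one must not let the crude estimate $|U-c|^{-1}\lesssim |x-z_c|^{-1}$ produce a non-integrable singularity—this is handled by splitting the $x$-integration into $|x-z_c|\le 1$ and $|x-z_c|\ge 1$ and using $\int_0^\infty\min\{1,|x-z_c|^{-1}\}e^{-\eta x}\,dx\lesssim 1+|\log\Im c|$. Once these two estimates are in hand, the proposition follows in the same manner as Lemma 3.4 without any additional iteration, because Lemma 3.6 has already produced the exact Rayleigh solutions and there is no error term to absorb.
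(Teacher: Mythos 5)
Your proof is correct and follows essentially the same route as the paper, which simply declares the proof ``identical to that of Lemma~\ref{lem-RaySa}'': one takes the Green function built from the two exact solutions of Lemma~\ref{lem-mid-alpha}, controls the critical-layer singularity via $|U(x)-c|^{-1}\lesssim|x-z_c|^{-1}$ with $\Im z_c\sim\Im c$ to pick up the $1+|\log\Im c|$ factor, uses $\eta<\alpha$ for the decay at infinity, and bootstraps to $p=1,2$ through the rearranged Rayleigh identity exactly as in Lemmas~\ref{lem-RayS0}--\ref{lem-derRayS0}. The only small thing to keep explicit is that, unlike $\phi_{1,0}$ in the $\alpha\ll1$ case which vanishes at $z_c$, both mid-$\alpha$ solutions are merely bounded with a $(z-z_c)\log(z-z_c)$ branch, but this does not change the $|x-z_c|^{-1}$ size of the kernel and the argument carries through unaltered.
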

\begin{proof}
The proof is identical to that of Lemma \ref{lem-RaySa}. 
\end{proof}

\begin{proof}[Proof of Lemma \ref{lem-mid-alpha}]
We start our construction with the following approximate Rayleigh solutions
\begin{equation}\label{def-phia12-mid}
\phi_{1,\alpha,0 } = e^{-\alpha z}  (U-c),\qquad \phi_{2,\alpha,0} = \phi_{1,\alpha,0} \int_{1+\Re z_c }^z \frac{1}{\phi_{1,\alpha,0}^2} \; dy.
\end{equation}
By construction, the Wronskian determinant $W[\phi_{1,\alpha,0},\phi_{2,\alpha,0}] 
=1$. In addition, we can check that 
\begin{equation}\label{Ray-phia12-mid}
\Ray_\alpha(\phi_{j,\alpha,0}) = - 2 \alpha U'(z) \phi_{j,\alpha,0}(z) .
\end{equation}
%
%
We then introduce an approximate Green function $G_{R,\alpha}(x,z)$ defined by 
$$
G_{R,\alpha}(x,z) =  \frac{1}{U(x)-c}\left\{ \begin{array}{rrr} \phi_{1,\alpha,0}(z) \phi_{2,\alpha,0}(x), \quad \mbox{if}\quad z>x\\
\phi_{1,\alpha,0}(x) \phi_{2,\alpha,0}(z), \quad \mbox{if}\quad z< x.\end{array}\right.
$$
for each fixed $x$, and the error term $E_{R,\alpha}(x,z)$ caused by the approximation
$$
E_{R,\alpha}(x,z) = -2\alpha U'(z) G_{R,\alpha}(x,z).
$$

\subsubsection*{Large $z$: $z\ge M$.} 
We shall construct Rayleigh solutions in the region $z\ge M$, for sufficiently large $M$ so that $M\ge 1+\Re z_c$. The solutions are constructed iteratively in the following form:
$$
\widetilde  \phi_{j,\alpha} = \phi_{j,\alpha,0} + e^{(-1)^j \alpha z}\psi_{j,0}$$
in which $\psi_{j,0}$ is defined via the contraction mapping principle 
$$
\begin{aligned}
 \psi_j =  T_j \psi_j: &= 2 \alpha e^{(-1)^{1+j} \alpha z} \int_M^\infty G_{R,\alpha} (x,z) U'(x) e^{(-1)^j \alpha x} \psi_{j,0}(x) \; dx
 \\
 &\quad + 2\alpha \int_M^\infty G_{R,\alpha} (x,z) U'(x) \phi_{j,\alpha,0}(x)\; dx.
 \end{aligned}$$
 We shall prove that the mapping $T_j$ is indeed contractive in $L^\infty(M,\infty)$ for sufficiently large $M$. It then follows by the construction that 
$$ Ray_\alpha(\widetilde  \phi_{j,\alpha} ) =  - 2 \alpha U'(z) e^{(-1)^j \alpha z}\psi_{j,0}. 
$$ 
We shall show that $e^{(-1)^j \alpha z}\psi_{j,0}$ is exponentially small of order $e^{-\eta_0M}$ for sufficiently large $M$. We then correct the previous error by introducing the next order solution $\psi_{j,1} = T_j \psi_{j,0}$, leaving a smaller error of the form $2 \alpha U'(z) e^{(-1)^j \alpha z}\psi_{j,1}$. This iterative construction yields exact Rayleigh solutions defined on $[M,\infty)$.

It remains to show the contractive of $T_j$ in $L^\infty(M,\infty)$. For all $x,z \ge M$, there holds 
$$ | G_{R,\alpha}(x,z) | \le C_0 e^{-\alpha |x-z|}.$$
Hence, for $j=1$, we have 
$$ 
\begin{aligned}
|T_1 \psi - T_1 \widetilde  \psi| 
&\le 
C \alpha \| \psi - \widetilde  \psi \|_{L^\infty} e^{\alpha z}\int_M^\infty  e^{-\alpha |x-z|} e^{-\eta_0 x} e^{-\alpha x} \; dx
\\
&\le 
C \alpha \| \psi - \widetilde  \psi \|_{L^\infty} e^{-\eta_0 z}
\\
&\le 
C \alpha e^{-\eta_0 M}\| \psi - \widetilde  \psi \|_{L^\infty} .\end{aligned}$$
Similarly, for $j=2$, we compute 
$$ 
\begin{aligned}
|T_2 \psi - T_2 \widetilde  \psi| 
&\le 
C \alpha \| \psi - \widetilde  \psi \|_{L^\infty} e^{-\alpha z}\int_M^\infty  e^{-\alpha |x-z|} e^{-\eta_0 x} e^{\alpha x} \; dx
\\
&\le 
C \alpha \| \psi - \widetilde  \psi \|_{L^\infty} z e^{-\eta_0 z}
\\
&\le 
C \alpha M e^{-\eta_0 M}\| \psi - \widetilde  \psi \|_{L^\infty} .\end{aligned}$$
Since $\alpha $ is bounded, these computations prove that $T_j$ are contractive on $L^\infty (M,\infty)$ for sufficiently large $M$. The exact Rayleigh solutions $\phi_j \sim \phi_{j,\alpha}$ for large $z$ follow.

\subsubsection*{Near critical layers: $z \in  [\Re z_c - r, \Re z_c + r]$.} 

For $r$ sufficiently small, we shall construct Rayleigh solutions in $I_c: = [\Re z_c - r, \Re z_c + r]$. As in the previous case, it suffices to show that 
$$  T \psi: = 2 \alpha \int_{I_c} G_{R,\alpha} (x,z) U'(x) \psi (x) \; dx$$
is contractive in $X$ that consists of functions $f(z)$ so that 
$$\| f\|_X : = \sup_{z\in I_c} |f(z)| \Big(1+ |(z-z_c) \log (z-z_c)|\Big)^{-1}$$
is finite. We note that for $x,z\in I_c$, there holds 
$$
|G_{R,\alpha}(x,z) | \lesssim \left\{ \begin{array}{rrr} |x-z_c|^{-1}|z-z_c| \Big( 1+ |(x-z_c) \log (x-z_c)|\Big), \quad \mbox{if}\quad z>x
\\ 1+ |(z-z_c) \log (z-z_c)|, \quad \mbox{if}\quad z< x.\end{array}\right.
$$
For all $\psi, \widetilde  \psi$ in $X$, we compute 
$$ 
\begin{aligned}
|(T \psi - T \widetilde  \psi )(z)| 
&\le 
 \| \psi - \widetilde  \psi \|_X \int_{I_c} |G_{R,\alpha}(x,z) |\Big( 1+ |(x-z_c) \log (x-z_c)|\Big)\; dx.
\end{aligned}$$
For $x >z$, the integral is bounded by 
$$\begin{aligned}
 \Big( &1+ |(z-z_c) \log (z-z_c)|\Big) \int_{I_c}\Big( 1+ |(x-z_c) \log (x-z_c)|\Big)\; dx 
 \\&\le C r \Big( 1+ |(z-z_c) \log (z-z_c)|\Big) .\end{aligned}$$
 Similarly, for $x<z$, the integral is estimated by 
 $$\begin{aligned}
  \int_{I_c} &|x-z_c|^{-1}|z-z_c| \Big( 1+ |(x-z_c) \log (x-z_c)|\Big)\; dx 
 \\&\le C r |(z-z_c) \log (z-z_c)|.\end{aligned}$$
Combining the two estimates yields 
$$ 
\begin{aligned}
|(T \psi - T \widetilde  \psi )(z)| 
&\le 
C r \| \psi - \widetilde  \psi \|_X 
\end{aligned}$$
which proves the contraction of $T$ as a map from $X$ to itself, for sufficiently small $r$. This yields the exact Rayleigh solutions near the critical layer.

\subsubsection*{Away from critical layers: $z\in [0,\Re z_c - r]\cup [\Re z_c + r,M] $.} 
Since the Rayleigh equations are regular away from the critical layers, the previous solutions constructed on $[M,\infty)$ can be extended for all $z\ge \Re z_c + r$, for arbitrarily small positive constant $r$. We then solve the Rayleigh problem on $I_c = [\Re z_c - r, \Re z_c + r]$, having the boundary conditions at $\Re z_c + r$ to be the same as the Rayleigh solutions constructed on $z\ge \Re z_c + r$. Finally, on $[0,\Re z_c - r]$, the Rayleigh equations are regular, and thus the solutions are extended to all $z\ge 0$ by the standard ODE theory.  

This completes the proof of Lemma \ref{lem-mid-alpha}. 
\end{proof}


\subsection{Rayleigh equation: $\alpha \gg1$}\label{sec-Rayleighla}


In this section, we study the Rayleigh equation when $\alpha \gg1$, but $\Im c$ can be arbitrarily close to the imaginary axis as long as $\alpha^{-1} \log \Im c$ remains sufficiently small. It turns out that we can avoid dealing with critical layers as done in the previous section. Precisely, we obtain the following lemma. 

\begin{lemma}\label{lem-large-alpha} For sufficiently large $\alpha$ so that $\alpha^{-1} \log \Im c$ remains sufficiently small, there are two exact solutions $\phi_{\alpha,\pm}$ to the Rayleigh equations so that 
$W[\phi_{\alpha,-}, \phi_{\alpha,+}] = \alpha$, and 
$$\phi_{\alpha,\pm}(z) =  e^{\pm \alpha z} \Big( 1 + \cO(\alpha^{-1}\log \Im c) \Big) .$$
In addition, the Green function $G_{R,\alpha}(x,z)$ of the Rayleigh operator satisfies 
$$ |G_{R,\alpha}(x,z)|\le C \alpha^{-1} |U(x)-c|^{-1} e^{-\alpha |x-z|}\Big( 1 + \cO(\alpha^{-1}\log \Im c) \Big) .$$
\end{lemma}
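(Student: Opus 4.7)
\textbf{Proof proposal for Lemma \ref{lem-large-alpha}.} The plan is to treat the Rayleigh equation as a perturbation of the constant-coefficient Helmholtz operator $\partial_z^2 - \alpha^2$, for which $e^{\pm\alpha z}$ are exact solutions. Rewriting \eqref{Ray-smalla} with $f=0$ as
\[
\phi'' - \alpha^2 \phi \;=\; \frac{U''}{U-c}\,\phi,
\]
I will construct the decaying Rayleigh solution $\phi_{\alpha,-}$ as the fixed point of the integral equation
\[
\phi_{\alpha,-}(z) \;=\; e^{-\alpha z} \;-\; \frac{1}{2\alpha}\int_0^\infty e^{-\alpha|z-x|}\,\frac{U''(x)}{U(x)-c}\,\phi_{\alpha,-}(x)\,dx,
\]
set in the weighted space $\{\, f : \|f\| = \sup_{z\ge 0} e^{\alpha z}|f(z)| < \infty\,\}$.

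The key estimate is that the map is a contraction with norm of order $\alpha^{-1}|\log \Im c|$. For any bounded $\psi$ with $|\psi(x)| \le \|\psi\| e^{-\alpha x}$, the elementary inequality $|z-x| + x \ge z$ yields $e^{-\alpha|z-x|-\alpha x} \le e^{-\alpha z}$, so
\[
\left|\frac{1}{2\alpha}\int_0^\infty e^{-\alpha|z-x|}\frac{U''(x)}{U(x)-c}\,\psi(x)\,dx\right| \;\le\; \frac{\|\psi\|\,e^{-\alpha z}}{2\alpha}\int_0^\infty \frac{|U''(x)|}{|U(x)-c|}\,dx.
\]
Since $U''$ decays exponentially, and since $|U(x)-c|\gtrsim |x-\Re z_c|+|\Im c|$ along $\RR_+$ (using $U'(z_c)\ne 0$ and monotonicity), the last integral is bounded by $C(1+|\log\Im c|)$. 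Thus under the hypothesis $\alpha^{-1}|\log\Im c|\ll 1$ the operator is contractive, producing $\phi_{\alpha,-}(z) = e^{-\alpha z}\bigl(1+\cO(\alpha^{-1}\log\Im c)\bigr)$ uniformly in $z\ge 0$.

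For the growing solution, I would use reduction of order: set
\[
\phi_{\alpha,+}(z) \;:=\; 2\alpha\,\phi_{\alpha,-}(z)\int_{z_*}^z \frac{dy}{\phi_{\alpha,-}^2(y)}
\]
for a fixed reference $z_*$ chosen away from the critical layer. Since the Rayleigh equation has no $\phi'$ term, the Wronskian of any two solutions is constant, and this formula gives $W[\phi_{\alpha,-},\phi_{\alpha,+}]=\alpha$ (modulo the normalization factor chosen in the statement). Using $\phi_{\alpha,-}^{-2}(y) = e^{2\alpha y}(1+\cO(\alpha^{-1}\log\Im c))$ and integrating from $z_*$, one gets $\phi_{\alpha,+}(z) = e^{\alpha z}\bigl(1+\cO(\alpha^{-1}\log\Im c)\bigr)$, with the error having the same structure because the integral is dominated by the endpoint $y=z$. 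Inserting the two solutions into the standard formula
\[
G_{R,\alpha}(x,z) \;=\; \frac{1}{(U(x)-c)\,W}\begin{cases}\phi_{\alpha,-}(z)\,\phi_{\alpha,+}(x) & z>x,\\ \phi_{\alpha,-}(x)\,\phi_{\alpha,+}(z) & z<x,\end{cases}
\]
and using $\phi_{\alpha,-}(z)\phi_{\alpha,+}(x) = e^{-\alpha(z-x)}(1+\cO(\alpha^{-1}\log\Im c))$ when $z>x$ (similarly for $z<x$), delivers the claimed pointwise bound on $G_{R,\alpha}$.

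The main obstacle is controlling the singular factor $U''(x)/(U(x)-c)$ across the critical point $z_c$: one must verify that its integral against the smooth kernel $e^{-\alpha|z-x|}$ only generates a logarithm in $\Im c$ rather than a worse divergence. This is where the assumption that the small parameter is $\alpha^{-1}\log \Im c$ (rather than merely $\alpha^{-1}$) enters essentially; the rest of the argument is a standard Neumann series in an exponentially weighted $L^\infty$ space, and in particular no critical-layer decomposition in the sense of Section \ref{sec-Rayleigh} is needed here because the factor $\alpha^{-1}$ from the Helmholtz Green function absorbs the logarithmic singularity.
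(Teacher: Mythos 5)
Your proof is essentially the paper's proof. The paper first rescales via $\tilde z=\alpha z$, $\phi = \alpha^{-2}\tilde\phi(\alpha z)$, so that the Helmholtz part becomes $\partial_{\tilde z}^2-1$ and the perturbation is explicitly $\alpha^{-2}U''$, but this is cosmetic; after undoing the scaling, the iteration operator is exactly your $\frac{1}{2\alpha}\int e^{-\alpha|z-x|}\frac{U''(x)}{U(x)-c}(\cdot)\,dx$ in the $e^{\pm\alpha z}$-weighted sup norms, and the contraction estimate is obtained with the same two ingredients: the triangle inequality $e^{-\alpha|z-x|}e^{\pm\alpha x}\le e^{\pm\alpha z}$ and the bound $\int_0^\infty |U''/(U-c)|\,dx\lesssim 1+|\log\Im c|$, giving operator norm $\cO(\alpha^{-1}\log\Im c)$. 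You correctly identified the only non-routine estimate, namely that integrating $1/(U-c)$ across the critical point yields only a logarithm in $\Im c$.

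The one place where you deviate from the paper is the growing solution: the paper simply re-runs the \emph{same} fixed-point argument in the $e^{-\alpha z}$-weighted space (where the triangle inequality works just as well), directly producing $\phi_{\alpha,+}(z)=e^{\alpha z}(1+\cO(\alpha^{-1}\log\Im c))$ uniformly. Your reduction-of-order formula
$\phi_{\alpha,+}(z)=2\alpha\,\phi_{\alpha,-}(z)\int_{z_*}^z\phi_{\alpha,-}^{-2}\,dy$
is a valid second solution, but by construction it vanishes at $z=z_*$, so the uniform expansion you claim for it cannot hold near $z_*$ (at $z=z_*$ the right-hand side $e^{\alpha z_*}(1+\text{small})$ is bounded away from zero). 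This does not harm the Green-function bound, which only needs the one-sided estimate $|\phi_{\alpha,+}(z)|\lesssim e^{\alpha z}$, but it falls short of the lemma's stated pointwise asymptotics for $\phi_{\alpha,+}$. The clean fix is the paper's: construct both solutions by the same contraction, once with weight $e^{+\alpha z}$ and once with $e^{-\alpha z}$.
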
 

\begin{proof} Since $\alpha \gg1$, it is convenient to introduce the change of variables
$$ \widetilde  z = \alpha z, \qquad \phi(z) = \alpha^{-2}\widetilde  \phi (\alpha z).$$
Then, $\widetilde  \phi$ solves the scaled Rayleigh equation
\begin{equation}\label{def-Raysc} \Ray_{sc}(\widetilde  \phi): = (U-c) (\partial_{\widetilde  z}^2 - 1) \widetilde  \phi - \alpha^{-2}U'' \widetilde  \phi =0,\end{equation}
in which $U = U(\alpha^{-1}\tilde z)$ and $U'' = U''(\alpha^{-1}\tilde z)$. 
This way, we can treat the $\alpha^{-2} U''$ as a regular perturbation, and thus the inverse of $\Ray_{sc}(\cdot)$ is constructed by induction, starting from the inverse of $(U-c) (\partial_{\widetilde  z}^2 - 1)$. The iteration operator is defined by  
$$ T \widetilde  \phi : = \alpha^{-2} \Big[ (U-c) (\partial_{\widetilde  z}^2 - 1)\Big]^{-1} \circ U'' \widetilde  \phi .$$ 
It suffices to show that $T$ is contractive with respect to the $e^{\widetilde  z}$ or $e^{-\widetilde  z}$ weighted sup norm, denoted by $\|\cdot \|_{L^\infty_\pm}$, respectively for decaying or growing solutions. Indeed, recalling that $(\partial_{\widetilde  z}^2 - 1)^{-1} f= e^{-|\widetilde  z|} \star f$, we check that  
$$\begin{aligned}
 |T \widetilde  \phi  - T\widetilde  \psi|(z) 
 &\le  \alpha^{-2}  \int_0^\infty  \Big| \frac{U''}{U-c}\Big| e^{-|\widetilde  x - \widetilde  z|} e^{\pm|\widetilde  x|} \| \widetilde  \phi  - \widetilde  \psi\|_{L^\infty_\pm}\; d\widetilde  x
 \\
& \le  \alpha^{-2} e^{\pm|\widetilde  z|}  \| \widetilde  \phi  - \widetilde  \psi\|_{L^\infty_\pm} \int_0^\infty  \Big| \frac{U''(\alpha^{-1}\widetilde  x)}{U(\alpha^{-1}\widetilde  x)-c}\Big| \; d\widetilde  x
 \\
& \le  \alpha^{-1} e^{\pm|\widetilde  z|}  \| \widetilde  \phi  - \widetilde  \psi\|_{L^\infty_\pm} \int_0^\infty  \Big| \frac{U''(x)}{U(x)-c}\Big| \; d x,
 \end{aligned}$$ 
 in which the triangle inequality was used to deduce $e^{-|\tilde x - \tilde z|} e^{\pm|\tilde x|} \le e^{\pm|\tilde z|} $. As for the integral term, when $c$ is away from the range of $U$, it follows that 
$$ \int_0^\infty  \Big| \frac{U''(x)}{U(x)-c}\Big| \; dx \le C_0 \int_0^\infty  |U''|\; dx \le C.$$ 
In the case, when $c$ is near the range of $U$, the integral of $1/(U-c)$ is bounded by $\log(U-c) \lesssim \log \Im c$. This proves that 
$$\begin{aligned}
 |T \tilde \phi  - T\tilde \psi|(z) 
& \le C \alpha^{-1} |\log \Im c| e^{\pm|\tilde z|}  \| \tilde \phi  - \tilde \psi\|_{L^\infty_\pm}.
 \end{aligned}$$

Hence, $T$ is contractive as long as $\alpha^{-1}\log \Im c$ is sufficiently small. Similarly, the same bound holds for derivatives, since the derivative of the Green kernel of $\partial_{\widetilde  z}^2 - 1$ satisfies the same bound. This yields two independent Rayleigh solutions of the form 
\begin{equation}\label{phi0-la} \partial_{\widetilde  z}^k\widetilde  \phi^0_{s,\pm} (\widetilde  z) = (-1)^k e^{\pm \widetilde  z} \Big( 1 + \cO(\alpha^{-1}\log \Im c) \Big), \qquad k=0,1.\end{equation}
The lemma follows. 
\end{proof}

\section{The $4^{th}$-order Airy operator}   
In this section, we study the Airy operator, defined as in \eqref{opAiry}. Let us introduce the following so-called modified Airy operator 
\begin{equation}\label{def-cA}\mathcal{A}: = \eps \Delta_\alpha - (U-c) .\end{equation}
Then, the Airy operator simply reads 
\begin{equation}\label{Airy-de}\Airy = \cA \Delta_\alpha .\end{equation}
That is, the inverse of $\Airy(\cdot)$ is simply a combination of the inverse of $\cA$ and the inverse of $\Delta_\alpha$. We observe that near the critical layer $z = z_c$, the coefficient $U - c$ is approximately $z - z_c$, 
and so $\cA$ can be approximated by the classical Airy operator $\partial_z^2 - z$.

\subsection{The modified Airy operator}\label{sec-mAiry}

This section is devoted to study the modified Airy operator $\cA$, defined in \eqref{def-cA}. Precisely, we shall construct the Green function of $\cA$. The construction involves the classical Airy functions and the Langer's variables. To proceed, let us introduce the following Langer's transformation, which is a joint transformation in the variable $z \to \eta$ and in the function 
$\Phi \to \Psi$: 
\begin{equation}\label{def-Langer}(z,\Phi) \mapsto (\eta,\Psi)\end{equation}
with $\eta = \eta(z)$ defined by
\begin{equation}\label{var-Langer}
\eta (z) = \Big[ \frac 32 \int_{z_c}^z \Big( \frac{U-c+\eps\alpha^2}{U'_c}\Big)^{1/2} \; dz \Big]^{2/3} 
\end{equation}
and $\Psi = \Psi(\eta)$ defined by the relation
\begin{equation}\label{phi-Langer}
 \Phi (z) = \dot z ^{1/2} \Psi(\eta).
 \end{equation}
Here, $
 \dot z = \frac{d z( \eta)}{d \eta} 
 $  and $z = z(\eta)$ is the inverse of the map $\eta = \eta(z)$. It is useful to note that $(U-c+\alpha^2 \epsilon) \dot z^2 = U_c' \eta$. 

 The Langer's transformation links the classical Airy equation to the modified Airy equation. Precisely, we have the following lemma whose proof follows from a direct calculation. 
 
\begin{lemma}[\cite{GGN2,GGN3}] \label{lem-Langer} Let $(z,\Phi) \mapsto (\eta, \Psi)$ be the Langer's transformation defined as in \eqref{def-Langer}. The function $\Psi(\eta)$ solves the classical Airy equation: 
\begin{equation}\label{cl-Airy}\epsilon   \partial^2_\eta \Psi  - U_c' \eta \Psi   = f(\eta)\end{equation}
if and only if the function $\Phi =  \dot z ^{1/2} \Psi(\eta (z))$ solves
\begin{equation}\label{id-mAiry} 
\cA( \Phi) =  \dot z ^{-3/2} f(\eta(z))+ \eps \D_z^2 \dot z^{1/2} \dot z^{-1/2}  \Phi (z) .\end{equation}
\end{lemma}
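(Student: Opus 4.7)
The plan is a direct chain-rule computation. The key algebraic identity behind the Langer change of variables is obtained by differentiating \eqref{var-Langer}: writing $\eta^{3/2}(z) = \tfrac{3}{2}\int_{z_c}^z\!\bigl((U-c+\alpha^2\eps)/U'_c\bigr)^{1/2}\,dy$ and differentiating gives $\eta^{1/2}\eta'(z) = \bigl((U-c+\alpha^2\eps)/U'_c\bigr)^{1/2}$, i.e.
\[
\eta\,(\eta'(z))^2 \;=\; \frac{U-c+\alpha^2\eps}{U'_c}, \qquad \eta'(z)\;=\;\frac{1}{\dot z},
\]
so that $(U-c+\alpha^2\eps)\,\dot z^{\,2} = U'_c\,\eta$. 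This is the single identity that will convert the $(U-c)$-coefficient into the $\eta$-coefficient of the classical Airy equation.

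Next I would compute $\cA(\Phi)$ for $\Phi(z) = A(z)\Psi(\eta(z))$ with $A(z) := \dot z^{1/2}$. By the chain rule, using $\eta'(z) = 1/\dot z$,
\[
\Phi''(z) \;=\; A''\Psi \;+\; \Big(\tfrac{2A'}{\dot z} + A\,\eta''\Big)\Psi' \;+\; \tfrac{A}{\dot z^{2}}\Psi''.
\]
The point of the prefactor $A = \dot z^{1/2}$ is the standard Sturm--Liouville cancellation of the first-derivative term: since $A' = \tfrac{1}{2}\dot z^{-3/2}\ddot z$ and $\eta'' = -\ddot z/\dot z^{3}$, one verifies $\tfrac{2A'}{\dot z} + A\,\eta'' \equiv 0$. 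Consequently
\[
\cA(\Phi) \;=\; \eps\Phi'' - (U-c+\alpha^2\eps)\Phi \;=\; \eps\,\dot z^{-3/2}\Psi'' \;+\; \bigl[\eps A'' - (U-c+\alpha^2\eps)A\bigr]\Psi.
\]

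To finish, I would invoke the key identity to rewrite $(U-c+\alpha^2\eps)A = (U'_c\eta/\dot z^{\,2})\dot z^{1/2} = U'_c\,\eta\,\dot z^{-3/2}$. This gives
\[
\cA(\Phi) \;=\; \dot z^{-3/2}\bigl[\eps\Psi'' - U'_c\,\eta\,\Psi\bigr] \;+\; \eps\,\partial_z^{2}(\dot z^{1/2})\,\Psi(\eta(z)).
\]
Since $\Psi(\eta(z)) = \dot z^{-1/2}\Phi(z)$, substituting $\eps\Psi'' - U'_c\eta\Psi = f(\eta)$ yields exactly \eqref{id-mAiry}. The ``if and only if'' is immediate: the transformation $(z,\Phi)\mapsto(\eta,\Psi)$ is invertible (the map $z\mapsto\eta(z)$ is a biholomorphism on the universal cover away from $z_c$, and $A = \dot z^{1/2}$ is nonvanishing), so the same calculation run backwards converts \eqref{id-mAiry} into \eqref{cl-Airy}.

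The whole proof is essentially bookkeeping; the only nontrivial step is the cancellation of the $\Psi'$-coefficient, which is the standard reason behind the Langer prefactor $\dot z^{1/2}$ and which I have already verified above. There is no genuine obstacle, only the need to keep the two meanings of derivatives — $\partial_z$ vs.\ $d/d\eta$ — carefully separated.
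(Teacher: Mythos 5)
Your computation is correct and is exactly the ``direct calculation'' the paper refers to (the lemma is cited from \cite{GGN2,GGN3} without an explicit proof here). The key step --- verifying that the Langer prefactor $A = \dot z^{1/2}$ kills the $\Psi'$-term via $2A'\eta' + A\eta'' = 0$, then using $(U-c+\alpha^2\eps)\,\dot z^{\,2} = U_c'\,\eta$ to match the zeroth-order coefficients --- is carried out accurately, and the ``if and only if'' follows because every step is an identity and the transformation is invertible.
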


In view of \eqref{cl-Airy}, let us denote 
\begin{equation}\label{def-dcl}
\delta = \Bigl( { \eps \over U_c'} \Bigr)^{1/3}  = e^{-i \pi / 6} (\alpha R U_c')^{-1/3}
\end{equation}
to be the thickness of the critical layer $z=z_c$. We also introduce the critical layer variables 
\begin{equation}\label{def-XZ}X = \delta^{-1} \eta(x), \qquad Z = \delta^{-1} \eta(z).\end{equation}
where $\eta(z)$ is the Langer's variable defined as in \eqref{var-Langer}.

In this section, we prove the following proposition, providing pointwise estimates on the Green function of the modified Airy operator.

\begin{proposition}\label{prop-exactGrmAiry}
Let $\cA$ be defined as in \eqref{def-cA} and let $\mathcal{G}(x,z)$ be the corresponding Green function to the modified Airy operator $\cA$ on the half-line $z\ge 0$. That is, for each fixed $x$, there holds
\begin{equation}\label{mA-Green}
\cA \mathcal{G}(x,z)  = \delta_x(z) ,  
\end{equation} 
together with the jump conditions across $z=x$:
\begin{equation}\label{GrmAiry-jump}[\mathcal{G}(x,z)]_{\vert_{z=x}} = 0 , \qquad [\epsilon \partial_z\mathcal{G}(x,z)]_{\vert_{z=x}} = 1.\end{equation}
Then, the Green function $\mathcal{G}(x,z)$ exists and satisfies the following pointwise estimates
\begin{equation}\label{mAiry-Greenest}
\begin{aligned}
|\D_z^\ell \D_x^k \mathcal{G}(x,z) | 
&\le 
C \delta^{-2-k-\ell} \langle z\rangle^{(1-k-\ell)/3}\langle Z \rangle^{(k+\ell-1)/2} \mathcal{T}(x,z)  ,
\end{aligned}
\end{equation} 
for all $k,\ell \ge 0$ and all $x,z\ge 0$. Here, $\mathcal{T}(x,z)$ is defined by 
\begin{equation}\label{def-TT}\mathcal{T}(x,z): = \left\{ \begin{aligned} 
e^{\frac{\sqrt2}{3}|X|^{3/2}} e^{-\frac{\sqrt2}{3}|Z|^{3/2}} & \quad \qquad \mbox{if} \quad z>x>\Re z_c,
\\
e^{-\frac{\sqrt2}{3}|X|^{3/2}} e^{-\frac{\sqrt2}{3}|Z|^{3/2}} & \quad \qquad \mbox{if} \quad z>\Re z_c>x,
\\e^{- \frac{\sqrt2}{3}|X|^{3/2}} e^{\frac{\sqrt2}{3}|Z|^{3/2}} & \quad \qquad \mbox{if} \quad \Re z_c>z>x,
\end{aligned}\right.\end{equation}
and $\mathcal{T}(x,z) = \mathcal{T}(z,x)$. In particular, there holds
\begin{equation}\label{Gcl-xnearz}
\begin{aligned}
\mathcal{T}(x,z) \le 
e^  {-  {\sqrt{2} \over 3} \sqrt{|Z|}|X-Z| }  .
\end{aligned}
\end{equation} 
In the above, $\delta$ denotes the critical layer thickness and $X,Z$ the critical layer variables defined as in \eqref{def-dcl} and \eqref{def-XZ}, respectively. 
\end{proposition}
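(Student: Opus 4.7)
The plan is to reduce the modified Airy equation $\cA \Phi = f$ to the classical Airy equation via the Langer transformation of Lemma \ref{lem-Langer}, build the Green function of the classical operator from the primary Airy functions $\mathrm{Ai}$ and $\mathrm{Ci}$, transfer that construction back through the inverse Langer map to obtain an approximate Green function $\mathcal{G}_0$, and finally upgrade $\mathcal{G}_0$ to an exact Green function by a Neumann-series iteration that absorbs the lower-order Langer error. Throughout, the rotation of $\delta = e^{-i\pi/6}(\alpha R U_c')^{-1/3}$ by $-\pi/6$ is what converts the classical exponent $-\tfrac{2}{3}Z^{3/2}$ of $\mathrm{Ai}$ into the real part $-\tfrac{\sqrt 2}{3}|Z|^{3/2}$ recorded in $\mathcal{T}(x,z)$.

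First I would recall the standard expansions $\partial_Z^m \mathrm{Ai}(Z)\lesssim \langle Z\rangle^{(2m-1)/4}e^{-\frac{\sqrt 2}{3}|Z|^{3/2}}$ and $\partial_Z^m \mathrm{Ci}(Z)\lesssim \langle Z\rangle^{(2m-1)/4}e^{\frac{\sqrt 2}{3}|Z|^{3/2}}$ on the sectors of interest, together with the normalization $W[\mathrm{Ai},\mathrm{Ci}] = c_0 \neq 0$. The Green function $G_{cl}$ of the classical scaled operator $\eps \partial_\eta^2 - U_c'\eta$ on the line is then the standard combination proportional to $\mathrm{Ai}(Z_>)\mathrm{Ci}(Z_<)$ with a Wronskian weight of size $\delta/\eps$. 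The four cases appearing in \eqref{def-TT} correspond to whether the source $x$ and the observation point $z$ lie on the same or opposite sides of the turning point $\Re z_c$; on each side we pick the Airy solution that decays in the correct direction, which produces exactly the stated exponential signatures. The bound \eqref{Gcl-xnearz} then follows from the elementary inequality $||X|^{3/2} - |Z|^{3/2}|\ge c\sqrt{|Z|}\,|X-Z|$ when $X,Z$ are on the same side of the turning point, together with the fact that both terms in the exponent are negative when they lie on opposite sides.

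The second step applies the inverse Langer transformation $\Phi(z)=\dot z^{1/2}\Psi(\eta(z))$ to transfer $G_{cl}$ into an approximate Green function $\mathcal{G}_0(x,z)$ of $\cA$. By Lemma \ref{lem-Langer},
\begin{equation*}
\cA \mathcal{G}_0(x,z) = \delta_x(z) + \eps (\partial_z^2 \dot z^{1/2})\dot z^{-1/2}\,\mathcal{G}_0(x,z),
\end{equation*}
and the jump conditions \eqref{GrmAiry-jump} hold because the Wronskian normalization of $\mathrm{Ai},\mathrm{Ci}$ has been built into the prefactor. Define the iteration operator $T f(z):=\int \mathcal{G}_0(y,z)\,\eps (\partial_y^2 \dot y^{1/2})\dot y^{-1/2} f(y)\,dy$ and set $\mathcal{G}:=\sum_{n\ge 0}(-T)^n \mathcal{G}_0$. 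The quantitative input is that $\eps\partial_z^2 \dot z^{1/2}\dot z^{-1/2}$ is of order $\delta^2 \langle z\rangle^{-2}\langle Z\rangle^{-1}$ near $z_c$ and decays rapidly away from it, so $T$ is a contraction on the weighted space dictated by the right-hand side of \eqref{mAiry-Greenest}, and the Neumann series converges with the target estimate preserved term by term. The half-line constraint $z\ge 0$ is enforced by adding an appropriate multiple of the decaying homogeneous solution $\dot z^{1/2}\mathrm{Ai}(Z)$, which satisfies the same pointwise bound.

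Derivatives are finally read off from the representation $\mathcal{G}(x,z)\sim \eps^{-1}\delta \dot z^{1/2}\dot x^{1/2}\mathrm{Ai}(Z_>)\mathrm{Ci}(Z_<)$: each $\partial_z$ produces a factor $\dot\eta(z)\delta^{-1}\sim \delta^{-1}\langle z\rangle^{-1/3}$, which combined with the derivative asymptotics of $\mathrm{Ai}$ yields the weight $\delta^{-2-k-\ell}\langle z\rangle^{(1-k-\ell)/3}\langle Z\rangle^{(k+\ell-1)/2}$ claimed in \eqref{mAiry-Greenest}. The main obstacle, in my view, is not the classical Green function of the first step but rather showing that the iteration of the third step converges \emph{uniformly in} $(\alpha,\eps,c)$ while preserving the case-dependent exponential weight $\mathcal{T}(x,z)$: in the critical layer $z\approx z_c$ the Langer error ceases to be pointwise small, so convergence must exploit cancellations between the growth and decay factors in $\mathcal{G}_0$ when integrated against the lower-order term, as well as the gain of $\delta^2$ coming from $\eps = \delta^3 U_c'$.
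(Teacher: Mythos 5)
Your plan mirrors the paper's argument almost exactly: the approximate Green function $G_\mathrm{a}$ of \eqref{def-Gcl} is built from $Ai$ and $Ci$ via the Langer change of variable, the remainder $E_\mathrm{a}$ is estimated via the Airy asymptotics of Lemma \ref{lem-classicalAiry} and absorbed by iteration using the convolution estimates (Lemmas \ref{lem-ConvAiry-cl}--\ref{lem-locConvAiry}), and the derivative weights in \eqref{mAiry-Greenest} are read off from $\eta'(z)\sim\langle z\rangle^{-1/3}$ and $\partial_Z^m Ai\sim\langle Z\rangle^{m/2-1/4}$ exactly as you describe (the paper uses the prefactor $\dot x$ instead of the Langer-symmetric $\dot x^{1/2}\dot z^{1/2}$ and so computes $E_\mathrm{a}$ directly, which is purely cosmetic). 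One small correction: the proposition's $\mathcal{G}$ is only required to satisfy the jump conditions \eqref{GrmAiry-jump} and to decay as $z\to\infty$, not a Dirichlet condition at $z=0$ (the no-slip boundary conditions are enforced later, through $G_{\alpha,c}$), so the homogeneous correction by a multiple of $\dot z^{1/2}Ai(Z)$ at the end of your construction is not needed; also the Langer error coefficient $\eps(\partial_z^2\dot z^{1/2})\dot z^{-1/2}$ is of size $\eps\sim\delta^3$ near $z_c$ rather than $\delta^2$, which only makes the contraction easier.
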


In view of the Langer's transformation and Lemma \ref{lem-Langer}, we recall the following lemma on the classical Airy solutions, which is needed in the proof of Proposition \ref{prop-exactGrmAiry}. 

\begin{lemma}[\cite{Airy,Vallee}]\label{lem-classicalAiry} The classical Airy equation $\Psi'' - z \Psi  =0$
has two independent solutions $Ai(z)$ and 
$Ci(z)$ so that the Wronskian determinant of $Ai$ and $Ci$ equals to one. 
In addition, $Ai(e^{i \pi /6} x)$ and $Ci(e^{i \pi /6} x)$ converge to $0$ as $x\to \pm \infty$
($x$ being real), respectively. Furthermore, there hold asymptotic bounds: 
$$
\Bigl| Ai(k, e^{i \pi / 6} x) \Bigr| \le
 {C| x |^{-k/2-1/4} } e^{-\sqrt{2 | x|} x / 3}, \qquad k\in \mathbb{Z}, \quad x\in \mathbb{R},
 $$ and
 $$\Bigl| Ci(k, e^{i \pi / 6} x) \Bigr| \le
 {C| x |^{-k/2-1/4} } e^{\sqrt{2 | x|} x / 3}, \qquad k\in \mathbb{Z},\quad x\in \mathbb{R},
 $$in which $Ai(0,z) = Ai(z)$, $Ai(k,z) = \partial_z^{-k} Ai(z)$ for $k\le 0$, and $Ai(k,z)$ is the $k^{th}$ primitives of $Ai(z)$, for $k\ge 0$. The functions $Ci(k,z)$ are defined in the same way. 
\end{lemma}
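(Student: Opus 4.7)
The plan is to construct $Ai$ and $Ci$ via contour integrals of Laplace type in the complex $t$--plane. Starting from
$$f(z) = \frac{1}{2\pi i}\int_\gamma \exp\!\Big(\frac{t^3}{3} - zt\Big)\,dt,$$
direct differentiation under the integral yields $f''(z) - z f(z) = \frac{1}{2\pi i}\int_\gamma \frac{d}{dt}\exp(t^3/3 - zt)\,dt$, which is a pure boundary term. It vanishes provided the endpoints of $\gamma$ recede to infinity inside one of the three sectors $S_k = \{|\arg t - (2k+1)\pi/3| < \pi/6\}$, $k=0,1,2$, where $\Re(t^3) \to -\infty$. Choosing $\gamma$ to run from $\infty\,e^{-i\pi/3}$ to $\infty\,e^{i\pi/3}$ defines $Ai(z)$; a second linearly independent contour (together with a linear combination tuned to the desired normalization) defines $Ci(z)$. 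Since the Airy equation has no first-derivative term, the Wronskian $W(Ai,Ci)$ is constant in $z$, so the normalization $W\equiv 1$ is fixed by evaluating the integrals and their $t$-derivatives once, e.g.\ at $z=0$ via the Gamma-function values of $Ai(0)$ and $Ai'(0)$.

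Next I would establish the pointwise bounds along the ray $z = e^{i\pi/6}x$ by the method of steepest descent. The saddle points of the phase $\Phi(t) = t^3/3 - zt$ are $t_\pm = \pm\sqrt{z}$, with critical values $\Phi(t_\pm) = \mp \tfrac{2}{3} z^{3/2}$. On the ray $z = e^{i\pi/6}x$ with $x>0$ one computes $z^{3/2} = e^{i\pi/4}x^{3/2}$, so $\Re\Phi(t_\pm) = \mp \tfrac{\sqrt 2}{3}x^{3/2} = \mp \sqrt{2|x|}\,x/3$; for $x<0$ the branch of $z^{3/2}$ flips the sign. The contour defining $Ai$ can be deformed to pass through the decaying saddle $t_- = -\sqrt z$ transversally to the steepest-descent direction, and the standard Laplace estimate gives
$$|Ai(e^{i\pi/6}x)| \le C|x|^{-1/4}\,e^{-\sqrt{2|x|}\,x/3},$$
uniformly in $x\in\mathbb R$, in particular decay as $x\to+\infty$. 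The contour for $Ci$ instead passes through the growing saddle $t_+ = +\sqrt{z}$, producing the opposite exponential and hence decay as $x\to-\infty$.

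For the primitives $Ai(k,z)$, $k\ge 1$, I would repeatedly integrate by parts in $t$ using the identity $e^{t^3/3-zt} = -\partial_z\bigl(t^{-1}e^{t^3/3-zt}\bigr)\cdot$ (up to a $t^2$-correction that reduces via the ODE), which gives an integral representation with an extra factor $t^{-k}$ at the saddle; evaluating this factor at $t_\pm$ produces the algebraic prefactor $|z|^{-k/2}$, hence
$$|Ai(k,e^{i\pi/6}x)| \le C|x|^{-k/2-1/4}\,e^{-\sqrt{2|x|}\,x/3}.$$
For derivatives ($k\le -1$) the Airy ODE $Ai'' = zAi$ lets one express $\partial_z^j Ai$ as a polynomial in $z$ times $Ai$ and $Ai'$; an induction on $j$, combined with the $k=0,1$ bounds, yields the same estimate with $k<0$. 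The analogous argument on the $Ci$-contour provides the companion bounds. The main obstacle is to carry out the steepest-descent estimate \emph{uniformly} in a complex sector containing the ray $\arg z = \pi/6$ as $x$ ranges over all of $\mathbb R$, in particular across $x=0$ where the two saddles $t_\pm$ coalesce; this requires a uniform asymptotic expansion of turning-point type rather than a naive saddle expansion, and is precisely the content of the classical references \cite{Airy,Vallee} that the lemma invokes.
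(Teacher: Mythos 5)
The paper offers no proof of this lemma: it is quoted directly from the classical references \cite{Airy,Vallee}, and your contour-integral/steepest-descent outline is exactly the argument those references carry out, so your route coincides with the one the paper relies on and is sound in outline. Two corrections are worth making. First, with your phase $\Phi(t)=t^{3}/3-zt$ and your own computation $\Phi(t_{\pm})=\mp\tfrac23 z^{3/2}$, the decaying saddle for $Ai$ is $t_{+}=+\sqrt z$, not $t_{-}=-\sqrt z$: the admissible contour from $\infty e^{-i\pi/3}$ to $\infty e^{i\pi/3}$ deforms onto the steepest-descent path through $+\sqrt z$, and the $Ci$-contour picks up the other saddle. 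Second, the coalescence of the two saddles at $x=0$ is not the real obstacle you suggest it is for the crude bounds claimed here: for $k\ge 0$ the right-hand side $|x|^{-k/2-1/4}$ blows up as $x\to 0$, so boundedness of the (entire) functions $Ai(k,\cdot)$, $Ci(k,\cdot)$ on compact sets already gives the estimate there, while for $k\le -1$ the bound as literally stated in fact fails at $x=0$ (e.g. $Ai'(0)\neq 0$ while the right-hand side vanishes), and the paper only ever applies these estimates for arguments bounded away from zero, as in the proof of Lemma \ref{lem-GrmAiry}. Thus no uniform turning-point expansion is needed: plain saddle-point/Laplace estimates for $|x|\gtrsim 1$, together with compactness near $x=0$ (and the away-from-zero proviso for the derivative bounds), give the lemma, and the remaining bookkeeping you describe — constancy of the Wronskian and its evaluation at $z=0$, the $t^{-k}$ factors at the saddle for the primitives with contours avoiding $t=0$ and suitable basepoints, and the ODE-induction for derivatives — is routine.
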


%
%

We first construct a good approximate Green function of $\cA$.

\begin{lemma}\label{lem-GrmAiry}
There is an approximate Green function $G_\mathrm{a}(x,z)$ (defined in \eqref{def-Gcl}) to the modified Airy operator $\cA$ in the following sense: for each $x$, the function $G_\mathrm{a}(x,z)$ solves 
\begin{equation}\label{mA-Ga}
\cA G_\mathrm{a}(x,z)  = \delta_x(z) + E_\mathrm{a}(x,z),  
\end{equation} 
for some remainder term $E_\mathrm{a}(x,z)$ (defined in \eqref{eqs-Gcl}), together with the jump conditions \eqref{GrmAiry-jump}. 
In addition, for $k,\ell\ge 0$ and $x,z\ge 0$, there hold the following uniform estimates
$$
\begin{aligned}
|\D_x^\ell \D_z^k G_\mathrm{a}(x,z) | 
&\le 
C \delta^{-2-k-\ell} \langle x\rangle^{(1-\ell)/3} \langle z \rangle^{-k/3} \langle X\rangle^{(2\ell-1)/4} \langle Z \rangle^{(2k-1)/4} \mathcal{T}(x,z),
\\
|\D_x^\ell \D_z^k E_\mathrm{a}(x,z) | 
&\le 
C \delta^{-k-\ell} \langle x\rangle^{(1-\ell)/3}\langle z \rangle^{(-4-k)/3} \langle X\rangle^{(2\ell-1)/4}\langle Z \rangle^{(2k+1)/4} \mathcal{T}(x,z),
\end{aligned}
$$
in which $\mathcal{T}(x,z)$ is defined as in \eqref{def-TT}. In particular, $$\mathcal{T}(x,z) \le 
e^  {-  {\sqrt{2} \over 3} \sqrt{|Z|}|X-Z| } .$$ 
\end{lemma}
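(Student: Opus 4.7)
The plan is to use Langer's transformation (Lemma \ref{lem-Langer}) to convert $\cA\phi=0$ into the classical Airy equation in the rescaled variable $Z=\delta^{-1}\eta(z)$, for which Lemma \ref{lem-classicalAiry} supplies decaying and growing fundamental solutions $Ai,Ci$ together with sharp exponential asymptotics. I then paste these two solutions into the standard \emph{max}/\emph{min} Green-function ansatz, with the prefactor chosen so as to enforce the jump conditions \eqref{GrmAiry-jump}. The resulting $G_{\mathrm a}$ is not an exact Green function for $\cA$, because Langer's transformation produces a Schwartzian-type commutator term in \eqref{id-mAiry}, but this defect is precisely the claimed remainder $E_{\mathrm a}$.

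\textbf{Explicit construction.} Define
\[\phi_-(z):=\dot z^{1/2}\,Ai\bigl(\delta^{-1}\eta(z)\bigr),\qquad \phi_+(z):=\dot z^{1/2}\,Ci\bigl(\delta^{-1}\eta(z)\bigr),\]
so that Lemma \ref{lem-classicalAiry} applies (the phase $e^{i\pi/6}$ is built into $\delta^{-1}$, cf.\ \eqref{def-dcl}). By \eqref{id-mAiry} each $\phi_\pm$ satisfies $\cA\phi_\pm=\epsilon\dot z^{-1/2}\partial_z^2(\dot z^{1/2})\phi_\pm$, i.e.\ fails to be a genuine null-solution only by the Langer commutator. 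Langer preserves the $z$-Wronskian, so the chain rule $\partial_\eta=\delta^{-1}\partial_Z$ applied to $W_Z(Ai,Ci)=1$ gives the constant value $W(\phi_-,\phi_+)=\delta^{-1}$ (up to a fixed phase). Setting
\[G_{\mathrm a}(x,z):=\frac{1}{\epsilon\,W(\phi_-,\phi_+)}\,\begin{cases}\phi_-(z)\phi_+(x),&z>x,\\[3pt]\phi_-(x)\phi_+(z),&z<x,\end{cases}\]
the prefactor, of order $\delta^{-2}$ since $\epsilon=\delta^{3}U_c'$, is exactly what enforces $[\epsilon\partial_z G_{\mathrm a}]_{z=x}=1$, while $[G_{\mathrm a}]_{z=x}=0$ is automatic from the symmetry of the ansatz. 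Applying $\cA$ piecewise and collecting the $\delta_x(z)$ produced by the jump, one is left with $E_{\mathrm a}(x,z)=\epsilon\,\dot z^{-1/2}\partial_z^2(\dot z^{1/2})\,G_{\mathrm a}(x,z)$.

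\textbf{Pointwise bounds and main obstacle.} The bounds come from substituting Lemma \ref{lem-classicalAiry} and the identity $\partial_z=\delta^{-1}\dot z^{-1}\partial_Z$. Each $z$- or $x$-derivative on $\phi_\pm$ contributes a factor $\delta^{-1}\dot z^{-1}$ (resp.\ $\dot x^{-1}$) together with the Airy-derivative gain $|Z|^{1/2}$ (resp.\ $|X|^{1/2}$) coming from the $|x|^{-k/2-1/4}$ in Lemma \ref{lem-classicalAiry}. Combined with the base bounds $|Ai(Z)|\lesssim\langle Z\rangle^{-1/4}e^{-\frac{\sqrt 2}{3}|Z|^{3/2}}$ for $\Re Z>0$ (with the growing counterpart for $\Re Z<0$), and the analogous bounds for $Ci$, the product $\phi_-(\max)\phi_+(\min)$ reproduces the three-case exponential factor $\mathcal T(x,z)$ of \eqref{def-TT} according to whether $x,z$ lie above or below $\Re z_c$. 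For $E_{\mathrm a}$, the Schwartzian $\dot z^{-1/2}\partial_z^2(\dot z^{1/2})$ is computed from the explicit identity $\dot z^{2}(U-c+\alpha^2\epsilon)=U_c'\eta$: it is bounded near $z_c$ (where $\dot z\to 1$) and decays like $\langle z\rangle^{-2}$ at infinity, which after multiplying by $\epsilon=\delta^{3}U_c'$ and trading $\langle z\rangle$-weights for $\langle Z\rangle$-weights via $|Z|\sim\delta^{-1}\langle z\rangle^{2/3}$ produces the claimed extra factor $\delta^{2}\langle z\rangle^{-4/3}\langle Z\rangle^{1/2}$ in $E_{\mathrm a}$ relative to $G_{\mathrm a}$. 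The principal technical point is this algebraic bookkeeping: the Langer prefactor $\dot z^{1/2}\sim\langle z\rangle^{1/6}$ and the Airy base $|Z|^{-1/4}\sim\delta^{1/4}\langle z\rangle^{-1/6}$ partially cancel at infinity, as do the per-derivative factors $\dot z^{-1}$ and $|Z|^{1/2}$; these cancellations, together with the separate treatment near $z_c$ (where $\dot z\sim 1$) and at $z\to\infty$ (where $\dot z\sim\langle z\rangle^{1/3}$), must be tracked exactly to land on the asymmetric weights $\langle x\rangle^{(1-\ell)/3}\langle z\rangle^{-k/3}\langle X\rangle^{(2\ell-1)/4}\langle Z\rangle^{(2k-1)/4}$ in the statement.
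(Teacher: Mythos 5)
Your proof is sound, but it takes a genuinely different route from the paper's, so let me compare.

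The paper's $G_\mathrm{a}$ in \eqref{def-Gcl} is $\delta\eps^{-1}\dot x\,\{Ai(X)Ci(Z)\ \text{or}\ Ai(Z)Ci(X)\}$, i.e.\ the raw Airy functions multiplied by a single normalizing factor $\dot x$ evaluated at the \emph{source} point; it does not carry the Langer prefactor $\dot z^{1/2}$. You instead build the Green function from the fully Langer-transformed fundamental solutions $\phi_\pm = \dot z^{1/2}Ai/Ci(\delta^{-1}\eta(z))$, with the Wronskian $W(\phi_-,\phi_+)=\delta^{-1}$ (your computation of this is correct: the two $\dot z^{1/2}$ factors and the $\eta'$ from the chain rule cancel via $\dot z\,\eta'=1$). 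Consequently your $G_\mathrm{a}$ carries $\dot x^{1/2}\dot z^{1/2}$ where the paper's has $\dot x$; these differ by $(\dot z/\dot x)^{1/2}\sim\langle z\rangle^{1/6}\langle x\rangle^{-1/6}$, so your $G_\mathrm{a}$ does not hit the stated weight $\langle x\rangle^{1/3}$ on the nose and you must absorb that factor into $\mathcal{T}$ (which is legitimate, since $\mathcal T$ decays super-exponentially relative to any polynomial and the paper itself uses this absorption for the derivative bounds).

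The error terms are also structurally different. The paper computes $E_\mathrm{a}$ by directly applying $\eps\Delta_\alpha-(U-c)$ to its ansatz and finds $E_\mathrm{a}(x,z)=\eta''(z)\dot x\,Ai(X)Ci'(Z)$ (resp.\ $Ai'(Z)Ci(X)$), i.e.\ an object involving \emph{derivatives} of the Airy functions — this is exactly why the stated $E_\mathrm{a}$ bound gains a $\langle Z\rangle^{1/2}$ over the $G_\mathrm{a}$ bound. Your error is the Langer commutator $E_\mathrm{a}=\eps\,\dot z^{-1/2}\partial_z^2(\dot z^{1/2})\,G_\mathrm{a}$, which is proportional to $G_\mathrm{a}$ itself. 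Near $z_c$ this is $O(\eps)\,G_\mathrm{a}\sim\delta^3 G_\mathrm{a}$, and at large $z$ it is $O(\delta^3\langle z\rangle^{-2})\,G_\mathrm{a}$ — in both regimes smaller than the claimed $\delta^2\langle z\rangle^{-4/3}\langle Z\rangle^{1/2}\,G_\mathrm{a}$, so your bound holds a fortiori, but your remark that the bookkeeping "produces the claimed extra factor" is an overstatement: you actually land below it, because the Langer prefactor eliminates the $\eta''$-level error entirely and leaves only the genuinely higher-order Schwartzian. In sum, your route is conceptually cleaner (it reads the error straight off Lemma \ref{lem-Langer} rather than computing $\eps\Delta_\alpha G_\mathrm{a}$ by hand) and gives a slightly better remainder, at the modest cost of having to trade a $\langle z\rangle^{1/6}\langle x\rangle^{-1/6}$ polynomial into the exponential; the paper's choice, by contrast, hits the $\langle x\rangle^{1/3}$ weight exactly at $k=\ell=0$ and yields the $Ai'/Ci'$ structure in $E_\mathrm{a}$ that the subsequent convolution lemmas are tailored to.
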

\begin{proof} Using Lemma \ref{lem-classicalAiry}, we can define an approximate Green function for the Airy equation:
\begin{equation}\label{def-Gcl} 
G_\mathrm{a}(x,z) =   \delta \eps^{-1}  \dot x\left\{ \begin{array}{rrr}  Ai(X)Ci(Z), \qquad &\mbox{if}\qquad x>z,\\
 Ai (Z) Ci(X) , \qquad &\mbox{if}\qquad x<z,
\end{array}\right.
\end{equation}with $\dot x = 1/\eta'(x)$. It is clear that $[G_\mathrm{a}(x,z)]_{\vert_{z=x}} = 0$. Next, recalling that the Wronskian determinant of $Ai$ and $Ci$ is equal to one, $\dot x = 1/\eta'$, and $\frac{d}{dz}Z = \delta^{-1} \eta'(z)$, we have 
$$[\partial_zG_\mathrm{a}(x,z)]_{\vert_{z=x}} =  \delta  \eps^{-1}  \dot x \frac{d}{dz}Z \Big[ Ai(Z) Ci'(Z) - Ai'(Z) Ci(Z) \Big]= \epsilon^{-1}.$$
That is, the jump conditions in \eqref{GrmAiry-jump} hold for $G_\mathrm{a}(x,z)$. In addition, for $x>z$, we compute 
$$ 
\begin{aligned}
\eps \partial_z^2 G_\mathrm{a}(x,z) 
&= \delta^{-1}\dot x (\eta'(z))^2 Ai(X) Ci''(Z) + \dot x \eta''(z) Ai(X) Ci'(Z) 
\\
&=\delta^{-1} \dot x (\eta'(z))^2 Z Ai(X) Ci(Z) + \dot x \eta''(z) Ai(X) Ci'(Z) 
\\
&= \epsilon \delta^{-2} (\eta'(z))^2 Z G_\mathrm{a}(x,z)+ \dot x \eta''(z) Ai(X) Ci'(Z) .
\end{aligned}
$$
By definition, we have $ \epsilon = \delta^3 U_c'$ and $(U-c+\alpha^2 \epsilon) \dot z^2 = U_c' \eta  = \delta U_c' Z$.  This proves 
$$ 
\begin{aligned}
\eps \Delta_\alpha G_\mathrm{a}(x,z) 
&= (U-c) G_\mathrm{a}(x,z)+ \dot x \eta''(z) Ai(X) Ci'(Z),
\end{aligned}
$$
for $x>z$. A similar calculation holds for $x<z$, yielding 
\begin{equation}\label{eqs-Gcl}
\eps \Delta_\alpha G_\mathrm{a}(x,z)  - (U-c) G_\mathrm{a}(x,z)  = \delta_x(z) + E_\mathrm{a}(x,z),  
\end{equation} 
with 
\begin{equation}\label{def-EEEa}
E_\mathrm{a}(x,z) = 
\eta''(z) \dot x\left\{ \begin{array}{rrr}  Ai(X)Ci'(Z), \qquad &\mbox{if}\qquad x>z,\\
 Ai'(Z) Ci(X) , \qquad &\mbox{if}\qquad x<z.
\end{array}\right.
\end{equation}

It remains to derive the pointwise estimates as stated in the proposition. It suffices to consider the case: $x<z$. By the estimates on the Airy functions obtained from Lemma \ref{lem-classicalAiry}, we get 
\begin{equation}\label{asymp-bd}
\begin{aligned}
| \D_z^k  &Ai(e^{i \pi / 6} z)  \D_x^\ell Ci(e^{i \pi / 6} x) | 
\\&\le 
{C|z|^{k/2-1/4} |x|^{\ell/2-1/4}} 
\exp \Bigl( {1\over 3} \sqrt{2 |x|} x -  {1\over 3} \sqrt{2 |z|} z \Bigr) ,
\end{aligned}
\end{equation} for $k \ge 0$, and for $x,z$ bounded away from zero. In particular, as $z > x$,
$$
\exp \Bigl( {1\over 3} \sqrt{2 |x|} x -  {1\over 3} \sqrt{2 |z|} z \Bigr) \le
\exp \Bigl( {\sqrt{2} \over 3} \sqrt{|z|} | x - z | \Bigr) .
$$
We remark that the polynomial growth in $x$ in the above estimate can be replaced by the growth in $z$, 
up to an exponentially decaying term. Similar bounds can be obtained for the case $x>z$.

The lemma thus follows directly from \eqref{asymp-bd}, upon noting that the pre-factor in terms of the lower case $z$ is due to the Langer's change of variables, with 
$$ |\eta(z)| \sim  \langle z \rangle^{2/3}, \qquad |\eta'(z)| \sim  \langle z \rangle^{-1/3} $$
with $\langle z \rangle = \sqrt{1+z^2}$, for some universal constant $C$. 
\end{proof}

\begin{lemma}  \label{lem-ConvAiry-cl}  Let $G_\mathrm{a}(x,z)$ be the approximate Green function defined as in \eqref{def-Gcl}, and $E_a(x,z)$ as defined in \eqref{eqs-Gcl}. Also let $f\in X_\eta$, for some $\eta>0$.  
Then, for $k\ge 0$, there is some constant $C$ so that 
\begin{equation}\label{conv-loc1}\begin{aligned}
\Big\|\int_{0}^{\infty}  \partial_z^kG_\mathrm{a}(x,\cdot)  f(x)   dx \Big\|_\eta &\le  C \delta^{-1-k}  \|f\|_\eta 
\end{aligned}\end{equation}
and \begin{equation}\label{conv-nonloc}\begin{aligned}
\Big \|\int_{0}^{\infty} \partial_z^kE_\mathrm{a}(x,\cdot)  f(x)   dx\Big \|_\eta &\le C \delta^{1-k}  \|f\|_\eta .
\end{aligned}\end{equation}  
\end{lemma}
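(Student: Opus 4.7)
The plan is to derive both inequalities by directly invoking the pointwise bounds on $G_\mathrm{a}$ and $E_\mathrm{a}$ from Lemma \ref{lem-GrmAiry}, converting the $x$-integral to a $X$-integral via the Langer change of variables, and exploiting the sharp exponential localization of $\mathcal{T}(x,z)$ about $X\approx Z$ that is visible from \eqref{Gcl-xnearz}. Since the pre-factor in Lemma \ref{lem-GrmAiry} carries the full $\delta$-scaling, the role of the integration is only to absorb one power of $\delta$ and to organize the remaining $\langle x\rangle$, $\langle X\rangle$, $\langle z\rangle$, $\langle Z\rangle$ weights.

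First I would fix $z$ and split $\int_0^\infty$ according to whether $|x-z_c|\le 1$ or $|x-z_c|\ge 1$, bounding $|f(x)|$ by $\|f\|_\eta$ on the first piece and by $e^{-\eta\Re x}\|f\|_\eta$ on the second. After inserting the pointwise bound for $\partial_z^k G_\mathrm{a}$, I would change variables $x\mapsto X=\delta^{-1}\eta(x)$, so that $dx=\delta\dot x\,dX$ with $\dot x\sim \langle x\rangle^{1/3}$. This converts one factor $\delta^{-2-k}\langle x\rangle^{1/3}dx$ into $\delta^{-1-k}\langle x\rangle^{2/3}dX$, and using $\langle X\rangle\sim \delta^{-1}\langle x\rangle^{2/3}$ rewrites the combination $\langle x\rangle^{2/3}\langle X\rangle^{-1/4}$ in terms of $\delta$ and $\langle X\rangle$ alone. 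The upshot is that $\int |\partial_z^k G_\mathrm{a}(x,z)|\,dx$ is controlled by $\delta^{-1-k}\langle z\rangle^{-k/3}\langle Z\rangle^{(2k-1)/4}$ times a one-dimensional integral of $\langle X\rangle^{3/4}e^{-(\sqrt 2/3)\sqrt{|Z|}|X-Z|}$.

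Next I would evaluate this residual one-dimensional integral in the two relevant regimes. For $|Z|\gg 1$ the kernel is sharply concentrated at $X\approx Z$, so $\langle X\rangle^{3/4}\sim \langle Z\rangle^{3/4}$ on the support and the integral contributes a factor $\langle Z\rangle^{-1/2}$ from the standard estimate $\int e^{-c\sqrt{|Z|}|X-Z|}\,dX\lesssim |Z|^{-1/2}$; multiplying out and using once more $\langle Z\rangle\sim \delta^{-1}\langle z\rangle^{2/3}$ collapses all the $z$- and $Z$-weights into a uniformly bounded quantity, giving the desired $\delta^{-1-k}\|f\|_\eta$. For $|Z|\lesssim 1$, i.e.\ $z$ within a critical-layer thickness of $z_c$, all the $\langle Z\rangle$ factors are harmless and direct integration yields the same conclusion. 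Finally, to upgrade the pointwise bound to the $\|\cdot\|_\eta$-norm, I would observe that the sharp localization of $\mathcal{T}$ forces $\Re x\approx \Re z$ in the main contribution, so the exponential weight $e^{-\eta\Re x}$ in $f$ transfers to $e^{-\eta\Re z}$ on the output; near $z_c$ the weight is trivial.

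For the error term $E_\mathrm{a}$, the same scheme applies verbatim, but the pointwise bound of Lemma \ref{lem-GrmAiry} carries the additional decay $\langle z\rangle^{-4/3}\langle Z\rangle^{1/2}$ with a $\delta$-power weaker by two. Converting $\langle Z\rangle^{1/2}\sim \delta^{-1/2}\langle z\rangle^{1/3}$ absorbs the extra $\langle z\rangle^{-1}$ into $\delta^{3/2}$ relative to the Green-function case, producing the advertised $\delta^{1-k}\|f\|_\eta$ bound in \eqref{conv-nonloc}. The main obstacle is the bookkeeping of the polynomial weights across the Langer change of variables and across the two regimes $|Z|\lesssim 1$ and $|Z|\gg 1$; the exponential localization of $\mathcal{T}$ does all the analytic work, but one must verify that all prefactors combine exactly to the stated powers $\delta^{-1-k}$ and $\delta^{1-k}$ uniformly in $z\ge 0$.
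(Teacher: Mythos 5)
Your overall strategy — insert the pointwise bounds of Lemma \ref{lem-GrmAiry}, pass to the Langer variable $X=\delta^{-1}\eta(x)$ with $dx=\delta\dot x\,dX$, use the sharp localization of $\mathcal{T}$ about $X\approx Z$ to absorb the $x$- and $X$-weights into $z$- and $Z$-weights, and then verify that the residual polynomial weights are bounded — is exactly the paper's approach. Your care in distinguishing $|x-z_c|\le 1$ from $|x-z_c|\ge 1$ and in transferring the exponential weight from $x$ to $z$ are also correct. However, the decisive bookkeeping step is not as clean as you assert, and there is a genuine gap.

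Concretely, for $G_\mathrm{a}$ the procedure yields, after the $X$-integration, the residual weight $\langle z\rangle^{(2-k)/3}\langle Z\rangle^{(k-2)/2}$ multiplying $\delta^{-1-k}e^{-\eta z}$. Using $\langle Z\rangle\lesssim \delta^{-1}\langle z\rangle^{2/3}$, this weight is $\lesssim\delta^{-(k-2)/2}$. This is $\lesssim 1$ only for $k\le 2$. For $k\ge 3$ and, say, $z\sim 1$ with $|z-z_c|\gtrsim 1$, one has $\langle Z\rangle\sim\delta^{-1}$ and the weight is $\sim\delta^{-(k-2)/2}\gg 1$, so the direct argument gives only $\delta^{-3k/2}$, not the claimed $\delta^{-1-k}$. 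The paper handles this by explicitly restricting the pointwise computation to $k=0,1,2$ and then using the equation \eqref{eqs-Gcl}, which expresses $\eps\partial_z^2 G_\mathrm{a}$ in terms of $(U-c)G_\mathrm{a}$, $\delta_x$, and $E_\mathrm{a}$ so that two extra $z$-derivatives can be traded for a bounded multiplier; your proposal does not mention this and therefore does not prove \eqref{conv-loc1} for $k\ge 3$.

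There is also an arithmetic slip in your $E_\mathrm{a}$ paragraph. You state that the extra factor $\delta^2\langle z\rangle^{-4/3}\langle Z\rangle^{1/2}$ collapses to $\delta^{3/2}$; but even accepting that, multiplying by the $\delta^{-1-k}$ obtained for $G_\mathrm{a}$ gives $\delta^{1/2-k}$, not the asserted $\delta^{1-k}$. Doing the convolution directly, the residual weight for $\partial_z^k E_\mathrm{a}$ after the $X$-integral is $\langle z\rangle^{-(2+k)/3}\langle Z\rangle^{(k-1)/2}$, which again is bounded only for small $k$ (here $k\le 1$); for $k\ge 2$ it contributes $\delta^{-(k-1)/2}$, so the stated $\delta^{1-k}$ is not reached by the direct bookkeeping alone. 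In short: the idea is right for $k\le 2$, but the assertion that "all prefactors combine exactly to the stated powers" for all $k\ge 0$ is not verified, and for $k\ge 3$ (resp. $k\ge 2$ for $E_\mathrm{a}$) you need the additional ingredient of the Airy equation \eqref{eqs-Gcl} to control higher $z$-derivatives.
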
 


\begin{proof} Without loss of generality, we assume $\|f \|_\eta =1$. Using the bounds from Lemma \ref{lem-GrmAiry}, and noting that $Z = \eta(z)/\delta \approx (1+|z|)^{2/3}/\delta$ as $z$ becomes large, for $k=0,1,2$, we obtain
$$\begin{aligned}
\int_{0}^{\infty} &|  \partial_z^kG_\mathrm{a}(x,z) f(x)|   dx
\\ & \le C \delta^{-2-k} \int_0^\infty (1+ z)^{(1-k)/3} e^{-\eta z} \langle Z \rangle^{(k-1)/2} \mathcal{T}(x,z) \; dx 
\\&\le C \delta^{-2-k} (1+ z)^{(1-k)/3} e^{-\eta z} \langle Z \rangle^{(k-1)/2}  \int_0^\infty (1+x)^{-1/3} e^{-  {\sqrt{2} \over 3} \sqrt{|Z|}|X-Z| }   \delta dX 
 \\&\le C \delta^{-1-k} (1+ z)^{(1-k)/3} \langle Z \rangle^{(k-2)/2} e^{-\eta |z|}
  \\&\le C \delta^{-1-k}  e^{-\eta |z|},
\end{aligned}$$
upon noting that $(1+ z)^{(1-k)/3} \langle Z \rangle^{(k-2)/2} \lesssim 1$. In the above, we have used the change of variable $dx = \delta \dot z^{-1} dX$ with $\dot z \approx (1+|x|)^{1/3}$. For higher derivatives, we use the equation \eqref{eqs-Gcl} for $\epsilon \partial_z^2 G_\mathrm{a}(x,z)$. Similar estimates hold for $E_\mathrm{a}(x,z)$. This completes the proof of the lemma.
\end{proof}

\begin{lemma} \label{lem-locConvAiry}  Let $G_\mathrm{a}(x,z), E_a(x,z)$ be defined as in \eqref{def-Gcl} and \eqref{eqs-Gcl}, and let $f = f(X)$ satisfy $|f(X)|\le  C_f e^{- \theta_0 |X|^{3/2}} $, for some positive constant $\theta_0<\frac{\sqrt 2}{3}$. 
Then there is some constant $C$ so that 
$$
\begin{aligned}
\int_{0}^\infty |\partial_z^k G_\mathrm{a}(x,z)   f(X)|   dx &\le  C C_f \delta^{-1-k} \langle z\rangle^{(1-k)/3} \langle Z \rangle^{(k-2)/2}e^{-\theta_0 |Z|^{3/2}},
\\
\int_{0}^\infty | \partial_z^kE_\mathrm{a}(x,z)  f(X)|   dx &\le  C C_f  \delta^{1-k} \langle z\rangle^{(-3-k)/3} \langle Z \rangle^{(k-2)/2}e^{-\theta_0 |Z|^{3/2}},
\end{aligned}
$$ for $k \ge 0$. 
\end{lemma}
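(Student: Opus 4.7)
The estimate is the natural analogue of Lemma \ref{lem-ConvAiry-cl}, where the $e^{-\eta z}$ weight on $f$ has been replaced by the Airy-type weight $e^{-\theta_0 |X|^{3/2}}$ (with $X = \delta^{-1}\eta(x)$). My plan is to start from the pointwise bounds on $\partial_z^k G_\mathrm{a}$ and $\partial_z^k E_\mathrm{a}$ from Lemma \ref{lem-GrmAiry}, perform the Langer change of variable $x \mapsto X$ so that $dx \approx \delta\,\langle x\rangle^{1/3}\,dX$, and then absorb the combination $\mathcal{T}(x,z)\,e^{-\theta_0|X|^{3/2}}$ into the desired output factor $e^{-\theta_0|Z|^{3/2}}$ plus an integrable residue in $X$. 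The polynomial factors in $\langle x\rangle$, $\langle z\rangle$, $\langle X\rangle$, $\langle Z\rangle$ are then matched using the asymptotic identities $\langle x\rangle \sim \delta^{3/2}\langle X\rangle^{3/2}$ for $x$ large (so that $\langle x\rangle^{1/3} \sim \delta^{1/2}\langle X\rangle^{1/2}$), which will produce the prefactors $\delta^{-1-k}\langle z\rangle^{(1-k)/3}\langle Z\rangle^{(k-2)/2}$ and $\delta^{1-k}\langle z\rangle^{(-3-k)/3}\langle Z\rangle^{(k-2)/2}$ announced in the statement.

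The heart of the argument is the case analysis of $\mathcal{T}(x,z)$, according to the definition \eqref{def-TT} (and its symmetric companion obtained from $\mathcal{T}(x,z)=\mathcal{T}(z,x)$). I would split $[0,\infty)$, for fixed $z$, into the three regimes and check that in each regime the exponential combination works out:
\begin{itemize}
\item[(A)] When $x,z$ lie on the same side of $\Re z_c$ with $|X|\le|Z|$, $\mathcal{T}\,e^{-\theta_0|X|^{3/2}} = e^{-\frac{\sqrt 2}{3}|Z|^{3/2}} e^{(\frac{\sqrt 2}{3}-\theta_0)|X|^{3/2}}$, which is bounded by $e^{-\theta_0|Z|^{3/2}}$ by monotonicity since $\theta_0 < \frac{\sqrt 2}{3}$.
\item[(B)] In the crossing regime ($x$ on one side of $\Re z_c$, $z$ on the other), both exponentials in $\mathcal{T}$ decay, so the bound $e^{-\theta_0|Z|^{3/2}} e^{-(\frac{\sqrt 2}{3}+\theta_0)|X|^{3/2}}$ is immediate.
\item[(C)] When $x,z$ are on the same side but $|X|\ge|Z|$ (the delicate case in which $\mathcal{T}$ grows like $e^{\frac{\sqrt 2}{3}|Z|^{3/2}}$), the decaying factor $e^{-(\frac{\sqrt 2}{3}+\theta_0)|X|^{3/2}}$ dominates because $|X|\ge|Z|$, again yielding $e^{-\theta_0|Z|^{3/2}}$ with a residue that is integrable over $X\ge|Z|$.
\end{itemize}

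In each of (A)--(C), the residue in $X$ is integrable against $\langle X\rangle^{-1/4}$ (the $\ell=0$ factor from Lemma \ref{lem-GrmAiry}); in fact, using the near-diagonal expansion $|Z|^{3/2}-|X|^{3/2}\approx \frac{3}{2}|Z|^{1/2}(|Z|-|X|)$ in case (A), one sees that the integral contributes an extra $\langle Z\rangle^{-1/2}$ for large $|Z|$, which together with the pointwise $\langle X\rangle^{-1/4}$ evaluated at $X\approx Z$ gives the missing $\langle Z\rangle^{-3/4}$ needed to pass from the $\langle Z\rangle^{(2k-1)/4}$ of the pointwise bound to the required $\langle Z\rangle^{(k-2)/2}$. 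For $E_\mathrm{a}$ the computation is identical, with an extra factor $\langle z\rangle^{-4/3}\langle Z\rangle^{1/2}$ inherited from the pointwise estimate of Lemma \ref{lem-GrmAiry}, giving the stated $\delta^{1-k}\langle z\rangle^{(-3-k)/3}\langle Z\rangle^{(k-2)/2}$ after the same Langer bookkeeping.

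The main obstacle is case (C): there $\mathcal{T}$ genuinely grows in $|Z|$, and one must carefully use the geometric constraint $|X|\ge|Z|$ forced by $x$ lying further from the critical layer than $z$, rather than any global bound on $\mathcal{T}$. A secondary annoyance is keeping track of the Langer rescaling near the critical layer ($\eta(z)\sim z-z_c$, $\eta'\sim 1$) versus at infinity ($\eta(z)\sim\langle z\rangle^{2/3}$, $\eta'\sim\langle z\rangle^{-1/3}$); this should be handled by splitting the $X$-integration into $|X|\lesssim 1$ and $|X|\gg 1$, using the near-critical-layer behaviour in the former and the $\langle x\rangle \sim \delta^{3/2}\langle X\rangle^{3/2}$ asymptotics in the latter. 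Higher derivatives $k\ge 1$ follow from the same computation with the $k$-dependent pointwise bounds of Lemma \ref{lem-GrmAiry}, with no new analytical input.
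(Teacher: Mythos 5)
Your proposal is correct and follows essentially the same route as the paper's proof: start from the pointwise bounds of Lemma \ref{lem-GrmAiry}, perform the Langer change of variables $x\mapsto X$ gaining a factor $\delta$, and split the $X$-integration according to $|X|\le|Z|$ versus $|X|\ge|Z|$ (your cases (A)--(C) are a mild refinement of the paper's two-case split, which collapses (B) and (C) by using the uniform bound $\mathcal{T}(x,z)\le e^{-\frac{\sqrt2}{3}\sqrt{|Z|}\,|X-Z|}$), with the extra factor $\langle Z\rangle^{-1/2}$ coming from the width of the integration region near $|X|\approx|Z|$ exactly as you indicate.
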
 
\begin{proof} Lemma \ref{lem-GrmAiry} yields 
$$\begin{aligned}
&\int_{0}^{\infty} |  \partial_z^kG_\mathrm{a}(x,z) f(X)|   dx
\\&\le C_0C_f\delta^{-2-k} \langle z\rangle^{(1-k)/3} \langle Z \rangle^{(k-1)/2}  \int_0^\infty  \mathcal{T}(x,z) e^{-\theta_0 |X|^{3/2}} \; dx
\\&\le C_0C_f\delta^{-1-k} \langle z\rangle^{(1-k)/3} \langle Z \rangle^{(k-1)/2}  \int_0^\infty  \mathcal{T}(x,z) e^{-\theta_0 |X|^{3/2}} \; dX,
\end{aligned}$$ in which $\mathcal{T}(x,z)$ is defined as in \eqref{def-TT}. 
We then estimate the above integral by considering two cases $|X|\le |Z|$ and $|X|\ge |Z|$. Precisely, we have 
$$\begin{aligned}
\int_{\{ |X|\ge |Z|\}}  \mathcal{T}(x,z) e^{-\theta_0 |X|^{3/2}} \; dX
&\le  e^{-\theta_0 |Z|^{3/2}} \int_0^\infty e^{-  {\sqrt{2} \over 3} \sqrt{|Z|}|X-Z| } \; dX 
\\& \lesssim  \langle Z \rangle^{-1/2}  e^{- \theta_0 |Z|^{3/2}}.\end{aligned}$$  
As for the case when $|X|\le |Z|$, we have $$\begin{aligned}
\int_{\{ |X|\le |Z|\}}  \mathcal{T}(x,z)e^{-\theta_0 |X|^{3/2}} \; dX
&\le  \int_0^{|Z|}  e^{-  {\sqrt{2} \over 3} |Z|^{3/2}} e^{{\sqrt{2} \over 3}|X|^{3/2} }  e^{-\theta_0 |X|^{3/2}} \; dX 
\\&\le  \int_0^{|Z|}  e^{-  {\sqrt{2} \over 3} |Z|^{3/2}} e^{({\sqrt{2} \over 3} - \theta_0)\sqrt{|Z|} |X| }  \; dX 
\\& \lesssim  \langle Z \rangle^{-1/2}  e^{- \theta_0 |Z|^{3/2}}.\end{aligned}$$  
The estimates for $E_\mathrm{a}(x,z) $ follow identically. \end{proof}

Finally, we end the section by proving Proposition \ref{prop-exactGrmAiry}. 
\begin{proof}[Proof of Proposition \ref{prop-exactGrmAiry}] The proof follows from the standard iteration from the approximate Green function. Indeed, let $x$ be fixed and let us introduce 
$$ G_0(x,z) = G_\mathrm{a} (x,z), \qquad E_0(x,z) = E_\mathrm{a}(x,z). $$
Lemma \ref{lem-GrmAiry} yields 
$$ \cA G_0(x,z) = \delta_x(z) + E_0(x,z).$$
By induction, for $n\ge 1$, we introduce 
$$
\begin{aligned}
 G_n(x,z) &=  - G_\mathrm{a}(x,\cdot) \star E_{n-1}(x,\cdot) (z), 
 \\
 E_n(x,z) &= - E_\mathrm{a}(x,\cdot) \star E_{n-1}(x,\cdot) (z),
 \end{aligned}$$
in which $\star$ denotes the usual convolution (in $z$). By construction, $ G_n(x,z)$ and their first derivatives are smooth, and 
the sum 
$$\mathcal{G}_n (x,z)= \sum_{k=0}^n G_n(x,z)$$
solves 
$$ \cA \mathcal{G}_n(x,z) = \delta_x(z) + E_n(x,z).$$
By Lemma \ref{lem-GrmAiry}, there holds 
$$
\begin{aligned}
|E_0(x,z) |  
\le C_0 e^{-  {\sqrt{2} \over 3} \sqrt{|Z|}|X-Z| } . 
\end{aligned}
$$
The convolution estimates from Lemma \ref{lem-locConvAiry}, and the inductive argument, then yield  
$$
\begin{aligned}
|E_n(x,z)|
\le (C_0\delta)^n e^{-  \theta_0\sqrt{|Z|}|X-Z| } 
\end{aligned}
$$ for some positive constant $\theta_0$. 
Since $\delta \ll1$, as $n\to \infty$, $E_n(x,z) $ converges pointwise to zero, and hence the series $\mathcal{G}_n(x,z)$ converges pointwise to $\mathcal{G}(x,z)$.  The proposition follows. 
\end{proof}


\subsection{An approximate Green kernel for Airy operator}


In order to study the smoothing effect of the Airy operator, we need more information on the Green function of the Airy operator. 
Recalling that $G_\mathrm{a}(x,z)$ the Green function of the modified Airy operator $\cA$, defined as in \eqref{eqs-Gcl}, 
we then introduce $G(x,z)$ so that 
\begin{equation} \label{def-doubleG}
\Delta_\alpha G(x,z) = G_\mathrm{a}(x,z) . 
\end{equation} 
By construction, we have \begin{equation}\label{eqs-2GAiry}
\Airy (G(x,z) ) = \delta_x(z) + E_\mathrm{a}(x,z),  
\end{equation}
with the same remainder $E_\mathrm{a}(x,z)$ as in \eqref{mA-Ga} for the modified Airy equation. Using \eqref{eqs-2GAiry} we see that $G$ satisfies the following jump conditions at $z=x$: 
\begin{equation}\label{def-jumpG0}
\begin{aligned}
~[G(x,z)]_{\vert_{x=z}} =  [\partial_z G(x,z)]_{\vert_{x=z}}  = [\partial_z^2 G(x,z)]_{\vert_{x=z}}  =0
\end{aligned}\end{equation}
and 
\begin{equation}\label{def-jumpG1}
\begin{aligned}
~[\epsilon \partial_z^3G(x,z)]_{\vert_{x=z}} =  1.
\end{aligned}\end{equation}
This yields an approximate Green function for the Airy operator. Certainly, we might take $G(x,z) = \Delta_\alpha^{-1} G_{a}(x,z)$,
in which $\Delta_\alpha^{-1}$ is defined through the convolution with its Green function $\alpha^{-1} e^{-\alpha |x-z|} $. This however will not work due to the fact that the oscillation of the Laplacian is of order $\alpha^{-1}$, which is much greater than the localization of the singularity of order $\delta \ll \alpha^{-1}$ near the critical layer $z=z_c$. Instead, we solve \eqref{def-doubleG} by integrating with respect to $z$, up to linear terms in $z$ which we shall correct below. For sake of presentation, we introduce 
$$ \widetilde Ci(1,z) =\delta^{-1} \int_0^z e^{-\alpha y} Ci(\delta^{-1}\eta(y))\; dy, \qquad \widetilde Ci(2,z) = \delta^{-1}\int_0^z  \widetilde Ci(1,y)\; dy$$
and 
$$ \widetilde Ai(1,z) = \delta^{-1}\int_\infty^z e^{-\alpha y} Ai(\delta^{-1}\eta(y))\; dy, 
\qquad \widetilde Ai(2,z) =\delta^{-1} \int_\infty^z \widetilde Ai(1,y)\; dy.$$
Then, in view of \eqref{def-Gcl}, we set 
\begin{equation}\label{def-tG}
\widetilde G(x,z) =  \delta^3 \epsilon^{-1}  \dot x^{3/2} e^{\alpha z} 
\left\{ \begin{array}{rrr}  Ai(\delta^{-1}\eta(x)) \widetilde Ci(2,z) , &\mbox{if }x>z,\\
Ci(\delta^{-1}\eta(x)) \widetilde Ai(2,z) ,  &\mbox{if } x<z
\end{array} \right. 
\end{equation} 
in which the factor $\dot x ^{3/2}$ was added simply to normalize the jump of $G(x,z)$. It follows from direct computations that $\widetilde G(x,z)$ solves
 \eqref{def-doubleG}.

 It remains to correct the jump conditions at $z=x$. The jump conditions on $\partial_z^2G(x,z)$ 
 and $\partial_z^3 G(x,z)$ follow from those of $G_\mathrm{a}(x,z)$. To correct the first two jump conditions in \eqref{def-jumpG0}, 
 we introduce 
\begin{equation}
\label{def-Gr}
G(x,z) = \widetilde G(x,z) + E(x,z) \end{equation}
in which $E(x,z)$ is of the form 
\begin{equation}\label{def-E}
E(x,z) =  \delta^3 \epsilon^{-1}  \dot x^{3/2}  e^{\alpha z} \left\{ \begin{array}{rrr}  
\delta^{-1} a_1 (x) (z-x) +  a_2(x) , &\mbox{if }x>z,\\
 0,  &\mbox{if } x<z.
\end{array} \right.
\end{equation}
Here, the coefficients $a_1(x), a_2(x)$ are defined by the jump conditions, which lead to
\begin{equation}\label{def-ta12}
\begin{aligned}   a_1 (x)  &=
 Ci(\delta^{-1}\eta(x)) \widetilde Ai(1,x) - Ai(\delta^{-1}\eta(x)) \widetilde Ci(1,x)  ,\\
a_2(x)  
&=  Ci(\delta^{-1}\eta(x))\widetilde Ai(2,x) - Ai(\delta^{-1}\eta(x)) \widetilde Ci(2,x)  .
\end{aligned}
\end{equation}
Then, it is clear that $G(x,z)$ solves \eqref{def-doubleG} and satisfies the jump conditions \eqref{def-jumpG0} and \eqref{def-jumpG1}. 
Thus, $G(x,z)$ is an approximate Green function of $\Airy$. Let us give some bounds on the approximate Green function, 
using the known bounds on $Ai(\cdot)$ and $Ci(\cdot)$.

We have the following proposition. 

\begin{proposition} \label{prop-ptGreenbound} 
Let $G(x,z) = \widetilde G(x,z) + E(x,z)$ be the approximate Green function defined as in \eqref{def-Gr}, 
and let $X = \eta(x)/\delta$ and $Z = \eta(z)/\delta$. There hold pointwise estimates
\begin{equation}\label{mG-xnearz}
\begin{aligned}
|\D_z^\ell \D_x^k \widetilde G(x,z) | 
\le 
C \delta^{-k-\ell} \langle z \rangle ^{(4-k-\ell)/3}\langle Z \rangle^{(k+\ell-3)/2}
\mathcal{T}(x,z) ,
\end{aligned}
\end{equation} 
with $\mathcal{T}(x,z)$ defined as in \eqref{def-TT}. In particular, $\mathcal{T}(x,z) \le e^  {-  {\sqrt{2} \over 3} \sqrt{|Z|}|X-Z| } $. 
Similarly, for the non-localized term, we have 
\begin{equation}\label{non-locE}
\begin{aligned}
|E(x,z) | &\le   C e^{-\alpha |x-z|}\Big[\langle x\rangle^{4/3} \langle X\rangle^{-3/2} 
+ \delta^{-1}\langle x\rangle^{1/3}\langle X \rangle^{-1}|x-z| \Big],
\\
|\partial_zE(x,z) | &\le  C\delta^{-1}\langle x \rangle^{1/3} e^{-\alpha |x-z|}\langle X \rangle^{-1} 
\end{aligned}\end{equation}
for $x>z$. In addition, $\partial_z^2 E(x,z) =0$.
\end{proposition}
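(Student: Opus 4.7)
The heart of the estimate lies in sharp pointwise bounds on the four primitives $\widetilde{Ai}(k,z)$, $\widetilde{Ci}(k,z)$, $k=1,2$. I would first derive these by successive integration by parts, starting from the asymptotic estimates in Lemma \ref{lem-classicalAiry}. Because $\alpha\delta \ll 1$, the factor $e^{-\alpha y}$ varies slowly on the Langer scale and may be pulled out as an effective $e^{-\alpha z}$; the dominant contribution to each integral comes from the upper (resp.\ lower) endpoint for $\widetilde{Ci}$ (resp.\ $\widetilde{Ai}$). After the change of variable $Y=\delta^{-1}\eta(y)$, $dy=\delta\dot{y}\,dY$ with $\dot{y}\sim\delta^{1/2}\langle Y\rangle^{1/2}$, one integration by parts gains a factor $\langle Z\rangle^{-1/2}$ from the exponential asymptotic plus a Jacobian factor, yielding
\[
|\widetilde{Ai}(k,z)|\;\le\;C\,\langle z\rangle^{k/3}\langle Z\rangle^{-k/2-1/4}\,e^{-\alpha z}\,e^{-\frac{\sqrt{2}}{3}\,\mathrm{sgn}(\Re(z-z_c))\,|Z|^{3/2}},
\]
and the same bound with opposite sign in the exponent for $\widetilde{Ci}(k,z)$.

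Next I substitute these together with the $Ai(X),Ci(X)$ bounds from Lemma \ref{lem-classicalAiry} into the definition \eqref{def-tG} of $\widetilde{G}(x,z)$. The prefactor $\delta^3\epsilon^{-1}\dot{x}^{3/2}e^{\alpha z}$ cancels the internal $e^{-\alpha z}$ sitting inside the $\widetilde{Ai}(2,\cdot),\widetilde{Ci}(2,\cdot)$ factors at $y=z$, and the product of the two exponentials at $x$ and $z$ is precisely $\mathcal{T}(x,z)$ in each of the three regions that define $\mathcal{T}$ (plus the symmetric swap $x\leftrightarrow z$). The polynomial prefactors combine to $\langle x\rangle^{1/2}\langle X\rangle^{-1/4}\langle z\rangle^{2/3}\langle Z\rangle^{-5/4}$, which upon using $\langle x\rangle\le\langle z\rangle$ (relevant range), $\langle x\rangle^{1/3}\sim\delta^{1/2}\langle X\rangle^{1/2}$ and $\langle z\rangle^{1/3}\sim\delta^{1/2}\langle Z\rangle^{1/2}$ is bounded by the stated $\langle z\rangle^{4/3}\langle Z\rangle^{-3/2}$. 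For derivatives, $\partial_z$ acting on $e^{\alpha z}\widetilde{Ci}(2,z)$ gives $\delta^{-1}e^{\alpha z}\widetilde{Ci}(1,z)$ up to the subdominant $\alpha\widetilde{Ci}(2,z)$, producing a multiplicative gain $\delta^{-1}\langle z\rangle^{-1/3}\langle Z\rangle^{1/2}$; a second $\partial_z$ brings back the unintegrated $Ci(\delta^{-1}\eta(z))$, and likewise $\partial_x$ on $Ci(X)$ yields $\delta^{-1}\eta'(x)Ci'(X)$ with an analogous gain $\delta^{-1}\langle x\rangle^{-1/3}\langle X\rangle^{1/2}$. Iterating this accounts precisely for the $\delta^{-k-\ell}\langle z\rangle^{-(k+\ell)/3}\langle Z\rangle^{(k+\ell)/2}$ factor, giving \eqref{mG-xnearz}.

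For the linear correction $E(x,z)$: by definition \eqref{def-E}, $E\equiv 0$ for $x<z$ and $E$ is affine in $z$ for $x>z$, so $\partial_z^2 E\equiv 0$ is immediate. The coefficients $a_1(x),a_2(x)$ in \eqref{def-ta12} are Wronskian-type combinations; the exponentially growing and decaying pieces in $Ci(X)\widetilde{Ai}(k,x)$ and $Ai(X)\widetilde{Ci}(k,x)$ cancel, yielding
\[
|a_1(x)|\;\lesssim\;\langle x\rangle^{1/3}\langle X\rangle^{-1}e^{-\alpha x},\qquad |a_2(x)|\;\lesssim\;\langle x\rangle^{2/3}\langle X\rangle^{-3/2}e^{-\alpha x}.
\]
Combining with the prefactor $\delta^3\epsilon^{-1}\dot{x}^{3/2}e^{\alpha z}=(U_c')^{-1}\dot{x}^{3/2}e^{\alpha z}$ converts $e^{\alpha z}e^{-\alpha x}$ into $e^{-\alpha|x-z|}$ (since $x>z$), and multiplying by $\dot{x}^{3/2}\sim\langle x\rangle^{1/2}$ gives, after trivial bookkeeping, the two bounds on $E$ and $\partial_z E$ stated in \eqref{non-locE}.

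\textbf{Main obstacle.} The delicate point is the bookkeeping across the four sign configurations of $\Re(x-z_c),\Re(z-z_c)$ and the position $x\lessgtr z$ that define $\mathcal{T}(x,z)$: in each region one must verify that the products of the asymptotic $e^{\pm\frac{\sqrt 2}{3}|X|^{3/2}}$ and $e^{\pm\frac{\sqrt 2}{3}|Z|^{3/2}}$ factors yield exactly the piecewise $\mathcal{T}(x,z)$ with no spurious growing mode. A secondary subtlety is that two separate scales coexist, $\dot{z}\sim\langle z\rangle^{1/3}\sim\delta^{1/2}\langle Z\rangle^{1/2}$, appearing via the Langer Jacobian and the normalizing factor $\dot{x}^{3/2}$; the powers must be tallied exactly to land on the stated $\langle z\rangle^{(4-k-\ell)/3}\langle Z\rangle^{(k+\ell-3)/2}$.
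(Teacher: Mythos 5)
Your proposal takes essentially the same route as the paper's proof: bound the primitives $\widetilde Ai(k,\cdot),\widetilde Ci(k,\cdot)$ via the Langer change of variables $y\mapsto Y$ and a Laplace-type estimate on the resulting $Y$-integral (the paper does not literally integrate by parts, but the $\langle Z\rangle^{-1/2}$ gain you describe is exactly what that direct estimate yields), combine with the $Ai,Ci$ asymptotics of Lemma \ref{lem-classicalAiry} to assemble the polynomial prefactors and $\mathcal T(x,z)$, and read off the $E$-bounds from the cancellation of the growing exponentials in the Wronskian-type combinations $a_1,a_2$. The bookkeeping you flag as the main obstacle---in particular splitting the $\widetilde Ai(k,z)$ integral at $\Re z_c$ when $z<\Re z_c$, where $Ai(e^{i\pi/6}Y)$ grows, and verifying $\mathcal T$ in each of the sign regions---is precisely what the paper's proof carries out explicitly, but it is a fillable computation rather than a different approach.
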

\begin{proof} We recall that for $z$ near $z_c$, we can write $ \dot z (\eta(z) ) = 1 + \cO(|z-z_c|),$ which in particular yields that 
$$
\frac 12 \le \dot z(\eta(z)) \le \frac 32 
$$ 
for $z$ sufficiently near $z_c$. In addition, by a view of the definition \eqref{var-Langer}, $\eta(z)$ grows like  $ (1+|z|)^{2/3}$ as $z\to \infty$. 

Let us assume that $z\gg 1$. It suffices to give estimates on $\widetilde Ai(k,z), \widetilde Ci(k,z)$. With notation $Y = \eta(y)/|\delta|$ and the fact that $\eta'(y) \sim \langle y \rangle^{-1/3}$, we have 
$$\begin{aligned}
 |\widetilde Ai(1,z) | &\le  \delta^{-1}\int_z^\infty | e^{-\alpha y}Ai(e^{i\pi/6}Y) |\; dy 
 \\&\le  C\delta^{-1}\int_z^\infty e^{-\alpha y} (1+|Y|)^{-1/4} e^{-\sqrt{2|Y|} Y/3} \; dy 
 \\& \le  C\int_Z^\infty e^{-\alpha y} (1+|Y|)^{-1/4} e^{-\sqrt{2|Y|} Y/3} \; (1+y)^{1/3}dY 
 \\& \le  C\int_Z^\infty (1+y)^{1/3} e^{-\alpha y} (1+|Y|)^{-1/4} e^{-\sqrt{2|Z|} Y/3} \; dY 
 \\& \le  C(1+z)^{1/3}e^{-\alpha z} (1+|Z|)^{-3/4} e^{-\sqrt{2|Z|} Z/3}  ,
  \end{aligned}
$$
and 
$$\begin{aligned}
 |\widetilde Ai(2,z) | &\le  \delta^{-1}\int_z^\infty |\widetilde Ai(1,y) |\; dy 
 \\&\le  C\delta^{-1}\int_z^\infty (1+y)^{1/3}e^{-\alpha y} (1+|Y|)^{-3/4} e^{-\sqrt{2|Y|} Y/3} \; dy 
 \\& \le  C\int_Z^\infty (1+y)^{2/3} e^{-\alpha y} (1+|Y|)^{-3/4} e^{-\sqrt{2|Z|} Y/3} \; dY 
 \\& \le  C(1+z)^{2/3} e^{-\alpha z} (1+|Z|)^{-5/4} e^{-\sqrt{2|Z|} Z/3}  .
  \end{aligned}
$$
Similarly, we have 
$$ \begin{aligned}
|\widetilde Ci(1,z)| &\le \delta^{-1} \int_0^z e^{- \alpha y} |Ci(e^{i\pi/6}Y)|\; dy
\\&\le C\delta^{-1}\int_0^z e^{-\alpha y} (1+|Y|)^{-1/4} e^{\sqrt{2|Y|} Y/3} \; dy
\\& \le C\int_0^z (1+z)^{1/3} e^{-\alpha z} (1+|Y|)^{-1/4} e^{\sqrt{2|Y|} Y/3} \; dY
\\& \le C (1+z)^{1/3}e^{-\alpha z} (1+|Z|)^{-3/4} e^{\sqrt{2|Z|} Z/3} 
\end{aligned}$$
and 
$$ \begin{aligned}
|\widetilde Ci(2,z)| &\le \delta^{-1} \int_0^z |\widetilde Ci(1,y)|\; dy
\\&\le C\int_0^z (1+y)^{2/3} e^{-\alpha y} (1+|Y|)^{-3/4} e^{\sqrt{2|Y|} Y/3} \; dY
\\& \le C (1+z)^{2/3} e^{-\alpha z}(1+|Z|)^{-5/4} e^{\sqrt{2|Z|} Z/3}. 
\end{aligned}$$

In the case that $z\lesssim 1$, the above estimates remain valid. Indeed, first consider the case that $z\ge \R z_c$.
 In this case, we still have $|Y|\ge |Z|$ whenever $y\ge z$, and so the estimates on $\widetilde Ai(k,z)$ follow in the same way as done above. Whereas, in the case that $z\le \Re z_c$, we write 
 $$\begin{aligned}
\widetilde Ai(1,z)  =  - \delta^{-1} \Big[ \int_z^{\Re z_c} + \int_{\Re z_c}^\infty\Big]  e^{-\alpha y}Ai(e^{i\pi/6}Y) \; dy 
  \end{aligned}
$$
in which the last integral is equal to $\widetilde Ai(1,\Re z_c)$ and is in particular bounded. As for the first integral, we compute   
 $$\begin{aligned}
& \delta^{-1} \int_z^{\Re z_c} | e^{-\alpha y}Ai(e^{i\pi/6}Y) |\; dy 
 \\&\le  C\delta^{-1}\int_z^{z_c} e^{-\alpha y} (1+|Y|)^{-1/4} e^{-\sqrt{2|Y|} Y/3} \; dy .
  \end{aligned}
$$
Since $z\le y\le \Re z_c$, we have $|Y|\le |Z|$ and hence  
 $$\begin{aligned}
 \delta^{-1} \int_z^{\Re z_c} | e^{-\alpha y}Ai(e^{i\pi/6}Y) |\; dy 
 &\le  C\delta^{-1}\int_z^{z_c}  (1+|Y|)^{-1/4} e^{\frac{\sqrt2}{3} |Y|^{3/2}} \; dy 
\\
 &\le  C\int_0^{|Z|}  (1+|Y|)^{-1/4} e^{\frac{\sqrt2}{3}\sqrt{|Z|} |Y|} \; dY
  \\&\le C \langle Z\rangle^{-3/4} e^{\sqrt{2} |Z|^{3/2}/3}.
  \end{aligned}
$$
Similar computations can be done for $|\widetilde Ai(2,z) |$, yielding 
$$\begin{aligned}
 |\widetilde Ai(1,z) | \le \langle Z\rangle^{-3/4} e^{\sqrt{2} |Z|^{3/2}/3} , \qquad  |\widetilde Ai(2,z) | \le  C\langle Z\rangle^{-5/4} e^{\sqrt{2}|Z|^{3/2} /3}  .
  \end{aligned}
$$
That is, like $Ai(\eta(z)/\delta)$, the functions $\widetilde Ai(k,z)$ grow exponentially fast as $z$ is increasingly away from the critical layer and $z\le \Re z_c$.  Similarly, we also have 
$$ \begin{aligned}
|\widetilde Ci(1,z)| \le C\langle Z\rangle^{-3/4} e^{ - \sqrt{2}|Z|^{3/2}/3}, \qquad 
|\widetilde Ci(2,z)| \le C \langle Z\rangle^{-5/4}e^{ - \sqrt{2}|Z|^{3/2}/3},
\end{aligned}$$
for $z\le \R z_c$. The estimates become significant when the critical layer is away from the boundary layer, that is when $\delta \ll |z_c|$. 

By combining together these bounds and those on $Ai(\cdot)$, $Ci(\cdot)$, the claimed bounds on $\widetilde G(x,z)$ follow. Derivative bounds are also obtained in the same way. 
Finally, using the above bounds on $\widetilde Ai(k,z)$ and $\widetilde Ci(k,z)$, we get 
\begin{equation}\label{ak12-bound}\begin{aligned}  
 |\partial_x^k a_1 (x)|  &\le C\delta^{-k}\langle x\rangle^{1/2-k/3} e^{-\alpha x} \langle X\rangle^{k/2-1}
 \\ 
 |\partial_x^ka_2(x)| &\le    C\delta^{-k}\langle x\rangle^{5/6-k/3}  e^{-\alpha x}\langle X\rangle^{k/2-3/2} ,
\end{aligned}
\end{equation}
upon noting that the exponents in $Ai(\cdot)$ and $Ci(\cdot)$ are cancelled out identically. 

This completes the proof of the lemma. 
\end{proof}

%

\subsection{Convolution estimates}

\begin{lemma}  \label{lem-ConvAiry} Let $G(x,z)= \widetilde G(x,z) + E(x,z)$ be the approximate Green function of $\Airy$, and let $f\in X_\eta$, $\eta>0$.  
Then there is some constant $C$ so that 
\begin{equation}\label{conv-loc}\begin{aligned}
\Big\| \int_{0}^{\infty} \partial_z^k\widetilde G(x,\cdot)  f(x)    dx \Big\|_{\eta'} &\le  \frac {C \delta^{1-k} }{\eta - \eta'}  \|f\|_\eta ,
\end{aligned}\end{equation}
and \begin{equation}\label{conv-nonloc}\begin{aligned}
\Big\|\int_{0}^{\infty} \partial_z^kE(x,\cdot )  f(x)   dx \Big \|_{\eta'}&\le \frac {C\delta^{-k}}{\eta - \eta'}\|f\|_\eta ,
\end{aligned}\end{equation} for $k = 0,1,2$ and for $\eta'<\eta$, with noting that $\partial_z^2 E(x,z)\equiv 0$.  
\end{lemma}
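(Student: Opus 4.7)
The plan is to substitute the pointwise bounds from Proposition~\ref{prop-ptGreenbound} into the two convolutions and carry out the $x$-integration carefully, tracking the $\delta$-weights via the critical-layer change of variables $X = \delta^{-1}\eta(x)$ (so that $dx = \delta\dot x\, dX$ with $\dot x = 1/\eta'(x) \sim \langle x\rangle^{1/3}$). Since $f \in X^{0,\eta}$, we have $|f(x)| \le \|f\|_\eta e^{-\eta \Re x}$ for $|x-z_c|\ge 1$ and $|f(x)|\le \|f\|_\eta$ on the unit neighborhood of $z_c$, so the argument naturally splits the integration into these two regions.

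For the localized piece $\widetilde G$, I would combine the pointwise bound
$$|\partial_z^k \widetilde G(x,z)| \le C\delta^{-k}\langle z\rangle^{(4-k)/3}\langle Z\rangle^{(k-3)/2}\,\mathcal{T}(x,z)$$
with the exponential localization $\mathcal{T}(x,z)\le e^{-\frac{\sqrt 2}{3}\sqrt{|Z|}|X-Z|}$. The driving one-dimensional integral
$$\int_\RR e^{-\frac{\sqrt 2}{3}\sqrt{|Z|}|X-Z|}\,dX \;\lesssim\; \langle Z\rangle^{-1/2}$$
for $|Z|\gtrsim 1$, together with the outer-region relation $\langle Z\rangle\sim \delta^{-1}\langle z\rangle^{2/3}$ and the change-of-variables factor $\delta\langle z\rangle^{1/3}$ evaluated near $x\approx z$, cancels the prefactor $\langle z\rangle^{(4-k)/3}\langle Z\rangle^{(k-3)/2}$ and yields precisely $\delta^{1-k}$. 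Splitting $e^{\eta' z}e^{-\eta x} = e^{-(\eta-\eta')x}e^{\eta'(z-x)}$, the Airy localization in $|x-z|$ (of width $\sim\delta^{3/2}$) suppresses $e^{\eta'(z-x)}$, and the remaining integration of $e^{-(\eta-\eta')x}$ produces the $(\eta-\eta')^{-1}$ factor. For $|Z|\lesssim 1$, near the critical layer, one instead uses $\mathcal{T}\le 1$ together with the integrability of $\langle Z\rangle^{(k-3)/2}$ on a fixed ball in the $X$-variable (which holds for $k\le 2$).

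For $\partial_z^k E$, the case $k=2$ is trivial since $\partial_z^2 E\equiv 0$. For $k=0,1$ the estimates \eqref{non-locE} reduce the problem to a convolution of $e^{-\alpha|x-z|}$ against $e^{-\eta x}$ weighted by the algebraic factors $\langle x\rangle^{4/3}\langle X\rangle^{-3/2}$ and $\delta^{-1}\langle x\rangle^{1/3}\langle X\rangle^{-1}|x-z|$. Converting via $\langle X\rangle^{-3/2}\sim \delta^{3/2}\langle x\rangle^{-1}$ and $\langle X\rangle^{-1}\sim \delta\langle x\rangle^{-2/3}$ collapses these to $\delta^{3/2}\langle x\rangle^{1/3}$ and $\langle x\rangle^{-1/3}|x-z|$ respectively, both of which are controlled modulo $e^{-(\eta-\eta')x}$. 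The elementary bound
$$\int_0^\infty e^{-\alpha|x-z|}e^{-(\eta-\eta')x}\,dx \;\lesssim\; \frac{e^{-(\eta-\eta')z}}{\eta-\eta'}$$
then delivers the $(\eta-\eta')^{-1}$ factor, and collecting the powers of $\delta$ yields the claimed $\delta^{-k}/(\eta-\eta')$.

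The principal obstacle is the bookkeeping across two distinct regimes: the inner region $|z-z_c|\lesssim\delta$, where $\langle Z\rangle\sim 1$ and the Airy localization is weak, versus the outer region $|z-z_c|\gtrsim\delta$, where the conversion $\langle Z\rangle\sim\delta^{-1}\langle z\rangle^{2/3}$ is essential to balance $\delta$-powers; the two regimes contribute differently and must be estimated separately. A secondary subtlety is that $f\in X^{0,\eta}$ is only bounded (not decaying) near $z_c$ while the weight $\langle Z\rangle^{(k-3)/2}$ is singular at $Z=0$ for $k=0,1$, so this singularity must be absorbed into the measure $dx = \delta\dot x\, dX$, which converts $\langle Z\rangle^{(k-3)/2}$ into an integrable density on the $X$-scale in a neighborhood of $Z=0$.
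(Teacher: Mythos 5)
Your proposal follows the paper's approach exactly: substitute the pointwise bounds from Proposition \ref{prop-ptGreenbound}, carry out the $x$-integration via the Langer change of variables $dx=\delta\dot x\,dX$ with $\dot x\sim\langle x\rangle^{1/3}$, and split into the inner regime $|Z|\lesssim1$ and the outer regime $|Z|\gtrsim1$; the treatment of $E$ via \eqref{non-locE} and the observation that $\partial_z^2E\equiv0$ are also what the paper does. One bookkeeping step is misattributed, although it turns out to be correctness-preserving. For the localized kernel $\widetilde G$ you try to extract both $\delta^{1-k}$ (from the Airy localization) and $(\eta-\eta')^{-1}$ (from ``the remaining integration of $e^{-(\eta-\eta')x}$'') out of the same $x$-integral, which double-counts: the factor $\mathcal{T}(x,z)\le e^{-\frac{\sqrt2}{3}\sqrt{|Z|}|X-Z|}$ already restricts the integration to a window of size $\lesssim\delta$ in $x$ near $x\approx z$, so the residual $e^{-(\eta-\eta')x}$ merely evaluates to $\approx e^{-(\eta-\eta')z}$ there and cannot by itself produce a $(\eta-\eta')^{-1}$. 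Relatedly, the prefactors do not cancel to ``precisely $\delta^{1-k}$'': carrying the arithmetic through with $\langle Z\rangle\sim\delta^{-1}\langle z\rangle^{2/3}$ leaves $\delta^{1-k+(4-k)/2}\langle z\rangle^{1/3}$, i.e.\ an extra (harmless) positive power of $\delta$ together with a residual polynomial weight $\langle z\rangle^{1/3}$ coming from the Langer factor $\dot x$. It is this residual polynomial in $z$, paired with $e^{-(\eta-\eta')z}$ when passing from $e^{-\eta z}$ to the $\|\cdot\|_{\eta'}$-norm, that actually generates the $(\eta-\eta')^{-1}$ --- the paper's proof leaves each block in the form $C\delta^{1-k}(1+z)e^{-\eta z}$ (for $\widetilde G$) and $C\delta^{-k}(1+z)^{1/3}e^{-\eta z}$ (for $E$), from which $\sup_z(1+z)e^{-(\eta-\eta')z}\lesssim(\eta-\eta')^{-1}$ closes the estimate. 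Once this is straightened out your argument is the paper's argument.
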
 


\begin{proof} Without loss of generality, we assume $\|f \|_\eta =1$. First, consider the case $|z|\le 1$. Using the pointwise bounds obtained in Proposition \ref{prop-ptGreenbound}, we have 
$$\begin{aligned}
\int_{0}^{\infty} |  \widetilde G(x,z) f(x)|   dx &\le C_0 \int_0^\infty  e^{-  {\sqrt{2} \over 3} \sqrt{|Z|}|X-Z| } e^{-\eta x}\; dx \le C \delta  ,
\end{aligned}$$
upon noting that $dx = \delta \dot z^{-1}(\eta (x)) dX$ with $\dot z(\eta(x)) \approx (1+|x|)^{1/3}$. Here the growth of $\dot z(\eta(x))$ in $x$ is clearly controlled by $e^{ -\eta x } $. Similarly, since $|E(x,z)|\le C(1+x)^{4/3}$, we have 
$$\begin{aligned}
\int_z^\infty | E(x,z) f(x)|   dx &\le C \int_z^\infty (1+x)^{4/3} e^{-\eta x} dx\le C   ,
\end{aligned}$$
which proves the estimates for $|z|\le 1$. Next, consider the case $z\ge 1$, and $k = 0,1,2$. Again using the bounds from Proposition \ref{prop-ptGreenbound} and noting that $Z = \eta(z)/\delta \approx (1+|z|)^{2/3}/\delta$ as $z$ becomes large, we obtain 
$$\begin{aligned}
\int_{0}^{\infty} |  \partial_z^k\widetilde G(x,z) f(x)|   dx
 &\le C \delta^{-k}\int_0^\infty  (1+z)^{(4-k)/3}  e^{-\eta x}e^{-  {\sqrt{2} \over 3} \sqrt{|Z|}|X-Z| } \; dx 
\\&\le C \delta^{-k}  (1+z)^{1-k/3}e^{-\eta z}   \int_0^\infty e^{-  {\sqrt{2} \over 3} \sqrt{|Z|}|X-Z| }   \delta dX 
\\&\le C\delta^{1-k}  (1+z)e^{-\eta |z|} .
\end{aligned}$$
Here again we have used the change of variable $dx = \delta \dot z^{-1} dX$ with $\dot z \approx (1+|x|)^{1/3}$. 

Let us now consider the nonlocal term $E(x,z)$, which is nonzero for $x>z$, and consider the case $z\ge 1$. Note that $|X|\ge |Z|$ and so we have from \eqref{non-locE} 
$$
|E(x,z) |\le   C(1+x)^{4/3} (1+|X|)^{-3/2} + C(1+x)^{1/3}.
$$
In addition, we note that $X = \eta(x)/\delta \approx (1+|x|)^{2/3}/\delta$, and so the first term is bounded by the second term in $E(x,z)$. We thus have 
$$\begin{aligned}
\int_{z}^\infty | E(x,z) f(x)|   dx &\le C \int_{z}^\infty (1+x)^{1/3}e^{-\eta|x|} dx 
\\&\le C  (1+z)^{1/3} e^{-\eta |z|} .
\end{aligned}$$
Also, we have 
$$\begin{aligned}
\int_{z}^\infty | \partial_z E(x,z) f(x)|   dx &\le C \delta^{-1}\int_{z}^\infty (1+x)^{1/3}e^{-\eta|x|} dx 
\\&\le C \delta^{-1} (1+z)^{1/3} e^{-\eta |z|} .
\end{aligned}$$
This completes the proof of the lemma.
\end{proof}

Finally, when $f$ is very localized, we obtain a better convolution estimate as follows. 

\begin{lemma}  \label{lem-locConvAiry} Let $G(x,z)$ be the approximate Green function of the Airy operator, and let $f = f(X)$ satisfy $|f(X)|\le  C_fe^{-\theta_0  |X|^{3/2}} $ for some positive $\theta_0$.  
Then there is some constant $C$ so that 
\begin{equation}\label{conv-locsource}\begin{aligned}
\int_{0}^\infty | \partial_z^k\widetilde G(x,z)  f(X)|   dx &\le  C C_f |\delta|^{1-k} \langle Z \rangle^{k-2}e^{-\theta_0 |Z|^{3/2}},
\\
\int_{0}^\infty | \partial_z^k E(x,z)  f(X)|   dx &\le  C C_f |\delta| ^{1-k},
\end{aligned}\end{equation}
for $\ell \ge 0$. 
\end{lemma}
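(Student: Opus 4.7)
The plan is to imitate the structure of the earlier convolution estimate for $G_\mathrm{a}$ and $E_\mathrm{a}$, but now feeding in the sharper pointwise bounds from Proposition \ref{prop-ptGreenbound}. The starting point is the bound \eqref{mG-xnearz}, which gives
$$|\partial_z^k \widetilde G(x,z)| \le C \delta^{-k} \langle z\rangle^{(4-k)/3}\langle Z\rangle^{(k-3)/2}\, \mathcal{T}(x,z),$$
together with the simplified kernel bound $\mathcal{T}(x,z) \le e^{-\frac{\sqrt2}{3}\sqrt{|Z|}\,|X-Z|}$. After changing variables $dx = \delta\,\dot z\,dX$ with $\dot z \approx \langle x\rangle^{1/3}$, the $\dot z$ factor is absorbed into the exponential $e^{-\theta_0|X|^{3/2}}$ at the price of a very slight shrinking of $\theta_0$ (permitted since $\theta_0<\tfrac{\sqrt2}{3}$ is the only assumption). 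This reduces the problem to estimating the scalar integral
$$I(Z) := \int_0^\infty \mathcal{T}(x,z)\, e^{-\theta_0 |X|^{3/2}}\, dX,$$
and multiplying by the explicit prefactors above.

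To estimate $I(Z)$, I would split into the regions $\{|X|\ge |Z|\}$ and $\{|X|\le |Z|\}$. On $\{|X|\ge |Z|\}$, use the convexity inequality $|X|^{3/2}-|Z|^{3/2}\ge \tfrac{3}{2}\sqrt{|Z|}\,(|X|-|Z|)$ to write $e^{-\theta_0|X|^{3/2}} \le e^{-\theta_0|Z|^{3/2}} e^{-\frac{3\theta_0}{2}\sqrt{|Z|}(|X|-|Z|)}$, and combine with the kernel decay. The resulting Gaussian-type integral in the variable $|X|-|Z|$ with spectral gap of size $\sqrt{|Z|}$ produces the factor $\langle Z\rangle^{-1/2}e^{-\theta_0|Z|^{3/2}}$. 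On $\{|X|\le |Z|\}$, use instead the complementary piecewise bound $\mathcal{T}(x,z)\le e^{-\frac{\sqrt2}{3}|Z|^{3/2}}e^{\frac{\sqrt2}{3}|X|^{3/2}}$ arising from \eqref{def-TT}; pairing with $e^{-\theta_0|X|^{3/2}}$ and using $|X|^{3/2}\le \sqrt{|Z|}\,|X|$ on this range produces the same $\langle Z\rangle^{-1/2}e^{-\theta_0|Z|^{3/2}}$. Combining the two regions and reinserting the prefactors gives the stated estimate on $\widetilde G$.

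For $E(x,z)$, the support in $x$ is only the range $x\ge z$, and the bounds \eqref{non-locE} split into two pieces: one of size $\langle x\rangle^{4/3}\langle X\rangle^{-3/2}$ and one of size $\delta^{-1}\langle x\rangle^{1/3}\langle X\rangle^{-1}|x-z|$ (with the analogous derivative bound for $k=1$ and $\partial_z^2 E\equiv 0$ eliminating higher $k$). In both pieces, since $\langle X\rangle\sim \langle x\rangle^{2/3}/\delta$, the polynomial factors in $\langle x\rangle$ and $\langle X\rangle$ essentially telescope into powers of $\delta$, and the residual $\langle x\rangle$-growth is absorbed by $e^{-\theta_0|X|^{3/2}}$ after yet another harmless reduction of $\theta_0$. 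Integrating out $dX$ with the $e^{-\alpha|x-z|}$ factor then yields the desired $\delta^{1-k}$ bound. I expect the main technical subtlety to be the region $\{|X|\le |Z|\}$ in the estimate of $I(Z)$, where the source decay alone is not strong enough and one must genuinely rely on the fast decay of the kernel away from the critical layer; this is the only step where the hypothesis $\theta_0 < \tfrac{\sqrt2}{3}$ is essential.
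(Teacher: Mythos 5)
Your proposal is correct and follows essentially the same route as the paper. The paper's own proof of this lemma is deliberately terse (``The proof is straightforward, following those from Lemma~\ref{lem-ConvAiry}\dots'') and defers to the analogous earlier estimate for $G_\mathrm{a}$, $E_\mathrm{a}$, whose proof performs exactly the split over $\{|X|\ge|Z|\}$ and $\{|X|\le|Z|\}$ that you use; you have simply re-derived the mechanism with the sharper prefactors of Proposition~\ref{prop-ptGreenbound}.

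Two small observations. First, on the outer region $\{|X|\ge|Z|\}$ your convexity inequality $|X|^{3/2}-|Z|^{3/2}\ge\tfrac32\sqrt{|Z|}\,(|X|-|Z|)$ is more machinery than you need: the monotone bound $e^{-\theta_0|X|^{3/2}}\le e^{-\theta_0|Z|^{3/2}}$ already suffices there, since the kernel factor $e^{-\frac{\sqrt2}{3}\sqrt{|Z|}|X-Z|}$ alone gives the integrability and the $\langle Z\rangle^{-1/2}$ gain; convexity is only genuinely needed on $\{|X|\le|Z|\}$ (where you use the equivalent inequality $|X|^{3/2}\le\sqrt{|Z|}\,|X|$). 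Second, you correctly identify the hidden hypothesis $\theta_0<\tfrac{\sqrt2}{3}$ — the statement as given here just says ``positive $\theta_0$,'' but the earlier formulation of this lemma (and the inner-region estimate) require the strict inequality, and you invoke it at exactly the right place. Your honesty about absorbing the Jacobian $\dot z\sim\langle x\rangle^{1/3}$ and the $\langle z\rangle^{(4-k)/3}$ weight via a slight shrinking of $\theta_0$ is a point the paper silently suppresses; strictly speaking the conclusion then holds with some $\theta_0'<\theta_0$ in the exponent, which is all that is needed downstream.
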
 
\begin{proof} The proof is straightforward, following those from Lemma \ref{lem-ConvAiry}. For instance, we have 
$$\begin{aligned}
&\int_{0}^\infty | \partial_z^k \widetilde G(x,z) f(X)|   dx 
\\&\le C_0C_f|\delta|^{-k} \int_0^\infty \Big[ \langle Z \rangle^{k-3/2} e^{-  {\sqrt{2} \over 3} \sqrt{|Z|}|X-Z| }  
+e^{ - \frac 14 |Z|^{3/2}} e^{ - \frac 14 |X|^{3/2} } \Big] e^{-\theta_0 |X|^{3/2}} \; dx
\\&\le C C_f |\delta|^{1-k}  \langle Z \rangle^{k-2}e^{-\theta_0 |Z|^{3/2}}.
\end{aligned}$$
Here, note that the first integration was taken over the region where $|X|\approx |Z|$. The estimates for $E(x,z)$ follow similarly. \end{proof}

\subsection{The inhomogenous Airy equation}

Using the approximate Green function, we can now construct the exact inverse of $\Airy$ and solve the inhomogenous equation:
 $$\Airy(\phi) = f.$$
We obtain the following proposition. 

\begin{proposition}\label{prop-exactAiry} Let $\eta'<\eta$ be positive numbers and let $\alpha$ satisfy $\nu^{1/2}\ll \alpha \ll \nu^{-1/4}$. Then, the inverse of $\Airy$ operator exists and is well defined from $X_\eta$ to $X_{\eta'}$ so that 
$$ \Airy (\Airy^{-1} (f))= f, \qquad \forall ~ f\in X_\eta.$$
In addition, for $k\ge 0$, there holds 
$$ \| \partial_z^k\Airy^{-1} f \|_{\eta'} \le C \delta^{-k}\| f\|_\eta , \qquad \forall ~ f\in X_\eta.$$
\end{proposition}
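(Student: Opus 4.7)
The plan is to construct $\Airy^{-1}$ as a Neumann perturbation of the approximate inverse associated with the kernel $G(x,z) = \widetilde G(x,z) + E(x,z)$ built in Proposition \ref{prop-ptGreenbound}. Set
$$\mathcal{K} f(z) := \int_0^\infty G(x,z)\, f(x)\,dx, \qquad \mathcal{E} f(z) := \int_0^\infty E_\mathrm{a}(x,z)\, f(x)\,dx.$$
The identity \eqref{eqs-2GAiry} yields $\Airy(\mathcal{K} f) = f + \mathcal{E} f$, so it suffices to invert $I+\mathcal{E}$ on $X_\eta$ and take $\Airy^{-1} f := \mathcal{K}(I+\mathcal{E})^{-1} f$.

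The first step is to establish that $\mathcal{E}$ has small operator norm on $X_\eta$ \emph{without} loss of exponential weight. This is exactly the content of Lemma \ref{lem-ConvAiry-cl}, which yields $\|\mathcal{E} f\|_\eta \le C\delta\,\|f\|_\eta$. Under the hypothesis $\sqrt\nu\ll\alpha\ll\nu^{-1/4}$, the critical-layer thickness $\delta=(\epsilon/U_c')^{1/3}\sim (\sqrt\nu/\alpha)^{1/3}$ is small, so $I+\mathcal{E}$ is invertible on $X_\eta$ by the Neumann series $(I+\mathcal{E})^{-1}=\sum_{n\ge 0}(-\mathcal{E})^n$, with operator norm bounded by $2$.

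By construction $\Airy(\Airy^{-1} f) = (I+\mathcal{E})(I+\mathcal{E})^{-1} f = f$. For the weighted derivative bounds, Lemma \ref{lem-ConvAiry} gives
$$\|\partial_z^k \mathcal{K} g\|_{\eta'}\le \frac{C\delta^{-k}}{\eta-\eta'}\|g\|_\eta, \qquad k=0,1,2,$$
(the $\widetilde G$ contribution scales as $\delta^{1-k}$, which is absorbed by the $E$ contribution $\delta^{-k}$ since $\delta\ll 1$). Composing with the Neumann bound gives the claimed $\|\partial_z^k\Airy^{-1} f\|_{\eta'}\le C\delta^{-k}\|f\|_\eta$ in the range $k\le 2$. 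For $k\ge 3$ I would differentiate the equation $\epsilon\Delta_\alpha^2\phi=(U-c)\Delta_\alpha\phi+f$ and use $\epsilon^{-1}=(U_c')^{-1}\delta^{-3}$ together with the $k\le 2$ bounds to bootstrap the higher derivatives.

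The only real technical point is the smallness estimate on $\mathcal{E}$ with no degradation of the exponential weight: the kernel $E_\mathrm{a}$ carries the same transversal factor $\mathcal{T}(x,z)$ as $G_\mathrm{a}$ and only an extra $\langle z\rangle^{-4/3}$ decay, so one has to exploit the Langer change of variables $dx=\delta\dot z^{-1}dX$ to extract the gain of $\delta$. That bookkeeping is already packaged in Lemma \ref{lem-ConvAiry-cl}, so the remainder of the argument is a routine Neumann-series composition.
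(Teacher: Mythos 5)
Your core argument is exactly the paper's: take the approximate Green kernel $G=\widetilde G+E$ from Proposition \ref{prop-ptGreenbound}, note that $\Airy(\mathcal{K}f)=f+\mathcal{E}f$ with $\mathcal{E}$ of operator norm $O(\delta)$ on $X_\eta$ by Lemma \ref{lem-ConvAiry-cl}, and set $\Airy^{-1}=\mathcal{K}\circ(I+\mathcal{E})^{-1}$ via the Neumann series. The paper presents this as an explicit iteration $\phi_n$, $E_n$, but that iteration \emph{is} the Neumann series $(I+\mathcal{E})^{-1}=\sum(-\mathcal{E})^n$; there is no mathematical difference. The observation that the $E$-kernel contribution $\delta^{-k}$ dominates the $\widetilde G$ contribution $\delta^{1-k}$ in Lemma \ref{lem-ConvAiry} is also correct and is what yields the $\delta^{-k}$ rate for $k\le 2$.

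The one place you diverge from the paper is the treatment of $k\ge 3$, and there your proposal has a small gap. Differentiating the scalar equation $\epsilon\Delta_\alpha^2\phi=(U-c)\Delta_\alpha\phi+f$ to close the bootstrap introduces $\partial_z f$, which is not controlled for general $f\in X_\eta$ (the proposition assumes only $f\in X_\eta$, not $X^{1,\eta}$). Solving the equation algebraically for $\partial_z^4\phi$ avoids that and works for \emph{even} orders, but there is no way to reach $\partial_z^3\phi$ from the $k\le2$ bounds by this route alone. What saves the day — and what the paper is pointing at when it invokes \eqref{conv-loc1} — is the factorization $\Airy=\cA\,\Delta_\alpha$: the constructed solution satisfies $\Delta_\alpha\phi=\cA^{-1}f$ exactly, and Lemma \ref{lem-ConvAiry-cl} gives $\|\partial_z^k\cA^{-1}f\|_\eta\le C\delta^{-1-k}\|f\|_\eta$ for \emph{all} $k\ge0$ with no weight loss. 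Then $\partial_z^{k+2}\phi=\alpha^2\partial_z^k\phi+\partial_z^k\cA^{-1}f$ closes the recursion for every $k$, even and odd, without ever differentiating $f$. I would replace your final bootstrap step with this; the rest of the proof can stand as written.
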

\begin{proof} The proposition follows from the standard iteration. Indeed, for $f \in X_\eta$, we set 
\begin{equation}\label{def-star-Gf} \phi_0 (z): = G \star f(z) = \int_0^\infty G(x,z) f(x) \; dx.\end{equation}
By construction, it follows that 
$$ \Airy (\phi_0) = f + E_0, \qquad E_0 := E_\mathrm{a} \star f$$
in which the remainder kernel $E_\mathrm{a}(x,z)$ is defined as in \eqref{eqs-2GAiry}. Lemmas \ref{lem-ConvAiry}  and \ref{lem-ConvAiry-cl} respectively give 
\begin{equation}\label{E00-est} \| \phi_0 \|_{\eta'} \le C_0 \| f\|_\eta, \qquad \| E_0 \|_\eta \le C_0 \delta \|f \|_\eta .\end{equation}
That is, the error term $E_0$ is indeed of order $\cO(\delta)$ in $X_\eta$. Recalling that the assumption on $\alpha$ in particular implies that $\delta \alpha \ll 1$. 

We may now define by iteration an exact solver for the Airy operator $\mathcal{A}(\cdot)$. Let us start with a fixed $f \in X_\eta$. Let us define
\begin{equation}\label{iter-Aphi}
\begin{aligned}
\phi_n &= - \mathcal{A}_\mathrm{a}^{-1}(E_{n-1})
\\
E_n &=  - Err_\mathrm{a}(E_{n-1}) \end{aligned}
\end{equation}
for all $n \ge 1$, with $E_0 = f$. Let us also denote 
$$
S_n = \sum_{k=1}^n \phi_k .
$$
It  follows by induction that 
$$ \mathcal{A} (S_n) = f + E_n,$$
for all $n\ge 1$. By Lemma \ref{lem-ConvAiry-cl}, we have 
$$ \| E_n\|_\eta \le C \delta \|E_{n-1}\|_\eta \le C^n  \delta ^n \| f\|_\eta.$$ 
Recalling that $\delta\to 0$, the above proves that $E_n \to 0$ in $X_\eta$ as $n \to \infty$. Again, Lemma \ref{lem-ConvAiry-cl} yields
$$ \| \phi_n\|_{\eta} \le C \delta^{-1} \| E_{n-1}\|_\eta \le C \delta^{-1} \delta ^{n-1} \|f\|_\eta.$$
This shows that $\phi_n$ converges to zero in $X_{\eta}$ as $n \to \infty$, and furthermore the series 
$$ S_n \to S_\infty$$
 in $X_{\eta}$ as $n \to \infty$, for some $S_\infty \in X_{\eta}$. We then denote $\mathcal{A}^{-1}(f) = S_\infty$, for each $f \in X_\eta$. In addition, we have $ \mathcal{A} (S_\infty) = f,$ that is, $\mathcal{A}^{-1}(f) $ is the exact solver for the modified Airy operator. A similar estimate follows for derivatives, using \ref{conv-loc1}. The proposition is proved.   
\end{proof}

%
%


\subsection{Smoothing effect of Airy operator}

In this section, we study the smoothing effect of the modified Airy function. Precisely, let us consider the Airy equation with a singular source:
\begin{equation}\label{Airyp-singular}\Airy(\phi) = \epsilon \partial_x^4 f(z)\end{equation}
in which $f \in Y^{4,\eta}$, that is $f(z)$ and its derivatives decay exponentially at infinity and behaves as $(z-z_c)\log(z-z_c)$ near the critical layer $z=z_c$. Precisely, we assume that 
\begin{equation}\label{assump-f01} |\dz^k f(z)| \le  C e^{-\eta z} ,\qquad k = 0, \cdots, 4,\end{equation}
 for $z$ away from $z_c$, and 
\begin{equation}\label{assump-f02} 
\begin{aligned}
&|f(z)|\le C, \quad | \dz f(z) | \le C (1 + | \log (z - z_c) | ) , 
\\
  &| \dz^k f(z) | \le C (1 + | z - z_c |^{1 - k} )
,
\end{aligned}\end{equation}
for $z$ near $z_c$ and for $k = 2,3,4$, for some constant $C$. The singular source $\epsilon \partial_x^4 f$ arises as an error of the inviscid solution when solving the full viscous problem. The key for the contraction of the iteration operator lies in the following proposition:

\begin{proposition}\label{prop-mAiry} Assume that $\nu^{1/2}\ll  \alpha \ll \nu^{-1/4}$. Let $\Airy^{-1}$ be the inverse of the Airy operator constructed as in Proposition \ref{prop-exactAiry} and let $f\in  Y^{4,\eta}$, for any positive number $\eta$. Then, there holds the following uniform estimate: 
\begin{equation}\label{smooth-Airy}
\begin{aligned}
\Big\| \Airy^{-1}( \epsilon \partial_x^4 f ) \Big\|_{X^{2,\eta'}} \le  C\|f\|_{Y^{4,\eta}}  \Big[ \delta +  \min\{ |z_c|, \alpha^{-1} \} \Big] |\log \delta| 
.
\end{aligned}\end{equation}
\end{proposition}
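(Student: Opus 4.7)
I will proceed from the Green-function representation
$$\Phi(z):= \Airy^{-1}(\eps \partial_x^4 f)(z) = \int_0^\infty G(x,z)\,\eps \partial_x^4 f(x)\,dx,$$
together with the iterative corrections supplied by the construction in Proposition \ref{prop-exactAiry}, which gain a factor $\delta$ at each step via Lemma \ref{lem-ConvAiry-cl} and are therefore subordinate to the leading convolution. Since $f\in Y^{4,\eta}$ only gives $|\partial_x^4 f(x)|\lesssim |x-z_c|^{-3}$ near the critical point, the source $\eps \partial_x^4 f$ does \emph{not} lie in $X^{0,\eta}$ and Proposition \ref{prop-exactAiry} cannot be applied directly. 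The strategy is therefore to integrate by parts twice in $x$ to move two derivatives from $f$ onto $G$, and to deform the real $x$-contour into the universal cover $\widetilde\Gamma$ so as to stay at distance $\gtrsim\delta$ from $z_c$.

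\textbf{Main integral.} After two integrations by parts the leading term is $I(z):=\int_0^\infty \eps \partial_x^2 G(x,z)\,\partial_x^2 f(x)\,dx$. Proposition \ref{prop-ptGreenbound} with $k=2$, $\ell=0$ yields $|\eps \partial_x^2 \widetilde G(x,z)|\le C\delta\,\langle Z\rangle^{-1/2}\mathcal T(x,z)$ (using $\eps=\delta^3 U_c'$). Since $f$ is holomorphic on $\widetilde\Gamma$, I deform the real $x$-contour to pass at distance $\gtrsim\delta$ from $z_c$; the $Y^{4,\eta}$ bound on the deformed arc becomes $|\partial_x^2 f(x)|\lesssim \delta^{-1}$ over the critical-layer piece and $|\partial_x^2 f(x)|\lesssim 1+|\log|x-z_c||$ outside. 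The factor $\mathcal T(x,z)$ localizes the effective integration in $x$, and the logarithmic integral $\int_\delta^1 dr/r\sim|\log\delta|$ supplies the $|\log\delta|$ factor. Combining with the $\eps$-gain produces $|I(z)|\le C\delta|\log\delta|\,\|f\|_{Y^{4,\eta}}$.

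\textbf{Boundary term at $x=0$.} The integration by parts leaves $-\eps G(0,z)\partial_x^3 f(0)+\eps\partial_x G(0,z)\partial_x^2 f(0)$. The $Y^{4,\eta}$ bounds combined with Taylor expansion around $z_c$ give $|\partial_x^k f(0)|\lesssim|z_c|^{1-k}$ for $k=2,3$. Meanwhile the explicit form \eqref{def-tG}--\eqref{def-E} splits $G(0,z)$ into an Airy component $\sim Ai(\delta^{-1}\eta(0))$ with $\eta(0)\approx -z_c$, and a Rayleigh-type tail from $E(0,z)$ weighted by $e^{-\alpha|z_c|}$. When $|z_c|\ll\alpha^{-1}$ the Airy factor is $O(1)$ and the net boundary contribution is $O(|z_c||\log\delta|)$; when $|z_c|\gg\alpha^{-1}$ the Rayleigh piece dominates and yields $O(\alpha^{-1}|\log\delta|)$. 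Taking the better of the two produces the $\min\{|z_c|,\alpha^{-1}\}|\log\delta|$ factor. The jump contributions at $x=z$ from the two integrations by parts are handled using the explicit structure \eqref{def-tG}--\eqref{def-E} of $G$ and are of lower order.

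\textbf{Derivative closure and main obstacle.} To close the estimate in the full $X^{2,\eta'}$ norm I differentiate the Green-function representation in $z$ up to twice, using the bound $|\partial_z^\ell\partial_x^2 \widetilde G|\lesssim \delta^{1-\ell}\langle Z\rangle^{(\ell-1)/2}\mathcal T(x,z)$ from Proposition \ref{prop-ptGreenbound}; the $\langle Z\rangle^{(\ell-1)/2}$ weights are absorbed by the $|z-z_c|^\ell$ pre-weight built into the $X^{2,\eta'}$ norm near $z_c$ and by the exponential tail of $\mathcal T$ away from $z_c$. The main obstacle lies in the boundary analysis: one must carefully track the two regimes (critical layer close to vs. far from the wall) in $G(0,z)$, balancing the $|z_c|^{-1},|z_c|^{-2}$ blow-ups of $\partial_x^2 f(0),\partial_x^3 f(0)$ against the competing Airy and Rayleigh scales in $G(0,z)$ to obtain exactly the $\min\{|z_c|,\alpha^{-1}\}$ factor, while simultaneously ensuring that the contour deformation in the main integral is compatible with the multivalued structure of $f$ on $\widetilde\Gamma$.
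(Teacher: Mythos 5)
There is a genuine gap: you attribute the crucial $\min\{|z_c|,\alpha^{-1}\}|\log\delta|$ factor to the boundary term at $x=0$, but that mechanism is wrong, and the term you actually need to analyze is missing. First, the non-localized part $E(x,z)$ of the Green function \eqref{def-E} vanishes identically for $x<z$, so for $z>0$ there is no contribution of $E$ at $x=0$: the ``Rayleigh-type tail from $E(0,z)$'' you invoke does not exist, and $G(0,z)=\widetilde G(0,z)$. The boundary terms at $x=0$ then involve $\widetilde G(0,z)$, which carries the Airy-decay factor $\mathcal{T}(0,z)\lesssim e^{-\theta_0(|z_c|/\delta)^{3/2}}$; combined with $|\epsilon\partial_x^k f(0)|\lesssim \epsilon|z_c|^{1-k}$ and $\epsilon=\delta^3 U_c'$, this gives a contribution of order $\delta(1+|\log\delta|)$ only (cf.\ \eqref{B-est01}), not $\min\{|z_c|,\alpha^{-1}\}|\log\delta|$.

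Second, and more importantly, your ``main integral'' estimate $|I(z)|\le C\delta|\log\delta|$ only uses the bound for the localized piece $\widetilde G(x,z)$ from Proposition \ref{prop-ptGreenbound}; you never account for the integral against the non-localized piece $E(x,z)$ over $x>z$. This is precisely where the $\min\{|z_c|,\alpha^{-1}\}$ factor comes from in the paper: in the regime $0\le z\le |z_c|-\delta$ with $\delta\ll|z_c|$, the linear term $\delta^{-1}a_1(x)(z-x)$ in $E(x,z)$, integrated over the critical layer $\{|x-z_c|\le\delta\}$ against $\partial_x f$, produces a factor $\min\{|z-z_c|,\alpha^{-1}\}$ via $|x-z|e^{-\alpha|x-z|}\le\alpha^{-1}$ and $|x-z|\lesssim|z-z_c|$ (see the estimate \eqref{large-zc} for $I_{e,1}$). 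Without this piece your bound on the main integral is too optimistic, and the proposition's $\min$ term has nowhere to come from. (As a side note, the contour deformation you propose is also problematic across the matching point $x=z$, where $\partial_z^3 G$ jumps; the paper instead works with real-variable estimates and performs a \emph{different} number of integrations by parts on the $\widetilde G$ and $E$ pieces separately.)
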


In the case when the critical layers $z=z_c$ are away from the boundary, the bound in the above proposition is not sufficiently small. Instead, we obtain the following proposition: 

\begin{proposition}\label{prop-mAiry2} Assume that $\nu^{1/2}\ll  \alpha \ll \nu^{-1/4}$ and $|z_c|\gtrsim 1$. For any positive number $r>0$, we set $\Omega_r := \{z\ge \Re z_c - r\}$. Then, there holds the estimate: 
\begin{equation}\label{smooth-Airy}
\begin{aligned}
\Big\| \Airy^{-1}( \epsilon \partial_x^4 f ) \Big\|_{X^{2,\eta'} (\Omega_r)} \le  C\|f\|_{Y^{4,\eta}}\Big[ \delta +  \min\{r, \alpha^{-1} \} \Big] |\log \delta| 
\end{aligned}\end{equation}
for arbitrary $\eta' < \eta$. 
\end{proposition}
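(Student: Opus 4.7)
The plan is to revisit the proof of Proposition \ref{prop-mAiry} and show that, when the output is measured only on $\Omega_r$, every appearance of $|z_c|$ can be replaced by $r$. By Proposition \ref{prop-exactAiry} we may write
\[
\Airy^{-1}(\epsilon \partial_x^4 f)(z) \;=\; \int_0^\infty G(x,z)\,\epsilon \partial_x^4 f(x)\,dx \;+\; \text{iterated corrections},
\]
with $G=\widetilde G+E$ as in Proposition \ref{prop-ptGreenbound}, and the iterated corrections gaining a factor $\cO(\delta)$ each by the contraction used in Proposition \ref{prop-exactAiry}. It therefore suffices to bound the first convolution uniformly for $z\in\Omega_r$.

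First I split the $x$-integration at $x=\Re z_c -r$. On the outer region $x\in[0,\Re z_c-r]$ the function $f$ is away from its critical layer, so $|\partial_x^4 f(x)|\lesssim \|f\|_{Y^{4,\eta}}e^{-\eta x}$ uniformly in $x$. Multiplying by $\epsilon=\delta^3 U_c'$ and using the $L^1_x$ estimates on $G$ from Lemma \ref{lem-ConvAiry}, this contribution is $\cO(\epsilon |\log\delta|)$, absorbed into $\cO(\delta|\log\delta|)$. On the inner region $x\in[\Re z_c-r,\infty)$ I integrate by parts in $x$ four times, splitting the integration across $x=z_c$ on the universal cover $\widetilde \Gamma$ to accommodate the $(x-z_c)\log(x-z_c)$-type singularities of $f$ and its derivatives. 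This produces a volume integral $\int \epsilon\partial_x^4 G(x,z)f(x)\,dx$ plus boundary terms at $x=\Re z_c-r$, jump terms at $x=z_c$, and a vanishing contribution at $x=\infty$.

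The volume integral is controlled by the convolution estimates of Lemmas \ref{lem-ConvAiry} and \ref{lem-ConvAiry-cl}. The contribution of $\widetilde G$ is $\cO(\delta|\log\delta|)$, using that $\epsilon\partial_x^4\widetilde G$ has size $\delta^{-1}\langle Z\rangle^{1/2}\mathcal{T}(x,z)$ and that $\|f\|_{X^{0,\eta}(I_c)}\lesssim 1$ on the critical-layer scale $I_c=\{|x-z_c|\lesssim 1\}$, with the logarithm coming from the $L^1_x$-norm of the slowly-varying factor of $f'$. The contribution of the slow component $E$ only decays as $e^{-\alpha|x-z|}$, so integrating it against $f$ restricted to the $r$-neighborhood of $z_c$ yields a factor $\min\{r,\alpha^{-1}\}$: when $r\le\alpha^{-1}$ the exponential is essentially $1$ over a window of size $r$, while when $r\ge\alpha^{-1}$ the exponential truncates the effective integration domain to a window of size $\alpha^{-1}$ around $z_c$. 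The boundary terms at $x=\Re z_c-r$ are estimated by combining the pointwise bounds of Proposition \ref{prop-ptGreenbound} on $\epsilon\partial_x^k G$ with the $Y^{4,\eta}$-control of $\partial_x^{3-k}f(\Re z_c-r)$; each integration by parts yields a $\delta^{-1}$ from derivatives of $G$, compensated by the gain of one degree of regularity on $f$ and a global $\epsilon$.

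The main obstacle will be the principal-value analysis at $x=z_c$: since $\partial_x^2 f\sim(x-z_c)^{-1}$, $\partial_x^3 f\sim(x-z_c)^{-2}$, and $\partial_x^4 f\sim(x-z_c)^{-3}$, the naive jump terms produced by integration by parts are formally divergent and must be regrouped with (also formally divergent) parts of the volume integral, via a contour deformation inherited from the definition of $\phi_{2,0}$ on the universal cover (Lemma \ref{lem-defphi012}); the required cancellations rely on the explicit $Q(z)(z-z_c)\log(z-z_c)$ structure. A secondary subtlety is that the two components $\widetilde G$ (localized on the critical-layer scale $\delta$) and $E$ (decaying on the slow scale $\alpha^{-1}$) contribute to different summands in the final bound: $\widetilde G$ supplies the $\delta|\log\delta|$ term, while $E$ supplies the $\min\{r,\alpha^{-1}\}|\log\delta|$ term, and the bookkeeping between these two scales must be consistent with the estimates already recorded in Lemma \ref{lem-locConvAiry}.
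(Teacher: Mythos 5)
Your proposal overcomplicates a one-line observation, and in doing so it introduces a genuine gap. The paper's proof (via Remark \ref{rem-large-zc}) is simply: run the same proof as Proposition \ref{prop-mAiry}, i.e.\ Lemma \ref{lem-mAiry}, and observe that the only place where $|z_c|$ enters is the bound \eqref{large-zc} on $I_{e,1}$, which actually reads $\delta + \min\{|z-z_c|, \alpha^{-1}\}$ and is produced by the linear factor $(z-x)$ inside $E(x,z)$ (i.e.\ the $a_1(x)(z-x)$ term), evaluated in the case $z \le \Re z_c - \delta$. Once $z$ is restricted to $\Omega_r$, we have $|z - z_c|\lesssim r$ in that case, so $\min\{|z-z_c|,\alpha^{-1}\}$ is automatically $\le \min\{r,\alpha^{-1}\}$. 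No new decomposition, no new integration by parts, no splitting of the $x$-integration is needed; the bound is pointwise in $z$ and the restriction to $\Omega_r$ acts only on the output variable.

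Your route, by contrast, splits the $x$-integral at $\Re z_c - r$ and integrates by parts four times across the critical layer, and this creates the obstacle you rightly flag: $\partial_x^4 f \sim (x-z_c)^{-3}$ is not integrable uniformly in $\Im c$, so the resulting boundary/jump terms are not controllable by the $Y^{4,\eta}$ norm alone. The paper never faces this because in Lemma \ref{lem-mAiry} one only ever integrates by parts three times \emph{globally} (leaving $\partial_x f \sim \log(x-z_c)$, which is locally integrable), and performs further integration by parts only on $\{|x-z_c|\ge\delta\}$ where $f$ is regular. Your suggested remedy — a contour deformation ``inherited from the definition of $\phi_{2,0}$'' — is speculative and not worked out; as stated your proof is incomplete at exactly its central step. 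You have, however, correctly identified the bookkeeping: $\widetilde G$ (localized on scale $\delta$) contributes $\delta\log\delta$ and $E$ (decaying on scale $\alpha^{-1}$) contributes $\min\{r,\alpha^{-1}\}\log\delta$; this is consistent with where the paper extracts the two summands.

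One further inaccuracy: you claim $|\partial_x^4 f(x)|\lesssim \|f\|_{Y^{4,\eta}} e^{-\eta x}$ \emph{uniformly} on $x\in[0,\Re z_c-r]$. This is false near $x = \Re z_c - r$, where $|\partial_x^4 f|\sim r^{-3}$ by \eqref{assump-f02}. The outer-region contribution is therefore $\cO(\epsilon r^{-3}\cdot\delta)$ rather than $\cO(\epsilon|\log\delta|)$, and whether this is absorbed into the target bound depends on the relative size of $r$ and $\delta$ — another reason your decomposition at $x=\Re z_c-r$ needs more care than the paper's argument.
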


\subsubsection{Pointwise bounds}
Roughly speaking, the Airy inverse of $\epsilon \partial_z^4  f$ is computed through the convolution with the (approximate) Green function $G(x,z)$:  
$$\Airy^{-1}(\epsilon \partial_z^4 f ) \approx G \star \epsilon \partial_z^4 f \approx  - \epsilon \partial_z^3 G \star \partial_z f ,$$
in which $\epsilon \partial_z^3 G$ is bounded and is localized near the critical layer, whose thickness is of order $\delta$. This indicates the bound by a constant of order $\delta \log \delta$ as stated in the estimate \eqref{smooth-Airy}. 
Precisely, we obtain the following lemma.

\begin{lemma}\label{lem-mAiry} Assume that $\nu^{1/2}\ll  \alpha \ll \nu^{-1/4}$.  Let $G(x,z)$ be the approximate Green function of $\Airy$, constructed as in \eqref{def-Gr}, with $E_\mathrm{a}(x,z)$ being the error of the approximation, and let $f\in Y^{4,\eta}$, for any positive number $\eta$. For $k\ge 0$, there holds the following uniform convolution estimates: 
\begin{equation}\label{mphi-bound}
\begin{aligned}
\Big| &(U(z)-c)^k \dz^k G\star \epsilon \partial_z^4f (z)\Big| 
\\&\le     C\|f\|_{Y^{4,\eta}}  (1+|\log \delta|) (\delta+\chi_{[0,\Re z_c]} \min \{|z-z_c|,\alpha^{-1}\}  )e^{-\eta' z}
\end{aligned}\end{equation}
for any $\eta' < \eta$, and in addition, 
\begin{equation}\label{mErrA-bound}
\begin{aligned}
\Big|(U(z)-c)^k \dz^k E_\mathrm{a} \star \epsilon \partial_z^4  f(z) \Big| 
&\le      C \|f\|_{ Y^{4,\eta}} e^{-\eta z} \delta^2 (1+|\log \delta|)
\end{aligned}\end{equation}
for all $z\ge 0$. 
\end{lemma}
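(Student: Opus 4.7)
The plan is to reinterpret the formally ill-defined convolution $G\star\epsilon\partial_x^4 f$---whose integrand is not absolutely integrable since $|\partial_x^4 f|\lesssim |x-z_c|^{-3}$ near the critical layer---by integrating by parts three times in $x$, moving three derivatives from $f$ onto $G$. Splitting the integral at $x=z$, and using boundedness of $f,\partial_x f,\ldots$ at $x=0$ together with exponential decay at infinity, one arrives at a representation of the form
$$
G\star\epsilon\partial_x^4 f(z) \;=\; -\int_0^\infty \epsilon\,\partial_x^3 G(x,z)\,\partial_x f(x)\,dx \;+\; J(z)\;+\;B(z),
$$
where $B(z)$ collects the boundary contributions at $x=0$ and $J(z)$ the jump contributions at $x=z$, both of which can be read off from the explicit formulas \eqref{def-tG}--\eqref{def-E}.

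The main integral is then controlled by Proposition \ref{prop-ptGreenbound}. Since $\epsilon=\delta^3 U_c'$, the bound $|\partial_x^3\widetilde G(x,z)|\lesssim \delta^{-3}\langle z\rangle^{1/3}\mathcal{T}(x,z)$ gives $|\epsilon\,\partial_x^3 G(x,z)|\lesssim \langle z\rangle^{1/3}\mathcal{T}(x,z)$, while the hypothesis $f\in Y^{4,\eta}$ forces $|\partial_x f(x)|\lesssim (1+|\log(x-z_c)|)\,e^{-\eta x}$. The weight $\mathcal{T}(x,z)\lesssim e^{-\frac{\sqrt{2}}{3}\sqrt{|Z|}|X-Z|}$ localises the integration to the critical layer of thickness $\delta$ (after the Langer change of variable $dx=\delta\,\dot z^{-1}dX$), and the logarithmic singularity of $\partial_x f$ produces exactly one power of $(1+|\log\delta|)$, yielding the target size $\delta(1+|\log\delta|)e^{-\eta'z}$. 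The jump terms $J(z)$ are of the form $[\epsilon\,\partial_x^jG(\cdot,z)]_{x=z}\,\partial_x^{3-j}f(z)$ for $j=0,1,2$, and absorb into the same bound using Proposition \ref{prop-ptGreenbound} together with the $Y^{4,\eta}$ estimate $|\partial_x^{3-j}f(z)|\lesssim (1+|z-z_c|^{j-2})(1+|\log(z-z_c)|)$. For the $(U(z)-c)^k\partial_z^k$ version, I would commute $\partial_z^k$ with the integral; near $z_c$ the factor $(U(z)-c)^k\sim \delta^k|Z|^k$ cancels the $\delta^{-k}$ loss in the derivative bound of Proposition \ref{prop-ptGreenbound}, while away from the critical layer the decay of $\mathcal{T}$ provides the smallness. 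The estimate \eqref{mErrA-bound} for $E_{\mathrm{a}}$ follows by the identical argument, the extra $\delta^2$ coming directly from the improved pointwise bound on $E_{\mathrm{a}}$ versus $G$ in Proposition \ref{prop-ptGreenbound}.

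The additional term $\chi_{[0,\Re z_c]}\min\{|z-z_c|,\alpha^{-1}\}$ will arise from the non-localised piece $E(x,z)$ of the approximate Green function defined in \eqref{def-E}: it is supported on $x>z$ and grows linearly in $z-x$, so its convolution against $\partial_x f$ contributes a term proportional to the distance $|z-z_c|$ between $z$ and the critical layer, saturating at $\alpha^{-1}$ because of the $e^{-\alpha|z-x|}$ decay visible in \eqref{non-locE}. The main technical obstacle will be the bookkeeping of the jump terms in $J(z)$: the conditions \eqref{def-jumpG0}--\eqref{def-jumpG1} are stated for $z$-derivatives of $G$, whereas three integrations by parts in $x$ demand the jumps of the $x$-derivatives, which I would extract directly from the explicit representation \eqref{def-tG}--\eqref{def-E} (the pre-factor $\dot x^{3/2}$ only generates lower-order corrections); matching these jumps cleanly with the logarithmic behaviour of $\partial_x^{3-j}f$ at $z=z_c$ so that the final bound is $\delta(1+|\log\delta|)$ rather than a larger power of $|\log\delta|$ is the delicate accounting step.
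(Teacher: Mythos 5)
Your plan reproduces the paper's proof in its essential structure: integrate by parts three times in $x$ to trade $\epsilon\partial_x^4 f$ (which has a nonintegrable $|x-z_c|^{-3}$ singularity) for $\epsilon\partial_x^3 G\cdot\partial_x f$ (where $\epsilon\partial_x^3 G$ is bounded and localized to the critical layer, and $\partial_x f$ is merely logarithmic), then close by Proposition \ref{prop-ptGreenbound} and the Langer change of variables. Your identification of the origin of the extra $\chi_{[0,\Re z_c]}\min\{|z-z_c|,\alpha^{-1}\}$ term as coming from the linearly growing, $e^{-\alpha|x-z|}$-decaying non-localized piece $E(x,z)$ of \eqref{def-E} is also exactly the mechanism the paper uses, via the integrals $I_{e,1}, I_{e,2}$ over $\{|x-z_c|\le\delta\}$ and $\{|x-z_c|\ge\delta\}$ separately, and your observation that $(U(z)-c)^k\sim\delta^k\langle Z\rangle^k$ cancels the $\delta^{-k}$ derivative loss matches the paper's argument for \eqref{mphi-bound} with $k>0$. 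The bound \eqref{mErrA-bound} does follow the same way from the pointwise bounds on $E_\mathrm{a}$, as you say.

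One point where your proposal creates work for itself: you flag the bookkeeping of the diagonal jump terms $J(z)$ at $x=z$ as ``the main technical obstacle,'' but there is in fact no obstacle --- those jumps all vanish. The jump conditions \eqref{def-jumpG0} assert $[G]_{x=z}=[\partial_z G]_{x=z}=[\partial_z^2 G]_{x=z}=0$, and since the only jump is across the diagonal, the chain-rule identity $\frac{d}{dx}\bigl(g_+(x,x)-g_-(x,x)\bigr)=0$ (with $g_\pm$ the two branches) forces $[\partial_x G]_{x=z}=-[\partial_z G]_{x=z}=0$, then $[\partial_x^2 G]_{x=z}=0$ similarly. Hence the three integrations by parts produce only the boundary terms at $x=0$ (the $\mathcal{B}_0(z)$ of \eqref{def-mBdry}), which is why the paper simply says ``using the continuity of the Green function.'' You need only go deep enough to see $[\partial_x^j G]_{x=z}=0$ for $j=0,1,2$; the nonzero $x$-jump first appears at third order, $[\epsilon\partial_x^3 G]_{x=z}=-1$, and is never reached. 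Once you replace your worry about $J(z)$ with this observation, your proposal is the paper's proof.
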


\begin{proof} Without loss of generality, we assume that $ \|f\|_{ Y^{4,\eta}}  = 1$. To begin our estimates, let us recall the decomposition of $G(x,z)$ into the localized and non-localized part as 
$$G(x,z) = \widetilde G(x,z) + E(x,z),$$
where $\widetilde G(x,z) $ and $E(x,z) $ satisfy the pointwise bounds in Proposition \ref{prop-ptGreenbound}. Using the continuity of the Green function, we can integrate by parts to get  
\begin{equation}\label{mphi-integral}\begin{aligned}
\phi (z)&=  - \epsilon \int_{0}^{\infty}\D_x^3  (\widetilde G + E) (x,z) \D_x f(x) \; dx 
+ \mathcal{B}_0(z)\\
&= I_\ell (z) + I_e(z) +  \mathcal{B}_0(z)
\end{aligned}
\end{equation}
Here, $I_\ell(z) $ and $I_e(z)$ denote the corresponding integral that involves $\widetilde G(x,z)$ and $E(x,z)$ respectively, and $\mathcal{B}_0(z)$ is introduced to collect the boundary terms at $x=0$ and is defined by 
\begin{equation}\label{def-mBdry}
\mathcal{B}_0(z): = - \epsilon \sum_{k=0}^2 (-1)^k \D_x^k G(x,z) \D^{3-k}_x(f(x))\vert{_{x=0}} .
\end{equation}
By a view of the definition of $E(x,z)$, we further denote 
$$\begin{aligned}  
I_{e,1} (z) : &=   \delta^2 e^{\alpha z} \int_{z}^{\infty} \D_x^3(\dot x^{3/2} a_1 (x) (z-x) ) \D_x f(x) \; dx,
\\ I_{e,2} (z) :& =  \delta^3 e^{\alpha z} \int_{z}^{\infty} \D_x^3 ( \dot x^{3/2} a_2(x)  ) \D_x f(x) \; dx  
\end{aligned}$$
We have $I_e(z) = I_{e,1}(z) + I_{e,2}(z)$. 


\bigskip
\noindent
{\bf Estimate for the integral $I_\ell (z)$.}  Using the bound \eqref{mG-xnearz} on the localized part of the Green function, we can give bounds on the integral term $I_\ell $ in \eqref{mphi-integral}. Consider the case $|z-z_c|\le \delta$. In this case, we note that $\eta'(z) \approx \dot z(\eta(z)) \approx 1$.  By splitting the integral into two cases according to the estimate \eqref{mG-xnearz}, we get
$$\begin{aligned}
|I_\ell (z)| &=\Big| \epsilon \int_{0}^{\infty} \D_x^3  \widetilde G(x,z) \D_x f(x) \; dx \Big| 
\\& \le  \epsilon \int_{\{|x-z_c|\le \delta\}} |\D_x^3  \widetilde G(x,z) \D_x f(x)| \; dx +  \epsilon \int_{\{|x-z_c|\ge \delta\}} |\D_x^3  \widetilde G(x,z) \D_x f(x)| \; dx,
 \end{aligned}$$ 
in which since $\epsilon \D_x^3 \widetilde G(x,z)$ is uniformly bounded, the first integral on the right is bounded by 
 $$ C \int_{\{|x-z_c|\le \delta\}} | \D_x f(x)| \; dx  \le C   \int_{\{|x-z_c|\le \delta\}} (1+|\log (x-z_c)|) \; dx \le C \delta (1+|\log \delta|) .$$ 
For the second integral on the right, we note that in this case since $X$ and $Z$ are away from each other, there holds $e^  {-  {\sqrt{2} \over 3} \sqrt{|Z|}|X-Z| } \le C e^{ - \frac 16 |X|^{3/2} }e^{ - \frac 16 |Z|^{3/2} }$. We get 
$$\begin{aligned}
\epsilon &\int_{\{|x-z_c|\ge \delta\}}  |\D_x^3  \widetilde G(x,z) \D_x f(x)| \; dx 
\\& \le C   \int_{\{|x-z_c|\ge \delta\}}  e^{ - \frac 16 |X|^{3/2} } e^{-\eta  x } (1+|\log (x-z_c)|)\; dx 
\\ & \le C   (1+|\log\delta|)\int_\RR   e^{ - \frac 16 |X|^{3/2} }  \; dx 
\\& \le C   \delta (1+|\log\delta|),
\end{aligned}$$
in which the second-to-last inequality was due to the crucial change of variable $X = \delta^{-1}\eta(x)$ and so $dx = \delta \dot z (\eta (x)) dX$ with $|\dot z(\eta (x))| \le C(1+|x|)^{1/3}$.

Let us now consider the case $|z-z_c|\ge \delta$. Here we note that as $z\to \infty$, $Z = \delta^{-1}\eta(z)$ also tends to infinity since $|\eta(z)| \approx (1+|z|)^{2/3}$ as $z$ is sufficiently large. We again split the integral in $x$ into two parts $|x-z_c|\le \delta $ and $|x-z_c|\ge \delta$. For the integral over $\{|x-z_c|\le \delta\}$, as above, with $X$ and $Z$ being away from each other, we get 
$$\begin{aligned}
\epsilon \int_{\{|x-z_c|\le \delta\}} |\D_x^3  \widetilde G(x,z) \D_x f(x)| \; dx 
& \le C   
e^{ - \frac 16 |Z|^{3/2}}  \int_{\{|x-z_c|\le \delta\}}  (1+|\log (x-z_c)|)\; dx 
\\&  \le C   e^{-\eta z}\delta (1+|\log \delta|).
\end{aligned}$$
Here the exponential decay in $z$ was due to the decay term $e^{ - \frac 16 |Z|^{3/2}}$ with $Z \approx (1+z)^{2/3}$. 
Next, for the integral over $\{ |x-z_c|\ge \delta\}$, we use  the bound \eqref{mG-xnearz} and the assumption $|\D_x f(x)| \le C   e^{-\eta  x } (1+|\log \delta|)$ to get 
$$\begin{aligned}
\epsilon \int_{\{|x-z_c|\ge \delta\}} & |\D_x^3  \widetilde G(x,z) \D_x f(x)| \; dx  
\\&\le C    (1+|\log \delta|) (1+z)^{1/3}  \int e^{-\eta x}   e^  {- \sqrt{2|Z|}|X-Z| /3} \; dx 
\\&\le C (1+|\log \delta|) \delta (1+z)^{1/3}  e^{-\eta z }  |Z|^{-1/2}
 \end{aligned}$$ 
If $z\le 1$, the above is clearly bounded by $C (1+|\log \delta|) \delta$. Consider the case $z\ge 1$. We note that $|Z| \gtrsim |z|^{2/3} / \delta$. This implies that $(1+z)^{1/3}|Z|^{-1/2} \lesssim 1$ and so the above integral is again bounded by $C (1+|\log \delta|) \delta e^{-\eta z } $.

 Therefore in all cases,  we have $|I_\ell (z)|\le C  e^{-\eta z} \delta (1+|\log \delta|)$ or equivalently, 
\begin{equation}  \Big| \int_{0}^{\infty}\epsilon \D_x^3  \widetilde G (x,z) \D_x f(x) \; dx \Big| 
\le C  e^{-\eta z} \delta (1+|\log \delta|)\end{equation}
 for all $z\ge 0$. 
 
 \bigskip
\noindent
{\bf Estimate for $I_{e,2}$.} Again, we consider several cases depending on the size of $z$. For $z$ away from the critical and boundary layer (and so is $x$): $z\ge |z_c| + \delta$, we apply integration by parts to get  
$$\begin{aligned}
 I_{e,2} (z) &=   \delta^3 e^{\alpha z}\int_{z}^{\infty} \D_x^3 ( \dot x^{3/2} a_2(x)  ) \D_x f(x) \; dx  
 \\& =  - \delta^3 e^{\alpha z}\int_{z}^{\infty} \D_x^2 (\dot x^{3/2}a_2(x)) \D^2_x f(x) \; dx  -  \delta^3  e^{\alpha x}\D_x^2 (\dot x^{3/2}a_2(x)) \D_x f(x)\vert_{x=z} .
 \end{aligned}$$
Here for convenience, we recall the bound \eqref{ak12-bound} on $a_2(x)$: 
\begin{equation}\label{bound-ma2}
 |\partial_x^ka_2(x)| \le    C\delta^{-k}(1+x)^{5/6-k/3} e^{-\alpha x} (1+|X|)^{k/2-3/2} .
\end{equation}
Now by using this bound and the fact that $|Z| \gtrsim |z|^{2/3}/\delta$ (recalling that $z$ is away from the critical layer), the boundary term in $I_{e,2}$ is bounded by 
$$\begin{aligned}
&|\delta^3  e^{\alpha z}\D_z^2 (\dot z^{3/2}a_2(z)) \D_z f(z)|
\\&\le C\delta (1+|z|)^{2/3}  (1+|Z|)^{-1/2} (1+|\log (z-z_c)|) e^{-\eta z}  
 \\&  \le C  e^{-\eta z} \delta (1+|\log \delta|) (1+\delta^{1/2}|z|^{1/3})
. \end{aligned}$$
Whereas, the integral term in $I_{e,2}$ is estimated by 
$$\begin{aligned}
\Big|\delta^3 e^{\alpha z} &\int_{z}^{\infty} \D_x^2 (\dot x^{3/2}a_2(x)) \D^2_x f(x) \; dx \Big| 
\\&\le C \delta  \int_{z}^{\infty} (1+x)^{2/3}|X|^{-1/2} |x-z_c|^{-1}   e^{-\eta x} e^{-\alpha |x-z|} \; dx
\\&\le C \delta (1+z)^{2/3} |Z|^{-1/2}   e^{-\eta z} (1+|\log \delta|)
\\&\le C   \delta(1+|\log \delta|) (1+\delta^{1/2} |z|^{1/3})e^{-\eta z} .
\end{aligned}$$
Thus we have 
\begin{equation}\label{mIe2-est}\Big|I_{e,2} (z) \Big| \le  C   \delta(1+|\log \delta|) (1+\delta^{1/2} |z|^{1/3})e^{-\eta z} , \end{equation}
for all $z\ge |z_c|+\delta$. 

Next, for $z\le |z_c| + \delta$, we write the integral $I_{e,2} (z)$ into 
$$ \delta^3 e^{\alpha z} \Big[\int_{\{|x-z_c| \ge \delta\}} + \int_{\{|x-z_c|\le \delta\}} \Big]\D_x^3 (\dot x^{3/2}a_2(x)) \D_x f(x) \; dx , $$ 
where the first integral can be estimated similarly as done in \eqref{mIe2-est}. For the last integral, using \eqref{bound-ma2} for bounded $X$ yields 
$$\begin{aligned}
\Big|&\delta^3e^{\alpha z} \int_{\{|x-z_c|\le \delta\}}\D_x^3 (\dot x^{3/2}a_2(X)) \D_x f(x) \; dx  \Big| 
\\&\le C   \int_{\{|x-z_c|\le \delta\}} e^{-\alpha |x-z|}(1+|\log (x-z_c)|)\; dx
\\& \le C   \delta (1+|\log \delta|) .\end{aligned}$$

Thus, we have shown that 
\begin{equation}\label{mIe2-bound} \Big| I_{e,2} (z) \Big| \le C   \delta(1+|\log \delta|) (1+\delta^{1/2} |z|^{1/3})e^{-\eta z},\end{equation}
for all $z\ge 0$. 

\bigskip
\noindent
{\bf Estimate for $I_{e,1}$.} Following the above estimates, we can now consider the integral 
 $$\begin{aligned}  
I_{e,1} (z) =  \delta^2  e^{\alpha z} \int_{z}^{\infty} \D_x^3(\dot x^{3/2} a_1 (x) (z-x) ) \D_x f(x) \; dx,
\end{aligned}$$
Let us recall the bound \eqref{ak12-bound} on $a_1(x)$: 
\begin{equation}\label{bound-ma1} |\partial_x^k a_1 (x)|  \le C\delta^{-k}(1+x)^{1/2-k/3} e^{-\alpha x}(1+|X|)^{k/2-1}.\end{equation}
To estimate the integral $I_{e,1}(z)$, we again divide the integral into several cases. First, consider the case $z\ge |z_c| + \delta$. Since in this case $x$ is away from the critical layer, we can apply integration by parts three times to get  
$$\begin{aligned} 
I_{e,1} (z)&=  - \delta^2 e^{\alpha z} \int_{z}^{\infty} \D_x^2(\dot x^{3/2} a_1 (x) (z-x)) \D_x^2 f(x) \; dx 
\\&\quad -  \delta^2  e^{\alpha x}\D_x^2(\dot x^{3/2} a_1 (x) (z-x)) \D_x f(x) \vert_{x=z} 
\\  & = -  \delta^2  e^{\alpha z} \int_{z}^{\infty}  \dot x^{3/2} a_1 (x) (z-x) \D_x^4 f(x) \; dx  
\\&\quad  +  \delta^2  e^{\alpha x} \Big( \D_x(\dot x^{3/2} (z-x)a_1) \D_x^2 f(x)  -\D_x^2(\dot x^{3/2} (z-x)a_1) \D_x f\Big) \vert_{x=z} 
 \end{aligned}$$
in which the boundary terms are bounded by $   e^{-\eta |z|}\delta (1+|\log \delta|)$ times 
$$\begin{aligned}
 (1+z)^{1/6}&\Big[ \delta  (1+z)^{5/6} |Z|^{-1} |z-z_c|^{-1} + \delta (1+z)^{-1/6} |Z|^{-1} + (1+z)^{1/2} |Z|^{-1/2}\Big]  
 \\& \le C (1+\delta^{1/2}z^{1/3}).
  \end{aligned}$$ 
  
Similarly, we consider the integral term in $I_{e,1}$. Let $M = \frac 1 \eta \log(1+z)$. By \eqref{bound-ma1}, we have 
$$\begin{aligned}
\Big| \delta^2  e^{\alpha z} &\int_{z}^{\infty}  \dot x^{3/2} a_1 (x) (z-x) \D_x^4 f(x) \; dx   \Big| 
\\& \le  C    \int_z^\infty \delta^2 (1+x)(1+|X|)^{-1}|x-z| |x-z_c|^{-3}  e^{-\eta  x } e^{-\alpha |z-x|}\; dx 
\\& \le  C    (1+z)^{1/3}\Big[ M +(1+z) e^{-\eta M}  \Big]  e^{-\eta z} \int_{\{|x-z_c|\ge \delta\}} \delta^3 |x-z_c|^{-3} \; dx 
\\& \le C   (1+z)^{1/3}\log(1+z) e^{-\eta z}\delta . 
\end{aligned}$$
Hence, we obtain the desired uniform bound $I_{e,1}(z)$ for $z \ge |z_c| + \delta $.

Next, consider the case $|z-z_c|\le \delta$ in which $Z$ is bounded. We write 
$$ I_{e,1}(z) =     \delta^2  e^{\alpha z} \Big[ \int_{\{|x-z_c|\ge \delta\}} + \int_{\{|x-z_c|\le\delta\}} \Big]  \D_x^3(\dot x^{3/2}a_1(x)(z-x)) \D_x f(x) \; dx   . $$ The first integral on the right can be estimated similarly as above, using integration by parts and noting that $|z-x| \le 2\delta$ on the boundary $|x-z_c| = \delta$. For the second integral, we use the bound \eqref{bound-ma1} for bounded $X$ to get 
$$\begin{aligned}
\Big|\delta^2 e^{\alpha z} &\int_{\{|x-z_c|\le\delta\}}\D_x^3(\dot x^{3/2}a_1(x)(z-x)) \D_x f(x)  \; dx \Big| 
\\&\le C \int_{\{|x-z_c|\le\delta\}} (1+|\log (x-z_c)|) e^{-\alpha |x-z|}\; dx
\\&  \le  C   \delta (1+|\log \delta|) .
\end{aligned}$$ 

Finally, we consider the case $0\le z\le |z_c|-\delta$. It suffices to consider the case when $\delta \ll |z_c|$, that is when the critical layer is away from the boundary layer. In this case the linear growth in $Z$ becomes significant: $|Z| \lesssim (1+|z-z_c|/\delta)$. As above, we estimate the integral $I_{e,1}$ over the regions $|x-z_c| \le \delta$ and $|x-z_c|\ge \delta$, separately. For the former case, we compute 
$$\begin{aligned}
\Big|\delta^2 e^{\alpha z} &\int_{\{|x-z_c|\le \delta \}}\D_x^3(\dot x^{3/2}a_1(x)(z-x)) \D_x f(x)  \; dx \Big| 
\\&\le C \delta^{-1} \int_{\{|x-z_c|\le \delta  \}} (1+|\log (x-z_c)|) |x-z| e^{-\alpha |x-z|}\; dx
\\&  \le  C   (1+|\log \delta |) \min \{ |z-z_c|, \alpha^{-1} \} 
\end{aligned}$$ 
in which the inequality $e^{-\alpha |x-z|} |x-z| \le \alpha^{-1}$ and the fact that $|x-z| \lesssim |z-z_c|$ were used. For the integral over $|x-z_c|\ge \delta$, we perform the integration by parts as done in the previous case, yielding the same bound. This proves 
\begin{equation}\label{large-zc}  I_{e,1}(z) \le  C   (\delta+\min\{ |z-z_c|, \alpha^{-1} \}) (1+|\log \delta|) \end{equation}
 for all $z\ge 0$.

\begin{remark}\label{rem-large-zc} We remark that on $\Omega_r  = \{ z\ge \Re z_c - r\}$ for arbitrary positive $r$, the estimate \eqref{large-zc} becomes 
\begin{equation}\label{large-zc-r}  I_{e,1}(z) \le  C   (\delta+\min\{r, \alpha^{-1} \}) (1+|\log \delta|) .\end{equation}
\end{remark}

%
%
 
 \bigskip
 
 \noindent
 {\bf Estimate for the boundary term $\mathcal{B}_0(z)$.} It remains to give estimates on
$$\mathcal{B}_0(z) = - \epsilon \sum_{k=0}^2 (-1)^k \D_x^k G(x,z) \D^{3-k}_x(f(x))\vert{_{x=0}} .$$
We note that there is no linear term $E(x,z)$ at the boundary $x=0$ since $z\ge 0$. Using the bound \eqref{mG-xnearz} for $x = 0$, we get   
$$ \begin{aligned}
|\epsilon \widetilde G(x,z) \D^3_x(f(x))\vert{_{x=0}}  &\le C    \delta^3  (1+ |z_c|^{-2})  e^{-\frac 23 |Z|^{3/2}}
\\
|\epsilon \D_x\widetilde G(x,z) \D^2_x(f(x))\vert{_{x=0}}  &\le C    \delta^2  (1+|z_c|^{-1}) e^{-\frac 23 |Z|^{3/2}}
\\
|\epsilon \D^2_x\widetilde G(x,z) \D_xf(x)\vert{_{x=0}} &\le C    \delta (1+|\log z_c|)e^{-\frac 23 |Z|^{3/2}}.
\end{aligned}$$
This together with the assumption that $\delta \lesssim z_c $ then yields 
\begin{equation}\label{B-est01}| \mathcal{B}_0(z) | \le C    \delta(1+|\log \delta|) e^{-\eta z}.\end{equation}

Combining all the estimates above yields the lemma for $k=0$. This completes the proof of estimate \eqref{mphi-bound} for $k=0$. 

\bigskip
 
 \noindent
 {\bf Proof of estimate \eqref{mphi-bound} with $k>0$.} We now prove the lemma for the case $k=2$; the case $k=1$ follows similarly. We consider the integral 
$$ \epsilon \int_{0}^{\infty} (U(z) - c)^2 \dz^2 \widetilde G(x,z) \D_x^4 f(x) dx  = I_1(z) + I_2(z),$$ with $I_1(z)$ and $I_2(z)$ denoting the 
integration over $\{ |x-z_c|\le \delta\}$ and $\{|x-z_c|\ge \delta\}$, respectively.  
Note that $(U(z)-c)\dot z^{2} =  U'(z_c) \eta(z)$ and recall that $Z = \eta(z)/\delta$ by definition. For the second integral $I_2(z)$, by using \eqref{assump-f01}, \eqref{assump-f02}, and the bounds on the Green function for $x$ away from $z$ and for $x$ near $z$, it follows that 
$$\begin{aligned} |I_2(z)| &\le C \Big[ \delta e^{-\eta z} \int_{\{|x-z_c|\ge \delta\}}  (1+|Z|)^{1/2} e^{-\frac 23 \sqrt{|Z|} |X-Z|} (1+|x-z_c|^{-1})\; dx
\\&\quad + \epsilon e^{-\frac 16 |Z|^{3/2}} \int_{\{|x-z_c|\ge \delta\}}e^{-\frac 16 |X|^{3/2}} (1+|x-z_c|^{-3}) e^{-\eta x}\; dx   \Big] .\end{aligned}$$
By using $|x-z_c|\ge \delta$ in these integrals and making a change of variable $X = \eta(x)/\delta$ to gain an extra factor of $\delta$, the integral $I_2$ is bounded by 
$$\begin{aligned} C \delta  \Big[(1+z) e^{-\eta z} \int_{\RR}  (1+|Z|)^{1/2} e^{-\frac 23 \sqrt{|Z|} |X-Z|} \; dX 
+ e^{-\frac 18 |Z|^{3/2}} \int_{\RR}e^{-\frac 16 |X|^{3/2}}\; dX  \Big] \end{aligned}$$
which is clearly bounded by $C \delta (1+z) e^{-\eta z}$. It remains to give the estimate on $I_1(z)$ over the region: $|x-z_c|\le \delta$. In this case, we take integration by parts three times. Leaving the boundary terms untreated for a moment, let us consider the integral term
$$\epsilon \int_{\{|x-z_c|\le \delta\}} (U(z) - c)^2 \dz^2 \dx^3 \widetilde G(x,z) \D_x f(x) dx.$$
We note that the twice $z$-derivative causes a large factor $\delta^{-2}$ which combines with $(U-c)^2$ to give a term of order $|Z|^2$. Similarly, the small factor of $\epsilon$ cancels out with $\delta^{-3}$ that comes from the third $x$-derivative. The integral is therefore bounded by 
$$\begin{aligned}  C  \Big[ &e^{-\eta z} \int_{\{|x-z_c|\le \delta\}}  e^{-\frac 23 \sqrt{|Z|} |X-Z|} (1+|\log (x-z_c)|)\; dx
\\&\quad + e^{-\frac 16 |Z|^{3/2}} \int_{\{|x-z_c|\le \delta\}}e^{-\frac 16 |X|^{3/2}} (1+|\log(x-z_c)|)\; dx   \Big] 
\\
&\le  C  \Big[ e^{-\eta z} + e^{-\frac 18 |Z|^{3/2}}   \Big] \int_{\{|x-z_c|\le \delta\}} (1+|\log (x-z_c)|)\; dx 
\\
&\le  C e^{-\eta z} \delta (1+|\log \I c|).
\end{aligned}$$
Finally, the boundary terms can be treated, following the above analysis and that done in the case $k=0$; see \eqref{B-est01}. This completes the proof of \eqref{mphi-bound}.

\bigskip
 
 \noindent
 {\bf Proof of estimate \eqref{mErrA-bound}.} The proof follows similarly, but more straightforwardly, the above proof for the localized part of the Green function. 
We skip the details. \end{proof}

\subsubsection{Proof of the Airy smoothing}
We are ready to give the proof of the main results in this section: Propositions \ref{prop-mAiry} and \ref{prop-mAiry2}. 

\begin{proof}[Proof of Proposition \ref{prop-mAiry}] The proposition now follows from the standard iteration, using the pointwise bounds obtained in Lemma \ref{lem-mAiry}Indeed, let us introduce 
$$ \phi_0 : = G\star \epsilon \partial_z^4f $$
Then, by construction, 
$$ \Airy (\phi_0) = \epsilon \partial_z^4f  + E_0, \qquad E_0: = E_\mathrm{a} \star \epsilon \partial_z^4  f(z).$$
Here, by Lemma \ref{lem-mAiry}, the error term $E_0$ satisfies the bound 
$$
\begin{aligned}
\| E_0 \|_\eta &\le    C \|f\|_{ Y^{4,\eta}} \delta^2  (1+|\log \delta|)
.\end{aligned}
$$
If we now set $\psi: = - \Airy^{-1} (E_0)$ with the exact inverse of $\Airy$ being obtained from Proposition \ref{prop-exactAiry}, we then have 
$$ \Airy (\phi_0 + \psi )  = \epsilon \partial_z^4f .$$
The proposition follows at once from Lemma \ref{lem-mAiry} which gives the claimed bounds on $\phi_0$ and Proposition \ref{prop-exactAiry} which provides bounds on $\psi$, the Airy inverse of $E_0$.  
\end{proof}

\begin{proof}[Proof of Proposition \ref{prop-mAiry2}] The proof is identical to that of Proposition \ref{prop-mAiry}, upon using Remark \ref{rem-large-zc}. 
\end{proof}

\section{Orr-Sommerfeld solutions near critical layers}
In this section, we construct four independent solutions to the Orr-Sommerfeld equations: 
\begin{equation}\label{rec-OS}
\OS(\phi) = - \eps \Delta_\alpha^2 \phi + (U - c) \Delta_\alpha \phi - U'' \phi =0.
\end{equation}
We study the solutions near critical layers; namely, we consider $c$ so that $\Im c \ll 1$ and there is a (unique) complex number $z_c$ so that 
$$c = U(z_c).$$
In addition, the spatial frequency $\alpha$ is assumed to be in the range 
\begin{equation}\label{def-rga}
 \nu^{1/2} \ll \alpha \ll \nu^{-1/4} .
 \end{equation}
In particular, we note that $
\epsilon = \frac{\sqrt \nu}{i\alpha} \ll1. 
$
In addition, the condition $\alpha \ll \nu^{-1/4}$ is equivalent to $\delta \alpha \ll1$, in which $\delta \approx \epsilon^{1/3}$ denotes the thickness of critical layers. The latter implies that the critical layers are thinner than the oscillation in the Laplacian $\Delta_\alpha = \partial_z^2 - \alpha^2$.  

\subsection{Main result}

Precisely, our main result in this section is as follows.  

\begin{theorem}[Independent Orr-Sommerfeld solutions]\label{theo-OS-solutions} Let $\alpha$ be within the range as described in \eqref{def-rga}, and $c$ be sufficiently close to the Range of $U$ so that $z_c$ exists and is finite. Assume that the parameters $c, z_c, \epsilon,$ and $\alpha$ satisfy either 
$$\gamma_{\epsilon,c}: =  (\epsilon^{1/3} + \min\{ |z_c|, \alpha^{-1}\} ) (1+|\log \delta \log \Im c|) \ll1$$
or in the case when $|z_c|\gtrsim 1$,
$$\gamma_{\epsilon,c}: = \epsilon^{1/4}(1+|\log \epsilon \log \Im c|)\ll1.$$
Then, there are four independent solutions to the Orr-Sommerfeld equations \eqref{rec-OS}, two of which are slow modes $\phi_{s,\pm}$ close to the Rayleigh solution, and the other two are fast modes $\phi_{f,\pm}$ close to the classical Airy functions. Precisely, there is some positive number $\eta$ so that
$$ \begin{aligned}
\phi_{s,\pm} &= \phi_{Ray,\pm} \big( 1 + \cO(\gamma_{\alpha,c}e^{-\eta z})\Big) 
\\ \phi_{f,\pm} &= \phi_{Airy,\pm} \big( 1 + \cO( \delta (1+\delta \alpha^2)e^{-\eta z})\Big) 
\end{aligned}$$
in which $\delta \sim \epsilon^{1/3}$ denotes the size of critical layers, $\cO(\cdot)$ are bounds in the usual $L^\infty$ norm, $ \phi_{Ray,\pm} $ the Rayleigh solutions constructed in Section \ref{sec-Rayleigh}, and $\phi_{Airy,\pm}$ denotes the second primitive Airy functions:  
$$\phi_{Airy,-}: = \gamma_3 Ai(2,\delta^{-1}\eta(z)) ,\qquad \phi_{Airy,+}(z) : = \gamma_4 Ci(2,\delta^{-1}\eta(z))$$
for some normalizing constants $\gamma_{3,4}$ so that $\phi_{Airy,\pm}(0) =1$. 
\end{theorem}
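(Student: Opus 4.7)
The construction separates slow from fast modes and builds each pair by a contraction argument in the function spaces $X^{p,\eta}$ of Section~\ref{sec-space}. The starting point is the algebraic identity
\[
\OS(\phi) = -\Airy(\phi) - U''\phi = -\epsilon \Delta_\alpha^2 \phi + \Ray_\alpha(\phi),
\]
which allows me to trade between the two inverses $\Airy^{-1}$ (from Proposition~\ref{prop-exactAiry}) and $\Ray_\alpha^{-1}$ (from Proposition~\ref{prop-exactRayS}, Proposition~\ref{prop-exactRayS-mid}, or Lemma~\ref{lem-large-alpha} depending on the range of $\alpha$). The Rayleigh inverse handles the regular far-field behavior but loses control near $z=z_c$, while $\Airy^{-1}$ absorbs the critical-layer singularity via the smoothing estimate of Proposition~\ref{prop-mAiry} (or Proposition~\ref{prop-mAiry2} when $|z_c|\gtrsim 1$), at the cost of the small gain $\gamma_{\epsilon,c}$.

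\textbf{Slow modes.} Starting from the exact Rayleigh solutions $\phi_{Ray,\pm}$ already constructed in Section~\ref{sec-Rayleigh}, I compute
\[
\OS(\phi_{Ray,\pm}) = -\epsilon \Delta_\alpha^2 \phi_{Ray,\pm},
\]
which is a source having the $(z-z_c)^{-3}$ singularity inherited from the $(z-z_c)\log(z-z_c)$ behavior of $\phi_{Ray,\pm}$. The first correction is
\[
\psi^{(1)}_\pm := -\Airy^{-1}\bigl(\epsilon \Delta_\alpha^2 \phi_{Ray,\pm}\bigr),
\]
which by Proposition~\ref{prop-mAiry} lies in $X^{2,\eta'}$ with norm $\cO(\gamma_{\epsilon,c})$. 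Using $\OS=-\Airy-U''$ the remaining residual telescopes to $-U''\psi^{(1)}_\pm$, which is smooth and exponentially decaying. The second correction $\psi^{(2)}_\pm$ is chosen to satisfy $\Ray_\alpha(\psi^{(2)}_\pm)=U''\psi^{(1)}_\pm$ via the inverse operators just cited, reintroducing a logarithmic singularity but one of size $\cO(\gamma_{\epsilon,c}|\log\Im c|)$ that can again be smoothed by $\Airy^{-1}$. Iterating, I set up the composite operator
\[
\Iter := -\Airy^{-1}\circ \epsilon \Delta_\alpha^2 \circ \Ray_\alpha^{-1}\circ U'',
\]
and from Propositions~\ref{prop-mAiry} and \ref{prop-exactRayS} (resp.\ \ref{prop-exactRayS-mid}, \ref{lem-large-alpha}) deduce that $\|\Iter\|_{X^{2,\eta}\to X^{2,\eta'}}\le C\gamma_{\epsilon,c}$, which is $\ll 1$ by hypothesis. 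The Neumann series converges and yields two exact solutions $\phi_{s,\pm}=\phi_{Ray,\pm}+\cO(\gamma_{\epsilon,c})$ in the claimed norm.

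\textbf{Fast modes.} For these I start from the normalized second primitives $\phi_{Airy,\pm}$ of the classical Airy functions composed with the Langer variable. By Lemma~\ref{lem-Langer} and the fact that $Ai(2,\cdot),Ci(2,\cdot)$ solve the classical Airy equation after two antiderivatives, the leading identity reads
\[
\Airy(\phi_{Airy,\pm}) = -\alpha^2(U-c)\phi_{Airy,\pm} + \text{Langer error},
\]
so that $\OS(\phi_{Airy,\pm})$ equals this Airy remainder together with $-U''\phi_{Airy,\pm}$. Both pieces are concentrated near $z=z_c$ on scale $\delta$ and carry the favorable exponential profile of $Ai$ (resp.\ $Ci$) in the Langer variable $Z$. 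I then iterate with the localized convolution estimate of Lemma~\ref{lem-locConvAiry}, which shows that $\Airy^{-1}$ applied to any source of Airy type produces output of the same type, multiplied by a small factor $\delta(1+\delta\alpha^2)$. This yields the contraction producing $\phi_{f,\pm}=\phi_{Airy,\pm}(1+\cO(\delta(1+\delta\alpha^2)))$. Independence of the four solutions is finally read off from the Wronskians at $z=z_c$: $W[\phi_{s,+},\phi_{s,-}]$ inherits the Rayleigh Wronskian $\alpha$ (or $1$) and $W[\phi_{f,+},\phi_{f,-}]$ inherits the Airy Wronskian $\delta^{-1}$, while the cross-Wronskians between slow and fast modes are dominated by the mismatch in characteristic rates $\mu_s\ll\mu_f$, so the $4\times 4$ Wronskian matrix is perturbatively nondegenerate.

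\textbf{Main obstacle.} The whole argument hinges on the contraction of $\Iter$, and in turn on the smoothing estimate Proposition~\ref{prop-mAiry}: one must show that the nominally singular source $\epsilon\Delta_\alpha^2\phi_{Ray,\pm}$, which is of order $\epsilon|z-z_c|^{-3}$, is sent by $\Airy^{-1}$ into an $L^\infty$ function of size $\gamma_{\epsilon,c}$. This is the only step that couples all four scales ($\epsilon$, $\delta$, $\alpha^{-1}$ and $|z_c|$), and it is where the joint smallness hypothesis on $\gamma_{\epsilon,c}$ enters essentially; every other estimate in the iteration is a direct consequence of the preceding lemmas.
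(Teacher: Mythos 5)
Your argument is correct and follows essentially the same route as the paper: iterate on the error starting from the exact Rayleigh solutions, using the composite operator $\Airy^{-1}\circ\epsilon\Delta_\alpha^2\circ\Ray_\alpha^{-1}$ (the paper's $\Iter$ in Lemma~\ref{lem-keyIter} places the $U''$ factor in front rather than at the end, which is merely a cyclic relabeling with identical norm), contraction being furnished by the smoothing estimates of Proposition~\ref{prop-mAiry} (resp.~Proposition~\ref{prop-mAiry2} when $|z_c|\gtrsim 1$), and for the fast modes compute the Langer/Airy residual and iterate with the localized convolution bound of Lemma~\ref{lem-locConvAiry}. You correctly identify Proposition~\ref{prop-mAiry} as the point where the joint smallness of $\gamma_{\epsilon,c}$ enters, which is exactly the role Lemma~\ref{lem-keyIter} plays in the paper.
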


Let us now detail $Ai$, $Ci$ and $\eta$.
In the above theorem, $Ai(2,\cdot)$ and $Ci(2,\cdot)$ are the corresponding second primitives of the Airy solutions $Ai(\cdot)$ 
and $Ci(\cdot)$, and $\eta(z)$ denotes the Langer's variable. 
We refer to Section \ref{sec-mAiry} for more details. In particular, there hold 
$$
\begin{aligned}Ai(2,Z) &\le C_0 \langle Z \rangle^{-5/4}   e^{- \sqrt{2|Z|} Z/3} 
\\Ci(2,Z) &\le C_0 \langle Z \rangle^{-5/4}   e^{ \sqrt{2|Z|} Z/3}.
\end{aligned} 
$$
Moreover, $Z = \delta^{-1}\eta(z)$ is of order $\epsilon^{-1/3}|z-z_c|$ near the critical layers $z = z_c$ and of order 
$\epsilon^{-1/3}\langle z\rangle^{2/3}$ for large $z$. 
By view of the Langer's variable, we have 
\begin{equation}\label{def-ZZZZZ}\begin{aligned}
 Z 
 &= \Big( \frac 32 \int_{z_c}^z \eps^{-1/2}\sqrt{U-c+\alpha^2 \eps} \; dy\Big)^{2/3} =  \Big( \frac 32 \int_{z_c}^z \mu_f(y) \; dy\Big)^{2/3} 
 \end{aligned}\end{equation}
with $\mu_f(y) = \eps^{-1/2} \sqrt{U-c+\alpha^2 \eps}$. This yields 
$$  e^{\pm \sqrt{2|Z|} Z/3} = e^{\pm\int_{z_c}^z\mu_f(y) \; dy } .$$
Hence, asymptotically as $z\to \infty$, the four Orr-Sommerfeld solutions behave as 
$$ \begin{aligned}
\phi_{s,\pm} \approx e^{\pm \alpha z} ,
\qquad \phi_{f,\pm} \approx e^{\pm\int_{z_c}^z\mu_f(y) \; dy } 
\end{aligned}$$
which coincide with the bounds in the case when critical layers are absent. 


\subsection{Slow Orr-Sommerfeld modes}\label{sec-construction-phi1}


In this section, we iteratively construct two exact slow-decaying and -growing solutions $\phi_{1,2}$ to the full Orr-Sommerfeld equations, starting from the Rayleigh solutions.
We recall that throughout this chapter, the parameter $\alpha$ is considered within the following range 
$$\nu^{1/2} \ll \alpha \ll \nu^{-1/4}.$$
Precisely, we obtain the following propositions whose proof will be given at the end of the section.

\begin{prop}[Small $\Re c$ or large $\alpha$]\label{prop-cons-phi1} Assume that $c, z_c, \delta,$ and $\alpha$ satisfy 
$$ (\delta + \min\{ |z_c|, \alpha^{-1} \}) (1+|\log \delta \log \Im c|) \ll1.$$
Then, for each Rayleigh solution $\phi_{Ray}$, there exists a corresponding Orr-Sommerfeld solution $\phi_s$ so that  
$$ \| \phi_s - \phi_{Ray}\|_{X^{2,\eta}} \le C (\delta + \min\{ |z_c|, \alpha^{-1} \} ) (1+|\log \delta \log \Im c|),$$
for some positive constants $C, \eta$ independent of $\alpha, \epsilon, c$.   
\end{prop}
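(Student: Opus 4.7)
The plan is to construct $\phi_s$ via an iterative scheme alternating between the Airy smoothing of Proposition~\ref{prop-mAiry} and the exact Rayleigh inverse of Proposition~\ref{prop-exactRayS}, following the $\Iter$-operator heuristic of Section~\ref{sec-crlayers}. The underlying identity is
$$\OS(\phi) = \Ray_\alpha(\phi) - \epsilon\Delta_\alpha^2\phi = -\Airy(\phi) - U''\phi,$$
which converts a residual of one type (critical-layer singular, cancelled by $\Airy^{-1}$) into the other (smooth, cancelled by $\Ray_\alpha^{-1}$), and back again. Set $\phi_0 := \phi_{Ray}$; by Lemma~\ref{lem-defphi012}, $\phi_0 \in Y^{p,\eta_0}$ for every $p \ge 0$. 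Since $\Ray_\alpha(\phi_0)=0$, the initial residual is $r_0 := \OS(\phi_0) = -\epsilon\Delta_\alpha^2\phi_{Ray}$, whose dominant part $-\epsilon\partial_z^4\phi_{Ray}$ carries a $(z-z_c)^{-3}$ singularity at the critical layer, inherited from the $(z-z_c)\log(z-z_c)$ behaviour of $\phi_{Ray}$.

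The first correction is $\phi_1 := \Airy^{-1}(r_0)$, so $\Airy(\phi_1) = r_0$; the identity above then immediately gives $\OS(\phi_0+\phi_1) = -U''\phi_1$. By Proposition~\ref{prop-mAiry} applied to $f = \phi_{Ray}$ (with the lower-order contributions $\alpha^2\partial_z^2\phi_{Ray}$ and $\alpha^4\phi_{Ray}$ from $\Delta_\alpha^2$ controlled similarly, using $\alpha\delta \ll 1$),
$$\|\phi_1\|_{X^{2,\eta'}} \le C\bigl(\delta + \min\{|z_c|,\alpha^{-1}\}\bigr)(1+|\log\delta|)\,\|\phi_{Ray}\|_{Y^{4,\eta_0}}.$$
The new residual $-U''\phi_1$ is smooth because $U''$ decays exponentially, so we correct it by $\phi_2 := \Ray_\alpha^{-1}(U''\phi_1)$; Proposition~\ref{prop-exactRayS} yields $\|\phi_2\|_{Y^{4,\eta''}} \le C(1+|\log\Im c|)\,\|U''\phi_1\|_{X^{2,\eta'}}$. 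A short calculation using the same $\OS$ identity gives $\OS(\phi_0+\phi_1+\phi_2) = -\epsilon\Delta_\alpha^2\phi_2$, a critical-layer residual bounded, after composing the two estimates, by a constant multiple of
$$\gamma_{\epsilon,c} := \bigl(\delta + \min\{|z_c|,\alpha^{-1}\}\bigr)(1+|\log\delta\log\Im c|)$$
times the size of $r_0$. Iterating this two-step cycle produces a sequence $\{\phi_n\}_{n\ge 0}$ whose partial sums have residuals contracting by $C\gamma_{\epsilon,c}$ at each full cycle; since $\gamma_{\epsilon,c} \ll 1$ by hypothesis, the series $\phi_s := \sum_{n\ge 0}\phi_n$ converges geometrically in $X^{2,\eta}$ for some $\eta > 0$ (after absorbing a small geometric loss in the exponential weight accumulated across Rayleigh inversions). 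The limit satisfies $\OS(\phi_s)=0$ exactly, and summing gives the claimed bound $\|\phi_s - \phi_{Ray}\|_{X^{2,\eta}} \le C\gamma_{\epsilon,c}$.

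The principal obstacle lies in the very first step: the singular source $\epsilon\Delta_\alpha^2\phi_{Ray}$ is not integrable near $z_c$, and only the smoothing of the Airy operator recorded in Proposition~\ref{prop-mAiry} makes $\phi_1$ a bounded correction, precisely at the price of the factor $(\delta+\min\{|z_c|,\alpha^{-1}\})|\log\delta|$. Without this estimate the iteration could not even be initialized, and it is exactly this factor which, compounded with the $|\log\Im c|$ from each Rayleigh inverse, produces $\gamma_{\epsilon,c}$ and dictates the smallness hypothesis of the proposition. The remaining bookkeeping is routine and structurally parallel to the Rayleigh iteration of Section~\ref{sec-exactRayleigh}: one verifies at each step that the residual lies in the correct $Y^{p,\eta}$ or $X^{p,\eta}$ space, and that the small losses in the exponential weight can be summed. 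Finally, the growing Rayleigh mode $\phi_{Ray,+}$ is handled identically in an exponentially growing weighted space, or, alternatively, the growing $\OS$ mode is produced from the decaying one via the Wronskian construction of Lemma~\ref{lem-exactphija}.
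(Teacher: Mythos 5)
Your proposal is correct and follows essentially the same route as the paper's Proposition~\ref{prop-construction-phi1} and Lemma~\ref{lem-keyIter}: an iteration alternating the Airy smoothing of Proposition~\ref{prop-mAiry} with the Rayleigh inverse of Proposition~\ref{prop-exactRayS}, contracting by the factor $\gamma_{\epsilon,c}$ at each full cycle. The only difference is cosmetic -- you unfold the cycle into two explicit alternating corrections, while the paper bundles each Rayleigh--plus--Airy pair into a single inductive step $\phi_{N+1} = \phi_N + \psi_N + B_{s,N}$ and records the contraction through the single operator $\Iter$.
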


\begin{remark}{\em  If we start our construction with the exact Rayleigh solutions $\phi_{Ray,\pm}$, which were constructed in Section \ref{sec-Rayleigh}, then Proposition \ref{prop-cons-phi1} yields existence of two exact solutions $\phi_{s,\pm}$ to the homogenous Orr-Sommerfeld equation. }
\end{remark}


\begin{prop}[Bounded $\Re c$ and bounded $\alpha$]\label{prop-cons-phi1c}  Assume that $|z_c|\gtrsim 1$, and
$$ \epsilon^{1/4}(1+|\log \epsilon \log \Im c|)\ll1.$$
Then, for each Rayleigh solution $\phi_{Ray}$, there exists an exact solution $\phi_s(z)$ to the Orr-Sommerfeld equations
so that $\phi_s$ is arbitrarily close to $\phi_{Ray}$ in $X^{2,\eta}$
$$ \| \phi_s - \phi_{Ray}\|_{X^{2,\eta}} \le C \epsilon^{1/4}(1+|\log \epsilon \log \Im c|)$$
for some positive constants $C, \eta$ independent of $\alpha, \epsilon, c$.   
\end{prop}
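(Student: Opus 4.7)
\emph{Proof proposal.} The approach mirrors that of Proposition~\ref{prop-cons-phi1}, with the ``$|z_c|\gtrsim 1$'' Airy smoothing estimate of Proposition~\ref{prop-mAiry2} replacing Proposition~\ref{prop-mAiry}. Since $\Ray_\alpha(\phi_{Ray})=0$ and $\OS=\Ray_\alpha-\epsilon\Delta_\alpha^2$, the initial residual is
$$\OS(\phi_{Ray}) = -\epsilon\Delta_\alpha^2 \phi_{Ray},$$
which is precisely of the form $\epsilon\partial_z^4 f$ with $f=\phi_{Ray}\in Y^{4,\eta}$ by Lemma~\ref{lem-defphi012}. Because $\alpha$ is bounded, hence $\alpha^{-1}\gtrsim \epsilon^{1/4}$, and since $|z_c|\gtrsim 1$, the hypotheses of Proposition~\ref{prop-mAiry2} are fulfilled and I would apply it with the choice $r\sim\epsilon^{1/4}$, obtaining
$$\|\Airy^{-1}(\epsilon\Delta_\alpha^2\phi_{Ray})\|_{X^{2,\eta'}(\Omega_r)} \le C\epsilon^{1/4}(1+|\log\epsilon|)\|\phi_{Ray}\|_{Y^{4,\eta}}$$
on $\Omega_r=\{z\ge\Re z_c-r\}$.

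Next, set $\psi_0:=-\Airy^{-1}(\epsilon\Delta_\alpha^2\phi_{Ray})$. Using the operator identity $\OS=-\Airy-U''$, a direct calculation yields the new residual
$$\OS(\phi_{Ray}+\psi_0) = -\epsilon\Delta_\alpha^2\phi_{Ray}+\epsilon\Delta_\alpha^2\phi_{Ray}-U''\psi_0 = -U''\psi_0,$$
which no longer exhibits the critical-layer singularity and is $O(\epsilon^{1/4}|\log\epsilon|)$ in size. A further application of the exact Rayleigh inverse (Proposition~\ref{prop-exactRayS-mid}, used with $p=2$ so that the output lies in $Y^{4,\eta}$) absorbs this remainder at the cost of a $(1+|\log\Im c|)$ factor and produces a new OS-residual of the singular Airy type $-\epsilon\Delta_\alpha^2(\cdot)$, reopening the cycle. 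Quantitatively each full Airy-then-Rayleigh pass contracts the residual by the factor $\gamma_{\epsilon,c}=C\epsilon^{1/4}(1+|\log\epsilon\log\Im c|)$, which is strictly less than $1$ by hypothesis. The associated Neumann-type series then converges in $X^{2,\eta}$; its sum is the sought $\phi_s$, and the bound claimed in the proposition is inherited from the size of the first corrector $\psi_0$.

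The main obstacle will be that Proposition~\ref{prop-mAiry2} only controls $\Airy^{-1}$ on $\Omega_r$, whereas the corrector must be defined globally on $[0,\infty)$. On the complementary slab $[0,\Re z_c-r]$ the source $\epsilon\Delta_\alpha^2\phi_{Ray}$ is already regular, with pointwise size at most $\epsilon r^{-3}\sim\epsilon^{1/4}$ by the explicit structure of $\phi_{Ray}$ described in Lemma~\ref{lem-defphi012}; on that region the missing piece of $\psi_0$ can be built directly from the pointwise Airy Green-function bounds of Proposition~\ref{prop-exactGrmAiry}, and the two constructions are then patched at $z=\Re z_c-r$ by invoking the exponential decay of the fast Airy modes across the critical layer to ensure the interface mismatch is of higher order in $\epsilon$ than the main estimate. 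A parallel bookkeeping issue is to verify that each iterate remains in $Y^{4,\eta}$ so that Proposition~\ref{prop-mAiry2} continues to apply at the next step; this is handled by appealing to Proposition~\ref{prop-exactRayS-mid} with $p=2$, which is legitimate because the Airy-generated corrector $\psi_n$ lies in $X^{2,\eta}$, so that $U''\psi_n\in X^{2,\eta}$ and $\Ray_\alpha^{-1}(U''\psi_n)\in Y^{4,\eta}$ as required.
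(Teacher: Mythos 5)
Your proposal matches the paper's opening move---apply Proposition~\ref{prop-mAiry2} with $r\sim\epsilon^{1/4}$ and then run the Airy--Rayleigh iteration---and you correctly identify the key size estimate $|\epsilon\Delta_\alpha^2\phi_{Ray}|\lesssim\epsilon r^{-3}\sim\epsilon^{1/4}$ on the complementary slab. However the proposal diverges on the extension to $[0,\Re z_c - r]$, and there is a genuine gap in how you resolve it.

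You propose to construct each corrector $\psi_n$ globally by computing the Airy inverse separately on $\Omega_r$ and on $\Omega_r^c$, then ``patching'' at $z=\Re z_c-r$ with the claim that the interface mismatch is higher order because the fast Airy modes decay across the critical layer. This does not go through as stated. The operator $\Airy=\cA\Delta_\alpha$ is fourth order, so the two pieces of $\psi_n$ each solve the equation on a half-interval but will generically fail to match up through the third derivative at the interface; the glued function is not a solution across $z=\Re z_c-r$. Removing the mismatch requires adding a combination of the four homogeneous $\Airy$ solutions, and one of the two fast modes \emph{grows} exponentially as $z$ decreases below $\Re z_c$---the decay argument applies to only one of them. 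You would then need to show the coefficient on the growing mode is small, and re-establish the contraction estimate for this patched Airy inverse at every iteration step. None of this is addressed; moreover, the ``missing piece'' of $\psi_n$ on $\Omega_r^c$ cannot be produced from Proposition~\ref{prop-exactGrmAiry}, which is the Green function for the \emph{second-order} operator $\cA$, not for $\Airy$.

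The paper sidesteps all of this with a different architecture. It runs the Airy--Rayleigh iteration on $\Omega_r$ \emph{alone} to completion, producing there an exact Orr--Sommerfeld solution $\phi_s$. It then makes a \emph{single} extension to $\Omega_r^c=[0,\Re z_c-r]$: setting $\phi=\phi_s-\phi_{Ray}$, which solves the inhomogeneous equation $\OS(\phi)=\epsilon\Delta_\alpha^2\phi_{Ray}$ on $\Omega_r^c$, the paper represents $\phi$ as a Green-function integral plus a linear combination $\sum_{j,\pm}A_{j,\pm}\phi_{j,\pm}$ of the four homogeneous OS solutions, with the $A_{j,\pm}$ fixed by the boundary conditions. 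Because $\Omega_r^c$ contains no critical layer, the OS problem there is regular and the Green function is controllable by standard means; with $r=\epsilon^{1/4}$ the source is $\cO(\epsilon^{1/4})$, the boundary data inherited from the $\Omega_r$ side are of the same size by \eqref{bd-phisRs}, and the coefficient of the leftward-growing fast mode $\phi_{f,-}$ is forced to be exponentially small. This ``iterate-then-extend-once'' route eliminates the repeated patching that your sketch leaves unresolved.
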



Next, we obtain the following lemma. 
\begin{lemma}\label{lem-analytic01} 
The slow modes $\phi_s$ constructed in Proposition \ref{prop-cons-phi1} and in Proposition \ref{prop-cons-phi1c} depend analytically in $c$, for $\I c>0$. 
\end{lemma}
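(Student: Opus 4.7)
The plan is to verify analyticity of each ingredient in the iterative construction of $\phi_s$ for $c$ in the upper half plane, and then invoke uniform convergence on compact subsets to obtain analyticity of the limit by Weierstrass' theorem.

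First, I would check that the starting point, namely the exact Rayleigh solutions $\phi_{Ray,\pm}$ from Section \ref{sec-Rayleigh}, are analytic in $c$. Since $U$ is real-analytic with $U'>0$, the implicit function theorem applied to $U(z_c)=c$ yields an analytic map $c\mapsto z_c$ on $\{\Im c>0\}$. With the branch of $\log$ chosen on $\mathbb{C}\setminus\{z_c+\mathbb{R}_-\}$, the functions $\phi_{1,0},\phi_{2,0}$ from Lemma \ref{lem-defphi012} are analytic in $c$ on this domain, because for real $z\ge 0$ and $\Im z_c>0$ the argument $z-z_c$ never meets $\mathbb{R}_-$. Hence the Rayleigh Green function $G_{R,\alpha}(x,z)$ and the operator $RaySolver_\alpha$ depend analytically on $c$, and the Neumann-type series defining $\phi_{Ray,\pm}$ in Lemma \ref{lem-exactphija} / Lemma \ref{lem-mid-alpha} / Lemma \ref{lem-large-alpha} converges uniformly on compact subsets of the parameter domain, producing analytic limits.

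Second, I would verify analyticity of the Airy-type operators appearing in the iteration to produce $\phi_s$ from $\phi_{Ray}$. The Langer variable $\eta(z)$ defined in \eqref{var-Langer} depends analytically on $c$ for $\Im c>0$ (the integrand is holomorphic and $z_c$ is analytic in $c$), and the classical Airy functions $Ai(\cdot),Ci(\cdot)$ are entire. Consequently the approximate Green functions $G_a(x,z)$, $\widetilde G(x,z)$, $E(x,z)$ and the associated error kernels $E_a(x,z)$ are analytic in $c$, and the exact inverses $\mathcal{A}^{-1}$, $\Airy^{-1}$ built in Proposition \ref{prop-exactGrmAiry} and Proposition \ref{prop-exactAiry} inherit analyticity, since they arise as geometrically convergent Neumann series in the small parameter $\delta$.

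Third, the slow mode $\phi_s$ is obtained as the limit of an iterative scheme of the form $\phi_s=\phi_{Ray}+\sum_{n\ge 1}\phi_n$, where each $\phi_n$ is produced from $\phi_{n-1}$ by applying the composition of $Ray^{-1}_\alpha$, $\epsilon\Delta_\alpha^2$, and $\Airy^{-1}$ (cf.\ the Iter operator introduced in Section \ref{sec-crlayers}). Each step preserves analyticity in $c$, and the bounds of Propositions \ref{prop-cons-phi1}--\ref{prop-cons-phi1c} give $\|\phi_n\|_{X^{2,\eta}}\le C\gamma_{\epsilon,c}^n\|\phi_{Ray}\|_{X^{2,\eta}}$ with $\gamma_{\epsilon,c}\ll 1$. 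Fixing any $c_0$ with $\Im c_0>0$ in the admissible range, on a small closed disk $\bar D\subset\{\Im c>0\}$ around $c_0$ the quantities $\delta,\alpha,|\log\Im c|,|z_c|$ all vary continuously and remain in the admissible regime, so the constants $C$ and $\gamma_{\epsilon,c}$ are uniformly controlled on $\bar D$. Thus the series converges uniformly on $\bar D$ in the $X^{2,\eta}$-norm, and in particular locally uniformly pointwise, so the limit $\phi_s$ is analytic in $c$ throughout $\{\Im c>0\}$ by Weierstrass' theorem.

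The only delicate point—and the one I would be most careful about—is the uniformity of the contraction constants as $c$ varies: the quantities $\gamma_{\epsilon,c}$ involve $|\log\Im c|$ and $|z_c|$, both of which are well-behaved locally but degenerate at the boundary of the admissible region. This is not an obstacle for analyticity, however, since analyticity is a local property and on any compact $\bar D\subset\{\Im c>0\}$ inside the admissible parameter range one has a uniform small-parameter bound; standard continuity of $z_c(c)$ and of the Langer variable in $c$ suffices.
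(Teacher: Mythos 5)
Your proof is correct and proceeds by the same underlying idea as the paper's one-line argument: the only singular factors appearing in the construction are of the form $\log(z-z_c)$, $(U-c)^{-k}$, etc., all of which are analytic in $c$ for $\Im c>0$ (since $z_c(c)$ is analytic and $z-z_c$ never meets the branch cut for real $z\ge0$), and the iteration/Weierstrass step propagates this to the limit. You simply spell out in full the bookkeeping that the paper compresses into the phrase ``straightforward.''
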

\begin{proof} The proof is straightforward since the only ``singularities'' are of the forms: $\log(U-c)$, $1/(U-c)$, $1/(U-c)^2$, and $1/(U-c)^3$, which are analytic in $c$ when $\I c>0$.  
\end{proof}

\begin{remark}  \textup{It can be shown that the constructed slow modes $\phi_{s}$ can be extended $C^\gamma$-H\"older continuously on the axis $\{\I c =0\}$, for $0\le \gamma<1$. 
}\end{remark}



The proof of Proposition \ref{prop-cons-phi1} follows at once from the proof of the following proposition, providing approximate solutions to the Orr-Sommerfeld equations.

\begin{proposition}\label{prop-construction-phi1}
Let $N$ be arbitrarily large and assume the same assumptions as in Proposition \ref{prop-cons-phi1}. For each $f\in X_\eta$ and each bounded Rayleigh solution $\phi_{Ray}$ so that 
$
Ray_\alpha (\phi_{Ray}) = f,
$
there exists a bounded function $\phi_{N} $ that approximately solves 
the Orr-Sommerfeld equation in the sense that 
\begin{equation}\label{def-phiNa} 
\OS(\phi_{N})(z) = f + O_N(z).
\end{equation}
In addition, there hold the following uniform bounds
$$ \| \phi_N - \phi_{Ray} \|_{X^{2,\eta}} \le C(\delta + \min\{ |z_c|, \alpha^{-1}\} ) (1+|\log \delta \log \Im c|) $$
and $$ 
\| O_N\|_{X^{2,\eta}}\le \Big[ C(\delta + \min\{ |z_c|, \alpha^{-1}\} ) (1+|\log \delta \log \Im c|)\Big]^N  .
$$
\end{proposition}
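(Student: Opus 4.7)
The plan is to build $\phi_N$ by iterating a two-step correction scheme: an Airy inversion, supplied by Proposition~\ref{prop-mAiry}, to absorb the singular fourth-order remainder $\eps\Delta_\alpha^2(\cdot)$, followed by a Rayleigh inversion, supplied by Proposition~\ref{prop-exactRayS}, to absorb the resulting regular remainder. The algebraic heart of the scheme is the pair of identities
\[
\OS \;=\; \Ray_\alpha - \eps\Delta_\alpha^2 \;=\; -\Airy - U'',
\]
which show that $\OS$ differs from $\Ray_\alpha$ only by the singular fourth-order operator $\eps\Delta_\alpha^2$ and from $-\Airy$ only by multiplication by the \emph{regular}, exponentially decaying coefficient $-U''$. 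This dichotomy is exactly what allows us to trade a singular error for a regular one at each step.

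I would initialize $\phi^{(0)}=\phi_{Ray}$, so that $\OS(\phi^{(0)})=f-\eps\Delta_\alpha^2\phi_{Ray}$. The error carries a $(z-z_c)^{-3}$ singularity inherited from the $(z-z_c)\log(z-z_c)$ behaviour of $\phi_{Ray}$ near the critical layer and therefore cannot be inverted by $\Ray_\alpha$. I set $\psi^{(0)}=-\Airy^{-1}(\eps\Delta_\alpha^2\phi_{Ray})$ via Proposition~\ref{prop-mAiry}, which is tailored precisely to such a source and provides
\[
\|\psi^{(0)}\|_{X^{2,\eta'}} \;\le\; C\bigl(\delta+\min\{|z_c|,\alpha^{-1}\}\bigr)|\log\delta|\,\|\phi_{Ray}\|_{Y^{4,\eta}}.
\]
Using $-\Airy(\psi^{(0)})=\eps\Delta_\alpha^2\phi_{Ray}$ in the second identity collapses the singular part of the error:
\[
\OS(\phi^{(0)}+\psi^{(0)}) \;=\; f - U''\psi^{(0)},
\]
which is regular, lies in $X^{0,\eta}$, and decays exponentially through $U''$. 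Next I apply the Rayleigh inverse: define $\phi^{(1)}=\Ray_\alpha^{-1}(U''\psi^{(0)})\in Y^{4,\eta'}$ using Proposition~\ref{prop-exactRayS}, at a cost of $(1+|\log\Im c|)$ in norm. Then $\OS(\phi^{(0)}+\psi^{(0)}+\phi^{(1)}) = f - \eps\Delta_\alpha^2\phi^{(1)}$, which has exactly the same singular form as the original remainder but is smaller by the combined factor
\[
\gamma_{\eps,c} \;=\; \bigl(\delta+\min\{|z_c|,\alpha^{-1}\}\bigr)(1+|\log\delta\log\Im c|),
\]
assumed $\ll 1$. Iterating the Airy/Rayleigh cycle $N$ times and setting
\[
\phi_N \;=\; \phi_{Ray} \;+\; \sum_{k=0}^{N-1}\psi^{(k)} \;+\; \sum_{k=1}^{N}\phi^{(k)},
\]
the bounds telescope into a geometric series with ratio $\gamma_{\eps,c}$, yielding $\|\phi_N-\phi_{Ray}\|_{X^{2,\eta}}\lesssim \gamma_{\eps,c}$ and the remainder estimate $\|O_N\|_{X^{2,\eta}}\lesssim \gamma_{\eps,c}^N$.

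The main technical obstacle is entirely bundled into Proposition~\ref{prop-mAiry}: producing the small factor $\delta+\min\{|z_c|,\alpha^{-1}\}$ out of $\Airy^{-1}\circ\eps\partial_z^4$ despite the $(z-z_c)^{-3}$ singularity of the source is what required the delicate pointwise Green function analysis and the integration-by-parts that moves three $z$-derivatives onto the Airy kernel before evaluation. Beyond invoking that proposition, the remaining work is regularity bookkeeping: one splits $\eps\Delta_\alpha^2=\eps\partial_z^4-2\eps\alpha^2\partial_z^2+\eps\alpha^4$ so that only the genuinely singular piece $\eps\partial_z^4$ is routed through the Airy step (the other two pieces are regular and can be folded into the subsequent Rayleigh step alongside $U''$), and one checks at every cycle that the new iterate still lies in $Y^{4,\eta}$, the space required to re-enter the Airy smoothing estimate.
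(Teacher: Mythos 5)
Your proposal is correct and follows essentially the same route as the paper: both iterate an Airy correction (via Proposition~\ref{prop-mAiry}) to absorb the singular $\eps\Delta_\alpha^2$ error followed by a Rayleigh correction (via Proposition~\ref{prop-exactRayS}) to absorb the resulting $U''$-weighted regular error, with the contraction ratio $\gamma_{\eps,c}$. The only differences are cosmetic — the paper packages the alternating step as the operator $\Iter = U''\circ\Airy^{-1}\circ\eps\Delta_\alpha^2\circ\Ray_\alpha^{-1}$ and isolates its contraction estimate in Lemma~\ref{lem-keyIter}, whereas you unroll the cycle explicitly and are slightly more careful in noting that Proposition~\ref{prop-mAiry} is stated for the singular piece $\eps\partial_z^4$ rather than the full $\eps\Delta_\alpha^2$.
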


The construction starts from the Rayleigh solution $\phi_{Ray}$ so that $
Ray_\alpha (\phi_{Ray}) = f.
$
By definition, we have
\begin{equation}\label{Orr-1stapp} \OS(\phi_{Ray}) = f - \epsilon \Delta_\alpha^2 (\phi_{Ray})\end{equation}
in which $\Delta_\alpha^2 \phi_{Ray}$ is singular near the critical layer $z=z_c$.  To smooth out the singularity, we introduce 
$$
B_{s} :=  \Airy^{-1}(A_{s}) , \qquad A_{s}:=  \epsilon \Delta_\alpha^2 (\phi_{Ray}) .
$$ 
Together with the identity $Orr =  Airy + U'' $, it follows at once that 
$$\begin{aligned}
 \OS(B_{s} ) &=  \epsilon \Delta_\alpha^2 (\phi_{Ray})  + U'' B_s
 \end{aligned}$$
This leads us to set \begin{equation}\label{def-phi1-s0}
\phi_{1} := \phi_{Ray} + B_{s} 
\end{equation}
which solves the Orr-Sommerfeld equations with a new remainder: $$
\OS( \phi_{1} ) =  f + O_1, \qquad O_1:= U'' B_s .
$$ 
We shall show that the remainder $O_1$ is sufficiently small in some function space. The standard iteration would yield the proposition. Indeed, inductively, let us assume that we have constructed $\phi_{N}$ so that $$
 \OS(\phi_{N})  = f +  O_N,
$$ 
for some sufficiently small remainder. We then improve the error term by constructing a new approximate solution $\phi_{1,N+1}$ so that it solves the Orr-Sommerfeld equations with a smaller remainder. To this end, we first solve the Rayleigh equation by introducing
$$
\psi_{N} := -  Ray_\alpha^{-1}\Bigl( O_N  \Bigr) 
$$ 
and introduce 
$$
\begin{aligned}B_{s,N} :=  \Airy^{-1} A_{s,N} , \qquad 
A_{s,N}:&= \chi \epsilon \Delta_\alpha^2 \psi_N.
\end{aligned}$$
Then, the new approximate solution is defined by \begin{equation}\label{def-phi1N}
\phi_{1,N+1} := \phi_{N} + \psi_N + B_{s,N} 
\end{equation}
solving the Orr-Sommerfeld equations with a new remainder 
 $$
\OS( \phi_{1,N+1} ) =  f +  O_{N+1}, \qquad O_{N+1}: = U'' B_{s,N} .
$$ 
 To ensure the convergence, let us introduce the iterating operator 
\begin{equation}\label{def-Iter}
Iter(g) :=  U'' \Airy^{-1}(A_{s}(g)) , \qquad A_{s}(g) : = \epsilon \Delta_\alpha^2 Ray^{-1}_\alpha (g) .
\end{equation}
  Then the relation between old and new remainders reads
\begin{equation}\label{newON}
O_{N+1} := Iter(O_N) .
\end{equation}
We then inductively iterate this procedure to get an accurate approximation to $\phi_1$. We shall prove the following key lemma which gives sufficient estimates on the $Iter$ operator and would therefore complete the proof of Proposition \ref{prop-construction-phi1}.  

\begin{lemma}\label{lem-keyIter} For $g \in X^{2,\eta}$, the $Iter(\cdot)$ operator defined as in \eqref{def-Iter} is a well-defined map from $X^{2,\eta}$ to $X^{2,\eta}$. Furthermore, there holds 
\begin{equation}\label{est-keyIter} \| Iter(g)\|_{X^{2,\eta}} \le C\Big[ \delta +  \min\{ |z_c|, \alpha^{-1} \} \Big] |\log \delta \log \Im c|  \|g\|_{X^{2,\eta}},\end{equation}
for some universal constant $C$. 
\end{lemma}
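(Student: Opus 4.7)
The plan is to unpack $Iter$ into its three constituent operators and to track the gains and losses at each stage. Given $g \in X^{2,\eta}$, the Rayleigh inverse produces $\phi_{Ray}:=Ray^{-1}_\alpha(g)$; the operator $\epsilon\Delta_\alpha^2$ generates a source concentrated near the critical layer with a $(z-z_c)^{-3}$ singularity; and $\Airy^{-1}$ smooths that singularity, picking up the small factor $\delta+\min\{|z_c|,\alpha^{-1}\}$. Multiplication by the exponentially decaying $U''$ then converts the result into an $X^{2,\eta}$ bound. The $|\log\Im c|$ factor in the claimed estimate comes from the Rayleigh inversion, while $|\log\delta|$ comes from the Airy smoothing.

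First I would apply the appropriate Rayleigh solver from Section \ref{sec-Rayleigh} depending on the size of $\alpha$: Proposition \ref{prop-exactRayS} when $\alpha\ll 1$, Proposition \ref{prop-exactRayS-mid} when $\alpha$ is bounded, and Lemma \ref{lem-large-alpha} when $\alpha\gg 1$ (the large-$\alpha$ Green function bound $|G_{R,\alpha}|\lesssim\alpha^{-1}|U-c|^{-1}e^{-\alpha|x-z|}$ supplying an extra $\alpha^{-1}$ pointwise gain). In each regime $\phi_{Ray}\in Y^{4,\eta'}$ satisfies $\|\phi_{Ray}\|_{Y^{4,\eta'}}\le C(1+|\log\Im c|)\|g\|_{X^{2,\eta}}$, so $\partial_z^4\phi_{Ray}$ carries the expected $(z-z_c)^{-3}$ singularity. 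Next, I would expand $A_s(g) = \epsilon\partial_z^4\phi_{Ray} - 2\epsilon\alpha^2\partial_z^2\phi_{Ray} + \epsilon\alpha^4\phi_{Ray}$ and treat the three pieces separately. For the dominant singular piece, Proposition \ref{prop-mAiry} (or Proposition \ref{prop-mAiry2} when $|z_c|\gtrsim 1$) gives directly
\[
\bigl\|\Airy^{-1}\bigl(\epsilon\partial_z^4\phi_{Ray}\bigr)\bigr\|_{X^{2,\eta''}} \le C\bigl[\delta+\min\{|z_c|,\alpha^{-1}\}\bigr]\,|\log\delta|\,\|\phi_{Ray}\|_{Y^{4,\eta'}},
\]
which after multiplication by $U''$ produces the leading contribution to the claimed bound. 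For the subprincipal pieces I would use the direct estimate $\|\Airy^{-1}h\|_{\eta''}\lesssim\|h\|_{\eta'}$ from Proposition \ref{prop-exactAiry}, combined with the Rayleigh identity $\Delta_\alpha\phi_{Ray}=(g+U''\phi_{Ray})/(U-c)$ (which rewrites $\partial_z^2\phi_{Ray}$ in terms of lower-order quantities carrying only a $(z-z_c)^{-1}$ singularity times $|\log\Im c|$), together with the $\alpha^{-1}$ pointwise gain on $\phi_{Ray}$ in the large-$\alpha$ regime, to show that $|\epsilon\alpha^{2j}|\cdot\|\partial_z^{4-2j}\phi_{Ray}\|$ is absorbed into $\delta(1+|\log\Im c|)\|g\|_{X^{2,\eta}}$ uniformly for $\nu^{1/2}\ll\alpha\ll\nu^{-1/4}$.

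Finally, multiplication by the analytic and exponentially decaying coefficient $U''$ upgrades the $X^{2,\eta''}$ bound on $\Airy^{-1}(A_s(g))$ to an $X^{2,\eta}$ bound, with the slight loss $\eta''<\eta'<\eta$ in the exponential rate absorbed by $U''$; the second-derivative part of the $X^{2,\eta}$ norm is controlled by combining Proposition \ref{prop-exactAiry}'s derivative estimate on $\Airy^{-1}$ with the smoothness of $U''$ and the Orr--Sommerfeld equation itself. The main obstacle is the subprincipal term $\epsilon\alpha^4\phi_{Ray}$ in the large-$\alpha$ regime, where $|\epsilon\alpha^4|=\sqrt\nu\,\alpha^3$ can be as large as $\nu^{-1/4}$ while the target scale $\delta$ is only $\nu^{1/4}$; closing this gap requires the $\alpha^{-2}$ overall pointwise decay obtained by combining the $\alpha^{-1}$ Green-function prefactor of Lemma \ref{lem-large-alpha} with the $\alpha^{-1}$ localization of $e^{-\alpha|x-z|}$, and this delicate balance is the most technical calibration of the proof.
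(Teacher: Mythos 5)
Your proposal follows the same three-operator skeleton as the paper's (quite terse) proof: compose $Ray_\alpha^{-1}$, $\epsilon\Delta_\alpha^2$, $\Airy^{-1}$, multiply by $U''$, and invoke Proposition \ref{prop-mAiry} together with a Rayleigh solver estimate. The paper itself cites only Proposition \ref{prop-exactRayS} for the Rayleigh step and applies Proposition \ref{prop-mAiry} directly to $\epsilon\Delta_\alpha^2 h$ (even though that proposition is stated for $\epsilon\partial_x^4 f$ alone), so your two refinements — selecting the Rayleigh solver by $\alpha$-regime and expanding $\Delta_\alpha^2 = \partial_z^4 - 2\alpha^2\partial_z^2 + \alpha^4$ — are in the right spirit and fill genuine gaps in the paper's write-up.

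However, your calibration of the subprincipal pieces contains a real gap. First, the claimed $\alpha^{-2}$ pointwise gain on $\phi_{Ray}$ is not available uniformly: the Green function bound in Lemma \ref{lem-large-alpha} is $|G_{R,\alpha}|\lesssim\alpha^{-1}|U(x)-c|^{-1}e^{-\alpha|x-z|}$, and when $z$ lies within an $\alpha^{-1}$-neighborhood of $z_c$ the convolution $\int |U(x)-c|^{-1}e^{-\alpha|x-z|}\,dx$ produces a $\log$ rather than a second factor of $\alpha^{-1}$, so the gain near the critical layer is only $\alpha^{-1}|\log\Im c|$. Second, the term $\epsilon\alpha^2\partial_z^2\phi_{Ray}$ still carries a $(z-z_c)^{-1}$ singularity (since $\Delta_\alpha\phi_{Ray} = (g+U''\phi_{Ray})/(U-c)$), so it does not live in $X_\eta$ and you cannot simply invoke Proposition \ref{prop-exactAiry} for it; you need a version of the smoothing estimate of Lemma \ref{lem-mAiry} adapted to this milder singularity. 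The cleaner route — and the one the paper is implicitly relying on — is to run the integration-by-parts argument of Lemma \ref{lem-mAiry} directly on $\epsilon\Delta_\alpha^2 h$: the $\alpha^2$- and $\alpha^4$-weighted terms come with fewer $x$-derivatives, hence milder singularities, and the extra powers of $\alpha$ are paid for by the identities $\epsilon\alpha^2(U-c)^{-1}\big|_{|z-z_c|\sim\delta}\sim U_c'(\alpha\delta)^2$ and $\epsilon\alpha^4\delta \sim U_c'(\alpha\delta)^4$, both $\ll1$ in the admissible range $\alpha\delta\ll1$. Your proposal would be on firm ground if the last paragraph were replaced by this bookkeeping instead of the pointwise $\alpha^{-2}$ claim.
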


\begin{proof} Let $g \in X^{2,\eta}$. By a view of Proposition \ref{prop-exactRayS}, we have
$$
\| Ray_\alpha^{-1}(g) \|_{Y^{4,\alpha}}\le C (1+|\log (\Im c)|)  \|g\|_{X^{2,\eta}}.$$
Next, we consider $A_{s}(g) = \epsilon \Delta_\alpha^2 ( Ray_\alpha^{-1}( g)),$ 
which has a singularity of order $(z-z_c)^{-3}$ due to the $z\log z$ singularity in $Ray_\alpha^{-1}(\cdot)$. Applying Proposition \ref{prop-mAiry}, together with the exponential decay from $U''$ at infinity, 
we get 
$$\Big\| U''\Airy^{-1}( \epsilon \Delta_\alpha^2 (h) ) \Big\|_{X^{2,\eta}} \le C\|h\|_{Y^{4,\alpha}}  \Big[ \delta +  \min\{ |z_c|, \alpha^{-1} \} \Big] |\log \delta \log \Im c|.$$
Putting these estimates together, we obtain 
$$
\| Iter(g)\|_{X^{2,\eta}} \le  C\Big[ \delta +  \min\{ |z_c|, \alpha^{-1} \} \Big] |\log \delta \log \Im c| \|g\|_{X^{2,\eta}} $$ 
which gives the lemma. 
\end{proof}

\begin{proof}[Proof of Proposition \ref{prop-cons-phi1c}] We now consider the case when both $\Re c$ and $\alpha$ are bounded. Let $r$ be an arbitrary positive number so that $\delta \ll r$. We first solve the Orr-Sommerfeld equations on $\Omega_r: = \{ z\ge \Re z_c - r\}$. 
Recall from Proposition \ref{prop-mAiry2} that 
$$
\begin{aligned}
\Big\| \Airy^{-1}( \epsilon \partial_x^4 f ) \Big\|_{X^{2,\eta'} (\Omega_r)} 
&\le  C\|f\|_{Y^{4,\eta}}\Big[ \delta +  \min\{r, \alpha^{-1} \} \Big] (1+|\log \delta| )
\\
&\le  C\|f\|_{Y^{4,\eta}} r (1+|\log \delta|) .
 \end{aligned}$$
 Using this and following the proof of Proposition \ref{prop-cons-phi1}, we obtain an exact Orr-Sommerfeld solution defined on $\Omega_r$ so that 
\begin{equation}\label{bd-phisRs} \| \phi_s - \phi_{Ray}\|_{X^{2,\eta}(\Omega_r)} \le C r (1+|\log \delta \log \Im c|)\end{equation}
as long as $ r (1+|\log \delta \log \Im c|) \ll1$. 

It remains to show that $\phi_s$ can be continued down to $z =0$ and remains close to the Rayleigh solution $ \phi_{Ray}$. Set $\Omega_r^c: = [0,\Re z_c - r]$ and  $\phi = \phi_s - \phi_{Ray}$. We are led to solve the following inhomogenous Orr-Sommerfeld equations 
$$
\OS(\phi) = \epsilon \Delta_\alpha^2 \phi_{Ray},
$$
on $\Omega_r^c$, together with boundary conditions $\phi =  \phi_s - \phi_{Ray}$ and $\phi' =  \phi_s' - \phi_{Ray}'$ at $z= \Re z_c - r$ and $\phi = \phi'=0$ at $z=0$. The solution $\phi$ is defined by 
\begin{equation}\label{def-phirc} \phi (z) = \int_{\Omega_r^c} G_{\alpha,c} (x,z)   \epsilon \Delta_\alpha^2 \phi_{Ray} (x)\; dx + \sum_{j=s,f} A_{j,\pm} \phi_{j,\pm}(z)\end{equation}
in which $G_{\alpha,c}(x,z)$ denotes the Green function of $\OS(\cdot)$ on $\Omega_r^c$, $\phi_{j,\pm}$ are four independent solutions of the Orr-Sommerfeld equations on $\Omega_r^c$, and the constants $A_{j,\pm}$ are added to correct the boundary conditions. 

Since there are no critical layers in $\Omega^c_r$, solutions to the Orr-Sommerfeld equation can be constructed via the standard iteration from the two Rayleigh solutions $\phi_{s,\pm}\sim e^{\pm\mu_s z}$ and two Airy solutions $\phi_{f,\pm} \sim e^{\pm \mu_f (z-z_c)}$. We then construct the Green function for the Orr-Sommerfeld equations, without taking care of the boundary conditions. In particular, the $L^1$ norm of $G_{\alpha,c}(\cdot,z)$ is uniformly bounded. Thus, 
$$ \Big|  \int_{\Omega_r^c} G_{\alpha,c} (x,z)   \epsilon \Delta_\alpha^2 \phi_{Ray} (x)\; dx \Big| \lesssim \sup_{z\in \Omega_r^z} |  \epsilon \Delta_\alpha^2 \phi_{Ray} (z)| \lesssim \epsilon r^{-3},$$
upon recalling that the singularity of the Rayleigh solution $\phi_{Ray}$ is of order $(z-z_c) \log (z-z_c)$. Take $r = \epsilon^{1/4}$. The above integral is thus bounded by a constant times $\epsilon^{1/4}$. In addition, by \eqref{bd-phisRs}, the boundary value 
$$ |\partial_z^k\phi_{\vert_{z=\Re z_c - r}}| \lesssim  \epsilon^{1/4 - k/4} (1+|\log \delta \log \Im c|) . $$
This proves that 
$$
\begin{aligned}
 \sum_{j=s,f} A_{j,\pm} {\partial_z^k\phi_{j,\pm}}_{\vert_{z=0,\Re z_c - r}} &\lesssim \epsilon^{1/4-k/4} (1+|\log \delta \log \Im c|) 
 ,\end{aligned}$$ 
which yields $A_{s,\pm}, A_{f,+} \lesssim  \epsilon^{1/4} (1+|\log \delta \log \Im c|) $ and $A_{f,-}$ is exponentially small. In particular, 
$$\Big|\sum_{j=s,f} A_{j,\pm} \partial_z^k\phi_{j,\pm}(z)\Big| \lesssim \epsilon^{1/4-k/4} (1+|\log \delta \log \Im c|) .$$
By view of \eqref{def-phirc} and \eqref{bd-phisRs}, the proposition is proved. 
\end{proof}

\subsection{Fast Orr-Sommerfeld modes}

In this section, we shall construct exact solutions to the Orr-Sommerfeld equations that are close to the two independent Airy solutions:  
\begin{equation}\label{def-phi30}
\phi_{3,0}(z) : = \gamma_3 Ai(2,\delta^{-1}\eta(z)) ,\qquad \phi_{4,0}(z) : = \gamma_4 Ci(2,\delta^{-1}\eta(z)) ,
\end{equation} 
in which $\gamma_3 = 1/ Ai(2,\delta^{-1}\eta(0))$ and $\gamma_4 = 1/Ci (2,\delta^{-1} \eta(0))$ are normalizing constants. 
 Here, $Ai(2,\cdot)$ and $Ci(2,\cdot)$ are the second primitive of the Airy solutions $Ai(\cdot)$ and $Ci(\cdot)$, respectively, and $\eta(z)$ denotes the Langer's variable
\begin{equation}\label{Lg-transform}
\delta = \Bigl( { \eps \over i U'_c} \Bigr)^{1/3} , \qquad\quad \eta(z)
= \Big[ \frac 32 \int_{z_c}^z \Big( \frac{U-c}{U'_c}\Big)^{1/2} \; dz \Big]^{2/3}.
\end{equation}
We recall that as $Z = \eta(z)/\delta$ which tends to the infinity along the line $e^{i\pi/6}\mathbb{R}$, the Airy solution $Ai(2, e^{i \pi/6}Z)$ behaves as $e^{\mp \frac {\sqrt{2}}{3} |Z|^{3/2}}$, whereas $Ci(2, e^{i \pi/6}Z)$ is of order $e^{\pm \frac {\sqrt{2}}{3} |Z|^{3/2}}$. In particular, since $\eta(0) \sim -z_c$, $\delta^{-1}\eta(0) \to \infty$ with an angle approximately $\frac 76 \pi$. Hence, the normalizing constant $\gamma_{3,4}$ is approximately of order $\langle z_c/\delta \rangle ^{5/4}e^{\pm\frac {\sqrt{2}}{3} |z_c/\delta|^{3/2}}$, respectively. 
Let us also recall that the critical layer is centered at $z = z_c$ and has a typical
size of $|\delta|\sim \epsilon^{1/3}$. Inside the critical layer, the Airy functions play a crucial role.

\begin{prop}\label{prop-construction-phi3} 
We assume that $\delta (1+\delta \alpha^2) \ll1$. Then, there are two exact independent solutions $\phi_{j}(z)$, $j = 3,4$, solving the Orr-Sommerfeld equation
$$ 
\OS(\phi_{j}) = 0, \qquad j = 3,4,
$$
so that $\phi_{j}(z)$ is approximately close to $\phi_{j,0}(z)$ in the sense that 
\begin{equation}\label{est-phi3-Orr} 
 \phi_j  =  \phi_{j,0} (1+ \psi)
  \end{equation}
with 
$$ \| \psi\|_\eta \le C \delta (1+\delta \alpha^2) $$
 for some constants $C$ independent of $\alpha,\epsilon,$ and $c$.  
\end{prop}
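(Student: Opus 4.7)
The plan is to construct $\phi_j$ as an additive correction $\phi_j = \phi_{j,0} + \widetilde\phi_j$ via iteration with the exact Airy inverse, and only at the end re-express it multiplicatively as $\phi_{j,0}(1+\psi)$ with $\psi := \widetilde\phi_j/\phi_{j,0}$. Using the identity $\OS = -\Airy - U''$, the equation $\OS(\phi_j)=0$ becomes the fixed-point problem
\[
\widetilde\phi_j \;=\; -\,\Airy^{-1}\bigl(\Airy(\phi_{j,0}) + U''\phi_{j,0} + U''\widetilde\phi_j\bigr),
\]
where $\Airy^{-1}$ is the exact inverse from Proposition \ref{prop-exactAiry}.

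The first step is to estimate the residual $\Airy(\phi_{j,0})$. By the Langer transform (Lemma \ref{lem-Langer}), writing $\phi_{j,0} = \dot z^{1/2}\Psi_j(\eta(z))$ with $\Psi_j$ a second primitive of the classical Airy solution, the modified Airy operator $\cA = \epsilon\Delta_\alpha - (U-c)$ applied to $\partial_z^2\phi_{j,0}$ vanishes on the leading order and produces only the Langer remainder $\epsilon\, \partial_z^2\dot z^{1/2}/\dot z^{1/2}$, of pointwise size $\delta$ relative to $\phi_{j,0}$. Writing $\Airy(\phi_{j,0}) = \cA\Delta_\alpha \phi_{j,0} = \cA\partial_z^2\phi_{j,0} - \alpha^2\cA\phi_{j,0}$, the second piece contributes, through the $\epsilon\Delta_\alpha$ term in $\cA$, a correction of size $\alpha^2\epsilon\,|\Delta_\alpha\phi_{j,0}|\sim \delta^2\alpha^2|\phi_{j,0}|$ (using $\epsilon = \delta^3 U_c'$ and the $\delta^{-2}$ scaling of $\partial_z^2$ on the Langer variable), while the lower order piece $\alpha^2(U-c)\phi_{j,0}$ is even smaller under $\delta\alpha\ll 1$. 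All together, $|\Airy(\phi_{j,0})(z)|\le C\delta(1+\delta\alpha^2)|\phi_{j,0}(z)|$.

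The second step runs the iteration. Working in an exponentially weighted $L^\infty$ norm adapted to the Airy profile of $\phi_{j,0}$, i.e. $\|h\|_{j,\eta} := \sup_{z\ge 0} e^{\eta z}|h(z)/\phi_{j,0}(z)|$, I would combine the pointwise Green function bounds of Proposition \ref{prop-exactGrmAiry} with the convolution estimates of Lemmas \ref{lem-ConvAiry} and \ref{lem-locConvAiry} to show that $T(\psi) := -\Airy^{-1}(U''\psi)$ satisfies $\|T\|_{j,\eta\to j,\eta}\le C\delta$. The key point is that multiplication by $U''$ supplies exponential decay at infinity, while $\Airy^{-1}$ gains a factor of $\delta$ on any rapidly-decaying input via the smoothing estimate behind Proposition \ref{prop-mAiry}. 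Combined with the residual bound above, this yields a convergent Neumann series and $\|\widetilde\phi_j\|_{j,\eta}\le C\delta(1+\delta\alpha^2)$, which on dividing by $\phi_{j,0}$ gives $\|\psi\|_\eta\le C\delta(1+\delta\alpha^2)$.

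The main obstacle is verifying the contraction in the weighted norm adapted to the \emph{growing} mode $\phi_{4,0}\sim e^{+\int_{z_c}^z\Re\mu_f}$. For $\phi_{3,0}$ the weight agrees with the decay of the Airy Green function, so the convolution bounds transfer directly; but for $\phi_{4,0}$ one must use the full kernel $\mathcal{T}(x,z)$ from Proposition \ref{prop-exactGrmAiry}, which records both decay and growth branches, to show that $\Airy^{-1}(U''\phi_{4,0})$ does not outgrow $\phi_{4,0}$ itself at infinity. Here the exponential decay of $U''$ is essential: it strictly dominates the Airy growth of $\phi_{4,0}$ and lets the kernel integrals close in the weighted norm. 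Once this is arranged, the same iteration scheme as for the slow modes (Proposition \ref{prop-construction-phi1}) produces the stated two independent fast solutions.
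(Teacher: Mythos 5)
Your overall strategy --- expand around the Langer-approximated Airy mode $\phi_{j,0}$, correct with the exact Airy inverse, and iterate off the exponential decay of $U''$ --- matches the paper's, but two steps as you have written them do not close.

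First, the residual estimate. You claim $|\Airy(\phi_{j,0})(z)|\le C\delta(1+\delta\alpha^2)|\phi_{j,0}(z)|$, attributing the factor $\delta$ to the Langer remainder of Lemma \ref{lem-Langer}. But $\phi_{j,0}$ is a \emph{second primitive} of the Airy solution, not of the form $\dot z^{1/2}\Psi$, so you cannot read the residual of the fourth-order operator $\Airy=\cA\Delta_\alpha$ off Lemma \ref{lem-Langer}; the leading $\epsilon\delta^{-4}$ and $(U-c)\delta^{-2}$ cancellations must be produced by hand from the chain rule together with the identities $\epsilon=\delta^3 U_c'$ and $(U-c)\dot z^2=U_c'\eta$, exactly as the paper does. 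The resulting residual $I_0:=\Airy(\phi_{3,0})$ obeys \eqref{I-bound}, $|I_0|\lesssim \gamma_3(1+\delta\alpha^2)\langle Z\rangle^{1/4}e^{-\sqrt{2|Z|}Z/3}$, while $|\phi_{3,0}|\sim\gamma_3\langle Z\rangle^{-5/4}e^{-\sqrt{2|Z|}Z/3}$: the ratio is $\langle Z\rangle^{3/2}$, of order one on the critical layer, \emph{not} $O(\delta)$. The small factor $\delta$ materialises only after applying $\Airy^{-1}$ to the critically localized $I_0$, via the smoothing estimate of Lemma \ref{lem-locConvAiry}, not because $I_0$ is pointwise small relative to $\phi_{j,0}$. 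Conflating these two effects would break the Neumann-series bookkeeping.

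Second, the contraction norm. You propose $\|h\|_{j,\eta}=\sup_z e^{\eta z}|h/\phi_{j,0}|$ and flag the growing mode $\phi_{4,0}$ as the hard case, but the decaying mode is also a problem: the Airy Green kernel $G=\widetilde G+E$ has a nonlocal part $E(x,z)$ (equation \eqref{def-E}) decaying only at the rate $e^{-\alpha|x-z|}$, and Lemma \ref{lem-locConvAiry} accordingly bounds the $E$-contribution of $\Airy^{-1}(f)$ by $C\delta$ \emph{without} the super-exponential $e^{-\theta_0|Z|^{3/2}}$ weight. Since $\phi_{3,0}$ does carry that weight, $\Airy^{-1}(U''\psi)/\phi_{3,0}$ is unbounded as $z\to\infty$, so $T$ does not map your space to itself. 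The paper avoids this by making one explicit preliminary correction $\phi_{3,1}:=\phi_{3,0}+\Airy^{-1}(I_0)$, observing $\OS(\phi_{3,1})=I_1=U''\Airy^{-1}(I_0)=\cO(\delta(1+\delta\alpha^2))$ in $X_\eta$, and then iterating the remaining error in the \emph{additive} $X_\eta$ scale with the machinery already built for the slow modes (Propositions \ref{prop-cons-phi1}, \ref{prop-cons-phi1c}), rather than with a $\phi_{j,0}$-weighted contraction.

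Your concern about the growing mode $\phi_{4,0}=\gamma_4\, Ci(2,\delta^{-1}\eta(z))$ is legitimate (the paper dispatches it in a single sentence), but the fix you sketch --- a $\phi_{4,0}$-weighted contraction built on the two-sided kernel $\mathcal{T}(x,z)$ --- inherits the same nonlocal-tail obstruction. The route consistent with the paper is to keep the iteration additive in $X_\eta$, using that the algebraic cancellations are identical for $Ci(2,\cdot)$, and if the growth of $Ci$ at infinity becomes an obstacle, first construct on a finite interval and patch the tail using the near-constant-coefficient asymptotics of the equation for large $z$.
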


From the construction, we also obtain the following lemma. 

\begin{lemma}\label{lem-analytic03} 
The fast Orr mode $\phi_{3,4}(z)$ constructed in Proposition \ref{prop-construction-phi3} 
depends analytically in $c$ with $\I c \not = 0$. 
\end{lemma}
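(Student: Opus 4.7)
The plan is to trace analyticity in $c$ through each ingredient of the construction in Proposition~\ref{prop-construction-phi3}, and then promote this to the limit by uniform convergence on compact subsets of $\{\Im c\neq 0\}$. Throughout, fix $c_0$ with $\Im c_0\neq 0$ and work in a small complex neighborhood $V$ of $c_0$ in which $\Im c$ has a constant sign.

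\textbf{Step 1: analyticity of the data.} Since $U$ is real analytic and strictly monotonic, the analytic implicit function theorem applied to $U(z_c)=c$ gives $z_c=z_c(c)$ analytic in $V$ (with $z_c$ staying off $\mathbb{R}_+$ because $\Im c\neq 0$). Consequently $U_c'=U'(z_c)$, the Langer variable $\eta(z)=\bigl[\tfrac{3}{2}\int_{z_c}^z((U-y)-c)^{1/2}(U_c')^{-1/2}\,dy\bigr]^{2/3}$, and $\delta=(\eps/iU_c')^{1/3}$ are analytic functions of $c\in V$, once a branch of the square root and cube root is chosen (which is possible uniformly on $V$ because $U-c$ does not vanish on $\mathbb{R}_+$ for $c\in V$). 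Since $Ai(2,\cdot)$ and $Ci(2,\cdot)$ are entire, the starting approximations $\phi_{j,0}(z)=\gamma_j\,Ai/Ci(2,\delta^{-1}\eta(z))$ and the normalizing constants $\gamma_{3,4}$ depend analytically on $c\in V$.

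\textbf{Step 2: analyticity is preserved by the iteration.} The iterative scheme producing $\phi_j$ from $\phi_{j,0}$ applies the operators $\Airy^{-1}$, $\Ray_\alpha^{-1}$, multiplication by $U''$ and by $\eps\Delta_\alpha^2$, all composed with the starting Airy mode. The Green function $G(x,z)$ of $\Airy$ built in Section~4 is assembled from $Ai$, $Ci$, the Langer variable, and the correctors $a_1,a_2$; by Step~1 each of these depends analytically on $c\in V$, so $G(x,z;c)$ is analytic in $c$, and the convolution $f\mapsto\int G(x,\cdot;c)f(x)\,dx$ produces an analytic function of $c$ whenever $f$ does. Similarly, the Rayleigh Green function is analytic in $c\in V$ (this is exactly the content of Lemma~\ref{lem-analytic01}). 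Hence every iterate $\phi_{j,N}$ is an analytic function of $c\in V$ with values in $X^{2,\eta}$.

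\textbf{Step 3: passage to the limit.} The contraction estimate in the proof of Proposition~\ref{prop-construction-phi3} has a contraction constant of order $\delta(1+\delta\alpha^2)$, which is bounded uniformly on $V$ (shrinking $V$ if necessary, as $\delta$ and $\alpha$ are themselves uniform on $V$). Therefore the iterates $\phi_{j,N}$ converge in $X^{2,\eta}$ uniformly in $c\in V$. Weierstrass' theorem (applied slicewise in $z$ to the uniform limit of holomorphic functions of $c$, using that convergence in $X^{2,\eta}$ implies uniform convergence on compacta in $z$) yields analyticity of the limit $\phi_j(z)$ in $c\in V$. Since $c_0$ was arbitrary in $\{\Im c\neq 0\}$, the lemma follows.

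\textbf{Main obstacle.} The delicate point is Step~1, namely making consistent choices of the branches in $\eta(z)$ and $\delta$ as $c$ varies: the Langer variable involves $(U-c)^{1/2}$ and the branch cut/contour from $z_c(c)$ must move analytically with $c$. This is handled by restricting to a small neighborhood $V$ of $c_0$ on which $\Im c$ keeps a fixed sign, so that $z_c(c)$ stays off the real axis, a branch of $(U-c)^{1/2}$ can be chosen on a fixed complex domain containing $\mathbb{R}_+$, and the path from $z_c$ to $z$ in the definition of $\eta(z)$ can be chosen to depend analytically on $c$ without crossing $z_c$.
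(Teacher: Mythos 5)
Your proof is correct and follows the same approach as the paper's (very terse) proof, which simply notes that the Airy functions and the Langer transformation are analytic in their arguments; you fill in the details the paper leaves implicit, namely the analyticity of $z_c(c)$, $\delta(c)$, $\eta(z;c)$ and the Green kernels entering the iteration, plus the passage to the limit via uniform convergence and Weierstrass' theorem. The only cosmetic slip is the typo $(U-y)-c$ for $U(y)-c$ in your display of the Langer variable.
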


\begin{proof} This is simply due to the fact that both Airy function 
and the Langer transformation \eqref{Lg-transform} are analytic in their arguments. 
\end{proof}



\begin{proof}[Proof of Proposition \ref{prop-construction-phi3}]
We start with $\phi_{3,0}(z)  =  \gamma_3 Ai(2,\delta^{-1}\eta(z))$. We note that $\phi_{3,0}$ and $\dz\phi_{3,0}$ are both bounded on $z\ge 0$, and so are $\eps \dz^4 \phi_{3,0}$ and $(U-c)\dz^2 \phi_{3,0}$. We shall show indeed that $\phi_{3,0}$ approximately solves the Orr-Sommerfeld equation. First, by the estimates on the Airy functions in Lemma \ref{lem-classicalAiry}, when the $z$-derivative hits  the Airy functions, it gives an extra $\delta^{-1}\langle Z \rangle^{1/2}$, and hence,
$\phi_{3,0}(z)$ satisfies 
$$ |\partial^k_z\phi_{3,0}(z)| \le C_0 \gamma_3\delta^{-k}\langle Z \rangle^{-5/4+k/2}   e^{- \sqrt{2|Z|} Z/3} ,\qquad k \ge 0.$$
We note that thanks to the normalizing constant $\gamma_3$, the above estimate in particular yields that 
$$ |\partial^k_z\phi_{3,0}(0)| \le C_0\delta^{-k} \langle z_c/\delta \rangle ^{k/2} ,\qquad k \ge 0, $$
which could be large in the limit $\epsilon, z_c \to 0$. When $z$ is away from zero, the exponent $e^{- \sqrt{2|Z|} Z/3}$ is sufficiently small, of order $e^{-1/|\delta|^{3/2}}$ in the limit $\delta \to 0$. This controls any polynomial growth in $1/\delta$.  
Next, direct calculations yield 
$$\begin{aligned}
 \Airy(\phi_{3,0}):= &~\eps \delta^{-1} \gamma_3\eta^{(4)} Ai(1,Z) + 4 \eps \delta^{-2}  \gamma_3\eta' \eta^{(3)} Ai(Z) + 3\eps \delta^{-2} \gamma_3 (\eta'')^2 Ai(Z)
 \\&+ \eps \delta^{-4} \gamma_3 (\eta')^4 Ai''(Z) + 6 \eps \delta^{-3} \gamma_3 \eta '' (\eta')^2 Ai'(Z) \\& - \gamma_3 (U-c) \Big[\eta'' \delta^{-1}Ai(1,Z) + \delta^{-2}(\eta')^2 Ai(Z)\Big]
 \\&+ \alpha^2 (U-c+\epsilon \alpha^2) \phi_{3,0},
\end{aligned}
$$ with $Z = \delta^{-1}\eta(z)$. Let us first look at the leading terms with a factor of $\eps \delta^{-4}$ and of $(U-c)\delta^{-2}$. Using the facts that $\eta' = 1/\dot z$, $\delta ^3= \eps/U_c'$, and $(U-c)\dot z^2 = U_c' \eta(z)$, we have 
$$\begin{aligned}  \eps \delta^{-4} (\eta')^4 Ai''(Z) &- \delta^{-2}(\eta')^2  (U-c)Ai(Z) \\
&=  \eps \delta^{-4} (\eta')^4 \Big[Ai''(Z) - \delta^{2}\eps^{-2}  (U-c)\dot z^2 Ai(Z)\Big]  \\&=  \eps \delta^{-4} (\eta')^4 \Big[Ai''(Z) - Z Ai(Z)\Big] = 0.
\end{aligned}$$
The next terms in $Airy(\phi_{3,0})$ are
$$\begin{aligned}
6 \eps \delta^{-3} \gamma_3 \eta '' (\eta')^2 Ai'(Z)  &-  \gamma_3(U-c)\eta'' \delta^{-1}Ai(1,Z) \\&= \gamma_3 \Big[6 \eta '' (\eta')^2 U_c' Ai'(Z)  - Z U_c' \eta'' (\eta'^2)Ai(1,Z)\Big]
\\&= \gamma_3 \eta '' (\eta')^2 U_c'  \Big[ 6Ai'(Z)  - Z Ai(1,Z) \Big] ,\end{aligned}$$ 
 which is bounded for $z\ge 0$. The rest is of order $\cO(\eps^{1/3})$ or smaller. That is, we obtain
 $$
 \begin{aligned} \Airy(\phi_{3,0}) = I_0(z): &=  \gamma_3 \eta '' (\eta')^2 U_c'  \Big[ 6Ai'(Z)  - Z Ai(1,Z) \Big] \\&\quad + \alpha^2 (U-c+\epsilon \alpha^2) \phi_{3,0}+ \cO(\eps^{1/3}).\end{aligned}$$
Here we note that the right-hand side $I_0(z)$ is very localized and depends primarily on the fast variable $Z$ as $Ai(\cdot)$ does. Precisely, we have 
\begin{equation}\label{I-bound} |\partial_z^kI_0(z)|\le C  \gamma_3\delta^{-k} (1+ \delta \alpha^2) \langle Z \rangle^{1/4+k/2} e^{- \sqrt{2|Z|} Z/3} \end{equation}
in which we have used $\epsilon \alpha^2 \lesssim \delta$ and $(U-c+\epsilon\alpha^2)\dot z^2  = \delta U'_c Z$. Again as $z\to 0$, we obtain the following bound, using the normalizing constant $\gamma_3$, 
$$| \partial_z^k I_0(0)| \le C_0 \delta^{-k}(1+ \delta \alpha^2) \langle z_c/\delta \rangle^{3/2+k/2}.$$

To obtain a better error estimate, we then introduce 
$$ \phi_{3,1}(z): = \phi_{3,0}(z)  + \Airy^{-1} (I_0) (z).$$ 
This yields 
$$ \OS(\phi_{3,1}) = I_1(z): =  U'' \Airy^{-1} (I_0) (z).$$
The existence of $I_1$ is guaranteed by the Proposition \ref{prop-exactAiry}. 
More precisely, we have
\begin{equation}\label{I1-bound}|\partial_z^kI_1(z)| \le C\delta^{1-k} \gamma_3 (1+ \delta \alpha^2) \langle Z \rangle^{-7/4+k/2}   e^{- \sqrt{2|Z|} Z/3}   + C_k\delta^{1-k},\end{equation}
for $k\ge 0$, in which $C_k$ vanishes for $k\ge 2$. Indeed, in the above estimate, the terms on the right are due to the convolution with the localized and non-localized part of the Green function of the $\Airy$ operator, respectively. Hence, when $z$-derivative hits the non-localized part $E(x,z)$, we have by definition, $\partial_z^k E(x,z) = 0$ for $k\ge 1$ and $x<z$, and $|\partial_zE(x,z) |\le  C\delta^{-1}\langle x \rangle^{1/3}\langle X \rangle^{-1} $ and $\partial_z^k E(x,z) = 0$ for $x>z$ and for $k\ge 2$. In particular, there is no linear growth in $Z$ in the last term on the right of \eqref{I1-bound}. In addition, we also note that as $z\to 0$, there holds the uniform estimate:
$$|\partial_z^kI_1(0)|\le C\delta^{1-k} (1+ \delta \alpha^2) \langle z_c/\delta \rangle^{-1/2+k/2} 
+ C \delta^{1-k}.$$ 
In particular, $I_1 = \cO(\delta)(1+ \delta \alpha^2)$ and $\partial_zI_1 = \cO(1+ \delta \alpha^2)$.

The construction of $\phi_3$ now follows from the standard iteration as done in the proof of Proposition \ref{prop-cons-phi1} and of Proposition \ref{prop-cons-phi1c}. A similar construction applies for $\phi_{4,0} = \gamma_4 Ci(2,\delta^{-1}\eta(z))$, since both $Ai(2,\cdot)$ and $Ci(2,\cdot)$ solve the same primitive Airy equation. 
\end{proof}

\section{Green function for Orr-Sommerfeld}

In this section, we prove Theorem \ref{theo-GreenOS-stable}, giving the pointwise bounds on the Green function of the Orr-Sommerfeld problem. Given the four independent solutions to the homogenous Orr-Sommerfeld equations, the Green function is constructed exactly as done in \cite{GrN1}, with a special attention near critical layers. Precisely, let $\phi_{s,\pm}, \phi_{f,\pm}$ be slow and fast modes of the Orr-Sommerfeld equations, respectively, constructed in the previous sections, and let $\phi_{s,\pm}^*, \phi_{f,\pm}^*$ be the corresponding adjoint solutions, defined through the following algebraic relations
\begin{equation}\label{OSadjoint}\begin{aligned}
\Phi^*_{j,+} \mathcal{B}\Phi_{k,-} &= \delta_{jk}, \qquad \Phi^*_{j,+} \mathcal{B} \Phi_{k,+}  = 0,
\\
\Phi^*_{j,-}  \mathcal{B} \Phi_{k,-} &=0, \qquad \Phi^*_{j,-}  \mathcal{B} \Phi_{k,+}  = \delta_{jk},
\end{aligned}
\end{equation}
for $j,k = s,f$, in which we have used the column vector notation 
$$
\Phi = [\phi, \phi', \phi'', \phi''']^t,
$$
respectively for $\phi=\phi_{j, \pm}$ with $j = s,f$. 
The same notation applies for the adjoint solutions $\Phi^*$, defined as a row vector. 
Here, the jump matrix 
\begin{equation}\label{def-cBBB} 
\mathcal{B}: = [\mathcal{G}_{\alpha,c}(x,z)]_{\vert_{z=x}}^{-1}
\end{equation}
is computed by $$
\mathcal{B}= 
 \begin{pmatrix} 
B(x) & \mathcal{O}(\epsilon) \\ \mathcal{O}(\epsilon) &0\end{pmatrix}, \qquad  B(x)= \begin{pmatrix} 
b'(x) & b(x;\epsilon) 
\\ -b(x;\epsilon)& 0
\end{pmatrix}
$$
with $b(x;\eps) = (U - c + 2 \epsilon \alpha^2)$. 

Following \cite{ZH,GrN1}, we obtain the following representation for the Green kernel $G_{\alpha,c}(x,z)$
\begin{equation}\label{rep-GrOS}G_{\alpha,c}(x,z) = \left \{ \begin{aligned} 
\sum_{j,k = s,f} d_{jk} \phi_{j,-}(z) \phi_{k,-}^* (x)  + \sum_{k = s,f} \phi_{k,-}(z) \phi_{k,+}^* (x) , \qquad & z>x\\
\sum_{j,k = s,f} d_{jk} \phi_{j,-}(z) \phi_{k,-}^* (x)  - \sum_{k = s,f} \phi_{k,+}(z) \phi_{k,-}^* (x) , \qquad & z<x\\
\end{aligned} \right. 
\end{equation}
on the half-line, in which the coefficient matrix $(d_{jk})$ is defined by 
$$M =(d_{jk})= \begin{pmatrix} \phi_{s,-} & \phi_{f,-} \\ \phi'_{s,-} & \phi'_{f,-}\end{pmatrix}^{-1} 
\begin{pmatrix} \phi_{s,+} & \phi_{f,+} \\ \phi'_{s,+} & \phi'_{f,+}\end{pmatrix} _{\vert_{z=0}}.
$$

It suffices to give bounds on the four independent Orr-Sommerfeld solutions and their adjoints. Recalling Theorem \ref{theo-OS-solutions} and the formula \eqref{def-ZZZZZ} of the Langer's variables, 
we have
$$
\phi_{s,\pm}(z) \approx e^{\pm \mu_sz}, \qquad \phi_{f,\pm} \approx \Big \langle \int_{z_c}^z \mu_f(y)\; dy \Big \rangle^{-5/6} e^{\pm \int_{z_c}^z \mu_f(y)\; dy}
$$ 
with $\mu_s  = \alpha$ and $\mu_f = \epsilon^{-1/2}\sqrt{U-c + \alpha^2 \eps}.$ In particular, $\Re \mu_f  \gg \mu_s$, since $\epsilon \alpha^2 \ll1$. 
Note that near the critical layers $z = z_c$, the fast modes are of the form  
$$\phi_{f,\pm} \approx \langle Z \rangle^{-5/4}   e^{ \pm\sqrt{2|Z|} Z/3}, \qquad Z \approx \delta^{-1} |z-z_c|.$$

It remains to study the adjoint solutions $\phi^*_{s,\pm}$ and $\phi^*_{f,\pm}$ which are defined through the algebraic relations \eqref{OSadjoint}. Let us first consider $z$ being away from the critical layers. Since $B(x)$ is of order one, it follows at once that asymptotically at $z = \infty$, the adjoint solutions decay and grow at the same exponential rate as those of $\phi_{j,\pm}$: that is, $\phi^*_j(z)\sim c_{j,\pm}e^{\pm\mu_j |z|}$, for some constants $c_{j,\pm}$, and for each $j = s,f$. To compute the coefficients, let $\Pi_1$ be the projection on the first two components of a vector in $\mathbb{C}^4$. From the relation $\Phi^*_{f_+}[\mathcal{G}(x,z)]_{\vert_{z=x}} ^{-1} \Phi_{f,-} = 1$, we first compute 
\begin{equation}\label{Pi-relation} 1 \approx \Pi_1 \Phi_{f,+}^* B(x)  \Pi_1 \Phi_{f,-} = b' \phi_{f,-} \phi_{f,+}^* - b (\phi_{f,-} \partial_z \phi_{f,+}^* - \phi_{f,+}^* \partial_z \phi_{f,-}).\end{equation}
 Similar relations hold for $\phi_{f,-}^*$. As the fast modes $\phi_{f,\pm}, \phi_{f,\pm}^*$ behave as the classical Airy functions, the $z$-derivative of these modes yields the factor of order $\delta^{-1}\langle Z \rangle^{1/2} \gg 1$. This shows that 
$$\phi_{f,\pm}^*(z) \approx \Big \langle \int_{z_c}^z \mu_f(y)\; dy \Big \rangle^{1/2}   e^{\pm \int_{z_c}^z \mu_f\; dy}$$
as $z\to \infty$. Similarly, $\phi_{s,\pm}^* (z)\approx \mu_s^{-1}e^{\pm \mu_s |z|}$. As for the behavior near the critical layers $z = z_c$, the approximation \eqref{Pi-relation} yields 
$$\phi^*_{f,\pm} \approx \delta \langle Z \rangle^{3/4}   e^{ \pm\sqrt{2|Z|} Z/3}, \qquad Z \approx \delta^{-1} |z-z_c|$$
near the critical layers $z = z_c$, upon recalling that $b(x) = U - c + 2 \epsilon \alpha^2$. 


We are now ready to put the above estimates into the representation formula \eqref{rep-GrOS} for the Green function. We consider the case when $x<z$ and $\alpha\gtrsim 1$. We have 
$$|\phi_{s,-}(z) \phi_{s,+}^* (x) | \le \mu_s^{-1} e^{-\alpha |x-z|} $$
and 
$$|\phi_{f,-}(z) \phi_{f,+}^* (x)| \le \delta \langle Z\rangle^{-5/4} \langle X \rangle^{3/4} e^{ -  \int_{x}^z \Re \mu_f(y)\; dy}$$
in which $Z = \delta^{-1}\eta(z)$ and $X = \delta^{-1}\eta(x)$. Similarly, we obtain 
$$ 
\begin{aligned}
|\phi_{s,-}(z) \phi_{s,-}^* (x) | &\le \mu_s^{-1} e^{-\alpha (|x| + |z|)} 
\\
|\phi_{s,-}(z) \phi_{f,-}^* (x) | &\le  \delta \langle X \rangle^{3/4} e^{-\alpha z}  e^{ -  \int_{z_c}^x \Re \mu_f(y)\; dy}
\\
|\phi_{f,-}(z) \phi_{s,-}^* (x) | &\le \mu_s^{-1} e^{-\alpha x} \langle Z \rangle^{-5/4} e^{ -  \int_{z_c}^z \Re \mu_f(y)\; dy} \\
|\phi_{f,-}(z) \phi_{f,-}^* (x) | &\le \delta \langle Z\rangle^{-5/4} \langle X \rangle^{3/4} e^{ -  \int_{z_c}^z \Re \mu_f(y)\; dy}e^{ -  \int_{z_c}^x \Re \mu_f(y)\; dy}
\end{aligned}$$
in which we note that $\Re \mu_f \gg \alpha$. Putting these estimates into \eqref{rep-GrOS} and simplifying terms, we obtain at once the claimed bounds on the Green function and hence Theorem \ref{theo-GreenOS-stable} for $\alpha \gtrsim 1$. 

Finally, we observe that in the case when $\alpha \ll1$ (and still, $\epsilon \ll1$), the slow modes are approximated by the Rayleigh solutions
$$ \phi_s \approx e^{-\alpha z}\Big( U-c + \cO(\alpha)\Big), \qquad \psi_s  = \phi_s \int_{z_c}^z \frac{1}{\phi_s^2} \; dy  .$$
See Section \ref{sec-exactRayleigh}. In particular, $\psi_s \approx e^{\alpha z} (1 + (z-z_c) \log(z-z_c))$. Hence, in this case, we have 
\begin{equation}\label{Gslow-smalla} |\phi_s \phi_s^*|  \le \frac{C_0}{|U-c|} \Big( |U-c| + \cO(\alpha)\Big) e^{-\alpha |x-z|} \end{equation}
for $\alpha \ll1$. Theorem \ref{theo-GreenOS-stable} follows.

\begin{proof}[Proof of Theorem \ref{theo-vort-stable}]
Finally, we derive bounds on $\Delta_\alpha G_{\alpha,c} (x,z)$ for the stable case. Decompose $\Delta_\alpha G_{\alpha,c} (x,z)$ 
as in \eqref{def-vortG12}, leaving the remainder  
$$  \mathcal{R}_G(x,z) := \int_0^\infty \mathcal{G}(y,z) U''(y) G_{\alpha,c} (x,y) \; dy $$
as in \eqref{def-RGGG}. 
We recall fromProposition \ref{prop-exactGrmAiry} that 
the approximate Green function $\mathcal{G}(x,z)$ satisfies 
$$
|\D_z^\ell \D_x^k \mathcal{G}(x,z) | 
\le 
C_{x,z} \delta^{-2-k-\ell}\langle Z \rangle^{(2k-1)/4}\langle X \rangle^{(2\ell-1)/4}
e^{ -  \int_{x}^z \Re \mu_f(y)\; dy}
$$
simply written in term of the integral of $\mu_f(z)$. The estimates follow the same lines as done in \cite[Section 5]{GrN1}, upon noting that the convolution of $\mathcal{G}(x,z)$ yields a pre-factor $\delta \langle Z \rangle^{-1/2}$; see the proof of Lemmas \ref{lem-ConvAiry-cl}  and \ref{lem-locConvAiry}. The theorem follows at once. 
\end{proof}


\section{Semigroup bounds for Navier-Stokes}


We are now ready to derive semigroup bounds for the linearized Navier-Stokes. We shall work with boundary layer norms, which we recall 
\begin{equation}\label{assmp-wbl-stable123} \| \omega_\alpha\|_{ \beta, \gamma, p} = \sup_{z\ge 0} \Big[ \Bigl( 1 +  \sum_{q=1}^p\delta^{-q} \phi_{P-1+q} (\delta^{-1} z)  \Bigr)^{-1} e^{\beta z} |\omega_\alpha(z)|\Big]\end{equation}
with $p\ge 0$, and the boundary layer thickness 
$$\delta = \gamma \nu^{1/8}$$
for some fixed positive constant $\gamma>0$.  

\subsection{Semigroup of modified vorticity}
In this section, we provide bounds on $\mathcal{S}_\alpha$, the semigroup of the modified vorticity equation, that is, the linearized vorticity equation \eqref{EE-vort} dropping the lower order term $vU''$:
\begin{equation}\label{def-Seqs} (\partial_t + i\alpha U) \omega = \sqrt \nu \Delta_\alpha \omega .\end{equation}
Precisely, we obtain the following proposition. 

\begin{proposition}\label{prop-tSa-stable} Define the semigroup $\mathcal{S}_\alpha$ as in \eqref{def-SRa}. Then, for any positive $\gamma_1$, there is a constant $C_0$ so that 
$$\| \mathcal{S}_\alpha  \omega_\alpha\|_{ \beta, \gamma, p} \le C_0 e^{\gamma_1\nu^{1/4} t }  e^{- \frac14 \alpha^2 \sqrt\nu t}  \| \omega_\alpha\|_{ \beta, \gamma, p}.$$
In addition, for $k\ge 0$, there holds
$$\begin{aligned}
\| \partial_z^k \mathcal{S}_\alpha\omega_\alpha \|_{ \beta, \gamma, p+k} \le C_k e^{\gamma_1 \nu^{1/4} t } e^{- \frac14 \alpha^2 \sqrt \nu t}  \sum_{a+b\le k}  t^a \| \alpha^a\partial_z^b\omega_\alpha \|_{ \beta, \gamma, p+b} .\end{aligned}$$
\end{proposition}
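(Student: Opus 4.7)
The plan is to represent $\mathcal{S}_\alpha\omega_\alpha(z)=\int_0^\infty\mathcal{H}_\alpha(t;x,z)\,\omega_\alpha(x)\,dx$ via the temporal Green function
\[
\mathcal{H}_\alpha(t;x,z):=\frac{1}{2\pi i}\int_{\Gamma_\alpha}e^{\lambda t}\,\mathcal{G}(x,z)\,\frac{d\lambda}{i\alpha},
\]
where $\mathcal{G}(x,z)$ is the Green function of $\cA=-\epsilon\Delta_\alpha+(U-c)$ from Proposition \ref{prop-exactGrmAiry} and $c=-\lambda/(i\alpha)$. Since the modified equation \eqref{def-Seqs} lacks the $U''\phi$ coupling that would generate discrete unstable eigenvalues, its operator has no point spectrum in the unstable half plane, and I may freely deform $\Gamma_\alpha$ to the vertical line $\{\Re\lambda=\gamma_1\nu^{1/4}-\tfrac14\alpha^2\sqrt\nu\}$. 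This choice already produces the claimed exponential weights $e^{\gamma_1\nu^{1/4}t}e^{-\alpha^2\sqrt\nu t/4}$; it remains to show that $\int_{\Gamma_\alpha}|\mathcal{G}(x,z)|\,d\tau/\alpha$ is controlled uniformly in the boundary-layer norm.

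On $\Gamma_\alpha$ one has $\Im c=\gamma_1\nu^{1/4}/\alpha-\alpha\sqrt\nu/4$ and $\Re c=-\tau/\alpha$; the assumption $\alpha\ll\nu^{-1/4}$ guarantees $\epsilon^{1/8}|\log\Im c|\ll 1$, so the kernel bound \eqref{estGa-11} applies uniformly for $|\tau|\lesssim\alpha$, while for $|\tau|\gtrsim\alpha$ the critical layer disappears and I invoke the simpler bounds of Theorem \ref{theo-GreenOS} from \cite{GrN1}. Both pieces exhibit Gaussian-type decay in $\tau$ through $\Re\mu_f\sim\epsilon^{-1/2}|U-c|^{1/2}$, making the $\tau$-integral absolutely convergent and producing the constant $C_0$. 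To close the boundary-layer norm I split $z$ into the outer zone $z\gtrsim\delta$, where the estimate reduces to a standard $L^\infty_\beta$-to-$L^\infty_\beta$ bound using the Airy decay $e^{-\int_x^z\Re\mu_f\,dy}$, and the boundary-layer zone $z\lesssim\delta$, where I apply a stationary Airy convolution argument in the spirit of Lemmas \ref{lem-ConvAiry-cl} and \ref{lem-locConvAiry}: convolving $\mathcal{G}(x,z)$ against $\delta^{-q}\phi_{P-1+q}(\delta^{-1}x)$ reproduces a profile of the same Langer-variable type with an order-one constant, owing to the self-similarity of the primitive Airy functions $\mathrm{Ai}(k,\delta^{-1}\eta(\cdot))$, $\mathrm{Ci}(k,\delta^{-1}\eta(\cdot))$ under $\cA$.

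For the derivative bound, each $\partial_z$ acting on $\mathcal{G}$ either costs the factor $\delta^{-1}\langle Z\rangle^{1/2}\sim\nu^{-1/8}$, absorbed into the shifted boundary-layer index from $p$ to $p+k$, or is converted via the modified Airy identity $-\epsilon\partial_z^2\mathcal{G}=\delta_x+(U-c+\alpha^2\epsilon)\mathcal{G}$ into the parabolic factor $(\sqrt\nu t)^{-1/2}$ arising from the $\lambda$-integral. Moving $\partial_x^a$ off $\mathcal{G}$ onto $\omega_\alpha$ by integration by parts, together with the identity $\partial_t\mathcal{H}_\alpha=\sqrt\nu\Delta_\alpha\mathcal{H}_\alpha-i\alpha U\mathcal{H}_\alpha$, generates the weights $\alpha^a t^a$ on the right-hand side. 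I expect the main technical obstacle to be the boundary-layer closure: verifying that convolution by $\mathcal{G}$ genuinely reproduces the profile $1+\sum_q\delta^{-q}\phi_{P-1+q}(\delta^{-1}\cdot)$ with order-one constants demands careful asymptotic matching between the Langer variable $Z=\delta^{-1}\eta(z)$ and the $\delta$-scale boundary profiles, and in particular explains the index shift from $p$ to $p+k$ under $\partial_z^k$.
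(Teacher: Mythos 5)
Your proposal diverges from the paper's proof in two material places, and the second one is a genuine gap.

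\textbf{Contour choice.} You deform to a \emph{fixed} vertical line $\{\Re\lambda=\gamma_1\nu^{1/4}-\tfrac14\alpha^2\sqrt\nu\}$. The paper instead uses an $(x,z,t)$--\emph{dependent} contour: $\Gamma_{\alpha,1}$ is a vertical segment over $U[x,z]$ at abscissa $\gamma=a^2\sqrt\nu+\tfrac12\alpha^2\sqrt\nu$ with $a=|x-z|/(2\sqrt\nu t)$, joined to parabolic pieces $\Gamma_{\alpha,2},\Gamma_{\alpha,3}$. That choice is what produces the Gaussian factor $e^{-|x-z|^2/(8\sqrt\nu t)}$ in Proposition \ref{prop-tGreen-Sa}, and the Gaussian is what Lemma \ref{lem-Heatconv} feeds on to propagate both the $e^{-\beta z}$ decay (via \eqref{exp-beta13}) and, more delicately, the algebraic boundary-layer weight $\phi_{P-1+q}(\delta^{-1}z)$: the split into $|z|\gtrsim\nu^{3/8}t$ versus $|z|\ll\nu^{3/8}t$ in \eqref{large-time-del} only closes because the kernel is Gaussian in $(x-z)/\sqrt{\sqrt\nu t}$ and the time growth $e^{\gamma_1\nu^{1/4}t}$ is traded against it. Your fixed contour yields a kernel $\sim\nu^{-1/8}e^{-c\nu^{-1/8}|x-z|}$, a time-independent exponential at boundary-layer scale. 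That may also be made to work (the exponential dominates any algebraic profile), but it is a different route; your invocation of ``self-similarity of the primitive Airy functions'' does not replace the concrete convolution argument of Lemma \ref{lem-Heatconv}, and you would still need to carry out the analogue of the $L^1_x$-boundedness of $I_{x,z}$ (the paper's change of variables $\Re c\mapsto\Re z_c\mapsto Z$ in \eqref{est-Ixzz}) on your contour to extract a finite constant from the $\tau$-integral.

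\textbf{Derivative bounds.} Here your plan does not produce the claimed statement. You propose to ``move $\partial_x^a$ off $\mathcal{G}$ onto $\omega_\alpha$ by integration by parts'' and to use the evolution identity $\partial_t\mathcal{H}_\alpha=\sqrt\nu\Delta_\alpha\mathcal{H}_\alpha-i\alpha U\mathcal{H}_\alpha$ to ``generate $\alpha^a t^a$.'' Neither step produces a growing factor of $t$: integration by parts in $x$ is $t$-independent, and the evolution identity converts $\partial_t$ into $z$-derivatives, the opposite direction from what is needed. The $t^a$ powers in the proposition come from a Duhamel commutator iteration (Section \ref{sec-dervSa-stable}): $\partial_z$ does not commute with $i\alpha U(z)$, so $\partial_z\mathcal{S}_\alpha[\omega_\alpha]$ solves the same equation with the forcing $-i\alpha U'\mathcal{S}_\alpha[\omega_\alpha]$ and initial data $\partial_z\omega_\alpha$, giving
$$\partial_z\mathcal{S}_\alpha(t)[\omega_\alpha]=\mathcal{S}_\alpha(t)[\partial_z\omega_\alpha]-i\alpha\int_0^t\mathcal{S}_\alpha(t-s)\bigl[U'\mathcal{S}_\alpha(s)[\omega_\alpha]\bigr]\,ds,$$
and the $\int_0^t ds$ supplies the factor $t$ with the factor $\alpha$ coming from the commutator $[\partial_z,i\alpha U]=i\alpha U'$, while the boundary-layer index shifts from $p$ to $p+1$ because $\partial_z$ applied to a BL profile of order $p$ stays in $\mathcal{B}^{\beta,\gamma,p+1}$. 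Iterating this gives exactly $\sum_{a+b\le k}t^a\|\alpha^a\partial_z^b\omega_\alpha\|_{\beta,\gamma,p+b}$. Without the Duhamel step your argument cannot produce that structure, so this part of the proposal must be replaced.

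A smaller point: your proposal reads the derivative loss $\partial_z\mathcal{G}\sim\delta^{-1}\langle Z\rangle^{1/2}\sim\nu^{-1/8}$ directly off the kernel. The paper deliberately avoids taking $z$-derivatives of the temporal Green function and instead works at the level of the semigroup; the raw kernel bound gives $\nu^{-1/8}$ per derivative with no $t$-dependence, which is strictly weaker than the proposition's $t$-linear bound when $t\lesssim\nu^{-1/8}$ and strictly stronger when $t\gg\nu^{-1/8}$ — i.e. neither implies the other, and the Duhamel route is what yields the stated form.
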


\begin{remark}
We note that the exponential growth $e^{\gamma_1 \nu^{1/4}t}$ in time can be replaced by $e^{\eta_0 \sqrt \nu t}$, if we modify the boundary layer thickness $\delta = \gamma \nu^{1/4}$ by $\delta(t) =\gamma \nu^{1/4} \sqrt{1+t}$ in the boundary layer norm $\| \cdot \|_{\beta,\gamma,p}$. In this case, there holds the following uniform bound
\begin{equation}\label{sharp-grSbd}\| \mathcal{S}_\alpha  \omega_\alpha\|_{ \beta, \gamma, p} \le C_0 e^{\eta_0\sqrt \nu t }  e^{- \frac14 \alpha^2 \sqrt\nu t}  \| \omega_\alpha\|_{ \beta, \gamma, p}\end{equation}
for some positive constants $C_0,\eta_0$; see Remark \ref{rem-expnu}. The polynomial growth in time for derivatives is sharp and can be seen from the usual transport operator $\partial_t + i\alpha U(z)$.  
\end{remark}

\subsubsection{Temporal Green function}
In this section, we study the temporal Green function $G_S (z,t;x)$ of 
$$( \partial_t + i\alpha U)\omega   - \sqrt \nu \Delta_\alpha  \omega=0,$$ defined by 
\begin{equation}
\label{def-tGreen}G_S(z,t;x): = \frac{1}{2\pi i}   \int_{\Gamma_{\alpha}} e^{\lambda t}   \mathcal{G} (x,z) \;\frac{d\lambda}{i\alpha} \end{equation} 
for an arbitrary contour $\Gamma_\alpha$ leaving on the left the resolvent set of $ ( \lambda + i\alpha U)\omega   - \sqrt \nu \Delta_\alpha  \omega$. Here, $\mathcal{G}(x,z)$ denotes the Green function of the modified Airy operator $ \epsilon \Delta_\alpha  \omega - U+c$, with $\epsilon = \sqrt\nu / i\alpha$ and $c = -\lambda/i\alpha$; see Proposition \ref{prop-exactGrmAiry}.  
To prove Proposition \ref{prop-tSa-stable}, we first obtain the following proposition, giving pointwise bounds on the temporal Green function. 

\begin{proposition}\label{prop-tGreen-Sa} There exists a positive universal constant $\theta_0$ so that the following holds
$$
|G_S(z,t;x)| \le C_0 e^{- \frac12 \alpha^2 \sqrt \nu t} e^{-\frac{|x-z|^2}{8\sqrt \nu t}} e^{-\frac18 \alpha |x-z|} I_{x,z}
$$
in which $I_{x,z}$ is uniformly bounded in $L^1$ norm with respect to $x$. 
\end{proposition}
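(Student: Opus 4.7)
The plan is to treat $G_S(z,t;x)$ as a Laplace inversion of the modified Airy Green function $\mathcal{G}(x,z)$, whose pointwise structure is already quantified by Proposition \ref{prop-exactGrmAiry}. The key algebraic observation is that, by the definitions of $\epsilon$ and $c$, the fast rate $\mu_f$ and the spectral parameter $\lambda$ are tied together by
$$
\sqrt\nu\,\mu_f(y;\lambda)^2 \;=\; \lambda + i\alpha U(y) + \alpha^2\sqrt\nu.
$$
Combined with the bound $|\mathcal{G}(x,z)| \lesssim \delta^{-2}\langle Z\rangle^{-1/4}\langle X\rangle^{-1/4}\,\mathcal{T}(x,z)$ from Proposition \ref{prop-exactGrmAiry}, where $\mathcal{T}(x,z) \le e^{-\int_x^z\Re\mu_f(y;\lambda)\,dy}$, this identity lets us convert the $\lambda$-integral over $\Gamma_\alpha$ into a Gaussian integral in a new spectral variable $\mu$.

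Concretely, pick a base point $y_\star$ between $x$ and $z$ and decompose $\int_x^z\mu_f(y;\lambda)\,dy = \mu_f(y_\star;\lambda)|x-z| + R(x,z;\lambda)$, where $R$ is controlled by $\|U'\|_{L^\infty}|x-z|^2/\sqrt\nu$ times manageable factors. Change variables from $\lambda$ to $\mu := \mu_f(y_\star;\lambda)$, using $\lambda = \sqrt\nu(\mu^2-\alpha^2) - i\alpha U(y_\star)$ and $d\lambda = 2\sqrt\nu\,\mu\,d\mu$. The exponential in the integrand becomes
$$
e^{\lambda t}\,e^{-\mu|x-z|} \;=\; e^{-i\alpha U(y_\star)t}\,e^{-\alpha^2\sqrt\nu t}\,e^{\sqrt\nu t\,\mu^2 - \mu|x-z|},
$$
and completing the square yields $\sqrt\nu t\,\mu^2-\mu|x-z| = \sqrt\nu t(\mu-\mu^\star)^2 - |x-z|^2/(4\sqrt\nu t)$ with saddle $\mu^\star = |x-z|/(2\sqrt\nu t)$. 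Deforming the $\mu$-contour to a vertical line through $\mu^\star$ (using the analyticity of $\mathcal{G}$ in $c$ off the critical locus and Assumption \ref{assump-Evans} to stay on the right of the spectrum) turns the integral into a standard Fresnel/Gaussian integral of size $(\sqrt\nu t)^{-1/2}$. Splitting each exponent into two halves, one half is retained to give the stated $e^{-\alpha^2\sqrt\nu t/2}\,e^{-|x-z|^2/(8\sqrt\nu t)}\,e^{-\alpha|x-z|/8}$ factor (the last of these coming from the additive $\alpha^2$ piece of $\mu_f^2$, which contributes $\Re\mu_f \gtrsim \alpha$ at the saddle), while the other half absorbs the algebraic prefactors $\delta^{-2}\langle Z\rangle^{-1/4}\langle X\rangle^{-1/4}$, the Jacobian $2\sqrt\nu\,\mu$, the $1/\alpha$ weight, and the residual $R(x,z;\lambda)$ into the $L^1$-bounded factor $I_{x,z}$.

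The main obstacle is item (iii) of the handling of $I_{x,z}$: when $\mu^\star$ is small, the saddle forces $c$ into the range of $U$, so the contour must approach the Euler continuous spectrum and critical layers appear. We confine $\Gamma_\alpha$ to lie at distance $\sim\nu^{1/4}$ from the critical locus (as in Section \ref{sec-crlayers}), which is permitted by the assumption $\epsilon^{1/8}|\log\Im c|\ll1$ of Proposition \ref{prop-exactGrmAiry}, and verify that the change-of-variables $X=\delta^{-1}\eta(x)$ converts the critical-layer-concentrated prefactor $\delta^{-2}\langle X\rangle^{-1/4}$ into a measure with Jacobian $\delta\,\dot z(\eta(x))\,dX$, so its total mass in $x$ is uniformly $O(1)$. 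Since the analogous factor $\langle Z\rangle^{-1/4}$ and the smooth $U$-dependent remainder $R$ are uniformly bounded in $x$, we obtain $\|I_{x,z}\|_{L^1_x}\le C_0$. This concludes the pointwise bound as claimed, and the polynomial derivative bounds stated in Proposition \ref{prop-tSa-stable} will follow by differentiating under the contour and applying the same saddle-point analysis with a gain of $(\alpha+\mu^\star)^k$ per $\partial_z^k$.
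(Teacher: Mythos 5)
Your overall plan -- treat $G_S$ as a Laplace inversion of $\mathcal{G}$, use the algebraic identity $\sqrt\nu\,\mu_f^2 = \lambda + i\alpha U + \alpha^2\sqrt\nu$ to convert the Airy exponent into a $\lambda$-dependent damping rate, and then do a saddle analysis to produce the Gaussian factor -- is indeed the right spirit and matches the paper's strategy: the paper's choice $\gamma = a^2\sqrt\nu + \tfrac12\alpha^2\sqrt\nu$ with $a = |x-z|/(2\sqrt\nu t)$ is precisely the saddle value of the exponent, and the estimates \textup{(}$\Re\sqrt{\gamma+i\alpha(U-\Re c)}\ge\sqrt\gamma$, etc.\textup{)} are a pedestrian version of completing the square. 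However, there are three genuine gaps in your execution.

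First, the contour you propose -- the image in the $\lambda$-plane of the vertical line $\mu = \mu^\star + i\tau$ -- is the parabola $\Re\lambda = \sqrt\nu((\mu^\star)^2 - \tau^2 - \alpha^2)$. This is to the right of the spectral boundary $\Re\lambda = -\alpha^2\sqrt\nu$ only for $|\tau|<\mu^\star$, so for $|\tau|>\mu^\star$ it crosses into the continuous spectrum of the modified Airy operator $\cA$ whenever $\Im\lambda$ is still in $-\alpha\,\mathrm{Range}(U)$ -- which always happens when $\mu^\star$ is small. Invoking Assumption~\ref{assump-Evans} here is also off: that assumption concerns the Orr-Sommerfeld Evans function $D(\alpha,c)$, not the spectrum of $\cA$, which is what controls $\mathcal{S}_\alpha$. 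The paper avoids the crossing by replacing the single parabola with the three-piece contour $\Gamma_{\alpha,1}\cup\Gamma_{\alpha,2}\cup\Gamma_{\alpha,3}$: a vertical segment at $\Re\lambda = \gamma - \alpha^2\sqrt\nu$ with $\Im\lambda$ swept over $-\alpha\,U([x,z])$, joined to two parabolic arcs anchored at $-\alpha\min_{[x,z]}U$ and $-\alpha\max_{[x,z]}U$, which do stay outside the spectrum.

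Second, your decomposition $\int_x^z\mu_f(y;\lambda)\,dy = \mu_f(y_\star;\lambda)|x-z| + R$ with $R$ ``manageable'' fails precisely in the regime you most need to control. When $\Re c\in U([x,z])$ the critical point $y=z_c$ lies in $[x,z]$ and $\mu_f(y;\lambda)\sim\epsilon^{-1/2}\sqrt{U(y)-c+\alpha^2\epsilon}$ has a branch point there, so $\mu_f$ is not Lipschitz in $y$ and the remainder is not $O(\|U'\|_\infty|x-z|^2/\sqrt\nu)$. The paper never linearizes the exponent in $y$; it keeps $\int_x^z\Re\mu_f$ intact and bounds it from below using the elementary inequalities $\Re\sqrt{a+ib}\ge\sqrt a$ and $\Re\sqrt{a+ib}\ge\tfrac{\sqrt2}{2}\sqrt{|b|}$, split convexly.

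Third, and most importantly, your $L^1_x$ argument for $I_{x,z}$ is off by a full factor of $\delta^{-1}$. The prefactor of $\mathcal{G}$ is $\delta_{cr}\epsilon^{-1}|\dot x| \sim \delta^{-2}$. One Langer Jacobian $dx = \delta_{cr}\dot z(\eta(x))\,dX$ absorbs only one power, leaving $\delta^{-1}$ uncancelled. In the paper, $I_{x,z}$ is defined as an explicit integral over $\Re c$ \textup{(}equivalently $\Re z_c$\textup{)}, coming from the $\Im\lambda$-sweep along the vertical segment $\Gamma_{\alpha,1}$; the $L^1_x$ bound \eqref{est-Ixzz} then uses \emph{two} Langer changes -- one in $x$ and one in $\Re z_c$ after $c=U(z_c)$ -- each absorbing a $\delta_{cr}$, and this is essential for the $O(1)$ bound. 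Your saddle-point reduction collapses the $\Im\lambda$-integral to a single value and thereby loses the second Langer change. You would need to keep the $\Im\lambda$-sweep as an explicit integration over the critical-layer location, exactly as \eqref{def-Ixzloc} does, for the estimate to close.
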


\begin{proof}
The proof relies on the appropriate choice of the contour $\Gamma_\alpha$, depending on the location of $x$ and $z$. Since the Green function is symmetric in $x$ and $z$, it suffices to consider the case when $x<z$. 
Using the Cauchy's theory, we decompose the contour of integration as follows: 
\begin{equation}\label{def-GaSa1}\Gamma_\alpha = \Gamma_{\alpha,1} \cup \Gamma_{\alpha,2} \cup \Gamma_{\alpha,3} \end{equation}
in which $$
\begin{aligned}
\Gamma_{\alpha,1} &:= \Big\{ \lambda =\gamma - \alpha^2 \sqrt\nu- i \alpha k, \qquad \min_{y\in [x,z]} U(y) \le k \le \max_{y\in [x,z]} U(y) \Big\} 
\\
\Gamma_{\alpha,2} &:= \Big\{ \lambda = \gamma - k^2 \sqrt\nu  - \alpha^2 \sqrt\nu  - i \alpha \min_{[x,z]} U + 2 \sqrt\nu i ak, \qquad k\ge 0 \Big\}
\\
\Gamma_{\alpha,3} &:= \Big\{ \lambda = \gamma - k^2 \sqrt\nu  - \alpha^2 \sqrt\nu   - i \alpha \max_{[x,z]} U + 2 \sqrt\nu i ak, \qquad k\le 0 \Big\} 
\end{aligned}$$
for \begin{equation}\label{def-ga-stable}
\gamma: = a^2 \sqrt\nu + \frac 12\alpha^2 \sqrt\nu, \qquad a = \frac{|x-z|}{2\sqrt\nu t} .
\end{equation}
We shall estimate the integral \eqref{def-tGreen} over the above contours. As $\Gamma_{\alpha,2}$ and $\Gamma_{\alpha,3}$ are away from the critical layers, the integral \eqref{def-tGreen} over these contours is estimated in the same way as done in \cite[Section 6.3]{GrN1}, and hence we avoid to repeat the details. It remains to focus on the integral over $\Gamma_{\alpha,1}$. 

\subsubsection*{Away from critical layers: $\gamma \gtrsim 1$.}

In the case when $\gamma \ge \theta_0$, for an arbitrary small and fixed constant $\theta_0>0$, $\lambda$ is away from $- \alpha^2 \sqrt\nu -i\alpha \mathrm{Range}(U)$, and thus again the contour is away from the critical layers $z_c$. The bounds on the Green function are already obtained in \cite[Section 6.3]{GrN1}. 


\subsubsection*{Near critical layers: $\gamma \ll 1$.}
When $\gamma \ll1$, it suffices to bound the integration on $\Gamma_{\alpha,1}$, which is now arbitrarily close to $- \alpha^2 \sqrt\nu - i\alpha \mathrm{Range} (U)$, and thus we need to study the Green function near the critical layers. We recall that 
$\mathcal{G}(x,z)$ is constructed in Proposition \ref{prop-exactGrmAiry}. In particular, there holds the following bound for $x\le z$: 
$$ |\mathcal{G}(x,z)| \le C|\dot x|\delta_{cr} \epsilon^{-1} \langle X \rangle^{-1/4} \langle Z \rangle^{-1/4}  | e^{\frac23 X^{3/2}} e^{-\frac23 Z^{3/2}} | $$
in which $X = \delta_{cr}^{-1}\eta(x)$ and $Z = \delta_{cr}^{-1} \eta(z)$, with the Langer's variable $\eta(z)$ and with the critical layer thickness:
$$\delta_{cr} = (\eps/U'_c)^{1/3}.$$
Here, we recall that $$\begin{aligned}
 Z 
 &= \delta_{cr}^{-1}\eta(z) = \eps^{-1/3}\Big( \frac 32 \int_{z_c}^z \sqrt{U-c} \; dy\Big)^{2/3} 
 \\& = \Big( \frac 32 \int_{z_c}^z \nu^{-1/4}\sqrt{\lambda + \alpha^2 \sqrt\nu + i \alpha U(y)} \; dy\Big)^{2/3} 
 \end{aligned}$$
This in particular yields 
\begin{equation}\label{exp-xzxz}e^{\frac23 X^{3/2}} e^{-\frac23 Z^{3/2}}= e^{-\int_x^z \eps^{-1/2} \sqrt{U-c}} \; dy = e^{-\int_x^z \nu^{-1/4} \sqrt{\lambda + \alpha^2 \sqrt\nu + i\alpha U(y)} \; dy}.\end{equation}
For $a\ge 0$, we note that 
$$ \Re \sqrt{a+ib} \ge \sqrt a, \qquad \Re \sqrt{a+ib} \ge \frac{\sqrt2}{2}\sqrt{|b|}.$$
Hence, for $\lambda \in \Gamma_{\alpha,1}$ with $\lambda = \gamma - \alpha^2 \sqrt\nu - i\alpha \Re c$, we have 
$$\begin{aligned}
| e^{\frac23 X^{3/2}} e^{-\frac23 Z^{3/2}}|  
&=  e^{-\Re \int_x^z \nu^{-1/4} \sqrt{\lambda + \alpha^2 \sqrt\nu + i\alpha U(y)} \; dy} 
\\&\le e^{- \frac 34\nu^{-1/4} \sqrt{\gamma }|x-z|} e^{-\theta_0 \nu^{-1/4} \sqrt \alpha \int_x^z \sqrt{|U-\Re c|}\; dy}
\end{aligned}$$
for some positive $\theta_0$. In addition, since $\Re c $ belongs to the range of $U$ over $[x,z]$, the critical layer $z=z_c$ stays between $x$ and $z$. By view of \eqref{exp-xzxz} and the fact that $\sqrt{\lambda + \alpha^2 \sqrt\nu + i\alpha U(y)}$ takes the positive real part, we have 
$$| e^{\frac23 X^{3/2}} e^{-\frac23 Z^{3/2}}|   \le C e^{-\theta_0 |X|^{3/2} } e^{-\theta_0 |Z|^{3/2}}$$
for all $x,z$ (as long as the critical layer $\Re z_c$ stays in the interval $[x,z]$). This proves 
\begin{equation}\label{bd-Ga-Ga1} |\mathcal{G}(x,z)| \le C|\dot x|\delta_{cr} \epsilon^{-1} e^{- \frac 34\nu^{-1/4} \sqrt{\gamma }|x-z|}  e^{-\theta_0 | X|^{3/2} } e^{-\theta_0 |Z|^{3/2}}
\end{equation}
for all $\lambda \in \Gamma_{\alpha,1}$ and for $c = -\lambda/i\alpha$. We stress that the Langer's variables $X,Z$ not only depend on $x,z$ but also on $c$. Here, $\gamma$ is defined as in \eqref{def-ga-stable}.  
 
Recall that $\delta_{cr} = (\eps/U'_c)^{1/3}$ and $\lambda = \gamma - \alpha^2 \sqrt\nu - i\alpha \Re c$. We then compute 
$$\begin{aligned}
 \Big| \int_{\Gamma_{\alpha,1}} e^{\lambda t}   \mathcal{G}(x,z) \; \frac{d\lambda}{i\alpha} \Big|
 &\le  Ce^{\gamma t } e^{- \alpha^2 \sqrt\nu t}  e^{- \frac34\nu^{-1/4} \sqrt{\gamma }|x-z|}  
 \\&\quad \times  \epsilon^{-2/3}
\int_{U[x,z]} |U_c'|^{-1/3}  e^{-\theta_0 |X|^{3/2} } e^{-\theta_0 |Z|^{3/2}} \;  |\dot x|d\Re c
\\ &\le  Ce^{\gamma t } e^{- \alpha^2 \sqrt\nu t}  e^{- \frac34\nu^{-1/4} \sqrt{\gamma }|x-z|}  I_{x,z}   \end{aligned}
  $$
in which we have introduced 
\begin{equation}\label{def-Ixzloc} I_{x,z}: = \epsilon^{-2/3} 
\int_{U[x,z]} |U_c'|^{-1/3} e^{-\theta_0 |X|^{3/2} } e^{-\theta_0 |Z|^{3/2}} \;  |\dot x| d\Re c .\end{equation}
By definition of $\gamma = \frac{|x-z|^2}{4\sqrt \nu t^2} + \frac12 \alpha^2 \sqrt \nu$, it follows that  
$$\begin{aligned}
e^{\gamma t }e^{- \alpha^2 \sqrt \nu t} e^{- \frac12 \nu^{-1/4} \sqrt{\gamma } |x-z|} &\le  e^{- \frac12 \alpha^2 \sqrt\nu t} \end{aligned}$$ 
and 
$$e^{- \frac14 \nu^{-1/4} \sqrt{\gamma } |x-z|} \le e^{-\frac{|x-z|^2}{8\sqrt\nu t}} e^{-\frac18 \alpha|x-z|}.$$
This proves that 
$$\begin{aligned}
 \Big| \int_{\Gamma_{\alpha,1}} e^{\lambda t}   \mathcal{G}(x,z) \; \frac{d\lambda}{i\alpha} \Big|
 \le  Ce^{- \frac12 \alpha^2 \sqrt\nu t}   e^{-\frac{|x-z|^2}{8\sqrt\nu t}} e^{-\frac18 \alpha|x-z|}  I_{x,z}   \end{aligned}
  $$
It remains to prove that $I_{x,z}$ is uniformly bounded in $L^1$. Indeed, we compute 
$$ \int_0^\infty I_{x,z} \; dx = \epsilon^{-2/3} \int_0^\infty 
\int_{U[x,z]} |U_c'|^{-1/3}  e^{-\theta_0 |X|^{3/2} } e^{-\theta_0 |Z|^{3/2}}\; |\dot x| d\Re c  dx .$$
Let us make the Langer's change of variables $X = \delta_{cr}^{-1}\eta(x)$ and hence $dX = \delta_{cr}^{-1} \eta'(x)dx$.  Recalling that $\dot x \eta'(x) =1$ and $|\dot x|\le C \langle x \rangle^{1/3}$ (which is bounded by the exponential decay term), we thus obtain 
$$\begin{aligned}
 \int_0^\infty I_{x,z} \; dx &\le \epsilon^{-2/3}  \int_{U[0,z]} \int_0^\infty  |U_c'|^{-1/3} e^{-\theta_0 |X|^{3/2} } e^{-\theta_0 |Z|^{3/2}} \;  \delta_{cr} |\dot x|^2dX d\Re c 
 \\
 &\le C \epsilon^{-1/3}  \int_{U[0,z]}  |U_c'|^{-2/3} e^{-\theta_0 |Z|^{3/2}}  \; d\Re c .  
 \end{aligned}$$
Furthermore, we introduce another change of variable: $c = U(z_c)$ and then the Langer's variable: $Z = \delta_{cr}^{-1}\eta(z_c)$. The above then yields 
\begin{equation}\label{est-Ixzz}\begin{aligned}
 \int_0^\infty I_{x,z} \; dx 
 &\le C \epsilon^{-1/3}  \int_0^z  |U_c'|^{1/3} e^{-\theta_0 |Z|^{3/2}}   \; d \Re z_c 
 \\
 &\le C \epsilon^{-1/3}  \int_0^\infty  |U_c'|^{1/3} e^{-\theta_0 |Z|^{3/2}}   \; \delta_{cr} |\dot z| d Z
 \\
 &\le C \int_0^\infty e^{-\theta_0 |Z|^{3/2}} \langle z\rangle^{1/3} \; d Z
\\&\le C .
 \end{aligned}\end{equation}

To summarize, we have obtained $$\begin{aligned}
 \Big|& \int_{\Gamma_\alpha } e^{\lambda t}   \mathcal{G}(x,z) \; \frac{d\lambda}{i\alpha} \Big|
 \\&\le  C e^{- \frac12 \alpha^2 \sqrt\nu t}   e^{-\frac{|x-z|^2}{8\sqrt\nu t}} e^{-\frac14 \alpha|x-z|} \Big[ I_{x,z} + (\sqrt \nu t)^{-1/2} e^{-\theta_0\frac{|x-z|^2}{\sqrt \nu t}}\Big].   \end{aligned}
  $$
Clearly, $ (\sqrt \nu t)^{-1/2} e^{-\theta_0\frac{|x-z|^2}{\sqrt \nu t}}$ is uniformly bounded in $L^1$ (in $x$ or $z$). This finishes the proof of Proposition \ref{prop-tGreen-Sa}.    
\end{proof}

\subsubsection{Bounds on $\mathcal{S}_\alpha$}

In this section, we prove the first part of Proposition \ref{prop-tSa-stable}, giving bounds on $\mathcal{S}_\alpha$. Recall that 
\begin{equation}\label{conv-SaGS}
\mathcal{S}_\alpha  \omega_\alpha (z) = \int_0^\infty G_S(z,t;x) \omega_\alpha (x) \; dx
\end{equation}
in which $G_S(z,t;x) $ satisfies the bounds obtained in Proposition \ref{prop-tGreen-Sa}. We now study the convolution with the boundary layer data $\omega_\alpha (z)$, satisfying \eqref{assmp-wbl-stable}. The bound on $\mathcal{S}_\alpha$ stated in Proposition \ref{prop-tSa-stable} follows at once from the following lemma and bounds on $G_S(z,t;x)$ from  Proposition \ref{prop-tGreen-Sa}. 

\begin{lemma}\label{lem-Heatconv} Let $H(z,t;x): =  e^{-\frac{|x-z|^2}{M\sqrt \nu t}} I_{x,z} $, for some positive $M$ and some $I_{x,z}$, which is uniformly bounded in $L^1_x$. For any $\beta,\gamma_1>0$ and $ p\ge 0$, there is a positive constant $C_0$ so that 
$$\Big \| \int_0^\infty H(\cdot,t;x) \omega_\alpha(x)\; dx \Big\|_{ \beta, \gamma, p} \le C_0 e^{\gamma_1\nu^{1/4} t } \| \omega_\alpha\|_{ \beta, \gamma, p}.$$
\end{lemma}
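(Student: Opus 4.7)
Writing $|\omega_\alpha(x)|\le \|\omega_\alpha\|_{\beta,\gamma,p} e^{-\beta x}\Phi(x)$ with $\Phi(x):=1+\sum_{q=1}^p \delta^{-q}\phi_{P-1+q}(\delta^{-1}x)$, the estimate reduces to showing
$$\int_0^\infty H(z,t;x)\, e^{-\beta(x-z)}\, \Phi(x)\, dx \;\le\; C_0\, e^{\gamma_1\nu^{1/4} t}\,\Phi(z).$$
My first move is to absorb the factor $e^{-\beta(x-z)}$ into the Gaussian by completing the square,
$$e^{-(x-z)^2/(M\sqrt\nu t)-\beta(x-z)} \;=\; e^{\beta^2 M\sqrt\nu t/4}\; e^{-((x-z)+\beta M\sqrt\nu t/2)^2/(M\sqrt\nu t)}.$$
Since $\sqrt\nu \le \nu^{1/4}$ in the inviscid limit, the prefactor is bounded by $e^{\gamma_1\nu^{1/4} t}$, and the remaining shifted Gaussian multiplied by $I_{x,z}$ is still uniformly bounded in $L^1_x$ with constant independent of $t$ and $z$.

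For the ``$1$" contribution of $\Phi(x)$, the $L^1$ bound on $I_{\cdot,z}$ immediately gives a constant, and hence $\le C\Phi(z)$. For each boundary-layer piece $\delta^{-q}\phi_{P-1+q}(\delta^{-1}x)$ I would split the $x$-integration into a near region $\{|x-z|\le\delta\}$ and its complement. On the near region, since $|\delta^{-1}(x-z)|\le 1$ and $\phi_{P-1+q}$ is smooth with at-worst polynomial growth times exponential decay, the profile $\phi_{P-1+q}(\delta^{-1}x)$ is pointwise comparable to $\phi_{P-1+q}(\delta^{-1}z)$ up to a universal constant, and the $L^1$ bound on the kernel then yields the required control by $C\delta^{-q}\phi_{P-1+q}(\delta^{-1}z)\le C\Phi(z)$.

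The far region $\{|x-z|>\delta\}$ is controlled by splitting according to time scales. In the short-time regime $\sqrt{M\sqrt\nu t}\le\delta$ (equivalently $t\le c\nu^{-1/4}$), the Gaussian provides $e^{-(x-z)^2/(M\sqrt\nu t)}\le e^{-|x-z|/\delta}$, so the far contribution decays exponentially in $|x-z|/\delta$ and is bounded by an absolute constant, absorbed into the ``$1$" term of $\Phi(z)$. In the long-time regime $\sqrt{M\sqrt\nu t}\ge\delta$, one has $\nu^{1/4}t\ge c>0$, hence the growth factor $e^{\gamma_1\nu^{1/4}t}$ is bounded below by a fixed positive constant and already absorbs any $O(1)$ or polynomial-in-$t$ loss incurred when one simply bounds $\phi_{P-1+q}$ by its $L^\infty$ norm and uses $\|I_{\cdot,z}\|_{L^1}\le C$.

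The principal obstacle is the transition regime $\sqrt{\sqrt\nu t}\sim\delta$, where neither the narrowness of the Gaussian nor the exponential prefactor is dominant on its own. I expect to resolve this by combining both effects: the Gaussian shift $\beta M\sqrt\nu t/2$ remains of order $\delta$ there, so the shifted kernel is still concentrated at the boundary-layer scale, and a careful pointwise comparison of $\phi_{P-1+q}(\delta^{-1}x)$ with $\phi_{P-1+q}(\delta^{-1}z)$ using the specific exponential structure of the Stokes-like profiles yields the desired control, with any residual constant absorbed by the already-present $e^{\gamma_1\nu^{1/4}t}$ prefactor.
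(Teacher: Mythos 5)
Your completion-of-the-square step to handle the $e^{-\beta(x-z)}$ weight is fine and equivalent to the paper's two-case estimate; both yield a prefactor $e^{O(\beta^2\sqrt\nu t)}\le e^{\gamma_1\nu^{1/4}t}$ for $\nu$ small. The gap is in how you propose to control the boundary-layer terms $\delta^{-q}\phi_{P-1+q}(\delta^{-1}x)$. When you ``simply bound $\phi_{P-1+q}$ by its $L^\infty$ norm and use $\|I_{\cdot,z}\|_{L^1}\le C$,'' the resulting bound on the far-region integral carries a factor $\delta^{-q}=\gamma^{-q}\nu^{-q/8}$, which is \emph{not} $O(1)$ nor polynomial in $t$: it diverges as $\nu\to 0$ at a rate that is independent of $t$. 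In your long-time regime $\nu^{1/4}t\ge c$ the growth factor $e^{\gamma_1\nu^{1/4}t}$ is merely bounded below by a constant, which cannot absorb a $\nu^{-q/8}$ loss. Concretely, for $t\sim\nu^{-1/4}$ and $z$ of order one, $e^{\gamma_1\nu^{1/4}t}\Phi(z)=O(1)$ while your far-region bound is $\sim\nu^{-q/8}\to\infty$. The same mischaracterization already breaks your short-time far-region step, where the claimed ``absolute constant'' is in fact of order $\delta^{-q}$.

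What is missing is a \emph{pointwise} exchange between the allowed growth $e^{\gamma_1\nu^{1/4}t}$ and the boundary-layer decay at scale $\delta\sim\nu^{1/8}$, rather than a global $L^\infty$ bound. The paper instead splits the $x$-integral at $x=z/2$: on $x\ge z/2$ monotonicity gives $\phi_{P-1+q}(\delta^{-1}x)\lesssim\phi_{P-1+q}(\delta^{-1}z)$ and the $L^1$ bound on $I_{x,z}$ closes the estimate with no loss; on $x<z/2$ the Gaussian contributes $e^{-z^2/(8M\sqrt\nu t)}$, and one then distinguishes $z\gtrsim\nu^{3/8}t$, where the Gaussian already dominates the polynomial weight $\phi_{P-1+q}(\delta^{-1}z)^{-1}$, from $z\lesssim\nu^{3/8}t$, where one uses $e^{-\gamma_1\nu^{1/4}t}\le e^{-\gamma_1 z/\nu^{1/8}}\lesssim\phi_{P-1+q}(\delta^{-1}z)$. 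It is only through this last pointwise inequality --- the exact compatibility of the $\nu^{1/4}$ temporal scale with the $\nu^{1/8}$ boundary-layer scale --- that the divergent $\delta^{-q}$ factor is absorbed uniformly in $\nu$. Your near/far split at $|x-z|=\delta$ and your short/long split in $t$ do not produce this matching, and the ``transition regime'' you flag is in fact the whole issue, not a residual technical point.
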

\begin{proof}
In the case when $|x-z|\ge M\beta  \sqrt \nu t$, it is clear that 
$$e^{-\frac{|x-z|^2}{M\sqrt \nu t}} e^{-\beta  |x|} \le e^{-\beta  |z|} e^{-|x-z| \Big( \frac{|x-z|}{M\sqrt \nu t} - \beta \Big)} \le e^{-\beta  |z|}.$$
Whereas, for $|x-z| \le M \beta \sqrt \nu t$, we note that 
$$ e^{- M \beta^2 \sqrt \nu t} e^{-\beta  |x|} \le e^{-\beta |x-z|} e^{-\beta  |x|} \le e^{-\beta  |z|}.$$
That is, the exponential decay $e^{-\beta z}$ is recovered at an expense of a slowly growing term in time: $e^{M \beta^2\sqrt\nu t}$. Precisely, this proves 
\begin{equation}\label{exp-beta13}e^{-\frac{|x-z|^2}{M\sqrt \nu t}} e^{-\beta x} \le e^{M \beta^2\sqrt \nu t} e^{-\beta |z|}, \qquad \forall x,z\in \RR.\end{equation}

It remains to study the integral 
\begin{equation}\label{conv-heatbl1-stable}
\begin{aligned}
\int_0^\infty e^{-\frac{|x-z|^2}{M\sqrt \nu t}} 
 I_{x,z}  \Bigl( 1 + \sum_{q=1}^p \delta^{-q} \phi_{P-1+q} (\delta^{-1} x)  \Bigr) \; dx.
\end{aligned}\end{equation}
First, without the boundary layer behavior, the integral is bounded thanks to the integrability assumption on $I_{x,z}$: 
\begin{equation}\label{int-IL11}
\begin{aligned}
 \int_0^\infty I_{x,z} \; dx \le  C_0 
 .\end{aligned}\end{equation}
Next, we treat the boundary layer term. Using \eqref{int-IL11} and the fact that $\phi_{P-1+q}(\delta^{-1}x)$ is decreasing in $x$, we have 
$$
\begin{aligned}
 \int_{z/2}^\infty &   e^{-\frac{|x-z|^2}{M\sqrt \nu t}} I_{x,z}
  \delta^{-q} \phi_{P-1+q} (\delta^{-1} x) \; dx 
  \\&\le C_0   \delta^{-q} \phi_{P-1+q} (\delta^{-1} z)  \int_{z/2}^\infty I_{x,z} \; dx
  \\&\le C_0   \delta^{-q} \phi_{P-1+q} (\delta^{-1} z) .  \end{aligned}$$
Whereas on $x\in (0,\frac z2)$, we have $|x-z|\ge \frac z2 $ and $\phi_{P-1+q} \le 1$. Hence, again using \eqref{int-IL11}, we have   
\begin{equation}\label{large-time-del}
\begin{aligned}
 \int_0^{z/2} &e^{-\frac{|x-z|^2}{M\sqrt \nu t}}  I_{x,z}
  \delta^{-q} \phi_{P-1+q} (\delta^{-1} x) \; dx 
\le C_0 e^{-\frac{|z|^2}{8M\sqrt \nu t}}  \delta^{-q} .
  \end{aligned}\end{equation}
which is  bounded by $C_0  \delta^{-q}  \phi_{P-1+q} (\delta^{-1} z)$, provided that $|z| \gtrsim \nu^{3/8} t$. In case when $|z|\ll \nu^{3/8} t$, we use the allowable exponential growth in time $e^{\gamma_1 \nu^{1/4} t}$, yielding 
$$ e^{ - \gamma_1 \nu^{1/4} t} \le e^{-\gamma_1 z/\nu^{1/8}}$$
which is again bounded by the boundary layer weight function $C_0 \phi_{P-1+q} (\delta^{-1} z)$, upon recalling that the boundary layer thickness is of order $\delta = \gamma \nu^{1/8}$.  
\end{proof}

\begin{remark}\label{rem-expnu}
We observe that if the boundary layer thickness $\delta = \gamma \nu^{1/4}$ is replaced by $\delta(t) = \gamma\nu^{1/4}\sqrt{1+t}$, the right-hand side of \eqref{large-time-del} is  bounded by $C_0  \delta^{-q}  \phi_{P-1+q} (\delta^{-1} z)$, for all $z\ge 0$. That is, we do not lose a growth in time of order $e^{\gamma_1\nu^{1/4}t}$ in Lemma \ref{lem-Heatconv}: precisely, there holds
$$\Big \| \int_0^\infty H(\cdot,t;x) \omega_\alpha(x)\; dx \Big\|_{ \beta, \gamma, p} \le C_0 e^{M\beta^2 \sqrt \nu t } \| \omega_\alpha\|_{ \beta, \gamma, p}$$ 
with the boundary layer norm $\| \cdot \|_{ \beta, \gamma, p}$ having the time-dependent boundary layer thickness $\delta(t) = \gamma \nu^{1/4}\sqrt{1+t}$. 
\end{remark}

\subsubsection{Derivative bounds on $\mathcal{S}_\alpha$}\label{sec-dervSa-stable}
Having obtained the bounds on $\mathcal{S}_\alpha$, we now derive the derivative bounds. First, note that the derivative $\partial_z\mathcal{S}_\alpha (t)[\omega_\alpha]$ solves 
$$ (\partial_t + i\alpha U -\sqrt \nu\Delta_\alpha) \partial_z \mathcal{S}_\alpha (t) [\omega_\alpha]  = - i\alpha U'(z)  \mathcal{S}_\alpha (t) [\omega_\alpha] $$
with initial data $ \partial_z \omega_\alpha$. 
The Duhamel's formula yields 
$$
\begin{aligned}
 \partial_z \mathcal{S}_\alpha (t) [\omega_\alpha]
 &=  \mathcal{S}_\alpha(t) [ \partial_z \omega_\alpha] - i\alpha \int_0^t \mathcal{S}_\alpha (t-s) [ U' \mathcal{S}_\alpha (s) [\omega_\alpha] ]  \; ds.
 \end{aligned}$$
Now, applying the bounds on $\mathcal{S}_\alpha$ obtained from Proposition \ref{prop-tSa-stable}, for $\tau >0$, we obtain at once 
$$\begin{aligned}
\| \mathcal{S}_\alpha(t)[\partial_z \omega_\alpha] \|_{ \beta, \gamma, p+1} \le C_0 e^{\gamma_1\nu^{1/4} t }  e^{- \frac14 \alpha^2 \sqrt\nu t}\| \partial_z \omega_\alpha \|_{ \beta, \gamma, p+1} \end{aligned}.$$
We apply again Proposition \ref{prop-tSa-stable}, for some positive $\tau_1 <\tau$, on $\mathcal{S}_\alpha (t-s)$. Thanks to the embedding estimate: $\| \omega \|_{ \beta, \gamma, p+1} \le \|\omega \|_{ \beta, \gamma, p}$,  we get 
$$ 
\begin{aligned}
\alpha \Big\| \int_0^t &\mathcal{S}_\alpha (t-s) [ U' \mathcal{S}_\alpha (s) [\omega_\alpha] ]  \; ds \Big \|_{ \beta, \gamma, p+1}
\\ &\le \int_0^t  C_0 e^{\gamma_1\nu^{1/4}  (t-s) } e^{- \frac14 \alpha^2 \sqrt \nu (t-s)}  \| \alpha U'  \mathcal{S}_\alpha (s) [\omega_\alpha] \|_{ \beta, \gamma, p+1} \; ds
\\ &\le \int_0^t  C_0 e^{\gamma_1\nu^{1/4}  (t-s) } e^{- \frac14 \alpha^2 \sqrt \nu (t-s)} C_\tau e^{\gamma_1\nu^{1/4}  s } e^{- \frac14 \alpha^2 \sqrt \nu s}  \|\alpha  \omega_\alpha \|_{ \beta, \gamma, p}   \; ds
\\ &\le  C_0 t e^{\gamma_1\nu^{1/4}  t } e^{- \frac14 \alpha^2 \sqrt \nu t} \| \alpha \omega_\alpha \|_{ \beta, \gamma, p} .
  \end{aligned} $$
This proves 
$$
\begin{aligned}
\| &\partial_z \mathcal{S}_\alpha(t)[\omega_\alpha] \|_{ \beta, \gamma, p+1} 
\le C_0 e^{\gamma_1\nu^{1/4}  t } e^{- \frac14 \alpha^2 \sqrt \nu t} \Big[  \| \partial_z \omega_\alpha \|_{ \beta, \gamma, p+1} + t\| \alpha \omega_\alpha \|_{ \beta, \gamma, p}  \Big].
\end{aligned}
$$
By induction, the higher order derivative estimates and thus Proposition \ref{prop-tSa-stable} follow.

\subsection{Semigroup remainders for vorticity}
In this section, we study the remainder of semigroup for vorticity defined as in \eqref{def-SRa}. For convenience, we write 
\begin{equation}\label{def-Rsemigroup}
\mathcal{R}_{\alpha} \omega_\alpha (z)= \int_0^\infty G_R(z,t;x) \omega_\alpha (x)\; dx 
\end{equation}
in which the residual Green function $G_R(z,t;x)$ is defined by 
\begin{equation}\label{def-GRztx}G_R(z,t;x) : =  \frac{1}{2\pi i}  \int_{\Gamma_{\alpha}}  e^{\lambda t}  \mathcal{R}_G(x,z) \; \frac{d \lambda}{i\alpha}  \end{equation}
with $ \mathcal{R}_G(x,z): = \Delta_\alpha G_{\alpha,c}(x,z) -  \mathcal{G}(x,z) $. 
The contour of integration $\Gamma_\alpha$ is chosen so that it remains on the right of the complex strip $ -(\alpha^2+ k^2) \sqrt \nu + i \alpha \mathrm{Range}(U)$, $k\ge0$. The choice of $\Gamma_\alpha$ depends not only on $\alpha$, but also on the relative position of $x,z,$ and $t$.

\subsubsection{Mid and high spatial frequency: $\alpha \gtrsim 1$}
Throughout this section, we assume that the spatial frequency $\alpha$ is bounded away from zero. We obtain the following proposition. 

\begin{proposition}[Mid and high frequency]\label{prop-tRa-largea} Define the semigroup $\mathcal{R}_\alpha$ as in \eqref{def-Rsemigroup}. Assume that $\alpha \gtrsim 1$. Then, for $0<\beta\le \frac \alpha2$, there are positive constants $C_0,\theta_0$ so that 
$$\| \mathcal{R}_\alpha  \omega_\alpha\|_{ \beta, \gamma, 1} \le C_0 e^{\gamma_1 \nu^{1/4} t} e^{- \frac12 \alpha^2 \sqrt\nu t}  \| \omega_\alpha\|_{ \beta, \gamma, 1}$$
and more generally for all $p\ge 0$,  
$$\| \mathcal{R}_\alpha  \omega_\alpha\|_{ \beta, \gamma, p} \le C_0e^{\gamma_1 \nu^{1/4} t} e^{- \frac12 \alpha^2 \sqrt\nu t}   \| \omega_\alpha\|_{ \beta, \gamma, p} \Big( \alpha^{-1} + \chi_{\{\alpha \delta \ll1\}}\delta^{1-p}\Big).$$
\end{proposition}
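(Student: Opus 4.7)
The strategy is to transport the contour analysis from the proof of Proposition~\ref{prop-tGreen-Sa} to the residual Green function $\mathcal{R}_G(x,z) = \Delta_\alpha G_{\alpha,c}(x,z) - \mathcal{G}(x,z)$, exploiting the gain of one factor of $\delta$ together with one factor of $\langle Z\rangle^{-1/2}$ that $\mathcal{R}_G$ carries relative to $\mathcal{G}$ in Theorem~\ref{theo-vort-stable}. Since $\alpha \gtrsim 1$ while the instability bands of Theorem~\ref{theo-spectralinstablity} sit in $[\nu^{1/8},\nu^{1/12}] \ll 1$, Assumption~\ref{assump-Evans} guarantees that $|D(\alpha,c)|$ is bounded below on any contour with $\Re\lambda \ge \gamma_1 \nu^{1/4}$, so the Evans pre-factor appearing in the boundary part of \eqref{est-GrOS-stable}--\eqref{def-RGGG} contributes no singularities along $\Gamma_\alpha$.

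Concretely, I would take $\Gamma_\alpha = \Gamma_{\alpha,1}\cup\Gamma_{\alpha,2}\cup\Gamma_{\alpha,3}$ exactly as in \eqref{def-GaSa1}, but shifted to $\Re\lambda \ge \gamma_1\nu^{1/4} + \gamma$ with $\gamma = a^2 \sqrt\nu + \tfrac12\alpha^2\sqrt\nu$ and $a = |x-z|/(2\sqrt\nu t)$ as in \eqref{def-ga-stable}. I would then split $\mathcal{R}_G$ according to Theorem~\ref{theo-vort-stable} into four pieces: (i) a slow \emph{interior} piece bounded by $C\delta\langle Z\rangle^{-1/2}\mu_s^{-1} e^{-\theta_0\mu_s|x-z|}$, (ii) a fast \emph{interior} piece bounded by $C\delta^2\langle Z\rangle^{-1} e^{-\int_x^z \Re\mu_f\,dy}$, and (iii)--(iv) the corresponding Evans \emph{boundary} pieces with $|x|+|z|$ replacing $|x-z|$ and $z_c$ replacing the symmetric integration endpoints in the fast exponential.

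The contributions along $\Gamma_{\alpha,2}\cup\Gamma_{\alpha,3}$ (away from critical layers) are handled by the bounds of \cite{GrN1}, so the new work is on $\Gamma_{\alpha,1}$. For the slow pieces, the fast decay $\mu_s^{-1} e^{-\theta_0 \alpha |x-z|} = \alpha^{-1} e^{-\theta_0\alpha|x-z|}$ is uniform in $c$; the change of variables $\lambda = \gamma - \alpha^2\sqrt\nu - i\alpha \Re c$ combined with the Langer changes of variables $c \mapsto z_c \mapsto Z$ as in \eqref{def-Ixzloc}--\eqref{est-Ixzz} produces an $L^1_x$-bounded kernel times the Gaussian-in-$|x-z|$ factor, and applying Lemma~\ref{lem-Heatconv} yields the $\alpha^{-1}$ contribution, with the factor $e^{-\theta_0\alpha|x-z|}$ transferring boundary layer weights from $x$ to $z$ when $\alpha\delta \gg 1$. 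For the fast pieces, the additional $\delta\langle Z\rangle^{-1/2}$ over the bare $\mathcal{G}$-bound of Proposition~\ref{prop-tGreen-Sa} picks up, after the same Jacobian computation as in \eqref{est-Ixzz}, one extra power of $\delta$; combined with the boundary-layer norm allowing up to $p-1$ absorbed powers of $\delta^{-1}$, this yields the $\delta^{1-p}$ contribution, active only in the regime $\alpha\delta\ll 1$ where the fast scale is longer than the slow scale. The two Evans boundary pieces are treated identically, with the double decay $e^{-\theta_0\alpha(|x|+|z|)}$ again absorbing the boundary-layer weights under the convolution with $\omega_\alpha$.

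The main obstacle will be the careful accounting of the $\delta\langle Z\rangle^{-1/2}$ enhancement through the $\Gamma_{\alpha,1}$ integration: one must verify that, despite the $c$-dependence of the critical layer $z_c$ and of the Langer variable $Z$, the kernel obtained after $\lambda$-integration remains concentrated on a scale $\delta$ around $\Re z_c$, so that each extracted $\delta^{-1}$ factor can be matched against exactly one boundary-layer profile $\phi_{P-1+q}(\delta^{-1}z)$ in the norm $\|\cdot\|_{\beta,\gamma,p}$. A secondary issue is the boundary-layer variant of Lemma~\ref{lem-Heatconv} needed in the regime $\alpha\delta\ll 1$, where both the slow and fast contributions act on scales longer than $\delta$ and one must trade the $e^{-\beta z}$ decay for one copy of $\phi_{P-1+q}(\delta^{-1} z)$ exactly as in \eqref{large-time-del}. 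Once these two bookkeeping steps are verified, assembling the four pieces as in the proof of Proposition~\ref{prop-tSa-stable} produces the stated estimate.
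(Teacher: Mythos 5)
Your strategy of working directly with the pointwise bound on $\mathcal{R}_G(x,z)$ from Theorem~\ref{theo-vort-stable}, rather than keeping the integral representation $\mathcal{R}_G(x,z)=\int_0^\infty \mathcal{G}(y,z)U''(y)G_{\alpha,c}(x,y)\,dy$ and deforming the contour according to $|y-z|$ as the paper does (Lemma~\ref{lem-tempGRztx}), is a genuinely different route and could in principle work. However, as written, the treatment of the slow pieces has a gap that would make the estimate fail.

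The slow interior and slow boundary pieces of $\mathcal{R}_G$ decay only exponentially, as $\mu_s^{-1}e^{-\theta_0\alpha|x-z|}$; they carry \emph{no} fast factor of the form $e^{-\nu^{-1/4}\sqrt\gamma\,|x-z|}$. If you place the contour at height $\Re\lambda=\gamma-\alpha^2\sqrt\nu$ with $\gamma=\gamma_1\nu^{1/4}+a^2\sqrt\nu+\tfrac12\alpha^2\sqrt\nu$ and $a=|x-z|/(2\sqrt\nu t)$, then on $\Gamma_{\alpha,1}$ the factor $|e^{\lambda t}|$ contributes $e^{a^2\sqrt\nu t}=e^{|x-z|^2/(4\sqrt\nu t)}$, a Gaussian \emph{growth} in $|x-z|$ that dominates $e^{-\theta_0\alpha|x-z|}$ and is cancelled by nothing in the slow kernel (the $\delta\langle Z\rangle^{-1/2}$ prefactor integrates to a small $c$-independent constant and does not help). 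So the Langer change of variables does \emph{not} produce ``an $L^1_x$-bounded kernel times a Gaussian-in-$|x-z|$ factor'' here; that balance is available for $\mathcal{G}(y,z)$ (which is exactly why the paper keeps the $y$-integral and ties the contour to $|y-z|$) and for the fast pieces via $e^{-\int_x^z\Re\mu_f\,dy}\le e^{-\nu^{-1/4}\sqrt\gamma\,|x-z|}$, but not for the slow pieces. The fix is to drop the $a^2\sqrt\nu$ term and take $\gamma=\gamma_1\nu^{1/4}+\tfrac12\alpha^2\sqrt\nu$ for the slow pieces, yielding the Laplace-type kernel $C\alpha^{-1}e^{\gamma_1\nu^{1/4}t}e^{-\frac12\alpha^2\sqrt\nu t}e^{-\theta_0\alpha|x-z|}$. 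One then applies Lemma~\ref{lem-Lapconv} with $\eta_\nu=\theta_0\alpha$, not Lemma~\ref{lem-Heatconv}; Lemma~\ref{lem-Lapconv} is precisely where the boundary-layer-weight transfer (the mechanism you describe) and the final factor $\alpha^{-1}+\chi_{\{\alpha\delta\ll1\}}\delta^{1-p}$ arise. With these corrections your route closes and in fact recovers the paper's bound, but the slow-piece step as you stated it does not.
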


We first prove the following simple lemma. 
\begin{lemma}\label{lem-Lapconv} Let $\eta_0>0$, $0< \beta\le\frac{\eta_0}{2}$, and let $P(z,t;x): =  \eta_\nu e^{-\eta_\nu |x-z|} $ for some $\eta_\nu \ge \eta_0$. Then, there is a positive constant $C_0$, independent of $\delta, \eta_\nu$, so that 
$$\Big \| \int_0^\infty P(\cdot,t;x) \omega_\alpha(x)\; dx \Big\|_{ \beta, \gamma, 1} \le C_0\eta_\nu \| \omega_\alpha\|_{ \beta, \gamma, 1},$$
and more generally for all $p\ge 0$,  
$$\Big \| \int_0^\infty P(\cdot,t;x) \omega_\alpha(x)\; dx \Big\|_{ \beta, \gamma, p} \le C_0\| \omega_\alpha\|_{ \beta, \gamma, p} \Big( 1 + \chi_{\{\eta_\nu \delta \ll1\}}\delta^{1-p} \eta_\nu\Big).$$
 \end{lemma}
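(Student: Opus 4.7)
I will prove both estimates simultaneously via a single convolution analysis. By definition of the norm,
\[
|\omega_\alpha(x)|\le \|\omega_\alpha\|_{\beta,\gamma,p}\,e^{-\beta x}\Bigl(1+\sum_{q=1}^p\delta^{-q}\phi_{P-1+q}(\delta^{-1}x)\Bigr),
\]
and since $\beta\le\eta_0/2\le\eta_\nu/2$ (using $\eta_\nu\ge\eta_0$), the exponential weight $e^{-\beta z}$ factors out cleanly via $e^{-\eta_\nu|x-z|}e^{-\beta x}\le e^{-\beta z}e^{-\eta_\nu|x-z|/2}$. It then suffices to estimate, for each $z\ge 0$, the quantity $\int_0^\infty\eta_\nu e^{-\eta_\nu|x-z|/2}W(x)\,dx$ with $W(x):=1+\sum_{q=1}^p\delta^{-q}\phi_{P-1+q}(\delta^{-1}x)$, by a constant times $(1+\chi_{\{\eta_\nu\delta\ll 1\}}\delta^{1-p}\eta_\nu)\,W(z)$.

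The constant contribution is immediate from $\int_0^\infty\eta_\nu e^{-\eta_\nu|x-z|/2}dx\le 4$. For each boundary-layer term, changing variables to $u=\delta^{-1}x$ and $Z=\delta^{-1}z$ gives
\[
I_q(z)=\eta_\nu\delta^{1-q}\int_0^\infty e^{-\eta_\nu\delta|u-Z|/2}\phi_{P-1+q}(u)\,du,
\]
whose behaviour is governed by the dimensionless ratio $\eta_\nu\delta$. When $\eta_\nu\delta\gtrsim 1$, the exponential kernel is concentrated on unit scales in $u$ and has $L^1$-mass $4/(\eta_\nu\delta)$; combined with the rapid decay and the unit-shift doubling property of the boundary-layer profiles $\phi_k$ used throughout \cite{GrN1,GrN2}, this yields $I_q(z)\le C\delta^{-q}\phi_{P-1+q}(Z)$, which is exactly the weight already appearing in $W(z)$. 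When $\eta_\nu\delta\ll 1$ the kernel is too broad to resolve the boundary layer; bounding it by its $L^\infty$-norm and using integrability $\int_0^\infty\phi_k(u)\,du\le C$, I obtain the uniform estimate $I_q(z)\le C\eta_\nu\delta^{1-q}$. The worst case $q=p$ produces the advertised correction $\chi_{\{\eta_\nu\delta\ll 1\}}\delta^{1-p}\eta_\nu$. The first displayed estimate is the specialization to $p=1$, using $\eta_\nu\ge\eta_0$ to absorb the uniform $1$ into $C_0\eta_\nu$.

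The main obstacle is the tight-kernel regime $\eta_\nu\delta\gtrsim 1$: one has to verify that convolution against an exponential kernel of width of order at most one in the rescaled variable returns a multiple of $\phi_{P-1+q}(Z)$ itself, rather than a broader profile. This reduces to a doubling-type property for the specific boundary-layer profiles, which follows from their super-exponential (or uniformly controlled exponential) decay and is the same type of bookkeeping already performed in the heat-kernel argument of Lemma \ref{lem-Heatconv}. Outside this verification, the argument is a routine assembly of the two cases and a supremum in $z$.
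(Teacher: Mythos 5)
Your proof follows the same approach as the paper: factor out the exponential weight via the triangle inequality, bound the constant term by the $L^1$ mass of the exponential kernel, and split the boundary-layer term into the two regimes $\eta_\nu\delta\ll 1$ (kernel too broad, bound by $L^\infty$ of kernel times $L^1$ of the profile, yielding $\eta_\nu\delta^{1-q}$) and $\eta_\nu\delta\gtrsim 1$ (kernel tight, convolution reproduces the profile). The paper handles the tight-kernel regime somewhat more concretely by splitting the $x$-integral at $z/2$ and using the monotonicity and boundedness of $\phi_{P-1+q}$, plus absorption of the tail $e^{-\frac18\eta_\nu z}$ into $\phi_{P-1+q}(\delta^{-1}z)$ via $\eta_\nu\gtrsim\delta^{-1}$; you recast this as a doubling estimate in the rescaled variable $u=\delta^{-1}x$. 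These are two presentations of the same computation.

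One inaccuracy worth flagging: you attribute the doubling property of the profiles $\phi_k$ to their ``super-exponential ... decay.'' This is exactly backwards. If $\phi_k$ decayed super-exponentially (say like $e^{-y^2}$), the ratio $\phi_k(u)/\phi_k(Z)$ for $|u-Z|\lesssim 1$ would be uncontrolled as $Z\to\infty$, and the convolution against a unit-scale kernel would \emph{not} return a constant multiple of $\phi_k(Z)$; super-exponential decay would also break the absorption of the tail term $e^{-\frac18\eta_\nu z}$ into the weight. The profiles used in \cite{GrN1,GrN2} decay polynomially, which is precisely what makes the doubling bound $\phi_k(u)\le C\phi_k(Z)$ for $|u-Z|\lesssim 1$, and the inequality $e^{-\frac18\eta_\nu z}\le C\phi_{P-1+q}(\delta^{-1}z)$ when $\eta_\nu\delta\gtrsim 1$, go through. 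Your parenthetical hedge ``(or uniformly controlled exponential)'' points in the right direction and the final conclusion is correct, but the stated reason is wrong and in a more delicate problem this would be a genuine error.
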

\begin{remark}
We note that there is no time growth $e^{\sqrt \nu t}$ in the above semigroup bound, since the spatially localized behavior $e^{-\beta z}$ is obtained through the Green kernel of the Laplacian $\alpha^{-1} e^{-\alpha |x-z|}$ (instead of the heat kernel $(\sqrt\nu t)^{-1/2} e^{-\frac{|x-z|^2}{4\sqrt \nu t}}$ as the case for $\mathcal{S}_\alpha$). 
\end{remark}

\begin{proof}[Proof of Lemma \ref{lem-Lapconv}] Without loss of generality, we assume $  \| \omega_\alpha \|_{ \beta, \gamma, p} =1$. By definition, we study the convolution 
\begin{equation}\label{est-P111}
\begin{aligned}
&\int_0^\infty  P(z,t;x) |\omega_\alpha(x)|\; d  x 
\\&\le  \int_0^\infty   \eta_\nu e^{-\eta_\nu |x-z|} e^{- \beta x} \Bigl( 1 +  \sum_{q=1}^p\delta^{-q} \phi_{P-1+q} (\delta^{-1} x)  \Bigr) \; d  x.
\end{aligned}\end{equation}
Using  $\frac{\eta_0} 2\ge \beta$ and the triangle inequality $|x|\ge |z| - |x-z|$, we obtain 
$$e^{-\frac12 \eta_0 |x-z|} e^{- \beta   x} \le e^{-\beta|z|} $$
yielding the exponential decay in the boundary layer norm. We shall now estimate each term in \eqref{est-P111}. Since $ \eta_\nu e^{-\eta_\nu |x-z|}$ is bounded in $L^1_x$, the term without the boundary layer weight is bounded. As for the boundary layer term, we obtain 
$$
\begin{aligned}
\int_0^\infty & \eta_\nu e^{-\frac12\eta_\nu |x-z|}  \delta^{-q} \phi_{P-1+q} (\delta^{-1} x)   \; d  x
\\&\le C_0 \eta_\nu \int_0^\infty   \delta^{-q} \phi_{P-1+q} (\delta^{-1} x)  \; d  x 
\\&\le C_0 \eta_\nu \delta^{1-q}. 
\end{aligned}$$ 
This proves the lemma when $\eta_\nu \delta \ll1$. 

In the case when $\eta_\nu \delta \gtrsim 1$, the boundary layer behavior coming from the Green function has smaller thickness, which we shall now use. Hence, using $\phi_{P-1+q} \le 1$ and its decreasing property, we compute 
\begin{equation}\label{fast-mus}
\begin{aligned}
 \int_0^\infty  \eta_\nu & e^{-\frac12 \eta_\nu |x-z|} \delta^{-q} \phi_{P-1+q} (\delta^{-1} x) \; d  x
\\
&\le C_0 \delta^{-q} \Big[ e^{-\frac18 \eta_\nu |z|}  \int_0^{z/2} \eta_\nu e^{-\frac14 \eta_\nu |x-z|}\; d  x 
\\&\quad +   \phi_{P-1+q} (\delta^{-1} z) \int_{z/2}^\infty \eta_\nu e^{-\frac12\eta_\nu |x-z|} \; d  x\Big]
\\
&\le C_0\delta^{-q} \Big[ e^{-\frac18 \eta_\nu |z|} +   \phi_{P-1+q} (\delta^{-1} z)\Big]
\\
&\le C_0 \delta^{-q}\phi_{P-1+q} (\delta^{-1} z)
\end{aligned}\end{equation}
in which the last inequality used the fact that $\eta_\nu \delta \gtrsim 1$. 
\end{proof}

Next, we prove the following pointwise bounds:

\begin{lemma}\label{lem-tempGRztx} Define the temporal Green function $G_R(z,t;x)$ as in \eqref{def-GRztx}. Assume that $\alpha \gtrsim 1$. There holds 
$$ |G_R(z,t;x)| \le C
 e^{-\frac12 \alpha^2 \sqrt \nu t} e^{-\theta_0\alpha |x-z|}  .
$$ 
\end{lemma}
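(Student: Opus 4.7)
The plan is to follow the Dunford-contour strategy from the proof of Proposition \ref{prop-tGreen-Sa}, replacing $\mathcal{G}(x,z)$ by the residual $\mathcal{R}_G(x,z)$ and feeding in the pointwise bounds supplied by Theorem \ref{theo-vort-stable}. The first step is to decompose $\Gamma_\alpha = \Gamma_{\alpha,1}\cup\Gamma_{\alpha,2}\cup\Gamma_{\alpha,3}$ exactly as in \eqref{def-GaSa1}, with $\Gamma_{\alpha,1}$ the vertical segment at $\Re\lambda = \gamma-\alpha^2\sqrt\nu$ opposite the range $-i\alpha\,\mathrm{Range}(U[x,z])$ and the two parabolic arms $\Gamma_{\alpha,2},\Gamma_{\alpha,3}$ running to infinity, with $\gamma = a^2\sqrt\nu+\tfrac12\alpha^2\sqrt\nu$ and $a=|x-z|/(2\sqrt\nu t)$. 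The integrals over $\Gamma_{\alpha,2}, \Gamma_{\alpha,3}$ stay uniformly away from the Euler continuous spectrum, so those estimates reduce verbatim to the off-critical case treated in \cite{GrN1} and yield the desired $e^{-\frac12\alpha^2\sqrt\nu t}e^{-\theta_0\alpha|x-z|}$ bound directly.

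On $\Gamma_{\alpha,1}$, I insert the two parts of the Theorem \ref{theo-vort-stable} estimate separately. The slow part is controlled by $\delta\langle Z\rangle^{-1/2}\mu_s^{-1}e^{-\theta_0\mu_s|x-z|}$ with $\mu_s=\alpha$; combining the factor $\alpha^{-1}$ with the prefactor $d\lambda/(i\alpha)$ and using that $\Gamma_{\alpha,1}$ has length of order $\alpha$, the integral produces a uniformly bounded constant times $e^{(\gamma-\alpha^2\sqrt\nu)t}e^{-\theta_0\alpha|x-z|}$. The fast part carries the extra Langer weight $\delta\langle Z\rangle^{-1/2}e^{-\int_x^z\Re\mu_f}$; the same Langer change of variables as in \eqref{exp-xzxz}--\eqref{est-Ixzz} applies, and the added $\delta\langle Z\rangle^{-1/2}$ is integrable against $d\Re c$ exactly as in the estimate of $I_{x,z}$, making this contribution dominated by the slow one. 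Finally the standard completion-of-squares $\gamma t-\tfrac34\nu^{-1/4}\sqrt\gamma\,|x-z|\le -\tfrac12\alpha^2\sqrt\nu t -\tfrac18\alpha|x-z|$ produces the claimed pointwise bound after absorbing the Gaussian-in-$(x-z)$ remainder.

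The main obstacle is controlling the $[D(\alpha,c)]^{-1}$ terms in $\mathcal{R}_G$ along $\Gamma_{\alpha,1}$, since $c$ approaches $\mathrm{Range}(U)$ as $\gamma \to 0$ and the Evans function degenerates near critical layers. Here the assumption $\alpha\gtrsim 1$ is crucial: by Theorem \ref{theo-spectralinstablity} the unstable eigenvalues live in the window $\nu^{1/8}\lesssim\alpha\lesssim\nu^{1/12}$, so for $\alpha\gtrsim 1$ and $\nu\ll 1$ there are no unstable point spectra, and Assumption \ref{assump-Evans} forces $D(\alpha,c)$ to be uniformly bounded below along the contour. With that in hand the terms containing $[D(\alpha,c)]^{-1}$ are treated identically to the ``local'' slow/fast pieces above, with $|x-z|$ replaced by $|x|+|z|\ge|x-z|$; the exponential $e^{-\theta_0\mu_s(|x|+|z|)}$ and $e^{-\int_{z_c}^x\Re\mu_f-\int_{z_c}^z\Re\mu_f}$ dominate their $|x-z|$-counterparts, so the same contour estimates go through and close the proof.
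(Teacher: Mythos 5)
There is a genuine gap in your argument, and it is exactly why the paper does \emph{not} estimate $G_R$ by plugging the pointwise bound on $\mathcal{R}_G(x,z)$ from Theorem~\ref{theo-vort-stable} into the Dunford integral. The paper first re-expands $\mathcal{R}_G$ via its definition \eqref{def-RGGG} as $\int_0^\infty \mathcal{G}(y,z)\,U''(y)\,G_{\alpha,c}(x,y)\,dy$ and only then deforms the contour, with a shift $\gamma$ built from $a=|y-z|/(2\sqrt\nu t)$, \emph{i.e.\ depending on the auxiliary variable $y$}, not on $|x-z|$. This is not a cosmetic choice: the completion-of-squares step
$$ e^{\gamma t}\,e^{-\frac34\nu^{-1/4}\sqrt\gamma\,|y-z|}\le e^{-\frac12\alpha^2\sqrt\nu t}\,e^{-\frac{|y-z|^2}{8\sqrt\nu t}} $$
is powered by the $\sqrt\gamma$-dependent spatial localization coming from the fast Airy kernel $\mathcal{G}(y,z)$, which pairs naturally with $|y-z|$. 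The slow Orr--Sommerfeld factor $G_{\alpha,c,s}(x,y)$ carries only the $\lambda$-independent rate $e^{-\theta_0\alpha|x-y|}$ and is simply carried along; the $y$-integration then reassembles the $|x-z|$ decay.

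If you instead set $a=|x-z|/(2\sqrt\nu t)$ and feed in the $\mathcal{R}_G(x,z)$ bound directly, the slow contribution of $\mathcal{R}_G$ is $\lesssim\delta\langle Z\rangle^{-1/2}\alpha^{-1}e^{-\theta_0\alpha|x-z|}$, which has \emph{no} $\sqrt\gamma$-dependent exponential in $|x-z|$. After integrating $e^{\lambda t}$ over $\Gamma_{\alpha,1}$ you are left with $e^{(\gamma-\alpha^2\sqrt\nu)t}=e^{|x-z|^2/(4\sqrt\nu t)}\,e^{-\frac12\alpha^2\sqrt\nu t}$, and the positive Gaussian $e^{|x-z|^2/(4\sqrt\nu t)}$ is not absorbed by $e^{-\theta_0\alpha|x-z|}$ for $|x-z|\gg\alpha\sqrt\nu t$. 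Your assertion that ``the standard completion-of-squares produces the claimed pointwise bound'' therefore fails on the slow piece. (Taking $a=0$ kills the runaway Gaussian but pushes the contour to $\Im c<0$, where the Orr--Sommerfeld Green function bounds of Theorem~\ref{theo-GreenOS-stable} are not available, and the contour from Proposition~\ref{prop-tGreen-Sa} also covers only $U[x,z]$, which does not account for the critical-layer singularities of $\mathcal{R}_G$ spread across all of $\mathrm{Range}(U)$ through the $y$-integration.) Two further points are glossed over: the paper shifts $\gamma$ by $\gamma_1\nu^{1/4}$ precisely to verify both the Evans-function lower bound and the hypothesis $\epsilon^{1/8}|\log\Im c|\ll1$ of Theorem~\ref{theo-GreenOS-stable} on the contour, and the $\langle Z\rangle^{-1/2}$, $\delta$ prefactors in $\mathcal{R}_G$ are themselves $\lambda$-dependent (through $z_c$ and $U_c'$), which the paper handles by keeping the $y$-integral explicit. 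You should instead re-expand $\mathcal{R}_G$ via \eqref{def-RGGG}, decompose $G_{\alpha,c}$ into its slow and fast parts, and carry out the contour deformation in the double integral with $a=|y-z|/(2\sqrt\nu t)$, as the paper does.
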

\begin{proof}
By construction of $G_R(z,t;x)$, we have 
\begin{equation}\label{def-reGR} G_R(z,t;x)= 
 \frac{1}{2\pi i}  \int_0^\infty  \int_{\Gamma_{\alpha}}  e^{\lambda t} \mathcal{G}(y,z)   U''(y) G_{\alpha,c} (x,y)  \; \frac{d\lambda dy}{i\alpha} \end{equation}
in which $c = -\lambda / i\alpha$. We note that when $\lambda$ is away from $-\alpha^2 \sqrt \nu - i\alpha \mathrm{Range(U)}$, the pointwise estimates on $G_R(z,t;x)$ are done exactly as in the case away from critical layers (\cite[Section 6.4]{GrN1}). In order to obtain sharp bounds on the semigroup, it is obliged to move the contour of integration as close to the imaginary axis as possible. 
As done earlier for $\mathcal{S}_\alpha$, it suffices to focus on the integral over the following contour: 
\begin{equation}\label{redef-Ga111}\Gamma_{\alpha,1} = \Big\{ \lambda = \gamma  - \alpha^2 \sqrt\nu - i\alpha k: \qquad  k \in \mathrm{Range}(U)  \Big\} \end{equation}
in which $\gamma$ 
is defined by 
$$ \gamma:  = \gamma_1 \nu^{1/4} + a^2 \sqrt\nu + \frac 12 \alpha^2 \sqrt\nu ,\qquad a: = \frac{|y-z|}{2\sqrt \nu t}$$
for positive $\gamma_1>\gamma_0$. Again, the case when $\gamma\gtrsim 1$ is treated exactly as in the previous unstable case (\cite[Section 6.4]{GrN1}). In what follows, we assume that $\gamma \ll1$.

In view of the spectral assumption, Assumption \ref{assump-Evans}, $c=-\lambda/i\alpha$ satisfies 
$$ |D(\alpha,c) |\gtrsim 1$$
for $\alpha \gtrsim 1$. In addition, $\Im c\ll1$ and $\Im c \ge \frac12\alpha \sqrt\nu \gtrsim |\epsilon|$, since $\alpha \gtrsim 1$. This proves that the condition    
$$ \epsilon^{1/8} |\log \Im c | \ll1$$
used in Theorem \ref{theo-GreenOS-stable} holds. 

By putting these together, on $\Gamma_{\alpha,1}$, the bounds on the Green function from Theorem \ref{theo-GreenOS-stable}
now read 
\begin{equation}\label{decomp-Gsf}G_{\alpha,c}(x,y) = G_{\alpha,c,s}(x,y) + G_{\alpha,c,f}(x,y)\end{equation}
in which 
$$
\begin{aligned}
 |G_{\alpha,c,s}(x,y)|  
 &\le 
  C_0\alpha^{-1}e^{-\theta_0\alpha |x-y|}  
 \\
 |G_{\alpha,c,f}(x,y)|  
 &\le 
  C_0\delta_{cr} e^{-\frac34  \nu^{-1/4} \sqrt \gamma |x-y|} e^{-\theta_0 \sqrt{\max\{|X|, |Y|\} } |X-Y|} 
\end{aligned}
$$
in which $X,Y$ denote the usual Langer's variable: $X = \delta_{cr}^{-1}\eta(x)$, recalling the critical layer thickness $\delta_{cr} = (\epsilon/U_c')^{1/3}$.

We are now ready to estimate 
$$
\begin{aligned}
I_{\alpha,s}: &= \frac{1}{2\pi i}  \int_0^\infty  \int_{\Gamma_{\alpha,1}}  e^{\lambda t} \mathcal{G}(y,z) U''(y) G_{\alpha,c,s} (x,y)  \; \frac{d\lambda dy}{i\alpha} 
\\
I_{\alpha,f}: &= \frac{1}{2\pi i}  \int_0^\infty  \int_{\Gamma_{\alpha,1}}  e^{\lambda t} \mathcal{G}(y,z) U''(y) G_{\alpha,c,f} (x,y)  \; \frac{d\lambda dy}{i\alpha} 
\end{aligned}
$$
with $\Gamma_{\alpha,1}$ defined as in \eqref{redef-Ga111}. In the convolution against the slow behavior $G_{\alpha,c,s}(x,y)$, to leading order, the $\lambda$-dependence is precisely due to the Green function $\mathcal{G}(y,z)$ and hence we can further move the contour of integration so that 
$$
\begin{aligned}
\Gamma'_{\alpha,1} &:= \Big\{ \lambda =\gamma - \alpha^2 \sqrt\nu- i \alpha k, \qquad k\in U[y,z] \Big\} 
\end{aligned}$$
and $\Gamma_{\alpha,2}', \Gamma_{\alpha,3}'$ being defined similarly as done in \eqref{def-GaSa1}. Hence, for slow behavior of the Green function, it suffices to estimate
$$I'_{\alpha,s}: = \frac{1}{2\pi i}  \int_0^\infty  \int_{\Gamma'_{\alpha,1}}  e^{\lambda t} \mathcal{G}(y,z) U''(y) G_{\alpha,c,s} (x,y)  \; \frac{d\lambda dy}{i\alpha} .$$
As in \eqref{bd-Ga-Ga1}, on $\Gamma'_{\alpha,1}$, we have 
$$
 |\mathcal{G}(y,z)| \le C|\dot y| \epsilon^{-2/3}|U'_c|^{-1/3} e^{- \frac 34\nu^{-1/4} \sqrt{\gamma }|y-z|}  e^{-\theta_0 |Y|^{3/2}} e^{-\theta_0 |Z|^{3/2}}.$$
Whereas on $\Gamma_{\alpha,1}$, there holds 
$$
 |\mathcal{G}(y,z)| \le C|\dot y| \epsilon^{-2/3}|U'_c|^{-1/3} e^{- \frac 34\nu^{-1/4} \sqrt{\gamma }|y-z|}  \langle Z \rangle^{-1/2}e^{-\theta_0 \sqrt{\max\{ |Y|, |Z|\}} |Y-Z|}
$$
upon noting that on $\Gamma_{\alpha,1}$, $\Re c$ can now take values in the entire range of $U$ (other than that over $[y,z]$ as in the case of $\Gamma_{\alpha,1}'$). Here, we note that 
$$e^{-\frac14 \nu^{-1/4} \sqrt \gamma |y-z|} \le e^{-\frac{|y-z|^2}{8 \sqrt \nu t}} e^{-\frac18 \alpha |y-z|} $$
and $$ e^{\gamma t} e^{-\frac12 \alpha^2 \sqrt \nu t} e^{-\frac12 \nu^{-1/4} \sqrt \gamma |y-z|} \le e^{\gamma_1 \nu^{1/4} t}. $$

We now estimate $I'_{\alpha,s}$. Combining these with the above Green function bounds and recalling that $|U''(y)| \le C_0 e^{-\eta_0 y}$, we obtain at once 
$$\begin{aligned}
I_{\alpha,s}' &\le C \alpha^{-1}   
e^{-\frac12 \alpha^2 \sqrt \nu t} e^{-\theta_0\alpha |x-z|} e^{\gamma_1 \nu^{1/4} t}
\\&\quad \times \int_0^\infty  \int_{U[y,z]} \epsilon^{-2/3}|U'_c|^{-1/3}   e^{-\theta_0 |Y|^{3/2}} e^{-\theta_0 |Z|^{3/2}} \; dk dy.
\end{aligned}$$
The integral above is already estimated in \eqref{def-Ixzloc}-\eqref{est-Ixzz}, yielding 
$$\begin{aligned}
I_{\alpha,s}' &\le C \alpha^{-1}   
e^{-\frac12 \alpha^2 \sqrt \nu t} e^{-\theta_0\alpha |x-z|} e^{\gamma_1 \nu^{1/4} t}.\end{aligned}$$
Next, turning to the fast behavior of the Green function, we estimate 
$$\begin{aligned}
I_{\alpha,f} &\le   
e^{-\frac12 \alpha^2 \sqrt \nu t} e^{-\theta_0\alpha |x-z|} e^{\gamma_1 \nu^{1/4} t} \int_0^\infty  \int_{\mathrm{Range (U)}} \epsilon^{-1/3}|U'_c|^{-2/3}  
 \langle Z \rangle^{-1/2} 
 \\&\quad \times  e^{-\theta_0 \sqrt{\max\{ |Y|, |Z|\}} |Y-Z|}  e^{-\theta_0 \sqrt{\max\{|X|, |Y|\} } |X-Y|} e^{- \eta_0 y}\; dk dy .
\end{aligned}$$
Changing variable $c = U(z_c)$ and then the Langer's variable $Z = \delta^{-1}_{cr} \eta(z)$, we obtain 
$$\begin{aligned}
&
\int_0^\infty  \int_{\mathrm{Range (U)}} \epsilon^{-1/3}|U'_c|^{-2/3} 
  \langle Z \rangle^{-1/2}e^{-\theta_0 \sqrt{\max\{ |Y|, |Z|\}} |Y-Z|}   \; e^{-\eta_0 y}\; dk dy
\\&=\int_0^\infty  \int_0^\infty \epsilon^{-1/3}|U'_c|^{1/3} 
 \langle Z \rangle^{-1/2} e^{-\theta_0 \sqrt{\max\{ |Y|, |Z|\}} |Y-Z|}   \; e^{-\eta_0 y} d\Re z_c dy
\\&\le C 
\int_0^\infty  \int_0^\infty \epsilon^{-1/3}|U'_c|^{1/3} 
  \langle Z \rangle^{-1/2} e^{-\theta_0 \sqrt{\max\{ |Y|, |Z|\}} |Y-Z|}   \; e^{-\eta_0 y} \delta_{cr} |\dot z| dZ dy
\\&\le C 
\int_0^\infty   \; e^{-\eta_0 y} dy \le C.
\end{aligned}$$
This proves 
$$\begin{aligned}
I_{\alpha,f} &\le   
C e^{-\frac12 \alpha^2 \sqrt \nu t} e^{-\theta_0\alpha |x-z|} e^{\gamma_1 \nu^{1/4} t} .
\end{aligned}$$
This yields the lemma. 
\end{proof}

\begin{proof}[Proof of Proposition \ref{prop-tRa-largea}] The proposition now follows directly from the bounds on the Green function obtained from Lemma \ref{lem-tempGRztx}, and Lemma \ref{lem-Lapconv}, applying for the Green kernel $e^{-\theta_0\alpha |x-z|} $, with $\alpha \gtrsim 1$. 
\end{proof}

\subsubsection{Small spatial frequency: $\alpha \ll1$}
In this section, we study the case when $\alpha \ll1$. In this case, there are unstable eigenvalues $\lambda_\nu$ as spelled out in Section \ref{sec-spectral}, and as a consequence, the contour of integration has to be chosen more carefully to avoid the unstable eigenvalues.

\begin{proposition}[Small frequency] \label{prop-tRa-smalla} Define the semigroup $\mathcal{R}_\alpha$ as in \eqref{def-Rsemigroup}. Assume that $\alpha \ll 1$. Then, for any positive $\beta$, there is a constant $C_0$ so that 
$$\| \mathcal{R}_\alpha  \omega_\alpha\|_{L^\infty_{\eta_0}} \le C_0 \alpha^2\nu^{-1/4}e^{\gamma_1 \nu^{1/4}t}  \| \omega_\alpha\|_{ \beta, \gamma, 1}$$
for any fixed number $\gamma_1 > \gamma_0$, in which $\eta_0$ is as in the bound $|U'(z)|\le C_0 e^{-\eta_0 z}$. By definition, there holds the embedding: $L^\infty_{\eta_0} \subset \mathcal{B}^{\eta_0, \gamma,p}$, for $p\ge 0$. 
\end{proposition}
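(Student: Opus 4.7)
The scheme follows that of Proposition \ref{prop-tRa-largea} with two distinguishing features of the small-frequency regime. First, by Theorem \ref{theo-spectralinstablity} the operator $L_\alpha$ may carry unstable eigenvalues with growth rate up to $\gamma_0\nu^{1/4}$, so Assumption \ref{assump-Evans} forces the Dunford contour $\Gamma_\alpha$ to stay on $\{\Re\lambda \ge \gamma_1\nu^{1/4}\}$; this alone accounts for the temporal factor $e^{\gamma_1\nu^{1/4}t}$. Second, in the regime $\alpha \ll 1$ the slow block of the Green function from Theorem \ref{theo-GreenOS-stable} is replaced by the modified expression \eqref{Gslow-small-a}, in which the benign factor $\mu_s^{-1}$ of the large-$\alpha$ case becomes the mildly singular $|U-c|^{-1}(|U-c|+\mathcal{O}(\alpha))$. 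It is this extra $\alpha/(U-c)$ contribution that will produce the loss $\alpha^2\nu^{-1/4}$.

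The first step is to parametrize the contour as $\Gamma_\alpha = \Gamma_{\alpha,1}\cup\Gamma_{\alpha,2}\cup\Gamma_{\alpha,3}$ exactly as in the proof of Lemma \ref{lem-tempGRztx}, but with the horizontal portion $\Gamma_{\alpha,1}$ shifted so that its real part equals $\gamma_1\nu^{1/4}$. The contributions of $\Gamma_{\alpha,2}$ and $\Gamma_{\alpha,3}$ are away from critical layers and are estimated as in the bounded-$\alpha$ case, producing kernels that Lemmas \ref{lem-Heatconv} and \ref{lem-Lapconv} absorb into $\|\omega_\alpha\|_{\mathcal{B}^{\beta,\gamma,1}}$. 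On $\Gamma_{\alpha,1}$ I insert into \eqref{def-GRztx} the decomposition of $\mathcal{R}_G$ from Theorem \ref{theo-vort-stable}, writing the slow factor as $C_0 + C_0\mathcal{O}(\alpha)(U-c)^{-1}$. The fast piece and the bounded $C_0$ piece are controlled by the same Langer change of variable $c = U(z_c)$ used in Lemma \ref{lem-tempGRztx}; the $y$-integration against $U''(y)$ in \eqref{def-RGGG}, combined with the fast exponential $e^{-|y-z|\Re\mu_f(y)}$ of $\mathcal{G}(y,z)$, delivers the required $e^{\eta_0 z}$ decay in $z$.

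The main obstacle is the singular $\alpha/(U-c)$ term. After the substitution $d\lambda/(i\alpha) = -dc$, its temporal contribution reduces to an integral of the form
\[
\int_{U(\mathbb{R})}\frac{\alpha\, e^{-i\alpha k t}}{U(z) - k - i\gamma_1\nu^{1/4}/\alpha}\,dk,
\]
where the imaginary shift $\Im c = \gamma_1\nu^{1/4}/\alpha$ is the distance from $\Gamma_{\alpha,1}$ to the Euler essential spectrum. Splitting this integral into the resonant zone $|U(z)-k|\lesssim \nu^{1/4}/\alpha$ (where the integrand is bounded by $\alpha^2/\nu^{1/4}$ on a set of measure $\nu^{1/4}/\alpha$) and its complement (whose $|\log|$-integrable tail is harmless relative to $e^{\gamma_1\nu^{1/4}t}$) yields precisely the announced loss $\alpha^2\nu^{-1/4}$. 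Because the contour cannot be pushed closer to the imaginary axis without crossing the unstable spectral region detected by Theorem \ref{theo-spectralinstablity}, this bound is sharp within the present framework, and it is exactly the phenomenon highlighted in the remark following Theorem \ref{theo-eLt-stable}.

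With the temporal kernel controlled, the final $x$-convolution against $\omega_\alpha\in\mathcal{B}^{\beta,\gamma,1}$ is routine: the slow exponential $e^{-\theta_0\alpha|x\pm z|}$ appearing in \eqref{Gslow-small-a} is absorbed via Lemma \ref{lem-Lapconv} with $\eta_\nu = \alpha$, while the fast-exponential contributions from $\mathcal{G}(y,z)$ are handled by Lemmas \ref{lem-ConvAiry-cl} and \ref{lem-locConvAiry}. Combined with the $e^{\eta_0 z}$-decay obtained in the previous step, this produces the stated $L^\infty_{\eta_0}$ bound and completes the proof.
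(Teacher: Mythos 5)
The overall scaffolding — contour placement at $\Re\lambda=\gamma_1\nu^{1/4}$ via Assumption \ref{assump-Evans}, splitting off $\Gamma_{\alpha,2},\Gamma_{\alpha,3}$, inserting the small-$\alpha$ slow bound \eqref{Gslow-small-a}, and obtaining $e^{-\eta_0 z}$ from the $y$-integration against $U''$ — matches the paper's route. The genuine problem is in the step that produces the factor $\alpha^2\nu^{-1/4}$.

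You isolate the singular piece as the scalar integral
$\int \frac{\alpha\, e^{-i\alpha k t}}{U(z)-k-i\gamma_1\nu^{1/4}/\alpha}\,dk$
and argue, by a resonance split, that this yields $\alpha^2\nu^{-1/4}$. That accounting does not close: on the resonant zone the integrand is $\lesssim\alpha^2\nu^{-1/4}$ but the zone has measure $\nu^{1/4}/\alpha$, so the resonant contribution to the integral is $\mathcal{O}(\alpha)$, and the complement gives a logarithmic tail, i.e.\ the scalar integral is $\mathcal{O}(\alpha|\log(\nu^{1/4}/\alpha)|)$, not $\alpha^2\nu^{-1/4}$. Your sentence takes the pointwise peak value of the integrand and reports it as the value of the integral; those are different objects, and the apparent agreement with the claim is coincidental. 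Moreover, by factoring out the singular term into a standalone $dk$-integral you drop its coupling to the Airy localization coming from $\mathcal{G}(y,z)$; the factors $\epsilon^{-2/3}|U_c'|^{2/3}e^{-\theta_0|Y|^{3/2}}e^{-\theta_0|Z|^{3/2}}$ depend on $c$ through $z_c$ and cannot be pushed outside the $c$-integral. What the paper actually does (see the display for $I_{\alpha,s}'$ and the inequality \eqref{loss-Gnu}) is the opposite order of operations: it bounds $\alpha|U(x)-c|^{-1}$ \emph{pointwise} on the contour by $\alpha^2\nu^{-1/4}$ — using only $|U(x)-c|\ge\Im c\gtrsim\nu^{1/4}/\alpha$ — pulls that constant out, and then shows that the remaining $c$- and $y$-integral is $\mathcal{O}(1)$ by the Langer change of variables, exactly as in \eqref{def-Ixzloc}--\eqref{est-Ixzz}. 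That sup-then-integrate strategy is where the $\alpha^2\nu^{-1/4}$ truly comes from. You should replace the resonance-splitting argument with this pointwise bound on the contour plus the $\mathcal{O}(1)$ estimate for the Airy-localized $c$-integral; as written, the proposal's central step neither reproduces the claimed loss nor survives its own arithmetic.
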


%
%

We shall prove the following lemma. 

\begin{lemma} Define the temporal Green function $G_R(z,t;x)$ as in \eqref{def-GRztx}. Assume that $\alpha \ll1$. There holds 
$$ |G_R(z,t;x)| \le C\alpha^2\nu^{-1/4}
 e^{\gamma_1 \nu^{1/4}t} e^{-\eta_0 z} \Big[ 1+ \alpha e^{-\theta_0\frac{|x-z|}{\nu^{1/8}} } \Big]$$ 
for any fixed number $\gamma_1 > \gamma_0$. 
\end{lemma}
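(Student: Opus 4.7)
The plan mirrors the proof of Lemma~\ref{lem-tempGRztx} (its large-$\alpha$ counterpart), but two new difficulties appear in the small-frequency regime: one must keep the contour strictly to the right of the unstable eigenvalues guaranteed by Theorem~\ref{theo-spectralinstablity}, and the slow behavior of the Orr--Sommerfeld Green function given by Theorem~\ref{theo-GreenOS-stable} must be replaced by the weakened bound~\eqref{Gslow-small-a}, which contains the small-divisor $\alpha/|U-c|$ responsible for the $\alpha^2\nu^{-1/4}$ loss.

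I would choose the contour $\Gamma_\alpha$ along $\Re\lambda = \gamma_1\nu^{1/4}$ with $\gamma_1>\gamma_0$; Assumption~\ref{assump-Evans} guarantees that the Evans function is uniformly bounded away from zero on the image of $\Gamma_\alpha$ under $\lambda\mapsto c = -\lambda/(i\alpha)$, so Theorems~\ref{theo-GreenOS-stable} and~\ref{theo-vort-stable} (with~\eqref{Gslow-small-a}) apply throughout. Splitting $\Gamma_\alpha = \Gamma_{\alpha,1}\cup\Gamma_{\alpha,2}\cup\Gamma_{\alpha,3}$ as in~\eqref{def-GaSa1}--\eqref{def-ga-stable}, the branches $\Gamma_{\alpha,2,3}$ live where $|\gamma|\gtrsim 1$, stay away from the critical layer, and can be estimated exactly as in~\cite{GrN1}; only the compact segment $\Gamma_{\alpha,1}$, of length $\sim\alpha$ and sitting within the critical-layer window, requires new analysis.

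On $\Gamma_{\alpha,1}$, I would insert the decomposition $\mathcal{R}_G = \mathcal{R}_G^{\mathrm{slow}} + \mathcal{R}_G^{\mathrm{fast}}$ from Theorem~\ref{theo-vort-stable} (with the small-$\alpha$ replacement~\eqref{Gslow-small-a}). The \emph{fast} contribution, carrying $\delta^2\langle Z\rangle^{-1}\exp(-\int_x^z\Re\mu_f)$, is handled by the same change of variables $c\mapsto z_c$ and then to the Langer variable $Z$ used in Proposition~\ref{prop-tGreen-Sa}; the resulting $\lambda$-integral is uniformly $L^1$ in $x,z$ and yields the $\alpha\,e^{-\theta_0|x-z|/\nu^{1/8}}$ piece once the $\alpha$-length of $\Gamma_{\alpha,1}$ is paired with the $1/(i\alpha)$ factor. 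For the \emph{slow} contribution, carrying $\delta\langle Z\rangle^{-1/2}\cdot\alpha|U-c|^{-1}e^{-\alpha|x-z|}$, I would return to the convolution definition~\eqref{def-RGGG} and write
\[
G_R^{\mathrm{slow}}(z,t;x) = \int_0^\infty U''(y)\,\Bigl(\frac{1}{2\pi i}\int_{\Gamma_{\alpha,1}} e^{\lambda t}\,\mathcal{G}(y,z)\,G_{\alpha,c}^{\mathrm{slow}}(x,y)\,\frac{d\lambda}{i\alpha}\Bigr)\,dy,
\]
exchanging the $y$- and $\lambda$-integrals. The $U''(y)$ factor supplies $e^{-\eta_0 y}$, localizing $y$ near the origin; the $\Re c$-integration, combined with the Langer change of variable $Z = \delta_{cr}^{-1}\eta(z)$ and $c\mapsto z_c$, then transfers the decay onto $z$, since the weight $\langle Z\rangle^{-1/2}e^{-\theta_0|Z|^{3/2}}$ inherent in $\mathcal{G}(y,z)$ forces $z_c$ to be close to $y$, hence close to the origin, to contribute appreciably.

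The principal obstacle is the accounting that produces $\alpha^2\nu^{-1/4}$ from the slow part: one has a factor $\alpha$ from the numerator of~\eqref{Gslow-small-a}, an additional factor $\alpha/\Im c = \alpha^2/(\gamma_1\nu^{1/4})$ from integrating the small-divisor $|U-c|^{-1}$ against $d(\Re c)$ along $\{\Im c = \gamma_1\nu^{1/4}/\alpha\}$ in its natural weighted $L^1$-norm, and a compensating $1/\alpha$ from $d\lambda/(i\alpha)$ paired with the $\alpha$-length of $\Gamma_{\alpha,1}$; tracking how these combine (together with the $\delta\langle Z\rangle^{-1/2}$ prefactor and the uniform Langer $L^1$-bounds analogous to~\eqref{est-Ixzz}) is precisely the mechanism behind Remark~\ref{rem-lossGnu}. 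Once this book-keeping is carried out, the pointwise bound follows without further subtlety.
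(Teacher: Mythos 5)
Your outline matches the paper's: one chooses the contour with $\Re\lambda = \gamma_1\nu^{1/4} + a^2\sqrt\nu$ (so that Assumption~\ref{assump-Evans} keeps $|D(\alpha,c)|\gtrsim 1$ and the hypothesis $\epsilon^{1/8}|\log\Im c|\ll1$ of Theorem~\ref{theo-GreenOS-stable} holds), splits $\mathcal{R}_G$ into slow and fast pieces via Theorem~\ref{theo-vort-stable} with the small-$\alpha$ replacement~\eqref{Gslow-small-a}, and converts the $d\Re c$ integral to Langer variables via $c\mapsto z_c\mapsto Z$ exactly as in Proposition~\ref{prop-tGreen-Sa}. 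One small correction: the small-$\alpha$ contour is~\eqref{redef-Ga1-sa}, not~\eqref{def-GaSa1}--\eqref{def-ga-stable} which are the large-$\alpha$ branches; the difference is precisely the $\gamma_1\nu^{1/4}$ shift and the absence of the $\tfrac12\alpha^2\sqrt\nu$ term.

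The final book-keeping paragraph, however, misattributes where the $\alpha^2\nu^{-1/4}$ comes from, in a way that would not survive being made precise. You say that ``integrating the small-divisor $|U-c|^{-1}$ against $d(\Re c)$ along $\{\Im c = \gamma_1\nu^{1/4}/\alpha\}$'' yields the factor $\alpha/\Im c$. Integrating $|U(x)-c|^{-1}$ in $\Re c$ with $\Im c$ fixed gives only a logarithm $\sim\log(1/\Im c)$, not $(\Im c)^{-1}$, so this mechanism cannot by itself produce the polynomial loss. What the paper actually does is purely pointwise: on $\Gamma'_{\alpha,1}$ one has $\Im c = \Re\lambda/\alpha\geq\gamma_1\nu^{1/4}/\alpha$, so
\[
\alpha\,|U(x)-c|^{-1}\ \le\ \alpha\,(\Im c)^{-1}\ \le\ \alpha^2/(\gamma_1\nu^{1/4}),
\]
which is~\eqref{loss-Gnu}. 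This factor is pulled out of the integral as a constant; the remaining $d\Re c$-integral — carrying $\epsilon^{-2/3}|U_c'|^{-1/3}$ and the Langer weights $e^{-\theta_0|Y|^{3/2}}e^{-\theta_0|Z|^{3/2}}$ — is then $O(1)$ after the changes of variable, exactly as in~\eqref{def-Ixzloc}--\eqref{est-Ixzz}. Also, the pairing of $d\lambda/(i\alpha)$ with the $\alpha$-length of the contour is just the change of variable $\lambda\mapsto c$, hence contributes a neutral factor~$1$; counting it as ``a compensating $1/\alpha$'' on top of the factor $\alpha$ from the numerator and the factor $\alpha/\Im c$ over-counts by a power of $\alpha$. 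Replacing your integral heuristic by the pointwise bound~\eqref{loss-Gnu} closes the gap, and the rest of your outline then recovers the stated estimate.
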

\begin{proof} The proof follows the similar lines to that of Lemma \ref{lem-tempGRztx}. Precisely, for each $x,y,z$, to avoid the unstable eigenvalues, we now take the contours of integration near the critical layers to be the following:  
\begin{equation}\label{redef-Ga1-sa}
\begin{aligned}
\Gamma'_{\alpha,1} &:= \Big\{ \lambda =\gamma - i \alpha k, \qquad k\in U[y,z] \Big\} 
\\\Gamma_{\alpha,1} &: = \Big\{\lambda =  \gamma  - i\alpha k: \qquad  k \in \mathrm{Range}(U)  \Big\} 
\end{aligned}\end{equation}
used for the slow and fast behavior of the Green function, respectively, in which $\gamma$ 
is defined by 
$$ \gamma:  = \gamma_1 \nu^{1/4} + a^2 \sqrt\nu ,\qquad a: = \frac{|y-z|}{2\sqrt \nu t}$$
for arbitrary $\gamma_1>\gamma_0$. We observe that $\nu^{-1/4}\Re \lambda = \gamma_1 + a^2 \nu^{1/4} > \gamma_0$.  Hence, the addition of $\gamma_1 \nu^{1/4}$ allows the contour of integration to avoid the unstable eigenvalues (see also Assumption \ref{assump-Evans}), yielding 
$$ |D(\alpha,c) |\gtrsim 1$$
for $c = -\lambda / i\alpha$ and for $\lambda \in \Gamma_{\alpha,1}$. We note that in this case there holds $\Im c \ge \gamma_1 \nu^{1/4}/\alpha \gg |\epsilon|$ and hence the condition $ \epsilon^{1/8} |\log \Im c | \ll1$, used in Theorem \ref{theo-GreenOS-stable}, remains valid. Thus, as in \eqref{decomp-Gsf}, by using Theorem \ref{theo-GreenOS-stable} for small $\alpha$, the Green function satisfies 
$$
\begin{aligned}
 |G_{\alpha,c,s}(x,y)|  
 &\le 
  C_0 \Big( 1 + \alpha |U(x)-c|^{-1} \Big) e^{-\theta_0\alpha |x-y|}  
 \\
 |G_{\alpha,c,f}(x,y)|  
 &\le 
  C_0\delta_{cr} e^{-\frac34  \nu^{-1/4} \sqrt \gamma |x-y|} e^{-\theta_0 \sqrt{\max\{|X|, |Y|\} } |X-Y|} 
\end{aligned}
$$
for the slow and fast behavior of the Green function, respectively. In this case, there hold
$$ e^{\gamma t} e^{-\frac12 \nu^{-1/4} \sqrt \gamma |y-z|} \le e^{\gamma_1 \nu^{1/4}t}$$
and $$e^{-\frac14 \nu^{-1/4} \sqrt \gamma |y-z|} \le e^{-\frac{|y-z|^2}{8 \sqrt \nu t}} e^{-\theta_0\frac{|y-z|}{\nu^{1/8}} } ,$$
upon noting the gain of the localized new term $e^{-\theta_0\frac{|y-z|}{\nu^{1/8}} } $,  thanks to the addition of $\gamma_1\nu^{1/4}$ into the definition of $\gamma$. Similarly, we also gain the term $e^{-\theta_0\frac{|x-y|}{\nu^{1/8}} } $ in the fast behavior of the Green function $G_{\alpha,f}(x,y)$.  

We now estimate 
$$
\begin{aligned}
I'_{\alpha,s}: &= \frac{1}{2\pi i}  \int_0^\infty  \int_{\Gamma'_{\alpha,1}}  e^{\lambda t} \mathcal{G}(y,z) U''(y) G_{\alpha,c,s} (x,y)  \; \frac{d\lambda dy}{i\alpha} 
\end{aligned}
$$
Combining these with the above Green function bounds and recalling that $|U''(y)| \le C_0 e^{-\eta_0 y}$, we obtain at once 
$$\begin{aligned}
I_{\alpha,s}' 
&\le C_0 e^{\gamma_1 \nu^{1/4}t}  
  \int_0^\infty  \int_{U[y,z]} \epsilon^{-2/3}|U'_c|^{-1/3} 
e^{-\frac{|y-z|^2}{8 \sqrt \nu t}} e^{-\theta_0\frac{|y-z|}{\nu^{1/8}} }  
  \\&\quad \times   e^{-\theta_0 |Y|^{3/2}} e^{-\theta_0 |Z|^{3/2}}   (1+ \alpha |U(x)-c|^{-1}) e^{-\theta_0\alpha |x-y|}  e^{-\eta_0 y}\; d\Re c dy .
\end{aligned}$$
Using $c = U(z_c)$, we obtain 
$$\begin{aligned}
I_{\alpha,s}' 
&\le C_0 e^{\gamma_1 \nu^{1/4}t}  
  \int_0^\infty  \int_y^z \epsilon^{-2/3}|U'_c|^{2/3} 
e^{-\frac{|y-z|^2}{8 \sqrt \nu t}} e^{-\theta_0\frac{|y-z|}{\nu^{1/8}} }  
  \\&\quad \times   e^{-\theta_0 |Y|^{3/2}} e^{-\theta_0 |Z|^{3/2}}    (1+ \alpha |U(x)-c|^{-1}) e^{-\theta_0\alpha |x-y|}  e^{-\eta_0 y}\; d\Re z_c dy .
\end{aligned}$$
Clearly, $e^{-\theta_0\frac{|y-z|}{\nu^{1/8}} }   e^{-\eta_0 y} \le e^{-\eta_0 z}$. Now using the fact that $\alpha \Im c \gtrsim \nu^{1/4}$, we bound 
\begin{equation}\label{loss-Gnu} \alpha |U(x)-c|^{-1} \lesssim \alpha^2 \nu^{-1/4}.\end{equation}
In addition, the integral above is already estimated in \eqref{def-Ixzloc}-\eqref{est-Ixzz}, yielding 
$$\begin{aligned}
I_{\alpha,s}' &\le C_0 (1+\alpha^2 \nu^{-1/4})e^{\gamma_1 \nu^{1/4}t}   e^{-\eta_0 z}  .\end{aligned}$$

The estimate on $I_{\alpha,f}$, the integral involving the the fast behavior of the Green function, follows identically the previous case for $\alpha \gtrsim 1$. This completes the proof of the lemma. \end{proof}

\begin{remark}\label{rem-lossGnu}
The loss of $\nu^{-1/4}$ is precisely due to the singularity of $(U-c)^{-1}$ in the (rough) bound \eqref{loss-Gnu}, which appears precisely in the case when $\alpha \ll1$.  
\end{remark}

\begin{proof}[Proof of Proposition \ref{prop-tRa-smalla}] By definition and using the fact that $G_R(z,t;x)$ is bounded by $C_0\alpha^2\nu^{-1/4}e^{\gamma_1 \nu^{1/4}t} e^{-\eta_0 z} $, we study the convolution 
$$\begin{aligned}
\Big | \int_0^\infty G_R(z,t;x) \omega_\alpha(x)\; dx \Big |
&\le C_0\alpha^2\nu^{-1/4} e^{\gamma_1 \nu^{1/4}t}  e^{-\eta_0 z} \| \omega_\alpha \|_{L^1}
\\
&\le C_0 \alpha^2\nu^{-1/4} e^{\gamma_1 \nu^{1/4}t}  e^{-\eta_0 z} \| \omega_\alpha\|_{ \beta, \gamma, 1}
\end{aligned}$$
which proves the proposition. 
\end{proof}

\subsection{Proof of the semigroup bounds}

The main theorem, Theorem \ref{theo-eLt-stable}, now follows at once from the decomposition 
$$ e^{L_\alpha t} = \mathcal{S}_\alpha + \mathcal{R}_\alpha $$
upon using estimates from Proposition \ref{prop-tSa-stable} on $\mathcal{S}_\alpha$ and from Propositions \ref{prop-tRa-largea} and \ref{prop-tRa-smalla} on $\mathcal{R}_\alpha$. 

~\\
{\em Acknowledgement.} The authors would like to thank Yan Guo for the fruitful discussions and ICERM, Brown University for the hospitality in Spring 2017 during which part of this work was completed. TN's research was supported in part by the NSF under grant DMS-1405728.


\end{document}